\newtheorem{algorithm}[theorem]{Algorithm}
\title{
On the infinite-dimensional QR algorithm}
\author[$\dagger$]{Matthew J. Colbrook\footnote{Corresponding author: m.colbrook@damtp.cam.ac.uk}}
\author[$\dagger$]{Anders C. Hansen}
\affil[$\dagger$]{DAMTP, Centre for Mathematical Sciences, University of Cambridge, Wilberforce Rd, Cambridge CB3 0WA, United Kingdom}
\begin{document}

\date{}
\maketitle

\begin{abstract}
Spectral computations of infinite-dimensional operators are notoriously difficult, yet ubiquitous in the sciences. Indeed, despite more than half a century of research, it is still unknown which classes of operators allow for computation of spectra and eigenvectors with convergence rates and error control. Recent progress in classifying the difficulty of spectral problems into complexity hierarchies has revealed that the most difficult spectral problems are so hard that one needs three limits in the computation, and no convergence rates nor error control is possible. This begs the question: which classes of operators allow for computations with convergence rates and error control? In this paper we address this basic question, and the algorithm used is an infinite-dimensional version of the QR algorithm. Indeed, we generalise the QR algorithm to infinite-dimensional operators. We prove that not only is the algorithm executable on a finite machine, but one can also recover the extremal parts of the spectrum and corresponding eigenvectors, with convergence rates and error control. This allows for new classification results in the hierarchy of computational problems that existing algorithms have not been able to capture. The algorithm and convergence theorems are demonstrated on a wealth of examples with comparisons to standard approaches (that are notorious for providing false solutions).We also find that in some cases the IQR algorithm performs better than predicted by theory and make conjectures for future study. 
\end{abstract}

\vspace{1pc}
\noindent
{\it Keywords:} spectra, eigenvectors, computation, infinite-dimensional Hilbert spaces, hierarchies of computational problems

\vspace{0.5pc} \noindent
{\it Mathematics Subject Classification (2010):} 	47A10, 65J10, 46N40, 03D55



\section{Introduction}\label{intro}

Spectral computations are ubiquitous in the sciences with applications in solutions to differential and integral equations, spline functions, orthogonal polynomials, quantum mechanics, quantum chemistry, statistical mechanics, Hermitian and non-Hermitian Hamiltonians, optics etc. \cite{ponomarenko2013cloning,dean2013hofstadter,szabo2012modern,bender2007making,davies2007linear,shivakumar2016infinite,simon1998classical,gray2006toeplitz}. The computational problem is as follows. Letting $T$ denote a bounded linear operator on the canonical separable Hilbert space $l^2(\mathbb{N})$, one wants to design algorithms to compute the spectrum of $T$, denoted by $\sigma(T)$. Given the many applications, this problem has been investigated intensely since the 1950s \cite{aronszajn1951approximation, Arveson_cnum_lin94, Arveson_role_of94,Arveson_Improper93, Bottcher_appr, brown2007quasi, Lyonell_2, Sharg, deift1985toda, digernes1994finite, hansen2008, hansen2011, Levitin, Marletta_pollution, Marletta_Spec_gaps, trefethen2005spectra, Arieh1, Arieh2, Davies_sec_order, Seidel_JFA, SeSi_Pseudospectra, Eugene3, Silbermann22, Hansen_PRS, riddell1967spectral}, and we can only cite a small subset here. 

In the paper ``On the Solvability Complexity Index, the $n$-pseudospectrum and approximations of spectra of operators'' \cite{hansen2011} the Solvability Complexity Index (SCI) was introduced. The SCI provides a classification hierarchy \cite{hansen2011,SCI, CRAS,colbrook4,colbrook3,colbrook2019computing,colbrook2020pseudoergodic} of spectral problems according to their computational difficulty. The SCI of a class of spectral problems is the least number of limits needed in order to compute the spectrum of operators in this class. From a classical numerical analysis point of view such a concept may seem foreign. Indeed, the traditional sentiment is that one should have an algorithm, $\Gamma_n$, such that for an operator $T \in \mathcal{B}(l^2(\mathbb{N}))$,
\begin{equation}\label{eq:one_limit}
\Gamma_n(T) \longrightarrow \sigma(T), \quad n \rightarrow \infty,
\end{equation}
preferably with some form of error control of the convergence. As this philosophy forms the basics of numerical analysis, it naturally permeates the classical literature on the computational spectral problem. However, as is shown in \cite{hansen2011,SCI, CRAS}, an algorithm satisfying \eqref{eq:one_limit} is impossible even for the class of self-adjoint operators. Indeed, in the general case, the best possible alternative is an algorithm depending on three indices $n_1, n_2, n_3$ such that 
  \begin{equation*}
\lim_{n_3 \rightarrow \infty} \lim_{n_2 \rightarrow \infty} \lim_{n_1 \rightarrow \infty} \Gamma_{n_3,n_2,n_1}(T) = \sigma(T).
\end{equation*}
In fact, any algorithm with fewer than three limits will fail on the general class of operators. Moreover, no error control nor convergence rate on any of the limits are possible, since any such error control would reduce the number of limits needed. However, for the self-adjoint and normal cases two limits suffice in order to recover the spectrum. This phenomenon implies that the only way to characterise the computational spectral problem is through a hierarchy classifying the difficulty of computing spectra of different subclasses of operators. This is the motivation behind the SCI hierarchy, which also covers general numerical analysis problems. Indeed, the SCI hierarchy is closely related to Smale's question on the existence of purely iterative generally convergent algorithm for polynomial zero
finding \cite{Smale2}. As demonstrated by McMullen \cite{McMullen1, McMullen2} and Doyle \& McMullen \cite{Doyle_McMullen}, this is a case where several limits are needed in the computation, and their results become special cases of classification in the SCI hierarchy \cite{SCI, CRAS}.  

Informally, the SCI hierarchy is characterised as follows (see the Appendix \S \ref{SCI_basics} for a more detailed summary describing the SCI hierarchy).  

\begin{labeling}{\,\,\, $\Delta_0$:}
\itemsep0em
\item[\,\,\, $\Delta_0$:] The set of problems that can be computed in finite time, the SCI $=0$.
\item[\,\,\, $\Delta_1$:] The set of problems that can be computed using one limit, the SCI $=1$, however 
one has error control and one knows an error bound that tends to zero as the algorithm progresses.
\item[\,\,\, $\Delta_2$:] The set of problems that can be computed using one limit, the SCI $=1$, but error control may not be possible. 
\item[\,\,\, $\Delta_{m+1}$:] For $m \in \mathbb{N}$, the set of problems that can be computed by using $m$ limits, the SCI $\leq m$.
\end{labeling}

The class $\Delta_1$ is of course a highly desired class, however, most spectral problems are much higher in the hierarchy. For example. we have the following known classifications \cite{hansen2011,SCI, CRAS}.
\begin{itemize}
\itemsep0em
\item[(i)] The general spectral problem is in $\Delta_4\setminus \Delta_3$.
\item[(ii)] The self-adjoint spectral problem is in $\Delta_3\setminus \Delta_2$.
\item[(iii)] The compact spectral problem is in $\Delta_2\setminus \Delta_1$.
\end{itemize}
Here, the notation $\setminus$ indicates the standard ``setminus''. Note that the SCI hierarchy can be refined. We will not consider the full generalisation in the higher part of the hierarchy in this paper, but recall the class $\Sigma_1$ \cite{colb1}. This class is defined as follows.
\begin{labeling}{\,\,\, $\Sigma_1$:}
\itemsep0em
\item[\,\,\, $\Sigma_1$:] We have $\Delta_1 \subset \Sigma_1 \subset \Delta_2 $ and $\Sigma_1$ is the set of problems that can be computed by passing to one limit. Error control may not be possible, however, there exists an algorithm for these problems that converges and for which its output is included in the spectrum (up to an arbitrarily small accuracy parameter $\epsilon$).
\end{labeling}

In the context of computing $\sigma(T)$, a $\Sigma_1$ classification means the existence of an algorithm $\Gamma_n$ such that
$$
\Gamma_n(T)\subset \sigma(T)+B_{2^{-n}}(0)
$$
and $\Gamma_n$ converges to $\sigma(T)$ in the Hausdorff metric. The $\Sigma_1$ class is very important as it allows for algorithms that never make a mistake. In particular, one is always sure that the output is sound but we do not know if we have everything yet. The simplest infinite-dimensional spectral problem is that of computing the spectrum of an infinite diagonal matrix and, as is easy to see, we have the following.

\begin{itemize}
\itemsep0em
\item[(iv)] The problem of computing spectra of infinite diagonal matrices is in $\Sigma_1\setminus \Delta_1$.
\end{itemize}

Hence, the computational spectral problem becomes an infinite classification theory in order to characterise the above hierarchy. In order to do so, there will, necessarily, have to be many different types of algorithms. Indeed, characterising the hierarchy will yield a myriad of different approaches, as different structures on the various classes of operators will require specific algorithms. The key contribution of this paper is to investigate the convergence properties of the Infinite-dimensional QR (IQR) algorithm, its implementation properties, and how this algorithm provides classification results in the SCI hierarchy.

\subsection{Main contribution and novelty of the paper}

The main contributions of the paper can be summarised as follows: New convergence results, algorithmic results (the IQR algorithm can be implemented), classification results in the SCI hierarchy and numerical examples. 
\begin{labeling}{}
\itemsep0em
\item[\, (1) {\it Convergence results}:] We provide new convergence theorems for the IQR algorithm with convergence rates and error control. The results include eigenvalues, eigenvectors and invariant subspaces. 

\item[\, (2) {\it Algorithmic implementation}:] We prove that for infinite matrices with finitely many non-zero entries in each column, it is possible to implement the IQR algorithm exactly (on a finite machine) as if one had an infinite computer at one's disposal. This can be extended to implementing the IQR algorithm with error control for general invertible operators.
\item[\, (3) {\it SCI hierarchy classifications}:] As a result of (1) and (2), we provide new classification results for the SCI hierarchy. In particular, the convergence properties of the IQR algorithm capture key structures that allow for sharp $\Delta_1$ classification of the problem of computing extremal points in the spectrum. Moreover we establish sharp $\Sigma_1$ classification of the problem of computing spectra of subclasses of compact operators. 
\item[\, (4) {\it Numerical examples}:] Finally, we demonstrate the IQR algorithm and the proven convergence results on a variety of difficult problems in practical computation, illustrating how the IQR algorithm is much more than a theoretical concept. Moreover, the examples demonstrate that the IQR algorithm performs much better than what our theory covers and works on much larger classes of operators than our theorems predict. Hence, we are left with many open problems on the theoretical understanding of the potential and limitations of this algorithm. The computational experiments include examples from
\begin{itemize}
\item[(i)] Toeplitz/Laurent operators and their perturbations,
\item[(ii)] $PT$-symmetry in quantum mechanics,
\item[(iii)] Hopping sign model in sparse neural networks,
\item[(iv)] NSA Anderson model in superconductors.
\end{itemize}
\end{labeling}

\subsection{Connection to previous work}
Our results connect to many different approaches in the vast literature on spectral computation in infinite dimensions. The infinite-dimensional computational spectral problem is very different from the finite-dimensional computational eigenvalue problem, and even though the IQR algorithm is inspired by the finite-dimensional version, this paper solely focuses on the infinite-dimensional problem. Thus, the paper is aimed at the analysis and numerical analysis audience focusing on infinite-dimensional problems rather than the finite-dimensional numerical linear algebra discipline. 

\begin{labeling}{}
\itemsep0em
\item[\,\,\,{\it Finite sections}:]The IQR algorithm provides an alternative to the standard finite section method in several cases where it fails. Whereas the finite section method would extract a finite section from the infinite matrix and then apply, for example, the finite-dimensional QR algorithm, the IQR algorithm first performs the infinite QR iterations and then extracts a finite section. In general these two processes do not commute. The finite section method (or any derivative of it) cannot work in general because of the general classification results in the SCI hierarchy mentioned in \S \ref{intro}. Typically, it may provide false solutions. However, in the cases where it converges, it provides invaluable $\Delta_2$ classifications in the SCI hierarchy. The finite section method has often been viewed in connection with Toeplitz theory and the reader may want to consult the work by B{\"o}ttcher \cite{Albrecht_Fields, Bottcher_pseu}, B{\"o}ttcher \& Silberman \cite{Bottcher_book}, B{\"o}ttcher, Brunner, Iserles \& N{\o}rsett \cite{Arieh2}, Brunner, Iserles \& N{\o}rsett \cite{Arieh1}, Hagen, Roch \& Silbermann \cite{Silbermann22}, Lindner \cite{Lindner}, Marletta \cite{Marletta_pollution} and Marletta \& Scheichl \cite{Marletta_Spec_gaps}. From the operator algebra point of view the work of Arveson \cite{Arveson_cnum_lin94, Arveson_noncommute93,Arveson_role_of94} has been influential as well as the work of Brown \cite{brown2007quasi}.  

\item[\,\,\,{\it Infinite-dimensional Toda flow}:] Deift, Li and Tomei \cite{deift1985toda} provided the first results on the IQR algorithm in connection with Toda flows with infinitely many variables. Their results are purely functional analytic and do not take implementation and computability issues into account. However, these results provide the fundamentals of the IQR algorithm. In \cite{hansen2008} these results were expanded with a convergence result for eigenvectors corresponding to eigenvalues outside the essential numerical range for normal operators. Yet, this paper did not consider convergence rates, actual numerical calculation nor any classification results.

\item[\,\,\,{\it Infinite-dimensional QL algorithm}:] Olver, Townsend and Webb have provided a practical framework for infinite-dimensional linear algebra and foundational results on computations with infinite data structures \cite{webb2017spectra,Olver_Townsend_Proceedings, Olver_SIAM_Rev, Olver_code1, Olver_code2}. This includes efficient codes as well as theoretical results. The infinite-dimensional QL (IQL) algorithm is an important part of this program. The IQL algorithm is rather different from the IQR algorithm, although they are similar in spirit. In particular, both the implementation and the convergence results are somewhat contrasting.  

\item[\,\,\,{\it Infinite-dimensional spectral computation}:] The results in this paper follow in the long tradition of infinite-dimensional spectral computations. This field contains a vast literature that spans more than half a century, and the references that we have cited in the first paragraph of \S \ref{intro} represent a small sample. However, we would like to highlight the recent work by B{\"o}gli, Brown, Marletta, Tretter \& Wagenhofer \cite{Sabine} who were able to computationally confirm, with absolute certainty, a conjecture on a certain oscillatory behaviour of higher auto-ionizing resonances of atoms. Note that problems that are classified as $\Delta_1$ and $\Sigma_1$ problems in the SCI hierarchy may allow for computer assisted proofs.  
\end{labeling}

\subsection{Background and notation}
\label{background}

Here we briefly recall some definitions used in the paper. We will consider the canonical separable Hilbert space $\mathcal{H} = l^2(\mathbb{N})$ (the set of square summable sequences). Moreover, we write $\mathcal{B}(\mathcal{H})$ for the set of bounded operators on $\mathcal{H}.$ For orthogonal projections $E,F$, we will write $E\leq F$ if the range of $E$ is a subspace of the range of $F$. We denote the canonical orthonormal basis of $\mathcal{H}$ by $\{ e_j \}_{j \in \mathbb{N}}$, and if $\xi \in \mathcal{H}$ we write $\xi(j) = \langle \xi, e_j \rangle.$ Note that $T \in \mathcal{B}(\mathcal{H})$ is uniquely determined by its matrix elements $t_{ij} = \langle Te_j, e_i\rangle$. Hence we will use the words bounded operator and infinite matrix interchangeably. Given a sequence of operators $\{T_n\}$ we will use the notation 
$$
T_n \stackrel{\text{SOT}}{\longrightarrow} T, \qquad T_n \stackrel{\text{WOT}}{\longrightarrow} T
$$
to mean convergence in the strong and weak operator topology respectively. 
The spectrum of $T \in \mathcal{B}(\mathcal{H})$ will be denoted by $\sigma(T)$, and $\sigma_d(T)$ denotes the set of isolated eigenvalues with finite multiplicity (the discrete spectrum). 

In connection with the spectrum we need to recall some definitions which will appear in the statement of our theorems. We recall that, for $T \in \mathcal{B}(\mathcal{H})$, the essential spectrum\footnote{Of course in the case of non-normal $T$ there are different definitions of the essential spectrum. However, these differences will not matter regarding the results of this paper.} and the essential spectral radius are given by
$$
\sigma_{\mathrm{ess}}(T) = \{z\in\mathbb{C}:T-zI\text{ is not Fredholm}\}, \quad r_{\mathrm{ess}}(T) = \sup \{|z|: z \in \sigma_{\mathrm{ess}}(T)\}.
$$
Moreover, the numerical range and the essential numerical range of $T$ are defined by
\[
W(T) = \{\langle T\xi,\xi\rangle:\|\xi\| = 1\}, \quad W_e(T) = \bigcap_{K \, \text{compact}} \overline{W(T + K)}.
\]
In addition, we need the Hausdorff metric as defined by the following. Let $\mathcal{S}, \mathcal{T} \subset \mathbb{C}$, be compact. Then their
Hausdorff distance is 
\begin{equation}\label{eq:Hausdorff}
d_H(\mathcal{S},\mathcal{T}) = \max \{\sup_{\lambda \in \mathcal{S}}
d(\lambda,\mathcal{T}), \sup_{\lambda \in \mathcal{T}} d(\lambda,\mathcal{S})\},
\end{equation} 
where $d(\lambda,\mathcal{T}) = \inf_{\rho \in\mathcal{T}}|\rho-\lambda|.$ 
 We also recall a generalisation of the spectrum, known as the pseudospectrum.
Indeed, for $\epsilon>0$ define the $\epsilon$-pseudospectrum as
$$
\sigma_{\epsilon}(T)=\{z\in\mathbb{C} :\left\|(T - zI)^{-1}\right\|\geq{\epsilon}^{-1}\},
$$
where we interpret $\left\|S^{-1}\right\|$ as $+\infty$ if $S$ does not have a bounded inverse.
This is easier to compute than the spectrum, converges in the Hausdorff metric to the spectrum as $\epsilon\downarrow0$ and gives an indication of the instability of the spectrum of $T$. We shall use it as comparison for the IQR algorithm and as a means to detect spectral pollution for finite section methods.

Finally, we need a notion of convergence of subspaces. We follow the notation in \cite{kato2013perturbation}. Let $M \subset \mathcal{B}$ and $N \subset \mathcal{B}$ be two non-trivial closed subspaces of a Banach space $\mathcal{B}.$ The distance between them is defined by 
\[
\delta(M,N) = \sup_{\stackrel{x \in M}{\|x\| = 1}}\inf_{y \in N} \|x -y\|, \qquad \hat \delta(M,N) = \max[\delta(M,N),\delta(N,M)].
\]
Given subspaces $M$ and $\{M_k\}$ such that $\hat \delta(M_k,M)
\rightarrow 0$ as $k \rightarrow \infty,$ we will use the notation $M_k {\rightarrow} M$. If we replace $\mathcal{B}$ with a Hilbert space $\mathcal{H}$ we can express $\delta$ and $\hat \delta$ conveniently in terms of projections and operator norms. In particular, if $E$ and $F$ are the orthogonal projections onto subspaces $M \subset \mathcal{H}$ and $N \subset \mathcal{H}$ respectively then
\[
\delta(M,N) = \sup_{\stackrel{x \in M}{\|x\| = 1}}\inf_{y \in N}
\|x -y\| = \sup_{\stackrel{x \in M}{\|x\| = 1}}
\|F^{\perp}x\| = \|F^{\perp}E\|.
\]  
Since the operator $E-F = F^{\perp}E - FE^{\perp}$ is essentially the
direct sum of operators $F^{\perp}E \oplus (- FE^{\perp}),$ its norm
is $\hat \delta(M,N),$ i.e.
\begin{equation}\label{cam}
\hat \delta(M,N) = \max(\|F^{\perp}E\|, \|E^{\perp}F\|) 
= \max(\|F^{\perp}E\|, \|FE^{\perp}\|) = \|E-F\|.
\end{equation}
This allows us to extend the definition to allow the trivial subspace $\{0\}$ and gives rise to a metric on the set of all closed subspaces of $\mathcal{H}$ (first introduced by Krein and Krasnoselski in \cite{krein1947fundamental}). We also define the (maximal) subspace angle, $\phi(M,N)\in[0,\pi/2]$, between $M$ and $N$ by
\begin{equation}
\label{subspace_angle}
\sin\big(\phi(M,N)\big)=\hat \delta(M,N). 
\end{equation}
Finally, we will use two further well known properties in the Hilbert space setting. First, if $M$ and $N$ are both finite $l$-dimensional subspaces, then
\begin{equation}
\label{fd_property}
\delta(M,N)\leq l^{\frac{1}{2}}\delta(N,M),
\end{equation}
which shows that to prove convergence of finite dimensional subspaces, it is enough to prove $\delta$-convergence. Second, suppose we have
$$
M=\bigoplus_{j=1}^n M_j,\quad N^{(k)}=N_1^{(k)}+...+N_n^{(k)},
$$
where the $N_j^{(k)}$ need not be orthogonal. Then a simple application of H\"older's inequality yields
\begin{equation}
\label{sum_property}
\delta(M,N^{(k)})\leq \Big(\sum_{j=1}^n\delta(M_j,N_j^{(k)})^2\Big)^{\frac{1}{2}},
\end{equation}
which shows that if the dimensions of $M_j$ and $N_j^{(k)}$ are finite and equal, then to prove convergence $N^{(k)}\rightarrow M$ we only need to prove that $\delta(M_j,N_j^{(k)})\rightarrow0$ as $k\rightarrow\infty$. For further properties (including other notions of distances between subspaces) and a discussion on two projections theory, we refer the reader to the excellent article of B{\"o}ttcher and Spitkovsky \cite{bottcher2010gentle}.

\subsection{Organisation of the paper}

The paper is organised as follows. In Section \ref{IQR_s1} we define the IQR algorithm (simple codes are also provided in the appendix). Section \ref{conv_main} contains and proves our main theorems including convergence rates. The outcome is more elaborate than the finite-dimensional case, as the infinite-dimensional setting includes more intricate instances. Our key practical result is that, despite being an algorithm dealing with infinite amount of information, it can be implemented on any standard computer and this is discussed in Section \ref{implement}. The fact that the IQR algorithm can be computed allows for its use in order to provide new classification in the SCI hierarchy as discussed in Section \ref{class_thms}. In particular, we demonstrate $\Delta_1$ classification for the extremal part of the spectrum and dominant invariant subspaces, as well as $\Sigma_1$ results for spectra of certain classes of compact operators. Note that the general spectral problem for compact operators is not in $\Sigma_1$. The IQR algorithm and convergence theorems are demonstrated on a large collection of examples from the sciences on difficult computational spectral problems in Section \ref{sec_5} with comparisons to the finite section method. The IQR algorithm is also found to perform better than theory predicts and we conjecture conditions on the operator for this to be the case. Finally, we conclude with a discussion of the opportunities and limits of the IQR algorithm in Section \ref{conc_s}.

\section{The infinite-dimensional QR algorithm (IQR)}
\label{IQR_s1}
The IQR algorithm has existed as a pure mathematical concept for more
than thirty years and it first appeared in the paper ``Toda Flows with
Infinitely Many Variables'' \cite{deift1985toda} in 1985. However, the analysis in \cite{deift1985toda} 
covers only self-adjoint infinite matrices with real entries, and since the analysis is done from 
a pure mathematical perspective, the question regarding the actual numerical algorithm is left out. 
We will in this paper extend the analysis to more general operators and answer the crucial question: can one actually implement the IQR algorithm? The answer is affirmative, and we also prove convergence theorems, generalising the well known finite dimensional case.

\subsection{The QR decomposition} 
The QR decomposition is the core of the QR algorithm. If $T \in
\mathbb{C}^{n\times n},$ one may apply the Gram-Schmidt procedure
to the columns of $T$ and store these columns in a matrix $Q$ and this
gives us the QR decomposition
\begin{equation}\label{decom}
T = QR,
\end{equation} 
where $Q$ is a unitary matrix and $R$ upper triangular. It is no surprise that a QR decomposition should exist in the infinite dimensional case, however, we need more than just the existence. A key ingredient in the QR algorithm is the Householder transformation used for computational reasons (they are backwards stable). It is crucial that we can adopt these tools in the infinite dimensional setting. Our goal is to extend the construction of the QR decomposition, via Householder transformations, to infinite matrices and to find a way so that one can implement the procedure on a finite machine. To do this we need to introduce the concept of Householder reflections in the infinite-dimensional setting. 
\begin{definition}
A Householder reflection is an operator 
$S \in \mathcal{B}(\mathcal{H})$ of the form 
\begin{equation}\label{theS}
S = I - \frac{2}{\|\xi\|^2} \xi\otimes \bar \xi, \qquad  \xi \in
\mathcal{H},
\end{equation}
where $\bar \xi$ denotes the associated functional in $\mathcal{H}^*$ given by $x\rightarrow\langle x,\xi \rangle$. In the case where $\mathcal{H} = \mathcal{H}_1 \oplus \mathcal{H}_2$
and 
$I_i$ is the identity on $\mathcal{H}_i$ then 
\[
U = I_1 \oplus \left(I_2 - \frac{2}{\|\xi\|^2} \xi\otimes \bar \xi
\right) \qquad  \xi \in 
\mathcal{H}_2,  
\] 
will be called a Householder transformation. 
\end{definition}
A straightforward calculation shows that $S^* = S^{-1} = S$ and thus
also 
$U^* = U^{-1} = U.$
An important property of the operator $S$ is that if $\{e_j\}$ is 
an orthonormal basis for $\mathcal{H}$ and $\eta \in
\mathcal{H}$ then one can
choose $\xi \in \mathcal{H}$ such that 
\[
\langle S\eta, e_j\rangle =  \langle (I - \frac{2}{\|\xi\|^2}
\xi\otimes 
\bar \xi)\eta, e_j\rangle = 0, \qquad \forall j \neq 1.
\]
In other words, one can introduce zeros in the column below the diagonal entry. Indeed, if $\eta_1 = \langle \eta, e_1\rangle \neq 0$ one may
choose 
$\xi = \eta \pm \|\eta\|\zeta,$
where 
$\zeta =
\eta_1/|\eta_1|e_1$ and if $\eta_1 = 0$ choose $\xi = \eta \pm \|\eta\|e_1.$ The following theorem gives the existence of a QR decomposition, even in the case where the operator is not invertible.

\begin{theorem}[\cite{hansen2008}]\label{QRthrm}
Let $T$ be a bounded operator on a separable Hilbert space
$\mathcal{H}$ and let
$\{e_j\}_{j\in\mathbb{N}}$ be an orthonormal basis for
$\mathcal{H}\cong l^2(\mathbb{N}).$ Then there exist an isometry $Q$ such that $T
= QR$ where $R$ is upper triangular with respect to $\{e_j\}$. 
Moreover, 
$$
Q = \underset{n \rightarrow \infty}{\text{SOT-lim \,}}
V_n
$$ 
where $V_n = U_1 \cdots U_n$ are unitary and each $U_j$ is a Householder transformation. 
\end{theorem}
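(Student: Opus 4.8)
The plan is to mimic the finite-dimensional construction: build the isometry $Q$ as an infinite product of Householder transformations, one per column, and then identify $R$ as $Q^*T$, the only delicate points being the strong convergence of the partial products and the fact that $Q$ need only be an \emph{isometry}, so that the identity $T=QR$ cannot be taken for granted.

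First I would construct the transformations inductively. Set $T^{(0)}=T$. Having obtained $T^{(j-1)}$, let $\eta^{(j)}$ be the vector with components $\big(\langle T^{(j-1)}e_j,e_i\rangle\big)_{i\ge j}$, i.e.\ the part of the $j$-th column lying on or below the diagonal. If $\eta^{(j)}\neq 0$, apply the Householder recipe recalled just before the theorem, with $\mathcal{H}_1=\mathrm{span}\{e_1,\dots,e_{j-1}\}$ and $\mathcal{H}_2=\overline{\mathrm{span}}\{e_j,e_{j+1},\dots\}$, to obtain a Householder transformation $U_j$ with $\langle U_j T^{(j-1)}e_j,e_i\rangle=0$ for all $i>j$; if $\eta^{(j)}=0$ set $U_j=I$. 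Put $T^{(j)}=U_jT^{(j-1)}$ and $V_n=U_1\cdots U_n$, which is unitary since each $U_j$ is. A routine induction shows that once the $j$-th column has been cleared it is never touched again: $U_k$ fixes $\mathrm{span}\{e_1,\dots,e_{k-1}\}$ pointwise and maps $\mathcal{H}_k=\overline{\mathrm{span}}\{e_k,e_{k+1},\dots\}$ into itself, so for $k>j$ it acts as the identity on the support $\mathrm{span}\{e_1,\dots,e_j\}$ of the $j$-th column of $T^{(j)}$. Hence $T^{(n)}e_j=r_j$ for all $n\ge j$, where $r_j\in\mathrm{span}\{e_1,\dots,e_j\}$, and likewise $V_ne_j=V_je_j$ for all $n\ge j$.

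Next I would prove strong convergence of $\{V_n\}$. For $m>n$, unitarity of $V_n$ gives $\|V_m\xi-V_n\xi\|=\|(U_{n+1}\cdots U_m-I)\xi\|$, and since each $U_k$ with $k\ge n+1$ acts as the identity on $\mathrm{span}\{e_1,\dots,e_n\}$ and maps $\overline{\mathrm{span}}\{e_{n+1},e_{n+2},\dots\}$ into itself, the operator $U_{n+1}\cdots U_m-I$ vanishes on $\mathrm{span}\{e_1,\dots,e_n\}$; writing $P_n$ for the orthogonal projection onto $\overline{\mathrm{span}}\{e_{n+1},e_{n+2},\dots\}$ and using that $U_{n+1}\cdots U_m$ is unitary, we get $\|V_m\xi-V_n\xi\|\le 2\|P_n\xi\|\to 0$ as $n\to\infty$, because $\xi\in l^2(\mathbb{N})$. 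Thus $Q\xi:=\lim_n V_n\xi$ exists for every $\xi$, $Q$ is linear, and $\|Q\xi\|=\lim_n\|V_n\xi\|=\|\xi\|$, so $Q$ is an isometry with $V_n\stackrel{\text{SOT}}{\longrightarrow}Q$; in particular $Qe_j=V_je_j$. Then define $R=Q^*T$, a bounded operator with $\|R\|\le\|T\|$. For any $i,j$, continuity of the inner product gives $\langle Re_j,e_i\rangle=\langle Te_j,Qe_i\rangle=\lim_n\langle V_n^*Te_j,e_i\rangle=\lim_n\langle T^{(n)}e_j,e_i\rangle=\langle r_j,e_i\rangle$, so $Re_j=r_j$ is supported on $\{e_1,\dots,e_j\}$ and $R$ is upper triangular. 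It remains to check $QR=T$, i.e.\ $QQ^*T=T$; since $QQ^*$ is the orthogonal projection onto $\overline{\mathrm{range}(Q)}$, it suffices to show $Te_j\in\mathrm{range}(Q)$ for each $j$. But $V_n^*Te_j=T^{(n)}e_j=r_j$ for $n\ge j$, so $Te_j=V_nr_j$ for all such $n$; letting $n\to\infty$ and using that $r_j$ is a finite linear combination of the $e_i$ together with $V_ne_i\to Qe_i$, we obtain $Te_j=\sum_{i\le j}r_j(i)\,Qe_i\in\mathrm{range}(Q)$. Hence $QQ^*Te_j=Te_j$ for all $j$, and by density and boundedness $QR=QQ^*T=T$.

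The main obstacle is conceptual rather than computational: because $Q$ is only an isometry (its range may be a proper subspace precisely when $T$ fails to be invertible), the relation $T=QR$ does not follow automatically from $R=Q^*T$, and one genuinely has to verify that each column $Te_j$ lands in $\mathrm{range}(Q)$. The other genuinely infinite-dimensional point is that the convergence $V_n\to Q$ is only strong and is controlled by the decay of the $l^2$-tails $\|P_n\xi\|$, with no rate uniform in $\xi$; both features are invisible in the finite-dimensional case and drive the structure of the argument.
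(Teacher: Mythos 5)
Your proof is correct and follows the approach that the paper's surrounding discussion clearly intends (and that the cited reference uses): build $U_j$ column by column via the Householder recipe, show the partial products $V_n$ stabilise on each $e_j$ and hence converge strongly because the tail projections $P_n\xi$ vanish in $l^2$, and then recover $T=QR$ from $R=Q^*T$ by checking each $Te_j=Qr_j\in\mathrm{range}(Q)$. You correctly identify and handle the two genuinely infinite-dimensional subtleties — that convergence is only SOT with no uniform rate, and that $Q$ may fail to be unitary when $T$ is not invertible, so $QQ^*T=T$ must be verified rather than assumed.
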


\subsection{The IQR algorithm}

Let $T \in \mathcal{B}(\mathcal{H})$ be invertible and let $\{e_j\}$
be an orthonormal basis for $\mathcal{H}$. By Theorem
\ref{QRthrm} we have $T = QR,$ where $Q$ is an isometry and $R$ is upper
triangular with respect to $\{e_j\}.$ Since $T$ is invertible, $Q$ is in fact unitary. Consider the following
construction of unitary operators $\{\hat Q_k\}$ and upper triangular
(w.r.t. $\{e_j\}$) operators $\{\hat R_k\}.$ Let $T = Q_1R_1$ be a QR
decomposition of $T$ and define $T_1 =
R_1Q_1.$ Then QR factorize $T_1 = Q_2R_2$ and define
$T_2 = 
R_2Q_2.$ The recursive procedure becomes 
\begin{equation}\label{rec}
T_{m-1} = Q_mR_m, \quad T_m = R_mQ_m.
\end{equation}
Now define 
\begin{equation}\label{qdef}
\hat Q_m = Q_1Q_2 \hdots Q_m, \quad \hat R_m = R_mR_{m-1}\hdots R_1. 
\end{equation}
This is known as the QR algorithm and is completely analogous to the finite dimensional case. Note also that we have $T_n = \hat{Q}^*_n T \hat{Q}_n.$ In the finite dimensional case and under favourable conditions $\hat{Q}_n^*T\hat{Q}_n$ converges to a diagonal operator and the columns of $\hat{Q}_n$ converge to the corresponding eigenvectors as $n\rightarrow\infty$ (see Theorem \ref{finite} below). We will see that the IQR algorithm behaves similarly for the extreme parts of the spectrum.

\begin{definition}
Let $T \in \mathcal{B}(\mathcal{H})$ be invertible and let $\{e_j\}$
be an orthonormal basis for $\mathcal{H}$. The sequences
$\{\hat Q_j\}$ and $\{\hat R_j\}$ constructed as in (\ref{rec}) and
(\ref{qdef}) will be 
called a $Q$-sequence and an $R$-sequence of $T$ with respect to
$\{e_j\}.$ 
\end{definition}

\begin{remark}
Note that since the Householder transformations used in the proof of Theorem \ref{QRthrm} are unique up to a $\pm$ sign, we will with some abuse of language refer to the QR decomposition constructed as {\it the} QR decomposition. In general for an invertible operator, the IQR algorithm is uniquely defined up to phase - see Section \ref{invert_comp}. This will not be a problem for our theorems or numerical examples.
\end{remark}

The following observation will be useful in the later
developments. From the construction in (\ref{rec}) and (\ref{qdef}) we
get 
\[
T = Q_1R_1 = \hat Q_1 \hat R_1,
\]
\[
T^2 = Q_1R_1Q_1R_1 = Q_1Q_2R_2R_1 = \hat Q_2 \hat R_2,
\]
\[
T^3 = Q_1R_1Q_1R_1Q_1R_1 = Q_1Q_2R_2Q_2R_2R_1 = Q_1Q_2Q_3R_3R_2R_1 =
\hat Q_3 \hat R_3.
\]
An easy induction gives us that 
\begin{equation}
\label{pwr_ref}
T^m = \hat Q_m \hat R_m.
\end{equation}
Note that $\hat R_m$ must be upper triangular with respect to
$\{e_j\}_{j\in\mathbb{N}}$ since $R_j, \, j \leq m$ is upper triangular with respect
to 
$\{e_j\}_{j\in\mathbb{N}}.$ Also, if $T$ is invertible then $\langle R
e_i,e_i\rangle \neq 0.$ From this it follows immediately that
\begin{equation}\label{usefulspan}
\mathrm{span}\{T^m e_j\}_{j=1}^J = \mathrm{span} \{\hat Q_me_j\}_{j=1}^J,
\quad J \in \mathbb{N}. 
\end{equation}


\section{Convergence theorems}
\label{conv_main}


In finite dimensions we have the following well known theorem:

\begin{theorem}[Finite dimensions]\label{finite}
Let $T \in \mathbb{C}^{N \times N}$ 
be a normal matrix with eigenvalues satisfying $|\lambda_1| >
\hdots > 
|\lambda_N|$. 
Let $\{Q_m\}$ be a $Q$-sequence of
unitary operators. Then (up to re-ordering of the basis)
$$
Q^*_m T Q_m \longrightarrow \bigoplus_{j=1}^N \lambda_j e_j \otimes e_j, \qquad \text{as }m \rightarrow \infty.
$$ 
\end{theorem}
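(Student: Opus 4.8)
The plan is to reduce everything to the identity $T^{m} = \hat Q_m\hat R_m$ from \eqref{pwr_ref} together with classical power-iteration asymptotics, using normality twice: once to diagonalise $T$ by a \emph{unitary}, and once (implicitly, through that unitarity) to force the limiting triangular Schur form to be genuinely diagonal. Throughout we may take $T$ invertible, since a $Q$-sequence is only defined in that case, so all the eigenvalues are nonzero.

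First I would diagonalise: as $T$ is normal, write $T = U\Lambda U^{*}$ with $U$ unitary and $\Lambda = \mathrm{diag}(\lambda_1,\dots,\lambda_N)$, the eigenvalues ordered by strictly decreasing modulus, so that $T^{m} = U\Lambda^{m}U^{*}$. Next I would insert an LU factorisation $U^{*} = L R_{0}$, where $L$ is lower unitriangular and $R_{0}$ is upper triangular and invertible. Such a factorisation need not exist for the given ordering of $\{e_j\}$ (a leading principal minor of $U^{*}$ may vanish), but it always exists after a suitable permutation of the basis vectors — this is exactly the pivoting used in Gaussian elimination, and it is the content of the phrase ``up to re-ordering of the basis''. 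Then
\[
T^{m} = U\bigl(\Lambda^{m}L\Lambda^{-m}\bigr)\Lambda^{m}R_{0} = U\,(I+E_{m})\,\Lambda^{m}R_{0},
\]
and the one real estimate of the proof is that $E_m\to 0$: its $(i,j)$ entry is $L_{ij}(\lambda_i/\lambda_j)^{m}$, which is $0$ for $i\le j$ (with $1$ on the diagonal absorbed into $I$) and decays geometrically for $i>j$ because $|\lambda_i|<|\lambda_j|$.

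It remains to match this against $T^{m} = \hat Q_m\hat R_m$. For $m$ large, $I+E_m$ is invertible; take its QR decomposition $I+E_m = \tilde Q_m\tilde R_m$ with, say, the positive-diagonal normalisation of $\tilde R_m$, so that continuity of QR at the identity gives $\tilde Q_m\to I$. Since $U\tilde Q_m$ is unitary and $\tilde R_m\Lambda^{m}R_{0}$ is upper triangular, essential uniqueness of the QR decomposition of the invertible matrix $T^{m}$ (uniqueness up to a diagonal unitary) yields $\hat Q_m = U\tilde Q_m D_m$ for some diagonal unitary $D_m$. Feeding this into $Q_m^{*}TQ_m = \hat Q_m^{*}T\hat Q_m$ and using $U^{*}TU=\Lambda$ together with the fact that $D_m$ commutes with the diagonal $\Lambda$,
\[
\hat Q_m^{*}T\hat Q_m = D_m^{*}\tilde Q_m^{*}\,\Lambda\,\tilde Q_m D_m \longrightarrow \Lambda = \bigoplus_{j=1}^{N}\lambda_j\,e_j\otimes e_j,
\]
because $\tilde Q_m\to I$ and conjugation by the unitary $D_m$ is norm-preserving. (The same computation shows the columns of $\hat Q_m$ converge, up to phase, to the eigenvectors, which is the bridge to the subspace picture \eqref{usefulspan} that the infinite-dimensional arguments will rest on.)

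I expect the only genuinely delicate points to be bookkeeping ones: justifying the ``re-ordering of the basis'', i.e. that a permutation making every leading principal minor of $U^{*}$ nonzero always exists, and pinning down the QR decomposition near the identity — continuity plus the phase normalisation needed both for $\tilde Q_m\to I$ and for the uniqueness statement. Everything else follows routinely from \eqref{pwr_ref} and the geometric decay of $E_m$; in particular, it is normality — equivalently, the unitarity of $U$ — that collapses what would otherwise be a triangular limit onto the diagonal one.
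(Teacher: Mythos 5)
Your proof is correct and is, up to presentation, the classical Wilkinson--Watkins convergence argument: diagonalise by the unitary $U$, pivoted-LU-factorise $U^{*}$, push $T^{m}=U(\Lambda^{m}L\Lambda^{-m})\Lambda^{m}R_{0}$ against the identity $T^{m}=\hat Q_m\hat R_m$ from \eqref{pwr_ref}, invoke continuity of the QR factorisation near $I$, and absorb the residual ambiguity into a diagonal unitary conjugation, which is harmless precisely because the limit $\Lambda$ is diagonal. The bookkeeping points you flag --- existence of pivoted LU, the positive-diagonal normalisation, QR continuity at $I$ --- are exactly the ones that need care and you handle them correctly; you are also right that the pivoting permutation is what the ``up to re-ordering of the basis'' clause encodes, and that invertibility comes for free from the definition of a $Q$-sequence. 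The paper does not actually supply a proof of Theorem~\ref{finite} --- it is quoted as a known motivating fact --- but the infinite-dimensional generalisation it then proves (Theorem~\ref{new_QR}, through Propositions~\ref{subspace}, \ref{rate_extension} and Theorem~\ref{QR-theorem}) rests on a genuinely different mechanism. Instead of an LU decomposition of the inverse eigenvector matrix, which has no natural bounded-operator analogue on $l^2(\mathbb{N})$, the paper works entirely with the subspace identity $\mathrm{span}\{T^m e_j\}_{j=1}^{J}=\mathrm{span}\{\hat Q_m e_j\}_{j=1}^{J}$ of \eqref{usefulspan}, the spectral projections $\chi_{\omega}(T)$, and power-iteration estimates for the gap $\hat\delta$ between the Krylov-type spaces $\mathrm{span}\{T^m e_j\}$ and $\mathrm{ran}\,\chi_{\omega}(T)$. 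Your route is shorter in finite dimensions and isolates cleanly where normality enters (the Schur triangle collapsing to the diagonal); the paper's route trades that concision for an argument that survives essential spectrum, extracts the explicit rate $r=\max_j|\lambda_{j+1}/\lambda_j|$ together with the initial subspace-angle constant that Theorem~\ref{class1} needs for its $\Delta_1$ classification, and delivers convergence of the individual columns of $\hat Q_m$ to eigenvectors directly rather than as an afterthought.
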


In this section we will address the convergence of the IQR algorithm for normal operators under similar assumptions and prove an analogue of Theorem \ref{finite} in infinite dimensions (Theorem \ref{new_QR}). As well as this, and for more general operators $T$ that aren't necessarily normal, we address block convergence (Theorem \ref{non_normal_prop}), relevant when the eigenvalues do not have distinct moduli, and convergence to (dominant) invariant subspaces (Theorem \ref{non_normal_prop2}).

\subsection{Preliminary definitions and results}
\label{prelims}

To state and prove our theorems we need some preliminary results. The reader only interested in the results themselves is referred to Section \ref{main_res_sectIII}. If $T$ is a normal operator, we will use $\chi_S(T)$ to denote the indicator function of the set $S$ defined via the functional calculus. Without loss of generality, we deal with the Hilbert space $\mathcal{H}=l^2(\mathbb{N})$ and the canonical orthonormal basis $\{e_j\}_{j\in\mathbb{N}}$. Our first set of results concerns convergence of spanning sets under power iterations and is analogous to the finite dimensional case. The following proposition can be found in \cite{hansen2008} and together with Lemma \ref{lemma_span} below, these are the only results we will use from \cite{hansen2008}.
 
\begin{proposition}\label{subspace}
Suppose that $T\in\mathcal{B}(\mathcal{H})$ is normal, is invertible and that $\sigma(T)=\omega\cup\Psi$ is a disjoint union such that $\omega=\{\lambda_i\}_{i=1}^N$ consists of finitely many isolated eigenvalues of $T$ with $\left|\lambda_1\right|>\left|\lambda_2\right|>...>\left|\lambda_N\right|$. Suppose further that $\mathrm{sup}\{\left|z\right|:z\in\Psi\}<\left|\lambda_N\right|$. Let $l\in \mathbb{N}$ and suppose that $\{\xi_i\}_{i=1}^l$ are linearly independent vectors in $\mathcal{H}$ such that $\{\chi_{\omega}(T)\xi_i\}_{i=1}^l$ are also linearly independent. Then
\begin{enumerate}
	\item[(i)] The vectors $\{T^k\chi_{\omega}(T)\xi_i\}_{i=1}^l$ are linearly independent and there exists an $l$-dimensional subspace $B\subset\mathrm{ran}\chi_{\omega}(T)$ such that
	$$
	\mathrm{span}\{T^k\xi_i\}_{i=1}^l\rightarrow B,\quad \text{as }k\rightarrow\infty.
	$$
	\item[(ii)] If
	$$
	\mathrm{span}\{T^k\xi_i\}_{i=1}^{l-1}\rightarrow D\subset\mathcal{H},\quad \text{as }k\rightarrow\infty,
	$$
	where $D$ is an $(l-1)$-dimensional subspace, then
	$$
  \mathrm{span}\{T^k
  \xi_i\}_{i=1}^l \rightarrow D \oplus
  \mathrm{span}\{\xi\},  \quad \text{as }k\rightarrow \infty,
  $$ 
where $\xi \in \mathrm{ran}\chi_{\omega}(T)$ is an eigenvector of $T$. 
\end{enumerate}
\end{proposition}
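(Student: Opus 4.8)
The plan is to reduce the statement to a clean ``essentially finite–dimensional'' one and then prove that by induction on the number of vectors. Throughout write $P_j=\chi_{\{\lambda_j\}}(T)$, $P=\chi_\omega(T)=\sum_{j=1}^NP_j$, $Q=\chi_\Psi(T)=I-P$, and $G_j=\mathrm{ran}(P_1+\dots+P_j)$ (so $G_0=\{0\}$, $G_N=\mathrm{ran}\,P$); since $T$ is normal all of these commute with $T$, and the functional calculus gives the two estimates $\|T^kQ\|\le r^k$ with $r:=\sup\{|z|:z\in\Psi\}<|\lambda_N|$, and $\|T^k\eta\|\ge|\lambda_N|^k\|\eta\|$ for $\eta\in\mathrm{ran}\,P$. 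The first (trivial) half of (i) is immediate: $\mathrm{ran}\,P$ is $T$–invariant and $T$ restricts there to an invertible operator, hence $T^k$ is injective on $\mathrm{ran}\,P$ and $\{T^k\chi_\omega(T)\xi_i\}_{i=1}^l$ is linearly independent for every $k$ once $\{\chi_\omega(T)\xi_i\}_{i=1}^l$ is.

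For the convergence claims I would prove the following lemma and deduce both parts from it: \emph{if $U_0\subset\mathcal H$ has $\dim U_0=l$ and $\chi_\omega(T)|_{U_0}$ is injective (equivalently $U_0\cap\mathrm{ran}\,Q=\{0\}$), then $T^kU_0\to\bigoplus_{j=1}^NF_j$ with $F_j\subset\mathrm{ran}\,P_j$ a subspace of dimension $s_j-s_{j-1}$, where $s_j:=\dim\big(\chi_{\{\lambda_1,\dots,\lambda_j\}}(T)\,U_0\big)$, and the $F_j$ depend monotonically on $U_0$.} The proof of the lemma has two ingredients. (a) \emph{Absorbing the tail.} Put $\hat U_0:=PU_0\subset\mathrm{ran}\,P$, which by injectivity is $l$–dimensional; every vector of $U_0$ has the form $\hat v+L\hat v$ with $\hat v\in\hat U_0$ and $L\colon\hat U_0\to\mathrm{ran}\,Q$ linear, so $T^kU_0$ is the graph over $T^k\hat U_0$ of the map $T^k\hat v\mapsto T^kL\hat v$, whose norm is $\le\|L\|\,(r/|\lambda_N|)^k\to0$ by the two estimates above; hence $\hat\delta(T^kU_0,T^k\hat U_0)\to0$ and it suffices to treat $\hat U_0\subset\mathrm{ran}\,P$. (b) \emph{Peeling.} On $\mathrm{ran}\,P$ the operator $T$ is normal with spectrum the finitely many points $\lambda_1,\dots,\lambda_N$ of strictly decreasing moduli, so I induct on $\dim\hat U_0$. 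Let $j_1=\min\{j:P_j\hat U_0\ne\{0\}\}$ and choose a basis of $\hat U_0$ whose first $m:=\dim P_{j_1}\hat U_0$ members $v_1,\dots,v_m$ have $\{P_{j_1}v_i\}$ spanning $P_{j_1}\hat U_0$ and whose remaining members lie in $\hat U_0\cap\ker P_{j_1}\subset\bigoplus_{j>j_1}\mathrm{ran}\,P_j$. Then $\mathrm{span}\{T^kv_i\}_{i\le m}\to P_{j_1}\hat U_0$ by single–vector power iteration (the base case), $T^k(\hat U_0\cap\ker P_{j_1})$ converges by the inductive hypothesis applied inside $\bigoplus_{j>j_1}\mathrm{ran}\,P_j$, the two limits are orthogonal (they lie in different eigenspaces), and the dimensions add up to $l$, so \eqref{sum_property} and \eqref{fd_property} force $T^k\hat U_0$ to converge to the orthogonal direct sum.

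Granting the lemma, (i) follows by applying it to $U_0=\mathrm{span}\{\xi_i\}_{i=1}^l$: $B:=\bigoplus_jF_j\subset\mathrm{ran}\,P$ has dimension $\sum_j(s_j-s_{j-1})=s_N=l$. For (ii), apply the lemma to $U':=\mathrm{span}\{\xi_i\}_{i=1}^{l-1}$ and to $U:=\mathrm{span}\{\xi_i\}_{i=1}^{l}$; by uniqueness of limits the limit for $U'$ is the given $D$, so $D\subset\mathrm{ran}\,P$ and $D=\bigoplus_jF'_j$ with $F'_j\subset\mathrm{ran}\,P_j$. Passing from $U'$ to $U$ adds one vector, so each $s_j$ increases by $0$ or $1$, with $s_0$ unchanged and $s_N$ up by exactly $1$; hence there is a \emph{unique} index $j^\ast$ (the first $j$ with $s_j>s'_j$) at which the jump occurs, and a short count gives $\dim F_j=\dim F'_j$ for $j\ne j^\ast$ and $\dim F_{j^\ast}=\dim F'_{j^\ast}+1$. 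By the monotonicity clause $F_j=F'_j$ for $j\ne j^\ast$ and $F_{j^\ast}\supset F'_{j^\ast}$, so picking a unit vector $\xi\in F_{j^\ast}\ominus F'_{j^\ast}\subset\mathrm{ran}\,P_{j^\ast}$ — an eigenvector of $T$, automatically orthogonal to $D$ since it is orthogonal to $F'_{j^\ast}$ and to every $F'_j$, $j\ne j^\ast$ (which lie in other eigenspaces) — yields $\lim_kT^kU=\bigoplus_jF_j=D\oplus\mathrm{span}\{\xi\}$, which is exactly the assertion of (ii).

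I expect the main obstacle to be the bookkeeping in ingredient (b), specifically the monotonicity clause: one must check that the peeling construction genuinely \emph{enlarges} each $F_j$ (not merely matches dimensions) when $U_0$ is enlarged, which involves tracking how $j_1$, the distinguished vectors $v_i$, and the recursive call on $\hat U_0\cap\ker P_{j_1}$ all behave under adding a vector. The accompanying delicate point is that the ``peeled'' summands recombine in the limit with no interference, which is where orthogonality of distinct eigenspaces together with \eqref{sum_property}–\eqref{fd_property} is used, and one must be careful that single–vector convergence and the inductive step are always applied to spaces of strictly smaller dimension so that the induction is well founded. By contrast, the tail–absorption step (a) is routine once the two normality estimates are recorded, and the passage from the lemma to (i) and (ii) is purely combinatorial.
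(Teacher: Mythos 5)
The paper does not actually prove this proposition: it is quoted from reference \cite{hansen2008}. However, the inductive construction preceding Proposition \ref{rate_extension}, together with the proof of Proposition \ref{rate_extension}, effectively re-derives it (with rates), so there is a fair basis for comparison.

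Your argument is correct and is, in substance, the same decomposition by eigenvalue moduli that the paper uses, but organized at a different level of abstraction. The paper builds an explicit, partially orthonormalized basis $\{\tilde\xi_i\}$, tracking for each basis vector the dominant eigenvalue $\nu_j$ on which its $\chi_\omega(T)$-component lives, and keeping the blocks $\{\chi_{\nu_j}(T)\tilde\xi_i\}_{i=k_{j-1}+1}^{k_j}$ orthonormal; the subspace $B$ is then the orthogonal sum $\bigoplus_j E_j$ of spans of these blocks. You instead work directly with subspaces: after the tail-absorption step (reduce to $\hat U_0 = \chi_\omega(T)U_0$), you peel the dominant eigenprojection, producing a filtration whose $j$-th piece can be written in closed form as $F_j = P_j\big(\hat U_0 \cap \ker(P_1+\dots+P_{j-1})\big)$, with $\dim F_j = s_j - s_{j-1}$. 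This formula is manifestly monotone in $U_0$, and that monotonicity is exactly what makes part (ii) drop out: adding one vector to $U_0$ increases $\sum_j\dim F_j$ by one, so (by monotonicity plus a count) exactly one $F_j$ gains one dimension. The trade-off is essentially clean statement versus quantitative usability: your subspace-level formulation is tidier and makes (ii) nearly automatic, whereas the paper's explicit basis $\{\tilde\xi_i\}$ is what feeds directly into the constant $Z(T,\{\xi_j\})$ and the convergence-rate estimate $\delta(B,\mathrm{span}\{T^k\xi_i\})\le Z\,r^k$ of Proposition \ref{rate_extension}, and is then re-used verbatim in the proofs of Theorems \ref{QR-theorem} and \ref{class1}. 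Your lemma would need a little extra bookkeeping (tracking the graph-operator norm $\|L\|$ in step (a) and the angles between $V$ and $W$ in step (b)) to recover the same explicit constants, which is the price of avoiding the explicit normalization. Both approaches use the same two spectral-calculus estimates, and both reduce the peeling induction to single-eigenspace power iteration, so there is no genuine gap.
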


In order to extend this proposition to describe rates of convergence and prove our main theorems, we need to describe the space $B$ in more detail. This is done inductively as follows. The first step is to choose $\nu_{1,1}\in \{\lambda_i\}_{i=1}^N$ of maximum modulus such that
$$
\mathrm{span}\{\chi_{\nu_{1,1}}(T)\xi_1\} \neq \{0\}.
$$
We then let $\xi_{1,1}$ be a linear multiple of $\xi_1$ such that $\chi_{\nu_{1,1}}(T)\xi_{1,1}$ has norm one. Now suppose that at the $m$-th stage we have constructed vectors $\{\xi_{m,i}\}_{i=1}^m$ with the same linear span as $\{\xi_{i}\}_{i=1}^m$ and such that there exist $\{\nu_{m,j}\}_{j=1}^{s_m}\subset \{\lambda_i\}_{i=1}^N$ with the following properties. After re-ordering the vectors $\{\xi_{m,i}\}_{i=1}^m$ if necessary, there exist integers $0=k_{m,0}<k_{m,1}<k_{m,2}<...<k_{m,s_m}=m$ such that
\begin{enumerate}
	\item[(1)] $\left|\nu_{m,s_m}\right|<\left|\nu_{m,s_m-1}\right|<...<\left|\nu_{m,1}\right|.$
	\item[(2)] $\chi_{\lambda}(T)\xi_{m,i}=0$ if $i>k_{m,j}$ and $\lambda\in\{\lambda_i\}_{i=1}^N$ has $\left|\lambda\right|>\left|\nu_{m,j+1}\right|$.
	\item[(3)] $\{\chi_{\nu_{m,j}}(T)\xi_{m,i}\}_{i=k_{m,j-1}+1}^{k_{m,j}}$ are orthonormal.
\end{enumerate}
We seek to add the space spanned by the vector $\xi_{m+1}$ whilst preserving these properties.

First we deal with (2). Let $\eta_{m+1}\in\{\lambda_i\}_{i=1}^N$ be of maximal modulus such that $\chi_{\{\lambda_1,...,\eta_{m+1}\}}(T)\xi_{m+1}\notin\mathrm{span}\{\chi_{\{\lambda_1,...,\eta_{m+1}\}}(T)\xi_{j}\}_{j=1}^{m}$. If $\left|\eta_{m+1}\right|<\left|\nu_{m,1}\right|$ then let $t({m+1})$ be maximal such that $\left|\eta_{m+1}\right|<\left|\nu_{m,t({m+1})}\right|$. We then choose complex numbers $\{a_{m,j}\}_{j=1}^{k_{m,t({m+1})}}$ such that writing
$$
\tilde \xi_{m+1,m+1}=\xi_{m+1}+\sum_{j=1}^{k_{m,t({m+1})}}a_{m,j}\xi_{m,j}
$$
we have that $\chi_{\lambda}(T)\tilde \xi_{m+1,m+1}=0$ if $\lambda\in\{\lambda_i\}_{i=1}^N$ has $\left|\lambda\right|>\left|\eta_{m+1}\right|$. Note that by (2), (3) and the definition of $\eta_{m+1}$, the coefficients $a_{m,j}$ are determined uniquely in terms of $\{\xi_{m,i}\}_{i=1}^{k_{m,t({m+1})}}$. If $\left|\eta_{m+1}\right|\geq\left|\nu_{m,1}\right|$ then let $t({m+1})=0$ and we set $\tilde \xi_{m+1,m+1}=\xi_{m+1}$. In this case we still have that $\chi_{\lambda}(T)\tilde \xi_{m+1,m+1}=0$ if $\lambda\in\{\lambda_i\}_{i=1}^N$ has $\left|\lambda\right|>\left|\eta_{m+1}\right|$.

We then define $\xi_{m+1,j}=\xi_{m,j}$ for $1\leq j\leq m$ and now deal with (3). If $\eta_{m+1}\notin\{\nu_{m,j}\}_{j=1}^{s_m}$ then let $\xi_{m+1,m+1}$ be a linear multiple of $\tilde \xi_{m+1,m+1}$ such that $\chi_{\eta_{m+1}}(T)\xi_{m+1,m+1}$ has norm 1 and we let $\{\nu_{m+1,j}\}_{j=1}^{s_m+1}$ be a re-ordering of $\{\nu_{m,j}\}_{j=1}^{s_m}\cup\{\eta_{m+1}\}$. Otherwise, we have $\eta_{m+1}=\nu_{m,t({m+1})+1}$ and we apply Gram-Schmidt to
$$
\{\chi_{\nu_{m,t({m+1})+1}}(T)\xi_{m+1,i}\}_{i=k_{m,t({m+1})}+1}^{k_{m,t({m+1})+1}}\cup\{\chi_{\nu_{m,t({m+1})+1}}(T)\tilde\xi_{m+1,m+1}\}
$$
(without changing $\{\xi_{m+1,i}\}_{i=k_{m,t({m+1})}+1}^{k_{m,t({m+1})+1}}$). Note that by (2) and the definition of $\eta_{m+1}$ these vectors are linearly independent. This gives $\xi_{m+1,m+1}$ such that
$$
\{\chi_{\nu_{m,t({m+1})+1}}(T)\xi_{m+1,i}\}_{i=k_{m,t({m+1})}+1}^{k_{m,t({m+1})+1}}\cup\{\chi_{\nu_{m,t({m+1})+1}}(T)\xi_{m+1,m+1}\}
$$
are orthonormal and $\chi_{\lambda}(T)\xi_{m+1,m+1}=0$ if $\lambda\in\{\lambda_i\}_{i=1}^N$ has $\left|\lambda\right|>\left|\nu_{m,t({m+1})+1}\right|.$ After re-ordering indices if necessary, we see that (1)-(3) now hold for $m+1$.

After $l$ steps the above process terminates giving a new basis $\{\tilde\xi_i\}_{i=1}^l=\{\xi_{l,i}\}_{i=1}^l$ for $\mathrm{span}\{\xi_{i}\}_{i=1}^l$ along with $\{\nu_{j}\}_{j=1}^n=\{\nu_{l,j}\}_{j=1}^n\subset\{\lambda_i\}_{i=1}^N$ and $0=k_0<k_1<k_2<...<k_n=l$ such that
\begin{enumerate}
  \item[(i)] $\left|\nu_{n}\right|<\left|\nu_{n-1}\right|<...<\left|\nu_{1}\right|.$
	\item[(ii)] $\chi_{\lambda}(T)\tilde\xi_{i}=0$ if $i>k_{j}$ and $\lambda\in\{\lambda_i\}_{i=1}^N$ has $\left|\lambda\right|>\left|\nu_{j+1}\right|$.
	\item[(iii)] $\{\chi_{\nu_{j}}(T)\tilde\xi_{i}\}_{i=k_{j-1}+1}^{k_{j}}$ are orthonormal.
\end{enumerate}

The subspace $B$ can then be described as
$$
B = \bigoplus_{j=1}^n \mathrm{span}\{\chi_{\nu_j}(T)
\tilde\xi_{i}\}_{i=k_{j-1}+1}^{k_j}. 
$$
\begin{definition}
With respect to the above construction we define the following:
\begin{equation}
\label{need_ejs}
E_j :=\mathrm{span}\{\chi_{\nu_j}(T)\tilde\xi_{i}\}_{i = k_{j-1}+1}^{k_j},\quad Z(T,\{\xi_j\}_{j=1}^l):=\Big(\sum_{i=1}^{l}(\|\tilde \xi_i\|^2-1)\Big)^{\frac{1}{2}}.
\end{equation}
\end{definition}

Since the Gram-Schmidt process is defined uniquely up to phases we see that $Z(T,\{\xi_j\}_{j=1}^l)$ is well-defined. The above construction also shows that if $\{\chi_{\omega}(T)\xi_i\}_{i=1}^{l+1}$ are linearly independent then
$$Z(T,\{\xi_j\}_{j=1}^{l+1})\geq Z(T,\{\xi_j\}_{j=1}^l).$$
We can now prove the following refinement of Proposition \ref{subspace}:

\begin{proposition}
\label{rate_extension}
Suppose the assumptions of Proposition \ref{subspace} hold. Let $J\leq N$ be minimal such that $\{\chi_{\{\lambda_1,...,\lambda_J\}}(T)\xi_i\}_{i=1}^l$ are linearly independent. Set
\begin{align*}
\rho&=\mathrm{sup}\{\left|z\right|:z\in\Psi\cup\{\lambda_{J+1},...,\lambda_N\}\},\\
r&=\max\{\left|\lambda_2/\lambda_1\right|,...,\left|\lambda_J/\lambda_{J-1}\right|,\rho/\left|\lambda_J\right|\}.
\end{align*}
Then $r<1$ and ${\delta}(B,\mathrm{span}\{T^k\xi_i\}_{i=1}^l)\leq Z(T,\{\xi_j\}_{j=1}^l)r^k$. Since the spaces are $l$-dimensional, it follows from (\ref{fd_property}) that we have the convergence rate
$$
\hat{\delta}(B,\mathrm{span}\{T^k\xi_i\}_{i=1}^l)\leq Z(T,\{\xi_j\}_{j=1}^l)l^{\frac{1}{2}}r^k.$$
\end{proposition}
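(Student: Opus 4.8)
The plan is to track how the power iteration $T^k$ acts on the constructed orthonormalised vectors $\tilde\xi_1,\dots,\tilde\xi_l$, using the normal functional calculus to decompose each $\tilde\xi_i$ into its spectral pieces. By property (ii) of the construction, for $k_{j-1}<i\le k_j$ the vector $\tilde\xi_i$ has no spectral mass on eigenvalues of modulus strictly larger than $|\nu_j|$; write $\tilde\xi_i = \chi_{\nu_j}(T)\tilde\xi_i + \zeta_i$, where $\zeta_i = \chi_{\sigma(T)\setminus\{\nu_j\}}(T)\tilde\xi_i$ is supported on the part of the spectrum of modulus $\le$ the next relevant ratio threshold. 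Applying $T^k$ and dividing by $\nu_j^k$ (a unimodular rescaling times a modulus factor), $\nu_j^{-k}T^k\tilde\xi_i = u_i^{(k)}\chi_{\nu_j}(T)\tilde\xi_i + \nu_j^{-k}T^k\zeta_i$, where $u_i^{(k)}$ is a phase; the first term has norm one (by (iii) it is in fact one of an orthonormal family spanning $E_j$), and the tail satisfies $\|\nu_j^{-k}T^k\zeta_i\| \le \|\tilde\xi_i - \chi_{\nu_j}(T)\tilde\xi_i\|\cdot(\rho_j/|\nu_j|)^k$ for the appropriate sub-threshold $\rho_j$, where $\rho_j$ is bounded by $|\nu_{j+1}|$ for the within-block gaps and by $\rho$ at the bottom. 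The maximum of all these geometric ratios is exactly $r$ as defined in the statement, and one checks $r<1$ because each $|\lambda_{i+1}/\lambda_i|<1$ by the distinct-moduli hypothesis and $\rho<|\lambda_N|\le|\lambda_J|$ by the assumption on $\Psi$ (combined with the definition of $J$).

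Next I would assemble these componentwise estimates into a $\delta$-bound. The target subspace is $B=\bigoplus_j E_j$, and each basis vector of $B$ is precisely some $\chi_{\nu_j}(T)\tilde\xi_i$ (after the Gram–Schmidt in step (iii)), i.e.\ the leading term of $\nu_j^{-k}T^k\tilde\xi_i$ up to the phase $u_i^{(k)}$. So $B = \sum_j \mathrm{span}\{u_i^{(k)}\chi_{\nu_j}(T)\tilde\xi_i : k_{j-1}<i\le k_j\}$ is a sum of subspaces each approximated by the corresponding piece of $\mathrm{span}\{\nu_j^{-k}T^k\tilde\xi_i\} = \mathrm{span}\{T^k\tilde\xi_i\} = \mathrm{span}\{T^k\xi_i\}$ (the last equality because the $\tilde\xi_i$ span the same space, and scaling a spanning vector does not change the span). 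For a single vector, $\delta(\mathrm{span}\{v\},\mathrm{span}\{w\}) \le \|v-w\|/\|v\|$ when $v,w$ differ; with $v = u_i^{(k)}\chi_{\nu_j}(T)\tilde\xi_i$ (norm one) and $w = \nu_j^{-k}T^k\tilde\xi_i$, the difference is the tail, bounded by $(\|\tilde\xi_i\|^2-1)^{1/2}$-type quantities times $r^k$. Summing via the Hölder/Pythagoras inequality \eqref{sum_property} over the $l$ one-dimensional pieces gives $\delta(B,\mathrm{span}\{T^k\xi_i\}_{i=1}^l) \le \big(\sum_{i=1}^l(\|\tilde\xi_i\|^2-1)r^{2k}\big)^{1/2} = Z(T,\{\xi_j\}_{j=1}^l)\,r^k$, using the definition \eqref{need_ejs} of $Z$. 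The $\hat\delta$ bound with the extra $l^{1/2}$ then follows immediately from \eqref{fd_property} since both subspaces are $l$-dimensional.

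The main obstacle I anticipate is bookkeeping the constants cleanly: one must verify that $\|\zeta_i\| = \|\tilde\xi_i - \chi_{\nu_j}(T)\tilde\xi_i\|$ is controlled by $(\|\tilde\xi_i\|^2-1)^{1/2}$ — this holds because $\chi_{\nu_j}(T)\tilde\xi_i$ has norm exactly one by (iii) and is orthogonal to $\zeta_i$ (disjoint spectral supports), so $\|\tilde\xi_i\|^2 = 1 + \|\zeta_i\|^2$ — and that applying $T^k$ to the orthogonal-sum decomposition preserves enough orthogonality structure that the spans genuinely line up block by block rather than merely approximately. The subtle point is that $\chi_{\nu_j}(T)\tilde\xi_i$ for $i$ in block $j$ need not be an eigenvector, only a spectral-subspace vector at a single eigenvalue, but since $T$ restricted to $\mathrm{ran}\,\chi_{\nu_j}(T)$ is $\nu_j$ times the identity (as $\nu_j$ is an isolated eigenvalue), $T^k\chi_{\nu_j}(T)\tilde\xi_i = \nu_j^k\chi_{\nu_j}(T)\tilde\xi_i$ exactly, which is what makes the leading-term/tail split rigorous. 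A secondary check is that $\mathrm{span}\{T^k\xi_i\}$ equals $\mathrm{span}\{\nu_{j(i)}^{-k}T^k\tilde\xi_i\}$; this is routine since the $\tilde\xi_i$ are obtained from the $\xi_i$ by an invertible triangular change of basis and the per-vector rescalings are nonzero, so the span is unchanged, but it should be stated explicitly.
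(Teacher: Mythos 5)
Your proof is correct and follows essentially the same approach as the paper: decompose each $\tilde\xi_i$ via the functional calculus into its $\chi_{\nu_j}(T)$-component plus a lower-modulus tail, apply $T^k$, rescale by $\nu_j^{-k}$, and assemble the resulting bounds via \eqref{sum_property}, using $\|\chi_{\theta_j}(T)\tilde\xi_i\|^2=\|\tilde\xi_i\|^2-1$ to identify the constant as $Z$. The only (inessential) organisational difference is that you apply \eqref{sum_property} directly to the $l$ one-dimensional orthogonal pieces $\mathrm{span}\{\chi_{\nu_{j(i)}}(T)\tilde\xi_i\}$, whereas the paper first bounds $\delta(E_j,E_j^k)$ blockwise via H\"older on a generic unit vector of $E_j$ and then sums over the blocks $E_j$; the resulting bound is identical. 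One small slip: the chain $\rho<|\lambda_N|\le|\lambda_J|$ you invoke for $r<1$ is false when $J<N$, since $\rho$ includes $|\lambda_{J+1}|\ge|\lambda_N|$; the correct reasoning is that $\rho=\max\bigl(\sup\{|\theta|:\theta\in\Psi\},|\lambda_{J+1}|\bigr)<|\lambda_J|$ because $\sup\{|\theta|:\theta\in\Psi\}<|\lambda_N|\le|\lambda_J|$ and $|\lambda_{J+1}|<|\lambda_J|$ by the distinct-moduli hypothesis.
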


\begin{proof}
Consider the subspaces
$$
E_j^k = \mathrm{span}\{T^k
\tilde \xi_{i}\}_{i=k_{j-1}+1}^{k_j}.
$$
Let $\zeta = \sum_{i=k_{j-1}+1}^{k_j} \alpha_i\chi_{\nu_j}(T)\tilde\xi_{i}\in E_j$ be a unit vector (hence $\sum_{i=k_{j-1}+1}^{k_j}\left|\alpha_i\right|^2=1$) and consider
$$
\eta_k = \sum_{i=k_{j-1}+1}^{k_j} \alpha_iT^k\tilde\xi_{i}/\nu^k_j\in E_j^k.
$$
By construction, we have for any such $\tilde\xi_{i}$ in the above sum that
$$
\tilde\xi_{i} = (\chi_{\nu_j}(T) + \chi_{\theta_j}(T))\tilde\xi_{i},
\qquad  
\theta_j = \{\lambda \in \sigma(T) : 
|\lambda| < |\nu_j|\}.
$$     
This gives $T^k\tilde\xi_{i} = \nu^k_j\chi_{\nu_j}(T)\tilde\xi_{j,i} 
 + T^k\chi_{\theta_j}(T)\tilde\xi_{i}.$ Now, by the assumption on $\sigma(T),$ we have 
\[
\rho_j = \sup\{|z|: z \in
  \theta_j\} < |\nu_j|.
\] Thus, since 
\[
\|T^k \chi_{\theta_j}(T)\tilde\xi_{i}\|/|\nu_j^k| < 
|\rho_j/\nu_j|^k \|\chi_{\theta_j}(T)\tilde\xi_{i}\|,
\]
we have 
$$
\left\|\zeta-\eta_k\right\|\leq |\rho_j/\nu_j|^k\sum_{i=k_{j-1}+1}^{k_j} \left|\alpha_i\right|\|\chi_{\theta_j}(T)\tilde\xi_{i}\|\leq \Big(\sum_{i=k_{j-1}+1}^{k_j}(\|\tilde\xi_i\|^2-1)\Big)^{\frac{1}{2}}r^k.
$$
Here we have used H{\"o}lder's inequality together with the fact that $\|\chi_{\theta_j}(T)\tilde\xi_{i}\|^2=\|\tilde\xi_{i}\|^2-1$ by orthonormality of $\{\chi_{\nu_j}(T)\tilde\xi_i\}_{i=k_{j-1}+1}^{k_j}$. The right hand side gives an upper bound for $\delta(E_j,E_j^k)$. Analogous rates of convergence hold for the other subspaces and from (\ref{sum_property}) we have
\begin{equation}
\label{perp_argument}
\delta(B,\mathrm{span}\{T^k\tilde\xi_i\}_{i=1}^l)\leq Z(T,\{\xi_j\}_{j=1}^l)r^k,
\end{equation}
since the spaces $E_j$ are orthogonal.
\end{proof}

For the rest of this section we shall assume the following:
\vspace{2mm}
\begin{tcolorbox}
\begin{itemize}
	\item[(A1)] $T \in \mathcal{B}(\mathcal{H})$ is an invertible normal operator and $\{e_j\}_{j\in\mathbb{N}}$ an orthonormal basis for $\mathcal{H}$. $\{Q_k\}$ and $\{R_k\}$ are $Q$- and $R$-sequences of $T$ with respect to the basis $\{e_j\}_{j\in\mathbb{N}}.$
	\item[(A2)] $\sigma(T) = \omega \cup \Psi$
such that $\omega \cap \Psi = \emptyset$ and $\omega =\{\lambda_i\}_{i=1}^N,$ where the $\lambda_i$s are isolated eigenvalues with (possibly infinite) multiplicity $m_i$. Let $M=m_1+...+m_N=\mathrm{dim}(\mathrm{ran}\chi_{\omega}(T))$ and suppose that $|\lambda_1| > \hdots > |\lambda_N|.$ Suppose further that $\sup\{|\theta|:\theta \in \Psi\} < |\lambda_N|.$ 
\end{itemize}
\end{tcolorbox}
\vspace{2mm}

To apply Propositions \ref{subspace} and \ref{rate_extension} to prove the main result Theorem \ref{new_QR}, we need to take care of the case that some of the $e_j$ may have $\chi_{\omega}(T)e_j=0$.

\begin{definition}
\label{define_the_basis}
Suppose that (A1) and (A2) hold and let $K\in\mathbb{N}\cup\{\infty\}$ be minimal with the property that $\dim(\mathrm{span}\{\chi_{\omega}(T)e_j\}_{j=1}^K) = M.$ Define 
\begin{align*}
&\Lambda_{\omega} = \{e_j:\chi_{\omega}(T)e_j \neq 0, j\leq K\}, \\
&\Lambda_{\Psi}= \{e_j:\chi_{\omega}(T)e_j = 0, j \leq K\},\\
&\tilde \Lambda_{\omega} = \{e_j \in
\Lambda_{\omega}:\chi_{\omega}(T)e_j \in
\mathrm{span}\{\chi_{\omega}(T)e_i\}_{i=1}^{j-1}\}.
\end{align*}
Define also the corresponding subset $\{\hat e_j\}_{j=1}^M\subset\{e_j\}_{j=1}^K$ such that $\{\hat e_j\}_{j=1}^M = \Lambda_{\omega}\setminus \tilde \Lambda_{\omega}$ and such that writing $\hat{e}_j=e_{p_j}$, the $p_j$ are increasing.
\end{definition}

Note that we have the following decomposition of $T$ into 
\[
T = \left(\sum_{j=1}^M \lambda_{c_j} \,\xi_j\otimes \bar \xi_j \right)\oplus
\chi_{\Psi}(T)T, \quad \lambda_{c_j} \in \omega, 
\]
where $\{\xi_j\}_{j=1}^M$ is an orthonormal set of eigenvectors of $T$. The following simple lemma extends Lemma 39 in \cite{hansen2008} to infinite $M$ but the proof is verbatim so omitted.

\begin{lemma}\label{lemma_span}
If $e_m \in
  \Lambda_{\Psi} \cup
  \tilde \Lambda_{\omega},$ then
\[
\mathrm{span}\{\chi_{\omega}(T) q_{k,j}\}_{j=1}^m =
  \mathrm{span}\{\chi_{\omega}(T) \hat q_{k,j}\}_{j=1}^{s(m)}, \quad 
 q_{k,j} = Q_k  e_j, \quad \hat q_{k,j} = Q_k \hat e_j,
\] 
where $s(m)$ 
is the largest integer such that $\{\hat e_j\}_{j=1}^{s(m)}
  \subset \{e_j\}_{j=1}^m.$ 
\end{lemma}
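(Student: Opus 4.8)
The plan is to transport the claim to the power vectors $T^k e_j$, for which \eqref{usefulspan} gives exact control, and then to settle the two inclusions by a dimension count. Throughout, write $P:=\chi_\omega(T)$, and recall that by (A1) we may identify $Q_k=\hat Q_k$ in the notation of \eqref{qdef}, so that $q_{k,j}=\hat Q_k e_j$ and $\hat q_{k,j}=\hat Q_k\hat e_j$. Two structural facts do the work. First, since $T$ is normal, $P$ commutes with $T$, and since $0\notin\sigma(T)\supseteq\omega$ the subspace $\mathrm{ran}\,P$ is $T$-invariant with $T|_{\mathrm{ran}\,P}$ invertible; hence $T^k$ restricts to a linear bijection of $\mathrm{ran}\,P$ onto itself. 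Second, by Definition \ref{define_the_basis} the vectors $\{\hat e_j\}$ arise from a greedy sweep through $e_1,e_2,\dots$ that keeps $e_j$ exactly when $Pe_j\notin\mathrm{span}\{Pe_i\}_{i<j}$; consequently, writing $\hat e_j=e_{p_j}$, the family $\{P\hat e_l\}_{l=1}^{s(N)}$ is a basis of $\mathrm{span}\{Pe_j\}_{j=1}^{N}$ (note $m\le K$ under the hypothesis $e_m\in\Lambda_\Psi\cup\tilde\Lambda_\omega$, so the sweep has not yet terminated at any index that appears), and moreover $s(p_j)=j$ and $s(p_j-1)=j-1$.

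With this in hand I would first record the identity, valid for every $N\in\mathbb{N}$,
\[
\mathrm{span}\{Pq_{k,j}\}_{j=1}^{N}=\mathrm{span}\{PT^k e_j\}_{j=1}^{N}=\mathrm{span}\{T^kP e_j\}_{j=1}^{N},
\]
where the first equality comes from applying $P$ to \eqref{usefulspan} and the second from $PT=TP$. Applying the bijection $T^k|_{\mathrm{ran}\,P}$ to the basis $\{P\hat e_l\}_{l=1}^{s(N)}$ of $\mathrm{span}\{Pe_j\}_{j=1}^{N}$ then shows that $\{T^kP\hat e_l\}_{l=1}^{s(N)}$ is a basis of $\mathrm{span}\{T^kPe_j\}_{j=1}^{N}=\mathrm{span}\{Pq_{k,j}\}_{j=1}^{N}$; in particular this span has dimension exactly $s(N)$. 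The inclusion $\mathrm{span}\{P\hat q_{k,j}\}_{j=1}^{s(m)}\subseteq\mathrm{span}\{Pq_{k,j}\}_{j=1}^{m}$ is then immediate, since $P\hat q_{k,j}=Pq_{k,p_j}$ lies in $\mathrm{span}\{Pq_{k,i}\}_{i=1}^{p_j}=\mathrm{span}\{T^kP\hat e_l\}_{l=1}^{s(p_j)}=\mathrm{span}\{T^kP\hat e_l\}_{l=1}^{j}\subseteq\mathrm{span}\{T^kP\hat e_l\}_{l=1}^{s(m)}=\mathrm{span}\{Pq_{k,j}\}_{j=1}^{m}$. For the reverse inclusion, since the left-hand span has dimension $s(m)$ and already contains the right-hand span, it suffices to show that the $s(m)$ vectors $\{P\hat q_{k,j}\}_{j=1}^{s(m)}$ are linearly independent; this follows by induction on $j$, because $\dim\mathrm{span}\{Pq_{k,i}\}_{i=1}^{p_j}=s(p_j)=j$ while $\dim\mathrm{span}\{Pq_{k,i}\}_{i=1}^{p_j-1}=s(p_j-1)=j-1$, so $P\hat q_{k,j}=Pq_{k,p_j}$ cannot lie in $\mathrm{span}\{Pq_{k,i}\}_{i=1}^{p_j-1}$, which contains $\mathrm{span}\{P\hat q_{k,l}\}_{l=1}^{j-1}$. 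Equality of the two spans follows.

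The main obstacle — and the only reason the statement is not a one-line corollary of \eqref{usefulspan} — is that the $\hat e_j$ form a scattered subset of the basis rather than an initial segment, so \eqref{usefulspan} cannot be invoked for them verbatim; routing everything through the power vectors $T^kPe_j$, where \eqref{usefulspan} does apply, and then characterising $\{P\hat q_{k,j}\}_{j=1}^{s(m)}$ intrinsically as a maximal linearly independent subfamily of $\{Pq_{k,j}\}_{j=1}^{m}$, is what circumvents it. Since the hypothesis on $e_m$ forces $m\le K$, every subspace appearing above is spanned by finitely many basis vectors, and each step is insensitive to whether $M$ is finite or infinite; this is precisely why the argument of Lemma 39 in \cite{hansen2008} carries over unchanged.
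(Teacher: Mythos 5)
Your proof is correct, and since the paper omits the proof of Lemma~\ref{lemma_span} (it cites Lemma~39 of~\cite{hansen2008} and states the argument carries over verbatim to infinite $M$), your write-up supplies a complete, self-contained argument in the spirit of the reference. The three ingredients you isolate are exactly the ones needed: (a) applying $P=\chi_\omega(T)$ to~\eqref{usefulspan} and commuting $P$ past $T^k$ to obtain $\mathrm{span}\{Pq_{k,j}\}_{j=1}^{N}=\mathrm{span}\{T^kPe_j\}_{j=1}^{N}$; (b) invertibility of $T^k|_{\mathrm{ran}\,P}$, so that $T^k$ carries the basis $\{P\hat e_l\}_{l=1}^{s(N)}$ of $\mathrm{span}\{Pe_j\}_{j=1}^{N}$ onto a basis of $\mathrm{span}\{Pq_{k,j}\}_{j=1}^{N}$, giving $\dim\mathrm{span}\{Pq_{k,j}\}_{j=1}^{N}=s(N)$; and (c) the forward inclusion plus the dimension count $s(p_j)=j$, $s(p_j-1)=j-1$ to get linear independence of $\{P\hat q_{k,j}\}_{j=1}^{s(m)}$ and hence equality. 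The observation that $e_m\in\Lambda_\Psi\cup\tilde\Lambda_\omega$ forces $m\le K$, so all indices in play stay in the range where the greedy construction of $\{\hat e_j\}$ is defined, is the point that makes the argument insensitive to whether $M$ is finite or infinite, and you flag this correctly. The only cosmetic remark is that you announce you will characterise $\{P\hat q_{k,j}\}_{j=1}^{s(m)}$ as a maximal linearly independent subfamily of $\{Pq_{k,j}\}_{j=1}^{m}$, but the argument you actually give (inclusion plus dimension count) is slightly different and cleaner; the narrative sentence does not quite match the proof, though this affects nothing.
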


The following theorem is the key step of the proof of Theorem \ref{new_QR} and concerns convergence to the eigenvectors of $T$.

\begin{theorem}\label{QR-theorem}
Assume (A1) and (A2) and define
$$
\rho=\mathrm{sup}\{\left|z\right|:z\in\Psi\},\quad r=\max\{\left|\lambda_2/\lambda_1\right|,...,\left|\lambda_N/\lambda_{N-1}\right|,\rho/\left|\lambda_N\right|\}.
$$
Then there exists a collection of orthonormal eigenvectors $\{\hat q_j\}_{j=1}^M\subset\mathrm{ran}\chi_{\omega}(T)$ of $\ T$ and collections of constants $A(m)$, $B(j)$ and $C(\mu)$ such that
\begin{enumerate}
	\item[(a)] If $e_m\in\Lambda_{\Psi}\cup\tilde \Lambda_{\omega}$ and $\mu$ is maximal with $p_\mu<m$ (recall that $\hat e_j=e_{p_j}$), then we have
	\begin{equation}
	\label{ABOUND}
	\left\|\chi_\omega(T)q_{k,m}\right\|\leq A(m) Z(T,\{\hat e_j\}_{j=1}^\mu)r^k.
	\end{equation}
	In the case that $m<p_1$, we interpret this as $\left\|\chi_\omega(T)q_{k,m}\right\|=0$ which holds from Lemma \ref{lemma_span}.
	\item[(b)] For any $j<M+1$,
	\begin{equation}
	\label{BBOUND}
	\hat\delta(\mathrm{span}\{\hat q_j\},\mathrm{span}\{\hat q_{k,j}\})\leq B(j)Z(T,\{\hat e_i\}_{i=1}^j) r^k.\end{equation}
	\item[(c)] For any $\mu<M+1$,
	\begin{equation}
	\label{CBOUND}
	\delta(\mathrm{span}\{\hat q_{j,k}\}_{j=1}^{\mu},\mathrm{span}\{\hat q_{j}\}_{j=1}^{\mu})\leq C(\mu)Z(T,\{\hat e_j\}_{j=1}^\mu) r^k
	\end{equation}
	and hence
	\begin{equation}
	\label{CBOUND2}
	\hat\delta(\mathrm{span}\{\hat q_{j,k}\}_{j=1}^{\mu},\mathrm{span}\{\hat q_{j}\}_{j=1}^{\mu})\leq \mu^{\frac{1}{2}}C(\mu)Z(T,\{\hat e_j\}_{j=1}^\mu) r^k.
	\end{equation}
\end{enumerate}
Here, as in Lemma \ref{lemma_span}, $q_{k,j} = Q_k  e_j$ and $\hat q_{k,j} = Q_k \hat e_j$. Finally, if $M$ is finite then we must have $\mathrm{span}\{\hat q_j\}_{j=1}^M = \mathrm{ran}\chi_{\omega}(T).$
\end{theorem}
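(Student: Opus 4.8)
The plan is to prove parts (a), (b), (c) together by a simultaneous induction on the index, leveraging the subspace convergence rates from Proposition \ref{rate_extension} applied to the spanning vectors $\{T^k \hat e_j\}$, and translating these into statements about the columns $\hat q_{k,j} = Q_k \hat e_j$ via the identity \eqref{usefulspan}, namely $\mathrm{span}\{T^k e_j\}_{j=1}^J = \mathrm{span}\{\hat Q_k e_j\}_{j=1}^J$. The crucial reduction is Lemma \ref{lemma_span}: it says that the ``relevant'' part of the span of $\{Q_k e_j\}_{j=1}^m$, i.e. its image under $\chi_\omega(T)$, only sees the subset $\{\hat e_j\}$, so working with the sub-basis $\{\hat e_j\}_{j=1}^M$ loses no information about convergence into $\mathrm{ran}\,\chi_\omega(T)$.

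First I would set up the induction: the base case $\mu = 1$ uses Proposition \ref{subspace}(i) / Proposition \ref{rate_extension} with $l = 1$ to show $\mathrm{span}\{T^k \hat e_1\} \to \mathrm{span}\{\hat q_1\}$ for an eigenvector $\hat q_1 \in \mathrm{ran}\,\chi_\omega(T)$, at rate governed by $r^k$ and the constant $Z(T,\{\hat e_1\})$; by \eqref{usefulspan} this span equals $\mathrm{span}\{\hat q_{k,1}\}$, giving (b) and (c) for $\mu = 1$. For the inductive step, assuming (b), (c) hold up to index $j-1$ (equivalently $\mu - 1$), I apply Proposition \ref{rate_extension} to $\{\hat e_i\}_{i=1}^\mu$: part (ii) of Proposition \ref{subspace} combined with the refined rates gives $\mathrm{span}\{T^k \hat e_i\}_{i=1}^\mu \to B = (\text{previous space}) \oplus \mathrm{span}\{\hat q_\mu\}$ at rate $Z(T,\{\hat e_j\}_{j=1}^\mu) \mu^{1/2} r^k$, and \eqref{usefulspan} turns this into (c). To extract the single-vector statement (b) for $\hat q_\mu$ from the two $\mu$-dimensional and $(\mu-1)$-dimensional subspace convergences, I would argue that $\hat q_{k,\mu}$, being orthogonal to $\mathrm{span}\{\hat q_{k,i}\}_{i=1}^{\mu-1}$ inside $\mathrm{span}\{\hat q_{k,i}\}_{i=1}^{\mu}$, must converge to the orthogonal complement of $\mathrm{span}\{\hat q_i\}_{i=1}^{\mu-1}$ inside $B$, which is exactly $\mathrm{span}\{\hat q_\mu\}$; quantitatively this is a perturbation-of-two-subspaces estimate of the type recorded in \eqref{cam}–\eqref{sum_property}, which is where the constant $B(j)$ absorbs factors depending on $\mu$. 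Part (a) then follows: for $e_m \in \Lambda_\Psi \cup \tilde\Lambda_\omega$ with $\mu$ maximal such that $p_\mu < m$, Lemma \ref{lemma_span} gives $\mathrm{span}\{\chi_\omega(T) q_{k,j}\}_{j=1}^m = \mathrm{span}\{\chi_\omega(T)\hat q_{k,j}\}_{j=1}^{\mu}$, so $\chi_\omega(T) q_{k,m}$ lies in a space that is converging (at rate $r^k$) to the $\mu$-dimensional $\mathrm{span}\{\hat q_j\}_{j=1}^\mu \subset \mathrm{ran}\,\chi_\omega(T)$; but $q_{k,m} \perp \mathrm{span}\{q_{k,j}\}_{j=1}^{m-1} \supseteq$ the preimage part, forcing $\|\chi_\omega(T) q_{k,m}\| \to 0$ at the same rate, which is \eqref{ABOUND}. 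The case $m < p_1$ is immediate from Lemma \ref{lemma_span} since then $s(m) = 0$.

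The main obstacle I anticipate is the bookkeeping that converts the clean $\delta$-estimates on subspaces (Proposition \ref{rate_extension}) into the three separate quantitative bounds with the precise constants $A(m), B(j), C(\mu)$ and the right power of $\mu$ — in particular, proving (b), the convergence of the individual column $\hat q_{k,\mu}$ to a fixed eigenvector $\hat q_\mu$, rather than merely of the flag of subspaces. This requires a stability estimate: if two nested subspace sequences converge at rate $r^k$, then the ``new direction'' (orthogonal complement of the smaller in the larger) also converges at rate $r^k$, with a constant that may degrade as the subspace angles between the limiting eigenspaces shrink. One must check that this constant is finite, i.e. that the limiting eigenvectors $\{\hat q_j\}$ are genuinely linearly independent (which holds because they are orthonormal, each being a unit vector in $E_j = \mathrm{span}\{\chi_{\nu_j}(T)\tilde\xi_i\}$ and the $E_j$ mutually orthogonal by construction). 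Finally, the last sentence — that $\mathrm{span}\{\hat q_j\}_{j=1}^M = \mathrm{ran}\,\chi_\omega(T)$ when $M < \infty$ — follows from a dimension count: by Definition \ref{define_the_basis}, $K$ is chosen so that $\dim(\mathrm{span}\{\chi_\omega(T) e_j\}_{j=1}^K) = M$ and $\{\hat e_j\}_{j=1}^M$ is exactly the sub-basis whose $\chi_\omega(T)$-images are linearly independent, so $\{\hat q_j\}_{j=1}^M$ are $M$ orthonormal vectors in the $M$-dimensional space $\mathrm{ran}\,\chi_\omega(T)$, hence a basis.
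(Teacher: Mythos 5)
Your overall architecture — simultaneous induction on $\mu$ for (a), (b), (c), appeal to Proposition~\ref{rate_extension} for the quantitative rate, Lemma~\ref{lemma_span} to dispose of the indices in $\Lambda_\Psi \cup \tilde\Lambda_\omega$, and the dimension count for the final sentence — matches the paper's plan. But there is a genuine gap in the central translation step, and it is precisely the step the paper spends the most effort on.

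You write that ``\eqref{usefulspan} turns $\mathrm{span}\{T^k\hat e_i\}_{i=1}^\mu \to B$ into (c)'', and in the base case ``by \eqref{usefulspan} this span equals $\mathrm{span}\{\hat q_{k,1}\}$.'' This is not correct when the $\hat e_j$ do not form an initial segment of the canonical basis (which is the generic situation by Definition~\ref{define_the_basis}, where $\hat e_j = e_{p_j}$). The identity \eqref{usefulspan} only equates $\mathrm{span}\{T^k e_j\}_{j=1}^J$ with $\mathrm{span}\{Q_k e_j\}_{j=1}^J$ for \emph{full initial segments} $\{e_1,\dots,e_J\}$. Writing $T^k \hat e_i = T^k e_{p_i} = \hat Q_k \hat R_k e_{p_i} = \sum_{j\le p_i}(\hat R_k)_{j,p_i}\, q_{k,j}$ shows that $T^k\hat e_i$ is a linear combination of \emph{all} $q_{k,j}$ with $j\le p_i$, not merely of the subset $\hat q_{k,1},\dots,\hat q_{k,i}$. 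So $\mathrm{span}\{T^k\hat e_i\}_{i=1}^\mu$ is a $\mu$-dimensional subspace sitting inside the $p_\mu$-dimensional $\mathrm{span}\{q_{k,j}\}_{j=1}^{p_\mu}$, and it is generally \emph{not} equal to $\mathrm{span}\{\hat q_{k,j}\}_{j=1}^\mu$. Bridging this gap is what requires the bootstrapping argument: one uses part (a) (the inductive bound on $\|\chi_\omega(T) q_{k,m}\|$ for $e_m\in\Lambda_\Psi\cup\tilde\Lambda_\omega$) together with part (b) to show that the coefficients of the approximating vector on those extra $q_{k,m}$ are of size $O(r^k)$, so that the projection onto $\hat q_{k,\mu+1}$ alone already approximates the new eigenvector. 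This is exactly the content of Lemma~\ref{noidea} and the coefficient estimates on $\alpha_{k,i}$ in the paper's inductive step; it is not a formal consequence of \eqref{usefulspan}.

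A secondary, smaller point: you propose to derive (b) from (c) via a ``perturbation of two nested subspaces'' estimate. That argument is sound in itself (if $E_k\to E$ and $F_k\to F$ with $E_k\le F_k$, $E\le F$, then $F_k - E_k\to F-E$ at the same rate), but it is a different route from the paper, which goes the other way: it first proves (b) directly (extracting the new eigenvector $\xi$ from $\mathrm{span}\{T^k\hat e_i\}_{i=1}^{\mu+1}$ and isolating the coefficient on $\hat q_{k,\mu+1}$), and then obtains (c) from (b) via the orthonormality of the $\hat q_{k,i}$ and \eqref{sum_property}, with the explicit constant $C(\mu+1) = (\sum_{j\le\mu+1}B(j)^2)^{1/2}$. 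Your direction is viable as a fallback, but it is not needed once the coefficient bookkeeping is done; and more importantly, without fixing the gap above you do not yet have (c) to start from.Your overall architecture — simultaneous induction on $\mu$ for (a), (b), (c), Proposition~\ref{rate_extension} for the rate, Lemma~\ref{lemma_span} to dispose of the indices in $\Lambda_\Psi \cup \tilde\Lambda_\omega$, and the dimension count for the final sentence — matches the paper's plan. But there is a genuine gap in the central translation step, and it is precisely the step the paper spends the most effort on.

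You write that \eqref{usefulspan} ``turns $\mathrm{span}\{T^k\hat e_i\}_{i=1}^\mu \to B$ into (c)'', and in the base case ``by \eqref{usefulspan} this span equals $\mathrm{span}\{\hat q_{k,1}\}$.'' This is not correct when the $\hat e_j$ do not form an initial segment of the canonical basis, which is the generic situation by Definition~\ref{define_the_basis} (where $\hat e_j = e_{p_j}$ and $p_j$ can be much larger than $j$). The identity \eqref{usefulspan} equates $\mathrm{span}\{T^k e_j\}_{j=1}^J$ with $\mathrm{span}\{Q_k e_j\}_{j=1}^J$ only for full initial segments $\{e_1,\dots,e_J\}$. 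Since $\hat R_k$ is upper triangular, $T^k \hat e_i = T^k e_{p_i} = \hat Q_k \hat R_k e_{p_i} = \sum_{j\le p_i}(\hat R_k)_{j,p_i}\, q_{k,j}$ involves \emph{all} $q_{k,j}$ with $j\le p_i$, not merely the subset $\hat q_{k,1},\dots,\hat q_{k,i}$. Thus $\mathrm{span}\{T^k\hat e_i\}_{i=1}^\mu$ is a $\mu$-dimensional subspace of the $p_\mu$-dimensional $\mathrm{span}\{q_{k,j}\}_{j=1}^{p_\mu}$ that in general is \emph{not} equal to $\mathrm{span}\{\hat q_{k,j}\}_{j=1}^\mu$. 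Bridging this gap requires the bootstrapping argument: one must use the inductive bound from (a) on $\|\chi_\omega(T)q_{k,m}\|$ for the ``extra'' indices $e_m\in\Lambda_\Psi\cup\tilde\Lambda_\omega$, together with (b) for earlier indices, to show that the coefficients of the approximating vector on those extra $q_{k,m}$ are themselves $O(r^k)$, so that the component on $\hat q_{k,\mu+1}$ alone already approximates the new eigenvector. That is exactly the content of Lemma~\ref{noidea} and of the coefficient estimates $|\alpha_{k,i}|\lesssim r^k$ in the paper's inductive step; it is not a formal consequence of \eqref{usefulspan}.

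A secondary point: you propose to derive (b) from (c) by a perturbation estimate on orthogonal complements of nested subspaces. That estimate is sound in itself, but it is the reverse of what the paper does: the paper proves (b) directly by isolating the coefficient of $\hat q_{k,\mu+1}$, and then derives (c) from (b) via orthonormality of the $\hat q_{k,i}$ and \eqref{sum_property}, yielding the explicit constant $C(\mu+1)=(\sum_{j\le\mu+1}B(j)^2)^{1/2}$. Your direction would also work once the constants are tracked, but without first fixing the gap above you do not yet have (c) available as a starting point.
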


We will provide an inductive proof of Theorem \ref{QR-theorem} which requires the following for the inductive step of part (a).

\begin{lemma}\label{noidea} Assume the conditions in the statement of Theorem \ref{QR-theorem}. Suppose also that (b) in Theorem \ref{QR-theorem} holds for $j=1,...,\mu$ and that (c) holds for a given $\mu<M$. Let $e_{p_{\mu+1}} = \hat  e_{\mu+1}$, then if $e_m \in
  \Lambda_{\Psi} \cup
  \tilde \Lambda_{\omega},$ where $m < p_{\mu+1}$, (\ref{ABOUND}) also holds with
	$$
	A(m)=\Big\{\sum_{j=1}^{\mu}\big[C(\mu)+B(j)\big]^2\Big\}^{\frac{1}{2}}+C(\mu).
	$$
\end{lemma}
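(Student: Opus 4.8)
The plan is to combine the subspace confinement of Lemma~\ref{lemma_span} with the two convergence rates (b) and (c) that are assumed to hold, reducing everything to one Cauchy--Schwarz estimate followed by a short bootstrap. First the reductions. Since $\hat e_{\mu+1}=e_{p_{\mu+1}}$ and $m<p_{\mu+1}$, none of $\hat e_{\mu+1},\hat e_{\mu+2},\dots$ occurs among $e_1,\dots,e_m$, so the integer $s(m)$ of Lemma~\ref{lemma_span} satisfies $s(m)\le\mu$, and that lemma gives $\chi_\omega(T)q_{k,m}\in\mathrm{span}\{\chi_\omega(T)\hat q_{k,j}\}_{j=1}^{s(m)}\subset\mathrm{span}\{\chi_\omega(T)\hat q_{k,j}\}_{j=1}^{\mu}$ (when $m<p_1$ this reads $s(m)=0$ and $\chi_\omega(T)q_{k,m}=0$, which is the claim). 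Moreover $Q_k$ is unitary (Theorem~\ref{QRthrm}, using invertibility of $T$), and $e_m\in\Lambda_\Psi\cup\tilde\Lambda_\omega$ forces $m\ne p_j$ for all $j$, so $q_{k,m}=Q_ke_m\perp Q_k\hat e_j=\hat q_{k,j}$ for every $j\le M$. These two facts are the whole mechanism: $\chi_\omega(T)q_{k,m}$ is pinned to the span of the $\chi_\omega(T)\hat q_{k,j}$ while $q_{k,m}$ is exactly orthogonal to all $\hat q_{k,j}$, so it would vanish if $\chi_\omega(T)$ acted as the identity on, and the $\hat q_{k,j}$ were eigenvectors of, the relevant subspace; (b) and (c) supply precisely the $O(r^k)$ defect in these two idealisations.

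For the main estimate, put $P:=\chi_\omega(T)$, $u:=Pq_{k,m}$, and write $u=\sum_{j=1}^{s(m)}\beta_jP\hat q_{k,j}$. Since $P=P^*=P^2$ and $Pq_{k,m}=u$, we have $\|u\|^2=\langle q_{k,m},u\rangle=\sum_j\overline{\beta_j}\langle q_{k,m},P\hat q_{k,j}\rangle$, and each inner product is estimated by inserting $\hat q_j$: using $\langle q_{k,m},\hat q_{k,j}\rangle=0$ and $P\hat q_j=\hat q_j$ one obtains $|\langle q_{k,m},P\hat q_{k,j}\rangle|\le|\langle q_{k,m},\hat q_j\rangle|+\|P^{\perp}\hat q_{k,j}\|$. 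The first term uses (b): writing it as $\|E_jq_{k,m}\|$ with $E_j$, $E_j^{(k)}$ the rank-one projections onto $\mathrm{span}\{\hat q_j\}$, $\mathrm{span}\{\hat q_{k,j}\}$, orthogonality gives $E_j^{(k)}q_{k,m}=0$, so $\|E_jq_{k,m}\|=\|(E_j-E_j^{(k)})q_{k,m}\|\le\|E_j-E_j^{(k)}\|=\hat\delta(\mathrm{span}\{\hat q_j\},\mathrm{span}\{\hat q_{k,j}\})\le B(j)Z(T,\{\hat e_i\}_{i=1}^{j})r^k$. The second term uses (c): as $\hat q_{k,j}\in\mathrm{span}\{\hat q_{k,i}\}_{i=1}^\mu$ and $\mathrm{span}\{\hat q_i\}_{i=1}^\mu\subset\mathrm{ran}\chi_\omega(T)$, we get $\|P^{\perp}\hat q_{k,j}\|\le\delta(\mathrm{span}\{\hat q_{k,i}\}_{i=1}^\mu,\mathrm{span}\{\hat q_i\}_{i=1}^\mu)\le C(\mu)Z(T,\{\hat e_i\}_{i=1}^\mu)r^k$. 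Since $Z(T,\{\hat e_i\}_{i=1}^j)\le Z:=Z(T,\{\hat e_i\}_{i=1}^\mu)$ for $j\le\mu$, this gives $|\langle q_{k,m},P\hat q_{k,j}\rangle|\le(C(\mu)+B(j))Zr^k$, and Cauchy--Schwarz over $j\le s(m)\le\mu$ yields
\[
\|u\|^2\le\Big(\sum_{j=1}^{s(m)}|\beta_j|^2\Big)^{1/2}\Big(\sum_{j=1}^{\mu}(C(\mu)+B(j))^2\Big)^{1/2}Z\,r^k.
\]

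To close the loop one must control the coefficient norm. It equals $\|w\|$ with $w:=\sum_j\beta_j\hat q_{k,j}$ (the $\hat q_{k,j}$ are orthonormal), and $Pw=u$, so Pythagoras plus (c) applied to $w\in\mathrm{span}\{\hat q_{k,i}\}_{i=1}^\mu$ gives $\|w\|\le\|u\|+\|P^{\perp}w\|\le\|u\|+C(\mu)Zr^k\|w\|$. If $A(m)Zr^k\ge1$ there is nothing to prove since $\|u\|\le\|q_{k,m}\|=1$; otherwise $C(\mu)Zr^k<C(\mu)/A(m)\le1$, so $\|w\|\le\|u\|/(1-C(\mu)Zr^k)$, and substituting into the displayed bound and cancelling one factor of $\|u\|$ gives $\|u\|\le\big(\sum_{j=1}^{\mu}(C(\mu)+B(j))^2\big)^{1/2}Zr^k/(1-C(\mu)Zr^k)$, which in the range $Zr^k\le1/A(m)$ is at most $A(m)Zr^k$ with $A(m)=\big(\sum_{j=1}^{\mu}(C(\mu)+B(j))^2\big)^{1/2}+C(\mu)$, as claimed.

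The genuinely delicate step is this last one: the system $\{\chi_\omega(T)\hat q_{k,j}\}$ is only \emph{approximately} orthonormal (its Gram matrix is the identity minus that of the small vectors $\chi_\omega(T)^{\perp}\hat q_{k,j}$), so one must transfer the estimate (c) into a bound on the representation coefficients $\beta_j$ of $u$ and absorb the resulting self-reference ($\|w\|$ on both sides) — it is precisely this that produces the additive $C(\mu)$ in $A(m)$ and makes the case split on $A(m)Zr^k\ge1$ necessary. Everything else is bookkeeping: tracking which of the two independent a priori rates is used where (the per-eigenvector rate (b) at scale $Z(T,\{\hat e_i\}_{i=1}^j)$ versus the invariant-subspace rate (c) at scale $Z(T,\{\hat e_i\}_{i=1}^\mu)$), using $Z(T,\{\hat e_i\}_{i=1}^{\cdot})$ monotone in its index, and keeping the eigenvector comparisons phase-free by working with rank-one projections throughout.
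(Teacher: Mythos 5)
Your proof is correct, but it takes a genuinely different route to the same bound. The paper's argument is direct: it invokes (c) to produce a vector $v_k\in\mathrm{span}\{\hat q_j\}_{j=1}^{\mu}$ with $\|\chi_\omega(T)q_{k,m}-v_k\|\le C(\mu)Zr^k$ (the nearest point, guaranteed because $\mathrm{span}\{\hat q_j\}_{j=1}^{\mu}$ is a fixed finite-dimensional subspace), bounds the coefficients $\beta_{j,k}=\langle v_k,\hat q_j\rangle$ by $[B(j)+C(\mu)]Zr^k$ via (b) and the approximation estimate, and then simply writes $\|\chi_\omega(T)q_{k,m}\|\le\|v_k\|+C(\mu)Zr^k$. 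No self-reference appears because the expansion coefficients live against the fixed orthonormal system $\{\hat q_j\}$. You instead expand $u=\chi_\omega(T)q_{k,m}$ \emph{exactly} in the only approximately-orthonormal frame $\{\chi_\omega(T)\hat q_{k,j}\}$, bound $\|u\|^2=\langle q_{k,m},u\rangle$ by Cauchy--Schwarz, and must then close the loop with the bootstrap $\|w\|\le\|u\|/(1-C(\mu)Zr^k)$ and a case split on whether $A(m)Zr^k\ge1$. That extra machinery is exactly the price of working in the moving frame; the paper's choice of approximant makes both unnecessary. Both yield the stated $A(m)$. A minor point: your derivation of $|\langle q_{k,m},P\hat q_{k,j}\rangle|\le|\langle q_{k,m},\hat q_j\rangle|+\|P^{\perp}\hat q_{k,j}\|$ is actually wasteful — orthogonality $\langle q_{k,m},\hat q_{k,j}\rangle=0$ gives the sharper $|\langle q_{k,m},P\hat q_{k,j}\rangle|=|\langle q_{k,m},P^{\perp}\hat q_{k,j}\rangle|\le\|P^{\perp}\hat q_{k,j}\|\le C(\mu)Zr^k$ directly, which would produce a smaller constant $(\sqrt{\mu}+1)C(\mu)$; you appear to have retained the $B(j)$ term only to reproduce the paper's specific formula for $A(m)$, which is legitimate but worth being aware of.
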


\begin{proof}
First note that from (\ref{usefulspan}), invertibility of $T$ and the fact that $\{\chi_{\omega}(T)\hat e_j\}_{j=1}^\mu$ are linearly independent, it must hold that $\{\chi_{\omega}(T)\hat q_{k,j}\}_{j=1}^{\mu}$ are linearly independent also. Then by using the assumptions stated and the fact that $\chi_\omega(T)\hat q_j=\hat q_j$ we have
\begin{align*}
 \delta(\mathrm{span}\{\chi_{\omega}(T)\hat q_{k,j}\}_{j=1}^{\mu}, \mathrm{span}\{ \hat
 q_j\}_{j=1}^{\mu})&\leq\delta(\mathrm{span}\{\hat q_{k,j}\}_{j=1}^{\mu}, \mathrm{span}\{ \hat
 q_j\}_{j=1}^{\mu})\\
&\leq C(\mu)Z(T,\{\hat e_j\}_{j=1}^\mu)r^k.
\end{align*}
Also, we have that $s(m) \leq \mu$ and Lemma \ref{lemma_span} implies
$$
\mathrm{span}\{\chi_{\omega}(T) q_{k,j}\}_{j=1}^m =
  \mathrm{span}\{\chi_{\omega}(T) \hat q_{k,j}\}_{j=1}^{s(m)}\subset \mathrm{span}\{\chi_{\omega}(T) \hat q_{k,j}\}_{j=1}^{\mu}.
$$
Using the fact that $\left\|\chi_{\omega}(T)q_{k,m}\right\|\leq 1$ and the definition of $\delta$ (along with the fact that $\mathrm{span}\{ \hat q_j\}_{j=1}^{\mu}$ is finite dimensional), it follows that there exists some $v_k=\sum_{j=1}^\mu\beta_{j,k}{\hat q_j}\in\mathrm{span}\{\hat
 q_j\}_{j=1}^{\mu}$ with $\left\|v_k\right\|\leq 1$ and
\begin{equation}
\label{squ}
\left\|\chi_{\omega}(T)q_{k,m} -v_k\right\|\leq  C(\mu)Z(T,\{\hat e_j\}_{j=1}^\mu)r^k.
\end{equation}
We also have from assumption (b) that
\begin{equation}
\label{sq2}
\left|\langle \chi_{\omega}(T)q_{k,m},\hat q_j\rangle\right|=\left|\langle q_{k,m},\hat q_j\rangle\right|\leq B(j)Z(T,\{\hat e_i\}_{i=1}^j)r^k + \left|\langle q_{k,m},\hat q_{k,j}\rangle\right|=B(j)Z(T,\{\hat e_i\}_{i=1}^j)r^k,
\end{equation}
since $q_{k,m}$ is orthogonal to $\hat q_{k,j}$. This together with (\ref{squ}) gives that $\left|\beta_{j,k}\right|\leq\big[C(\mu)+B(j)\big]Z(T,\{\hat e_j\}_{j=1}^\mu)r^k$. Hence we must have
$$
\left\|v_k\right\|\leq \Big\{\sum_{j=1}^{\mu}\big[ C(\mu)+B(j)\big]^2\Big\}^{\frac{1}{2}}Z(T,\{\hat e_j\}_{j=1}^\mu)r^k.
$$
Using (\ref{squ}) again then gives the result. Note that we have used orthonormality of $\{\hat q_j\}_{j=1}^{\mu}$ which will be proven as part of the induction.
\end{proof}

\begin{proof}[\textbf{Proof of Theorem \ref{QR-theorem}}:]

We begin with the initial step of the induction for (b) and (c). Note that (a) trivially holds by construction with $A(m)=0$ for any $m<p_1$ where $e_{p_1}=\hat e_{1}$ and this provides the initial step for (a).

By Propositions \ref{subspace} and \ref{rate_extension}, there exists a unit eigenvector $\hat q_1\in\mathrm{ran}\chi_{\omega}(T)$ such that
$$
\delta(\mathrm{span}\{\hat q_1\},\mathrm{span}\{T^k \hat e_1\})\leq Z(T,\{\hat e_1\})r^k.
$$
Since $\mathrm{span}\{T^k\hat  e_1\}\subset  \mathrm{span}\{T^ke_i\}_{i=1}^{p_1}$, this implies that
$$
\delta( \mathrm{span}\{\hat q_1\}, \mathrm{span}\{T^ke_i\}_{i=1}^{p_1}) \leq Z(T,\{\hat e_1\})r^k.
$$
Thus, it follows that 
\begin{equation}\label{n2}
\delta(\mathrm{span}\{\hat q_1\},\mathrm{span}\{q_{k,i}\}_{i=1}^{p_1}) =
\delta( \mathrm{span}\{\hat q_1\},\mathrm{span}\{T^ke_i\}_{i=1}^{p_1}) \leq Z(T,\{\hat e_1\})r^k,
\end{equation}  
from (\ref{usefulspan}). Note that $\{q_{k,i}\}_{i=1}^{p_1}$ are orthonormal (recall that $Q_k$ is unitary) and hence by (\ref{n2}) there exists some coefficients $\alpha_{k,i}$ with $\sum_{i=1}^{p_1}|\alpha_{k,i}|^2 \leq 1$ such that defining $\tilde \eta_k =  \sum_{i=1}^{p_1}\alpha_{k,i}q_{k,i}$ we have
\begin{equation}
\label{eta_approx2}
\left\|\hat q_1-\tilde \eta_k\right\|\leq Z(T,\{\hat e_1\})r^k.
\end{equation}
If $e_m \in \Lambda_{\Psi} \cup  \tilde \Lambda_{\omega},$ where $m < {p_1}$ then by Lemma \ref{lemma_span} $\langle q_{k,m},\hat q_1\rangle=0$. It follows that we must have
$$
\delta(\mathrm{span}\{\hat q_1\},\mathrm{span}\{\hat q_{k,1}\})\leq\left\|\hat q_1-\alpha_{k,p_1}\hat q_{k,1}\right\|\leq Z(T,\{\hat e_1\})r^k.
$$
Hence we can take $B(1)=1$ and $C(1)=1$ in (b) and (c) respectively which completes the initial step.

For the induction step we will argue simultaneously for (a), (b) and (c) using induction on $\mu$. Suppose that (a) holds for $m<p_{\mu}$ with $e_{p_\mu}=\hat e_\mu$ together with (b) and (c) for $j\leq \mu$ and some $\mu<M$. Let $e_{p_{\mu+1}}=\hat e_{\mu+1}$ then we can use Lemma \ref{noidea} to extend (a) to all $m<p_{\mu+1}$ and this provides the step for (a). For (b), we note that Propositions \ref{subspace} and \ref{rate_extension} imply that
\begin{equation}\label{a}
\delta(\mathrm{span}\{\hat q_i\}_{i=1}^{\mu} 
  \oplus \mathrm{span}\{\xi\},\mathrm{span}\{T^k\hat
  e_i\}_{i = 1}^{\mu+1}, )\leq Z(T,\{\hat e_j\}_{j=1}^{\mu+1})r^k, \quad \xi \in \mathrm{ran}\chi_{\omega}(T),
\end{equation}
where $\xi$ is a unit eigenvector of $T.$ We may also assume without loss of generality that $\xi$ is orthogonal to $\hat q_j$ for $j=1,...,\mu$. As before, since $\mathrm{span}\{T^k\hat
  e_i\}_{i=1}^{\mu+1} \subset  \mathrm{span}\{T^ke_i\}_{i=1}^{p_{\mu+1}}
$ we have
$$
\delta(
 \mathrm{span}\{\hat q_i\}_{i=1}^{\mu} \oplus \mathrm{span}\{\xi\},
 \mathrm{span}\{T^ke_i\}_{i=1}^{p_{\mu+1}}) \leq Z(T,\{\hat e_j\}_{j=1}^{\mu+1})r^k,
$$
and hence by invertibility of $T$
\begin{equation}\label{n}
\begin{split}
\delta(\mathrm{span}\{\hat q_i\}_{i=1}^{\mu} \oplus \mathrm{span}\{\xi\},\mathrm{span}\{q_{k,i}\}_{i=1}^{p_{\mu+1}})&=\delta(\mathrm{span}\{\hat q_i\}_{i=1}^{\mu} \oplus \mathrm{span}\{\xi\}, \mathrm{span}\{T^ke_i\}_{i=1}^{p_{\mu+1}})\\
&\leq Z(T,\{\hat e_j\}_{j=1}^{\mu+1})r^k.
\end{split}
\end{equation}  
Again, using that$\{q_{k,i}\}_{i=1}^{p_{\mu+1}}$ are orthonormal, there exists some coefficients $\alpha_{k,i}$ with $\sum_{i=1}^{p_{\mu+1}}|\alpha_{k,i}|^2 \leq 1$ such that defining $\tilde \eta_k =  \sum_{i=1}^{p_{\mu+1}}\alpha_{k,i}q_{k,i}$ we have
\begin{equation}
\label{eta_approx}
\left\|\xi-\tilde \eta_k\right\|\leq Z(T,\{\hat e_j\}_{j=1}^{\mu+1})r^k.
\end{equation}
If $e_m \in \Lambda_{\Psi} \cup  \tilde \Lambda_{\omega},$ where $m < {p_{\mu+1}}$ then as shown above we have
$$
\left|\langle q_{k,m},\xi\rangle\right|=\left|\langle \chi_{\omega}(T)q_{k,m},\xi\rangle\right|\leq A(m)Z(T,\{\hat e_j\}_{j=1}^{\mu})r^k\leq A(m)Z(T,\{\hat e_j\}_{j=1}^{\mu+1})r^k.
$$
Taking the inner product of $\xi-\tilde \eta_k$ with $q_{k,m}$ and using (\ref{eta_approx}) together with the orthonormality of the $q_{k,j}$s, it follows that $\left|\alpha_{k,m}\right|\leq \big(A(m)+1\big)Z(T,\{\hat e_j\}_{j=1}^{\mu+1})r^k$. Similarly, if $j\leq \mu$ then for any $c\in\mathbb{C}$
$$
\left|\langle \hat q_{k,j},\xi\rangle\right|\leq\left|\langle c\hat q_{j},\xi\rangle\right|+\left|c\hat q_{j}-\hat q_{k,j}\right|=\left|c\hat q_{j}-\hat q_{k,j}\right|,
$$
since $\xi$ is orthogonal to $\hat q_j$. Minimising over $c$, we can bound this by $B(j)Z(T,\{\hat e_j\}_{j=1}^{\mu}) r^k$. In the same way, it then follows that $|\alpha_{k,p_{j}}|\leq\big(B(j)+1\big)Z(T,\{\hat e_j\}_{j=1}^{\mu+1})r^k$ where $\hat e_j=e_{p_j}$. Together, these imply that
$$
\left\|\xi-\alpha_{k,p_{\mu+1}}\hat q_{k,\mu+1}\right\|\leq \Big[1+\Big\{\sum_{m=1,e_m\in\Lambda_{\Psi}\cup\tilde \Lambda_{\omega}}^{p_{\mu+1}} \big[A(m)+1\big]^2 +\sum_{j=1}^{\mu} \big[B(j)+1\big]^2\Big\}^{\frac{1}{2}}\Big]Z(T,\{\hat e_j\}_{j=1}^{\mu+1})r^{k}.
$$
To finish the inductive step, we define $\hat q_{\mu+1}=\xi$. Recall that $\xi$ is orthogonal to any $\hat q_{l}$ with $l\leq \mu$. Hence it follows that $\{\hat q_i\}_{i=1}^{\mu+1}$ are orthonormal and we can take
$$
B(\mu+1)=1+\Big\{\sum_{m=1,e_m\in\Lambda_{\Psi}\cup\tilde \Lambda_{\omega}}^{p_{\mu+1}} \big[A(m)+1\big]^2 +\sum_{j=1}^{\mu} \big[B(j)+1\big]^2\Big\}^{\frac{1}{2}}
$$
in (b). For the induction step for (c), the fact that $\{\hat q_{k,i}\}_{i=1}^{\mu+1}$ are orthonormal and (\ref{sum_property}) imply we can take
$$
C(\mu+1)=\Big(\sum_{j=1}^{\mu+1}B(j)^2\Big)^{\frac{1}{2}}.
$$

Finally, if $M$ is finite we demonstrate that $\mathrm{span}\{\hat q_j\}_{j = 1}^M = \mathrm{span}\{\xi_j\}_{j=1}^M.$ Since the $\{\hat q_i\}_{i=1}^M$ are orthogonal and are eigenvectors of $\sum_{j=1}^M \lambda_{c_j} \,\xi_j\otimes\bar \xi_j$ it follows that $\mathrm{span}\{\hat q_j\}_{j = 1}^M =
\mathrm{span}\{\xi_j\}_{j=1}^M = \mathrm{ran}\chi_{\omega}(T).$
\end{proof}

\subsection{Main Results}
\label{main_res_sectIII}

Our first result generalises Theorem \ref{finite} to infinite dimensions and relies on Theorem \ref{QR-theorem} (which concerns convergence to eigenvectors).

\begin{theorem}[Convergence theorem for normal operators in infinite dimensions]\label{new_QR} Let $T \in \mathcal{B}(l^2(\mathbb{N}))$ be an invertible normal operator with $\sigma(T) = \omega \cup \Psi$ and $\omega = \{\lambda_i\}_{i=1}^N,$ where the $\lambda_i$'s are isolated eigenvalues with (possibly infinite) multiplicity $m_i$ satisfying $|\lambda_1| > \hdots > |\lambda_N|.$ Suppose further that $\sup\{|\theta|:\theta \in \Psi\} < |\lambda_N|,$ and let $\{e_j\}_{j\in\mathbb{N}}$ be the canonical orthonormal basis. Let $\{Q_n\}_{n\in\mathbb{N}}$ and $\{R_n\}_{n\in\mathbb{N}}$ be $Q$- and $R$-sequences of $T$ with respect to $\{e_j\}_{j\in\mathbb{N}}.$ Let $\{\hat e_j\}_{j=1}^M \subset \{e_j\}_{j\in\mathbb{N}}$, where $M = m_1 + \hdots +m_N$, be the subset described in Definition \ref{define_the_basis} and Theorem \ref{QR-theorem}, i.e. $\mathrm{span}\{Q_k \hat e_j\} \rightarrow\mathrm{span}\{\hat q_j\}$ where $\{\hat q_j\}_{j=1}^M\subset\mathrm{ran}\chi_{\omega}(T)$ is a collection of orthonormal eigenvectors of $T$ and if $e_j \notin \{\hat e_j\}_{j=1}^M,$ then $\chi_{\omega}(T)Q_ke_j \rightarrow 0.$ Then: 
\begin{itemize}
\item[(i)] Every subsequence of 
$\{Q_n^*TQ_n\}_{n\in\mathbb{N}}$ has a convergent subsequence 
$\{Q_{n_k}^*TQ_{n_k}\}_{k\in\mathbb{N}}$ such that 
\begin{equation*}
Q_{n_k}^*TQ_{n_k} \stackrel{\text{WOT}}{\longrightarrow} \left(\bigoplus_{j=1}^{M} \langle T\hat q_j,\hat q_j\rangle\hat e_j \otimes \hat e_j \right) \bigoplus \sum_{j \in \Theta} \xi_j \otimes e_j, 
\end{equation*}
as $k \rightarrow \infty,$ where
$$
\Theta = \{j: e_j \notin \{\hat e_l\}_{l=1}^M\}, \quad \xi_j \in \overline{\mathrm{span}\{e_i\}_{i\in\Theta}}$$
and only $\sum_{j \in \Theta} \xi_j \otimes e_j$ depends on the choice of subsequence. Furthermore, if $T$ has only finitely many non-zero entries in each column then we can replace $WOT$ convergence by $SOT$ convergence.
\item[(ii)] We have the following convergence of sections:
$$
\widehat P_{M} Q_n^*TQ_n\widehat P_{M}  \stackrel{\text{SOT}}{\longrightarrow} \bigoplus_{j=1}^{M} \langle T\hat q_j,\hat q_j\rangle\hat e_j \otimes \hat e_j ,\qquad \text{as }n \rightarrow \infty,
$$
where $\widehat P_M$ denotes the orthogonal projection onto $\overline{\mathrm{span}\{\hat e_j\}_{j=1}^M}$. Furthermore, if we define
$$
\rho=\mathrm{sup}\{\left|z\right|:z\in\Psi\},\quad r=\max\{\left|\lambda_2/\lambda_1\right|,...,\left|\lambda_N/\lambda_{N-1}\right|,\rho/\left|\lambda_N\right|\}
$$
then $r<1$ and for any fixed $x\in \mathrm{span}\{\hat e_j\}_{j=1}^M$ we have the following rate of convergence
\begin{equation}
\label{when_M_finite2}
\left\|\widehat P_{M} Q_n^*TQ_n \widehat P_{M}x-\left(\bigoplus_{j=1}^{M} \langle T\hat q_j,\hat q_j\rangle\hat e_j \otimes \hat e_j \right)x\right\|= O(r^n),\quad \text{as }n\rightarrow\infty.
\end{equation}
\end{itemize}
If $M$ is finite then we can write (after possibly re-ordering)
\begin{equation}
\label{when_M_finite}
\bigoplus_{j=1}^{M} \langle T\hat q_j,\hat q_j\rangle\hat e_j \otimes \hat e_j =\bigoplus_{k=1}^N\left(\lambda_k\bigoplus_{j=1+\sum_{l<k}m_l}^{\sum_{l\leq k}m_l} \hat e_j \otimes \hat e_j \right),
\end{equation}
and in part (ii) we have the rate of convergence
\begin{equation}\label{eq:big_O}
\left\|\widehat P_{M} Q_n^*TQ_n\widehat P_{M}-\bigoplus_{j=1}^{M} \langle T\hat q_j,\hat q_j\rangle\hat e_j \otimes \hat e_j \right\|= O(r^n),\quad \text{as }n\rightarrow\infty.
\end{equation}
If $\{\chi_{\omega}(T)e_l\}_{l=1}^M$ are linearly independent, then we can take $\hat e_j=e_j$.
\end{theorem}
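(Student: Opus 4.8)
The plan is to derive Theorem~\ref{new_QR} from Theorem~\ref{QR-theorem} by splitting $T$ along its spectral decomposition and treating the two pieces separately. Write $P_\omega=\chi_\omega(T)$ and $P_\Psi=\chi_\Psi(T)=I-P_\omega$, so $T=P_\omega T\oplus P_\Psi T$ with $\|P_\omega T\|=|\lambda_1|$ and $\|P_\Psi T\|=\rho<|\lambda_N|$, and put $D=\bigoplus_{j=1}^M\langle T\hat q_j,\hat q_j\rangle\,\hat e_j\otimes\hat e_j$. First I would extract from Theorem~\ref{QR-theorem} the two estimates that drive everything. For each fixed \emph{good} index $a$ (i.e.\ $\hat e_a=e_{p_a}$) put $d_{n,a}=\langle\hat q_a,Q_n\hat e_a\rangle$; since \eqref{BBOUND} gives $\hat\delta(\mathrm{span}\{\hat q_a\},\mathrm{span}\{Q_n\hat e_a\})=O(r^n)$ and for one–dimensional subspaces spanned by unit vectors $\hat\delta=\sqrt{1-|\langle u,v\rangle|^2}$, one gets $1-|d_{n,a}|^2=O(r^{2n})$, hence $\|Q_n\hat e_a-d_{n,a}^{-1}\hat q_a\|=O(r^n)$ and — using unitarity of $Q_n$ together with $\sum_{i\ne p_a}|\langle\hat q_a,Q_ne_i\rangle|^2=1-|d_{n,a}|^2$ — also $\|Q_n^*\hat q_a-d_{n,a}\hat e_a\|=O(r^n)$. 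For a \emph{bad} index $m$ (i.e.\ $e_m\notin\{\hat e_l\}_{l=1}^M$), the bound $\|P_\omega Q_ne_m\|=O(r^n)$ follows from \eqref{ABOUND} if $m\le K$, and from \eqref{CBOUND2} with $\mu=M$ together with \eqref{cam} if $M<\infty$ and $m>K$.

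Next, the $\omega$–part. Because $P_\omega$ commutes with $T$, $P_\omega Q_n\hat e_a=d_{n,a}^{-1}\hat q_a+O(r^n)$ and $T\hat q_a=\langle T\hat q_a,\hat q_a\rangle\hat q_a$, a good column gives $Q_n^*P_\omega TQ_n\hat e_a=d_{n,a}^{-1}\langle T\hat q_a,\hat q_a\rangle\,Q_n^*\hat q_a+O(r^n)=\langle T\hat q_a,\hat q_a\rangle\hat e_a+O(r^n)$, while a bad column gives $\|Q_n^*P_\omega TQ_ne_m\|\le\|T\|\,\|P_\omega Q_ne_m\|=O(r^n)$. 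Thus $Q_n^*P_\omega TQ_ne_j\to De_j$ in norm at rate $O(r^n)$ for every $j$, and since $\|Q_n^*P_\omega TQ_n\|\le|\lambda_1|$, uniform boundedness promotes this to $Q_n^*P_\omega TQ_n\stackrel{\text{SOT}}{\longrightarrow}D$. For the sectioned statement (ii), note that the good–column part of $S_n:=Q_n^*P_\Psi TQ_n$ is already $O(r^n)$: $P_\Psi\hat q_a=0$, so $P_\Psi T(Q_n\hat e_a)=P_\Psi T(Q_n\hat e_a-d_{n,a}^{-1}\hat q_a)=O(r^n)$; adding $\widehat P_MS_n\widehat P_M\to 0$ to $\widehat P_MQ_n^*P_\omega TQ_n\widehat P_M$ yields (ii) with the stated rate on $\mathrm{span}\{\hat e_j\}_{j=1}^M$. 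When $M<\infty$, summing the squared column errors (Hilbert–Schmidt norm) over the finitely many columns upgrades this to the operator–norm bound \eqref{eq:big_O}, and \eqref{when_M_finite} is just the last sentence of Theorem~\ref{QR-theorem} together with $\mathrm{ran}\,P_\omega=\bigoplus_k\ker(T-\lambda_k)$.

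For part (i), analyse $S_n=Q_n^*P_\Psi TQ_n$, which satisfies $\|S_n\|\le\rho$. The computations above give $S_ne_{p_a}\to0$ in norm and $\langle S_ne_j,\hat e_a\rangle\to0$ for every good $a$ and every $j$ (using $P_\Psi(Q_n\hat e_a)=P_\Psi(Q_n\hat e_a-d_{n,a}^{-1}\hat q_a)=O(r^n)$). Given an arbitrary subsequence, a Cantor diagonal argument over the countably many matrix entries $\langle S_ne_i,e_j\rangle$ (each bounded by $\rho$) extracts a further subsequence along which they all converge; the limiting matrix defines a bounded operator $S$ with $\|S\|\le\rho$, and uniform boundedness then gives $S_{n_k}\stackrel{\text{WOT}}{\longrightarrow}S$. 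The two limit computations force the columns and the rows of $S$ at good indices to vanish, so $S$ has $j$-th column $\xi_j\in\overline{\mathrm{span}\{e_i\}_{i\in\Theta}}$ for $j\in\Theta$ and zero columns otherwise. Since $Q_{n_k}^*TQ_{n_k}=Q_{n_k}^*P_\omega TQ_{n_k}+S_{n_k}\stackrel{\text{WOT}}{\longrightarrow}D+S$ and the coordinate blocks $\{p_a\}$ and $\Theta$ are complementary, $D+S=D\oplus S$, with $D$ independent of the subsequence and $S$ possibly not; this is the WOT assertion in (i).

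Finally, the SOT strengthening when $T$ has finitely many nonzero entries per column. Here the key auxiliary fact is that the column supports of $T_n:=Q_n^*TQ_n$ are bounded \emph{uniformly in $n$}: with $b_j:=\max\bigcup_{i\le j}\mathrm{supp}(Te_i)<\infty$ (non-decreasing in $j$), I would show $\mathrm{supp}(T_ne_j)\subseteq\{1,\dots,b_j\}$ for all $n$ by induction on $n$. In the iteration $T_{n-1}=Q_n'R_n'$, $T_n=R_n'Q_n'$ of \eqref{rec}, the factor $R_n'$ is upper triangular with nonzero diagonal, so exactly as in the derivation of \eqref{usefulspan} one has $\mathrm{span}\{Q_n'e_i\}_{i\le j}=\mathrm{span}\{T_{n-1}e_i\}_{i\le j}$, whence $\mathrm{supp}(Q_n'e_j)\subseteq\bigcup_{i\le j}\mathrm{supp}(T_{n-1}e_i)$; applying the upper-triangular $R_n'$ cannot raise the largest index in a support, and the induction hypothesis closes the loop. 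Given this, any subsequence yields, as in the previous paragraph, a further subsequence along which $T_{n_k}e_j\to(D\oplus S)e_j$ weakly for each $j$; but $T_{n_k}e_j$ lies in the fixed finite-dimensional space $\mathrm{span}\{e_1,\dots,e_{b_j}\}$ and is bounded, so weak convergence there is norm convergence, and with $\|T_{n_k}\|=\|T\|$ this upgrades to SOT convergence. The last clause — if $\{\chi_\omega(T)e_l\}_{l=1}^M$ are linearly independent then $\hat e_j=e_j$ — is immediate from Definition~\ref{define_the_basis}, since linear independence forces $K=M$ and $\Lambda_\Psi=\tilde\Lambda_\omega=\emptyset$. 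The two steps I expect to require the most care are tracking the phases $d_{n,a}$ so that they cancel on the diagonal and contribute nothing off it (upgrading mere subspace convergence to convergence of the operator), and the uniform-in-$n$ support bound, which is precisely what separates the finitely-supported-column case from the general WOT statement.
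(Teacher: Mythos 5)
Your proof is correct and follows essentially the same strategy as the paper's: extract the needed estimates from Theorem~\ref{QR-theorem} and Lemma~\ref{lemma_span}, invoke weak sequential compactness (your Cantor diagonal argument is equivalent), analyse matrix entries against $\chi_\omega(T)$ and $\chi_\Psi(T)$, and use the quasi-banded-preservation property to upgrade WOT to SOT. The one genuinely different ingredient is your derivation of the uniform-in-$n$ column-support bound: the paper proves this as Proposition~\ref{subdiagonal} by inspecting the Householder structure of each $Q_j$ and noting each reflection has quasi-banded subdiagonals with respect to the same $f$, whereas you instead use the span identity $\mathrm{span}\{Q_n'e_i\}_{i\le j}=\mathrm{span}\{T_{n-1}e_i\}_{i\le j}$ (which follows from the QR factorisation $T_{n-1}=Q_n'R_n'$ with $R_n'$ upper triangular and invertible) together with upper-triangularity of $R_n'$. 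Your derivation is marginally more abstract — it only uses that a QR factorisation exists with invertible $R$, not the specific Householder construction — but it buys nothing new for the theorem and arrives at the same bound, so the proofs are best regarded as identical in content.
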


\begin{remark}
What Theorem \ref{new_QR} essentially says is that if we take the $n$-th iteration of the IQR algorithm and truncate to an $m\times m$ matrix (i.e. $P_mQ_n^*TQ_nP_m$) then, as $n$ grows, the eigenvalues of this matrix will converge to the \textit{extremal parts} of the spectrum of $T$. In particular, the theorem suggests that the IQR algorithm can locate the \textit{extremal parts} of the spectrum.
\end{remark}

\begin{proof}[\textbf{Proof of Theorem \ref{new_QR}}:]
To prove (i), since a closed ball in $\mathcal{B}(l^2(\mathbb{N}))$ is weakly sequentially compact, it follows that that any subsequence of $\{Q_n^*TQ_n\}_{n\in\mathbb{N}}$ must have a weakly convergent subsequence $\{Q_{n_k}^*TQ_{n_k}\}_{k\in\mathbb{N}}$. In particular, there exists a $W \in \mathcal{B}(l^2(\mathbb{N}))$ such that
$$
 Q_{n_k}^*TQ_{n_k} \stackrel{\text{\scriptsize WOT}}{\longrightarrow} W, \qquad k
\rightarrow \infty.
$$
Let $\widehat P_M$ denote the projection onto $\overline{\mathrm{span}\{\hat e_j\}_{j=1}^M}$. Note that part (i) of the theorem will follow if we can show that 
\begin{equation}\label{tjall}
\widehat P_M W \widehat P_M = \bigoplus_{j=1}^{M} \langle T\hat q_j,\hat q_j\rangle\hat e_j \otimes \hat e_j ,
\end{equation}
and 
$$
\widehat P_M^{\perp} W \widehat P_M = 0, \quad \widehat P_MW \widehat P_M^{\perp} = 0.
$$
We will indeed show this, and we start by observing that, due to the weak convergence and the standard functional calculus, we have that 
\begin{equation}\label{W}
\langle W \hat e_j, e_i \rangle  = \lim_{k\rightarrow \infty}\langle TQ_{n_k} \hat e_j, \chi_{\omega}(T)Q_{n_k} e_i \rangle + \lim_{k\rightarrow \infty}\langle TQ_{n_k} \hat e_j, \chi_{\Psi}(T) Q_{n_k}e_i \rangle, 
\end{equation}
\begin{equation}\label{W_2}
\langle W e_i,\hat e_j \rangle  = \lim_{k\rightarrow \infty}\langle \chi_{\omega}(T)Q_{n_k} e_i, T^*Q_{n_k} \hat e_j \rangle + \lim_{k\rightarrow \infty}\langle T Q_{n_k} e_i, \chi_{\Psi}(T)Q_{n_k} \hat e_j \rangle.
\end{equation}

We then have the following 
\begin{equation}\label{lim1}
\begin{split}
&\chi_{\omega}(T)Q_ne_i\rightarrow 0,  \quad n \rightarrow \infty, \quad i \in \Theta \\
&\qquad\qquad\Longrightarrow \quad 
\begin{cases}
\lim_{k\rightarrow \infty}\langle TQ_{n_k} \hat e_j, \chi_{\omega}(T)Q_{n_k} e_i \rangle = 0, & i \in \Theta,\\
\lim_{k\rightarrow \infty}\langle \chi_{\omega}(T)Q_{n_k} e_i, T^*Q_{n_k} \hat e_j \rangle  = 0, & i \in \Theta,
\end{cases}
\end{split}
\end{equation}
\begin{equation}\label{ol}
\begin{split}
&\mathrm{span}\{Q_n \hat e_j\} \rightarrow
\mathrm{span}\{\hat q_j\}, \quad n \rightarrow \infty, \quad T\hat q_j = \lambda \hat q_j, \, \lambda \in \omega, \\
&\qquad\qquad\Longrightarrow \quad 
\begin{cases}
\lim_{k\rightarrow \infty}\langle TQ_{n_k} \hat e_j, \chi_{\Psi}(T) Q_{n_k}e_i \rangle = 0, & i \in \mathbb{N},\\
\lim_{k\rightarrow \infty}\langle T Q_{n_k} e_i, \chi_{\Psi}(T)Q_{n_k} \hat e_j \rangle = 0, & i \in \mathbb{N},\\
 \lim_{k\rightarrow \infty}\langle TQ_{n_k} \hat e_j, \chi_{\omega}(T)Q_{n_k} \hat e_l \rangle = \delta_{j,l} \lambda.
\end{cases}
\end{split}
\end{equation}
Thus, by (\ref{W}), (\ref{lim1}), (\ref{ol}) and Theorem \ref{QR-theorem} we get (\ref{tjall}) and also that $\widehat P_M^{\perp}W \widehat P_M = 0$. Also, by (\ref{W_2}), (\ref{lim1}), (\ref{ol}) and Theorem \ref{QR-theorem} we get that $\widehat P_MW \widehat P_M^{\perp} = 0$. Note that in all of these cases, Theorem \ref{QR-theorem} implies that the rate of convergence is such that the difference between $\langle W \hat e_j, e_i \rangle$, $\langle W e_i,\hat e_j \rangle$ and their limiting values is $O(r^{n_k})$ (however, not necessarily uniformly over the indices). Now suppose that $T$ has finitely many non-zero entries in each column. This can be described by a function $f:\mathbb{N}\rightarrow\mathbb{N}$ non-decreasing with $f(n)\geq n$ such that $\langle Te_j,e_i\rangle = 0$ when $i > f(j)$ as in Definition \ref{quasi_f}. Proposition \ref{subdiagonal} shows that this is preserved under the iteration in the IQR algorithm, i.e. $Q_{n_k}^*TQ_{n_k}$ also has this property. So let $x\in l^2(\mathbb{N})$ and $\epsilon>0$. Choose $y$ of finite support such that $\|x-y\|\leq \epsilon$. It is then clear that $\|Q_{n_k}^*TQ_{n_k}y-Wy\|\rightarrow 0$ as $n_k\rightarrow\infty$ (since we only require convergence in finitely many entries). Hence
$$
\limsup_{n_k\rightarrow\infty}\|Q_{n_k}^*TQ_{n_k}x-Wx\|\leq (\|T\|+\|W\|)\epsilon.
$$
Since $\epsilon>0$ and $x$ were arbitrary we have $ Q_{n_k}^*TQ_{n_k} \stackrel{\text{\scriptsize SOT}}{\longrightarrow} W$.

To prove (ii), suppose that $x\in\mathrm{span}\{\hat e_j\}_{j=1}^M$, then $x$ can be written as 
$$
x=\sum_{j=1}^Mx_j\hat e_j,
$$
with at most finitely many $x_j$ non-zero. We have that $\hat\delta(\mathrm{span}\{Q_n\hat e_j\},\mathrm{span}\{\hat q_j\})= O(r^n)$ and hence there exists some $a_{n,j}$ of unit modulus such that $\left\|Q_n\hat e_j-a_{n,j}\hat q_j\right\|= O(r^n)$. Since $Q_n$ is unitary we then have
\begin{align*}
\left\|\widehat P_{M} Q_n^*TQ_n\widehat P_{M}x-\left(\bigoplus_{j=1}^{M} \langle T\hat q_j,\hat q_j\rangle\hat e_j \otimes \hat e_j \right)x\right\|&\leq\left\|Q_n^*TQ_n\widehat P_{M}x-\left(\bigoplus_{j=1}^{M} \langle T\hat q_j,\hat q_j\rangle\hat e_j \otimes \hat e_j \right)Q_n^*Q_nx\right\|\\
&=\left\|\sum_{j=1}^Mx_j(T-\langle T\hat q_j,\hat q_j\rangle I)Q_n\hat e_j\right\|= O(r^n),
\end{align*}
where we have used the fact that $T$ is bounded in the last line. We therefore have convergence on $\mathrm{span}\{\hat e_j\}_{j=1}^M$, and, since the operators are uniformly bounded, we must have convergence on $\overline{\mathrm{span}\{\hat e_j\}_{j=1}^M}$ which implies that
$$
\widehat P_{M} Q_n^*TQ_n\widehat P_{M}  \stackrel{\text{SOT}}{\longrightarrow} \bigoplus_{j=1}^{M} \langle T\hat q_j,\hat q_j\rangle\hat e_j \otimes \hat e_j ,\qquad \text{as }n \rightarrow \infty.
$$

For the last parts, suppose that $M$ is finite. Theorem \ref{QR-theorem} then implies (\ref{when_M_finite}) after a possible re-ordering. The rate of convergence in (\ref{when_M_finite2}) also implies that
$$
\left\|\widehat P_{M} Q_n^*TQ_n\widehat P_{M}-\bigoplus_{j=1}^{M} \langle T\hat q_j,\hat q_j\rangle\hat e_j \otimes \hat e_j \right\|= O(r^n).
$$
More generally, let $K\in\mathbb{N}\cup\{\infty\}$ be minimal such that $\dim(\mathrm{span}\{\chi_{\omega}(T)e_j\}_{j=1}^K) = M.$ Recall that we defined
\begin{align*}
&\Lambda_{\omega} = \{e_j:\chi_{\omega}(T)e_j \neq 0, j\leq K\}, \quad
\Lambda_{\Psi}= \{e_j:\chi_{\omega}(T)e_j = 0, j \leq K\}\\
&\text{and    } \tilde \Lambda_{\omega} = \{e_j \in
\Lambda_{\omega}:\chi_{\omega}(T)e_j \in
\mathrm{span}\{\chi_{\omega}(T)e_i\}_{i=1}^{j-1}\}.
\end{align*}
Recall also from the proof of Theorem \ref{QR-theorem} that $\{\hat e_j\}_{j=1}^{M} = \Lambda_{\omega} \setminus \tilde \Lambda_{\omega}.$ If $\{\chi_{\omega}(T)e_j\}_{j=1}^M$ are linearly independent then $\tilde \Lambda_{\omega} = \emptyset$, and therefore $\{\hat e_j\}_{j=1}^{M} = \{ e_j\}_{j=1}^{M},$ which yields that the projection $\widehat P_M$ in (\ref{tjall}) is the projection onto $\overline{\mathrm{span}\{e_j\}_{j=1}^M}$.
\end{proof}

Theorems \ref{new_QR} and \ref{QR-theorem} also give us convergence to the eigenvectors. With the use of (possibly countably many) shifts and rotations, the above theorem allows us to find all eigenvalues, their multiplicities and eigenspaces outside the convex hull of the essential spectrum, i.e. outside the essential numerical range.

\begin{example}
\label{lose_spec_exam}
It is possible in the case of infinite $M$ that the $\hat q_j$ do not form an orthonormal basis of $\mathrm{ran}\chi_{\omega}(T)$ and we can even loose part of $\omega$ in the convergence of $\widehat P_{M} Q_n^*TQ_n \widehat P_{M}$ to a diagonal operator. This is to be contrasted to the finite dimensional case. For example, suppose that with respect to an initial orthonormal basis $\{v_j\}_{j\in\mathbb{N}}$, $T$ is given by the diagonal matrix $\mathrm{Diag}(1/2,1,1,...)$. Now define $f_j=v_1+(1/j)v_{j+1}$ and apply Gram-Schmidt to the sequence $\{f_j\}_{j\in\mathbb{N}}$ to generate orthonormal vectors $\{e_j\}_{j\in\mathbb{N}}$. It is easy to see that any $v_j$ can be approximated to arbitrary accuracy using finite linear combinations of $e_j$ and hence $\{e_j\}_{j\in\mathbb{N}}$ is an orthonormal basis of our Hilbert space. We also have that the $\chi_{1}(T)(f_j)=(1/j)v_{j+1}$ are linearly independent and hence so are $\chi_{1}(T)(e_j)$. It follows that the IQR iterates converge in the strong operator topology to the identity operator. However, we could equally take $\omega=\{1,1/2\}$ in Theorem \ref{new_QR}. Hence we have the curious case that $\overline{\mathrm{span}\{\hat q_j\}_{j\in\mathbb{N}}}\subset\overline{\mathrm{span}\{\hat v_j\}_{j>1}}$ and we loose the eigenvalue $1/2$.
\end{example}

The following corollary is entirely analogous to the finite dimensional case.

\begin{corollary}
\label{new_QRCOR}
Suppose that the conditions of Theorem \ref{new_QR} hold with $M$ finite. Suppose also that for $j=1,...,N$ the vectors $\{\chi_{\{\lambda_1,...,\lambda_j\}}(T)e_i\}_{i=1}^{\sum_{l\leq j}m_l}$ are linearly independent. In the notation of Theorem \ref{new_QR}, let $\rho=\mathrm{sup}\{\left|z\right|:z\in\Psi\}$. For $j<N$ define $r_j=\max\{|\lambda_{k+1}/\lambda_k|:k\leq j\}$ and for $j=N$ define $r_N=\max\{|\lambda_{k+1}/\lambda_k|,|\lambda_N/\rho|:k\leq j\}$. We then have the following rates of convergence to the diagonal operator for $i,j\leq M$:
\begin{enumerate}
	\item $\left|\langle Q_n^*TQ_n e_j, e_i\rangle\right|= O(r_k^n)$ as $n\rightarrow\infty$ if $i>j$ and $k$ is minimal such that $i\leq \sum_{l\leq k}m_l$,
	\item $\left|\langle Q_n^*TQ_n e_i, e_i\rangle-\lambda_k\right|= O(r_k^n)$ as $n\rightarrow\infty$ if $k$ is minimal such that $i\leq \sum_{l\leq k}m_l$.
\end{enumerate}
\end{corollary}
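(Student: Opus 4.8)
The plan is to reduce everything to Propositions \ref{subspace} and \ref{rate_extension}, applied to the \emph{truncated} eigenvalue sets, together with the identity $\mathrm{span}\{T^ne_i\}_{i=1}^{l}=\mathrm{span}\{q_{n,i}\}_{i=1}^{l}$ (from \eqref{usefulspan} and invertibility of $T$), where as in Lemma \ref{lemma_span} I write $q_{n,i}=Q_ne_i$. Write $l_k=\sum_{m\le k}m_m$ (so $l_N=M$), $\mathcal R_k=\mathrm{ran}\,\chi_{\{\lambda_1,\dots,\lambda_k\}}(T)$ (so $\dim\mathcal R_k=l_k$), $\mathcal E_k=\ker(T-\lambda_kI)=\mathrm{ran}\,\chi_{\{\lambda_k\}}(T)$, and $V_n^{(k)}=\mathrm{span}\{q_{n,i}\}_{i=1}^{l_k}$, with the conventions $l_0=0$ and $\mathcal R_0=V_n^{(0)}=\{0\}$. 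Under the linear independence hypothesis one has $K=M$, $\Lambda_\Psi=\tilde\Lambda_\omega=\emptyset$ and hence $\hat e_j=e_j$ for $1\le j\le M$ (cf.\ the last line of Theorem \ref{new_QR}), so that $Q_n^*TQ_n$ in the statement is written in the same basis as the diagonal limit \eqref{when_M_finite}. It therefore suffices to establish, for each block index $k$, the single-vector estimate: \emph{if $l_{k-1}<i\le l_k$ then $\|(T-\lambda_kI)q_{n,i}\|=O(r_k^{\,n})$}; claims (1) and (2) then follow by pairing.

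First I would fix $k\in\{1,\dots,N\}$ and apply Propositions \ref{subspace} and \ref{rate_extension} to the partition $\sigma(T)=\omega_k\cup\Psi_k$ with $\omega_k=\{\lambda_1,\dots,\lambda_k\}$ and $\Psi_k=\Psi\cup\{\lambda_{k+1},\dots,\lambda_N\}$, using the linearly independent vectors $\{e_i\}_{i=1}^{l_k}$. This partition is admissible because $\sup\{|z|:z\in\Psi_k\}=\max\{\rho,|\lambda_{k+1}|\}<|\lambda_k|$, and $\{\chi_{\omega_k}(T)e_i\}_{i=1}^{l_k}$ is linearly independent by hypothesis. The minimal index $J$ of Proposition \ref{rate_extension} equals $k$ here (for $J'<k$ one has $l_k$ vectors inside the $l_{J'}$-dimensional space $\mathrm{ran}\,\chi_{\{\lambda_1,\dots,\lambda_{J'}\}}(T)$, hence they are dependent), and a short computation shows that the associated contraction constant of Proposition \ref{rate_extension} is exactly $r_k$ as defined in the statement. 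On dimension grounds the limiting subspace $B$ of Proposition \ref{subspace} must equal $\mathcal R_k$, so Proposition \ref{rate_extension} together with \eqref{usefulspan} yields a constant $c_k$ (depending only on $T$ and $k$) with $\hat\delta(\mathcal R_k,V_n^{(k)})\le c_k\,r_k^{\,n}$ for all $n$.

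Next I would derive the single-vector estimate. Fix $i$ with $l_{k-1}<i\le l_k$. Since $q_{n,i}\in V_n^{(k)}$ and, by \eqref{cam}, the orthogonal projections onto $\mathcal R_k$ and onto $V_n^{(k)}$ differ by $\hat\delta(\mathcal R_k,V_n^{(k)})$ in norm, we get $\|\chi_{\omega_k}^{\perp}(T)q_{n,i}\|=O(r_k^{\,n})$, where $\chi_{\omega_k}^{\perp}(T)=I-\chi_{\omega_k}(T)$. Since the columns of the unitary $Q_n$ are orthonormal and $i>l_{k-1}$, we have $q_{n,i}\perp V_n^{(k-1)}$, hence likewise $\|\chi_{\omega_{k-1}}(T)q_{n,i}\|\le\hat\delta(\mathcal R_{k-1},V_n^{(k-1)})=O(r_{k-1}^{\,n})=O(r_k^{\,n})$, using $r_{k-1}\le r_k$ (the statement is vacuous when $k=1$). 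As $T$ is normal, $\chi_{\{\lambda_k\}}(T)=\chi_{\omega_k}(T)-\chi_{\omega_{k-1}}(T)$, so $q_{n,i}-\chi_{\{\lambda_k\}}(T)q_{n,i}=\chi_{\omega_k}^{\perp}(T)q_{n,i}+\chi_{\omega_{k-1}}(T)q_{n,i}$ has norm $O(r_k^{\,n})$; applying $T-\lambda_kI$ (which annihilates $\chi_{\{\lambda_k\}}(T)q_{n,i}$) gives $\|(T-\lambda_kI)q_{n,i}\|\le\|T-\lambda_kI\|\,O(r_k^{\,n})=O(r_k^{\,n})$, i.e.\ $Tq_{n,i}=\lambda_kq_{n,i}+O(r_k^{\,n})$, and also $\|\chi_{\{\lambda_k\}}(T)q_{n,i}\|=1+O(r_k^{\,n})$.

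Finally I would assemble the two claims. For (2): if $k$ is minimal with $i\le l_k$, then $\langle Q_n^*TQ_ne_i,e_i\rangle=\langle Tq_{n,i},q_{n,i}\rangle=\lambda_k\|q_{n,i}\|^2+O(r_k^{\,n})=\lambda_k+O(r_k^{\,n})$ (Cauchy--Schwarz for the error term, $\|q_{n,i}\|=1$). For (1): take $i>j$, let $k$ be the block of $i$ and $k'\le k$ the block of $j$. If $k'=k$, the previous paragraph applied to $j$ gives $\langle Q_n^*TQ_ne_j,e_i\rangle=\langle Tq_{n,j},q_{n,i}\rangle=\lambda_k\langle q_{n,j},q_{n,i}\rangle+O(r_k^{\,n})=O(r_k^{\,n})$ since $q_{n,j}\perp q_{n,i}$. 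If $k'<k$, then $q_{n,j}$ lies within $O(r_{k'}^{\,n})$ of $\mathcal E_{k'}$, hence $Tq_{n,j}$ lies within $O(r_{k'}^{\,n})$ of $\mathcal E_{k'}$ (as $T\mathcal E_{k'}\subseteq\mathcal E_{k'}$), while $q_{n,i}$ lies within $O(r_k^{\,n})$ of $\mathcal E_k$; since $\mathcal E_{k'}\perp\mathcal E_k$ for $k'\ne k$ and all vectors involved are uniformly bounded, $\langle Tq_{n,j},q_{n,i}\rangle=O(r_{k'}^{\,n}+r_k^{\,n})=O(r_k^{\,n})$, again using $r_{k'}\le r_k$. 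I expect the main obstacle to be the bookkeeping in the second paragraph — checking that the abstract contraction constant produced by Proposition \ref{rate_extension} for the truncated partition really is the index-dependent $r_k$ of the statement (including the endpoint $k=N$, where the term involving $\rho/|\lambda_N|$ enters) — since everything afterwards is routine manipulation with orthonormality of the $q_{n,i}$ and with the orthogonality of spectral subspaces of the normal operator $T$.
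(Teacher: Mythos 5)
Your proposal is correct and takes essentially the same approach as the paper: the paper's one-line proof is ``apply Theorem \ref{new_QR} successively to $\omega_1,\dots,\omega_N$ where $\omega_j=\{\lambda_k:k\le j\}$,'' and your argument implements exactly that idea, simply unwinding it to the level of Propositions \ref{subspace} and \ref{rate_extension} and the subspace-distance identity \eqref{cam} rather than citing Theorem \ref{new_QR} as a black box. The details you supply (verifying $J=k$, $B=\mathcal R_k$, that the contraction constant is $r_k$, and the orthogonality bookkeeping giving $\|\chi_{\{\lambda_k\}}(T)q_{n,i}-q_{n,i}\|=O(r_k^n)$) are a faithful and more explicit rendering of the same strategy.
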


\begin{proof}
The result follows from Theorem \ref{new_QR} applied successively to $\omega_1,\omega_2,...,\omega_N$ where $\omega_j=\{\lambda_k:k\leq j\}$. In general analogous results follows from Theorem \ref{new_QR} when $M$ is infinite and with other linear independence conditions on $\chi_{\omega'}(T)e_i$ with $\omega'\subset\omega$ but the statements become less succinct.
\end{proof}

In the finite dimensional case and the case of distinct eigenvalues of the \textit{same magnitude} the QR algorithm applied to a normal matrix will `converge' to a block diagonal matrix (without necessarily converging in each block). This can be extended to infinite dimensions by inductively using the following theorem which also extends to \textit{non-normal} operators.

\begin{theorem}[Block convergence theorem in infinite dimensions]
\label{non_normal_prop}
Let $T \in \mathcal{B}(l^2(\mathbb{N}))$ be an invertible operator (not necessarily normal) and suppose that there exists an orthogonal projection $P$ of rank $M$ (possibly infinite) such that both the ranges of $P$ and of $I-P$ are invariant under $T$. Suppose also that there exists $\alpha>\beta>0$ such that
\begin{itemize}
	\item $\|Tx\|\geq \alpha \|x\|\quad \forall x\in \mathrm{ran}(P)$,
	\item $\|Tx\|\leq \beta \|x\| \quad \forall x\in \mathrm{ran}(I-P)$.
\end{itemize}
Let $\{Q_n\}_{n\in\mathbb{N}}$ and $\{R_n\}_{n\in\mathbb{N}}$ be $Q$- and $R$-sequences of $T$ with respect to $\{e_i\}.$ Then there exists a subset $\{\hat e_j\}_{j=1}^M\subset \{e_i\}_{i\in\mathbb{N}}$ such that
\begin{itemize}
	\item[(i)] For any finite $\mu\leq M$ we have $\delta (\mathrm{span}\{Q_n\hat e_j\}_{j=1}^\mu,\mathrm{ran}(P))= O(\beta^n/\alpha^n)$ as $n\rightarrow\infty$. If $M$ is finite this implies full convergence $\hat\delta (\mathrm{span}\{Q_n\hat e_j\}_{j=1}^M,\mathrm{ran}(P))= O(\beta^n/\alpha^n)$ as $n\rightarrow\infty$.
	\item[(ii)] Every subsequence of $\{Q_n^*TQ_n\}_{n\in\mathbb{N}}$ has a convergent subsequence 
$\{Q_{n_k}^*TQ_{n_k}\}_{k\in\mathbb{N}}$ such that 
\begin{equation*}
Q_{n_k}^*TQ_{n_k} \stackrel{\text{WOT}}{\longrightarrow} \sum_{j=1}^M \xi_j \otimes \hat e_j \bigoplus \sum_{i \in \Theta} \zeta_i \otimes e_i, 
\end{equation*}
as $k \rightarrow \infty,$ where
$$
\Theta = \{j: e_j \notin \{\hat e_l\}_{l=1}^M\}, \quad \xi_j \in \overline{\mathrm{span}\{\hat e_l\}_{l=1}^M}, \quad \zeta_i \in \overline{\mathrm{span}\{e_l\}_{l\in\Theta}}.$$
\end{itemize}
If $\{Pe_l\}_{l=1}^M$ are linearly independent then we can take $\hat e_j=e_j$. Furthermore, if $T$ has only finitely many non-zero entries in each column then we can replace $WOT$ convergence by $SOT$ convergence.
\end{theorem}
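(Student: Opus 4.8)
The plan is to replay the machinery behind Theorems~\ref{QR-theorem} and~\ref{new_QR}, replacing the spectral projections $\chi_{\omega}(T),\chi_{\Psi}(T)$ everywhere by the given invariant projections $P$ and $I-P$, and the ratio $r$ by $\beta/\alpha$. First I would record the elementary facts. Since $P$ is an orthogonal projection whose range and co-range are both $T$-invariant, a one-line computation gives $PT=TP$, hence $PT^k=T^kP$, and from the two hypotheses $\|T^kx\|\ge\alpha^k\|x\|$ for $x\in\mathrm{ran}(P)$ and $\|T^kx\|\le\beta^k\|x\|$ for $x\in\mathrm{ran}(I-P)$, for every $k$. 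Next I would define $\{\hat e_j\}_{j=1}^M$ exactly as in Definition~\ref{define_the_basis} with $\chi_{\omega}(T)$ replaced by $P$: let $K\in\mathbb N\cup\{\infty\}$ be minimal with $\dim\mathrm{span}\{Pe_j\}_{j=1}^K=M$ and let $\hat e_1=e_{p_1},\hat e_2=e_{p_2},\dots$ (with $p_1<p_2<\cdots$) list those $e_j$, $j\le K$, with $Pe_j\notin\mathrm{span}\{Pe_i:i<j\}$; then $\{P\hat e_j\}_{j=1}^M$ is linearly independent and spans $\mathrm{ran}(P)$. The analogue of Lemma~\ref{lemma_span} carries over verbatim (using $PT=TP$ and upper-triangularity of $\hat R_k$): $\mathrm{span}\{PQ_ke_j\}_{j=1}^m=\mathrm{span}\{PQ_k\hat e_j\}_{j=1}^{s(m)}$ with $s(m)=\#\{j:p_j\le m\}$; in particular $PQ_ke_m\in\mathrm{span}\{PQ_k\hat e_l:p_l<m\}$ whenever $e_m$ is ``discarded'' ($m\le K$, $m\notin\{p_j\}$).

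The substitute for Propositions~\ref{subspace}--\ref{rate_extension} is a direct dichotomy estimate: for finite $l$, any unit vector $v=\sum_{j=1}^l c_jT^k\hat e_j$ has $\|\sum_j c_jP\hat e_j\|\le\alpha^{-k}$ (apply $P$, use $PT^k=T^kP$ and $\|Pv\|\le1$), hence $\max_j|c_j|\le\kappa_l^{-1}\alpha^{-k}$ where $\kappa_l>0$ is the conditioning constant of $\{P\hat e_j\}_{j=1}^l$, and therefore $\|(I-P)v\|=\|\sum_j c_jT^k(I-P)\hat e_j\|\le\beta^k\sum_j|c_j|\,\|(I-P)\hat e_j\|=O((\beta/\alpha)^k)$; thus $\delta(\mathrm{span}\{T^k\hat e_j\}_{j=1}^l,\mathrm{ran}(P))=O((\beta/\alpha)^k)$. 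To pass from $T^k$ to $Q_k$ I would induct on $j$, mirroring the proof of Theorem~\ref{QR-theorem} (with Lemma~\ref{noidea} replaced by the previous sentence). At stage $j$ set $G_k=\mathrm{span}\{Q_ke_i\}_{i=1}^{p_j}=\mathrm{span}\{T^ke_i\}_{i=1}^{p_j}$ by \eqref{usefulspan}; it splits orthogonally as $X_k\oplus Y_k$ with $X_k=\mathrm{span}\{Q_k\hat e_l\}_{l=1}^j$ and $Y_k$ the span of the discarded columns $Q_ke_m$, $m\le p_j$, and it also splits (non-orthogonally) as $A_k+B_k$ with $A_k=\mathrm{span}\{T^k\hat e_l\}_{l=1}^j$ and $B_k=\mathrm{span}\{T^kw_m\}\subseteq\mathrm{ran}(I-P)$, where $w_m\in\mathrm{ran}(I-P)$ is $e_m$ minus the combination of earlier $\hat e_l$ with the same $P$-image. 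The inductive hypothesis $\|(I-P)Q_k\hat e_l\|=O((\beta/\alpha)^k)$ for $l<j$, orthonormality of $\{Q_ke_i\}$, and the relation $PQ_ke_m\in\mathrm{span}\{PQ_k\hat e_l:p_l<m\}$ give $\|PQ_ke_m\|=O((\beta/\alpha)^k)$ for each discarded $m\le p_j$, i.e.\ $\delta(Y_k,\mathrm{ran}(I-P))=O((\beta/\alpha)^k)$. A short two-subspaces argument then shows: if a space equals $X_k\oplus Y_k$ orthogonally and also $A_k+B_k$ with $B_k\subseteq\mathrm{ran}(I-P)$, $\delta(A_k,\mathrm{ran}(P))=O((\beta/\alpha)^k)$ and $\delta(Y_k,\mathrm{ran}(I-P))=O((\beta/\alpha)^k)$, then necessarily $\delta(X_k,\mathrm{ran}(P))=O((\beta/\alpha)^k)$, and in particular $\|(I-P)Q_k\hat e_j\|=O((\beta/\alpha)^k)$. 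Part~(i) now follows by summation: for finite $\mu$, $\delta(\mathrm{span}\{Q_n\hat e_j\}_{j=1}^\mu,\mathrm{ran}(P))\le(\sum_{j=1}^\mu\|(I-P)Q_n\hat e_j\|^2)^{1/2}=O((\beta/\alpha)^n)$, and when $M$ is finite \eqref{fd_property} upgrades this to $\hat\delta$. The claim $\hat e_j=e_j$ when $\{Pe_l\}_{l=1}^M$ are linearly independent is immediate from the construction.

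For part~(ii) I would copy the proof of Theorem~\ref{new_QR}(i): a closed ball in $\mathcal B(l^2(\mathbb N))$ is weakly sequentially compact, so every subsequence of $\{Q_n^*TQ_n\}$ has a further subsequence with $Q_{n_k}^*TQ_{n_k}\stackrel{\text{WOT}}{\longrightarrow}W$. For $i\in\Theta$ the bounds $\|PQ_{n_k}e_i\|=O((\beta/\alpha)^{n_k})$ (valid because such $e_i$ is discarded, or $i>K$ so $Pe_i\in\mathrm{span}\{P\hat e_l\}$) and $\|(I-P)Q_{n_k}\hat e_j\|=O((\beta/\alpha)^{n_k})$, together with $T\,\mathrm{ran}(P)\subseteq\mathrm{ran}(P)$ and $T\,\mathrm{ran}(I-P)\subseteq\mathrm{ran}(I-P)$, force $\langle W\hat e_j,e_i\rangle=0$ and $\langle We_i,\hat e_j\rangle=0$; this is precisely the block form claimed, with $\xi_j=W\hat e_j\in\overline{\mathrm{span}\{\hat e_l\}_{l=1}^M}$ and $\zeta_i=We_i\in\overline{\mathrm{span}\{e_l\}_{l\in\Theta}}$. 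Finally, if $T$ has finitely many non-zero entries per column, Proposition~\ref{subdiagonal} shows each $Q_n^*TQ_n$ inherits the same $f$-banded structure (Definition~\ref{quasi_f}), so the finite-support approximation argument of Theorem~\ref{new_QR}(i) upgrades WOT to SOT.

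The main obstacle I anticipate is the transfer from the clean power-iteration estimate to statements about the individual columns $Q_k\hat e_j$: this needs both the Lemma~\ref{lemma_span}-type bookkeeping of which columns carry the $P$-component and which are discarded, and the elementary but somewhat delicate two-subspaces lemma that converts ``$A_k$ near $\mathrm{ran}(P)$ and the orthogonal complement $Y_k$ near $\mathrm{ran}(I-P)$'' into ``the complementary summand $X_k$ is near $\mathrm{ran}(P)$'' — this is the role played, in the normal case, by the interlocking constants $A(m),B(j),C(\mu)$ of Lemma~\ref{noidea} and the proof of Theorem~\ref{QR-theorem}. Everything else is routine adaptation.
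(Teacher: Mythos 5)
Your proposal is correct and follows essentially the same route as the paper's proof: same definition of $\{\hat e_j\}$ via linear independence of $\{Pe_j\}$, same use of the Lemma~\ref{lemma_span} analogue, a power-iteration estimate for $T^k$, an induction on columns to transfer that estimate to $Q_k$, and the same weak-compactness plus matrix-entry argument (and Proposition~\ref{subdiagonal}) for part~(ii). The two places where you diverge stylistically are both sound. First, your power-iteration estimate bounds $\delta(\mathrm{span}\{T^k\hat e_j\}_{j=1}^l,\mathrm{ran}(P))$ directly via a conditioning constant $\kappa_l$ for $\{P\hat e_j\}_{j=1}^l$, whereas the paper goes through $\delta(\mathrm{span}\{PT^n\hat e_j\},\mathrm{span}\{T^n\hat e_j\})$ via a change of basis making $\{PT^n\hat e_j\}$ orthonormal; these are equivalent (each unit vector $v=\sum c_j T^k\hat e_j$ satisfies $\|v-Pv\|=O(r^k)$ with $Pv\in\mathrm{span}\{PT^k\hat e_j\}$). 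Second, your ``short two-subspaces argument'' replaces the paper's inner-product bookkeeping with constants $C_1,\dots,C_5$; it is correct, and in fact does not even need the auxiliary summand $B_k$ — it suffices that $A_k\subseteq G_k$ with $\dim A_k=\dim X_k=j$: for unit $a\in A_k$ and unit $y\in Y_k$ one has $|\langle a,y\rangle|\leq\|\Pi_{\mathcal P}a\|\|\Pi_{\mathcal P}y\|+\|\Pi_{\mathcal Q}a\|\|\Pi_{\mathcal Q}y\|\leq 2\epsilon$ (with $\mathcal P=\mathrm{ran}(P)$, $\mathcal Q=\mathrm{ran}(I-P)$), so $\Pi_{X_k}|_{A_k}$ is injective with $\|(\Pi_{X_k}|_{A_k})^{-1}\|\leq(1-4\epsilon^2)^{-1/2}$, hence bijective onto $X_k$; pulling a unit $x\in X_k$ back to $a=(\Pi_{X_k}|_{A_k})^{-1}x$ gives $\|(I-P)x\|\leq\|(I-P)a\|+\|\Pi_{Y_k}a\|\leq 3\epsilon\|a\|=O(\epsilon)$. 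This yields the same qualitative $O((\beta/\alpha)^n)$ conclusion as the paper, at the cost of not tracking explicit constants (which the paper does but does not subsequently rely on for this theorem).
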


\begin{remark}
Theorem \ref{non_normal_prop} essentially says that the IQR algorithm can compute the invariant subspace $\mathrm{ran}(P)$ of such an operator if there is enough separation between $T$ restricted to $\mathrm{ran}(P)$ and $\mathrm{ran}(I-P)$. In other words, provided the existence of a \textit{dominant} invariant subspace.
\end{remark}

\begin{proof}[\textbf{Proof of Theorem \ref{non_normal_prop}}:]
The main ideas of the proof of Theorem \ref{non_normal_prop} have already been presented so we sketch the proof. We first define the vectors $\{\hat e_j\}_{j=1}^M$ in a similar way to Definition \ref{define_the_basis} inductively by $\hat e_j=e_{p_j}$ where
$$
p_j=\min\{i:Pe_i\notin\mathrm{span}\{P\hat e_{k}\}_{k=1}^{j-1}\}.
$$
Let $r=\beta/\alpha<1$. We will prove inductively that
\begin{itemize}
	\item[(a)] $\hat\delta(\mathrm{span}\{Q_n\hat e_j\}_{j=1}^\mu,\mathrm{span}\{PQ_n\hat e_j\}_{j=1}^\mu)\leq C_1(\mu)r^n$ for any finite $\mu\leq M$,
	\item[(b)] $\|PQ_ne_j\|\leq C_2(j)r^n$ for any $j\in\Theta$,
\end{itemize}
for some constants $C_1(\mu)$ and $C_2(j)$. Suppose that this has been done. Part (i) of Theorem \ref{non_normal_prop} now follows since $\mathrm{span}\{PQ_n\hat e_j\}_{j=1}^\mu\subset\mathrm{ran}(P)$. We then argue as in the proof of Theorem \ref{new_QR} to gain
$$
 Q_{n_k}^*TQ_{n_k} \stackrel{\text{\scriptsize WOT}}{\longrightarrow} W, \qquad k
\rightarrow \infty.
$$
Then by studying the inner products $\langle TQ_{n_k}e_j,Q_{n_k}e_i\rangle$ using the invariance of $\mathrm{ran}(P)$, $\mathrm{ran}(I-P)$ under $T$ and from (b), part (ii) of Theorem \ref{non_normal_prop} easily follows (note that (a) implies that $\|(I-P)Q_n\hat e_j\|\leq C_1(j)r^n$). The final part of the theorem then follows from the same arguments in the proof of Theorem \ref{new_QR}. Hence we only need to prove (a) and (b).

We first claim that
\begin{equation}
\label{non_norm_delta}
\delta(\mathrm{span}\{PT^n\hat e_j\}_{j=1}^\mu,\mathrm{span}\{T^n\hat e_j\}_{j=1}^\mu)\leq C_3(\mu)r^n.
\end{equation}
$P$ commutes with $T$ which is invertible and hence both of these spaces have dimension $\mu$ by the construction of the $\hat e_j$. It follows that (\ref{non_norm_delta}) implies
\begin{equation}
\label{non_norm_delta2}
\hat\delta(\mathrm{span}\{PT^n\hat e_j\}_{j=1}^\mu,\mathrm{span}\{T^n\hat e_j\}_{j=1}^\mu)\leq \mu^{\frac{1}{2}}C_3(\mu)r^n=C_4(\mu)r^n.
\end{equation}
To show (\ref{non_norm_delta}), let $x_1^n,...,x_\mu^n$ be an orthonormal basis for $\mathrm{span}\{PT^n\hat e_j\}_{j=1}^\mu$ and let $\xi=\sum_{j=1}^\mu \alpha_j x_j^n$ have norm at most $1$. Now, we may choose coefficients $\beta_{j,n}$ such that $
T^n\sum_{j=1}^\mu \beta_{j,n} x_j^n = \xi$ since $T|_{\mathrm{ran}(P)}$ is invertible when viewed as an operator acting on $\mathrm{ran}(P)$. By the assumptions on $T$ we must have that
$$
\big(\sum_{j=1}^m\left|\beta_{j,n}\right|^2\big)^{1/2} \leq \frac{1}{\alpha^n}.
$$
We may change basis from $\{\hat e_j\}_{j=1}^\mu$ to $\{\tilde e_j\}_{j=1}^\mu$ such that $P\tilde e_j=x_j^n$. Form the vector
$$
\eta_n= T^n\big(\sum_{j=1}^\mu\beta_{j,n} \tilde e_j\big)\in \mathrm{span}\{T^n\hat e_j\}_{j=1}^\mu.
$$
Then clearly by H\"older's inequality
$$
\|\xi-\eta_n\|\leq \frac{\big(\sum_{j=1}^\mu\|T^n(I-P)\tilde e_j\|^2\big)^{1/2}}{\alpha^n}\leq C_3(\mu)\frac{\beta^n}{\alpha^n},
$$
proving (\ref{non_norm_delta}) and hence (\ref{non_norm_delta2}).

Note that the proof of Lemma \ref{lemma_span} carries over (replacing the projection $\chi_{\omega}(T)$ by $P$) to prove that
\begin{equation}
\label{ext_lemma}
\mathrm{span}\{PQ_ne_j\}_{j=1}^m=\mathrm{span}\{PQ_n\hat e_j\}_{j=1}^{s(m)}
\end{equation}
where $s(m)$ is maximal with $\{\hat e_j\}_{j=1}^{s(m)}\subset\{e_j\}_{j=1}^m$. It follows that
\begin{align*}
\delta(\mathrm{span}\{T^n\hat e_j\}_{j=1}^\mu,\mathrm{span}\{PQ_n\hat e_j\}_{j=1}^\mu)&=\delta(\mathrm{span}\{T^n\hat e_j\}_{j=1}^\mu,\mathrm{span}\{PQ_n e_j\}_{j=1}^{p_\mu})\\
&=\delta(\mathrm{span}\{T^n\hat e_j\}_{j=1}^\mu,\mathrm{span}\{PT^n e_j\}_{j=1}^{p_\mu})\\
&\leq\delta(\mathrm{span}\{T^n\hat e_j\}_{j=1}^\mu,\mathrm{span}\{PT^n \hat e_j\}_{j=1}^{\mu})\leq C_4(\mu)r^n,
\end{align*}
where we have used (\ref{usefulspan}) to reach the second line and the fact that $\mathrm{span}\{PT^n \hat e_j\}_{j=1}^{\mu}\subset\mathrm{span}\{PT^n e_j\}_{j=1}^{p_\mu}$ to reach the third line. Again, both spaces have dimension $\mu$ so we have
\begin{equation}
\label{what_we_wnt}
\begin{split}
\delta(\mathrm{span}\{PQ_n\hat e_j\}_{j=1}^\mu,\mathrm{span}\{Q_n e_j\}_{j=1}^{p_\mu})&=\delta(\mathrm{span}\{PQ_n\hat e_j\}_{j=1}^\mu,\mathrm{span}\{T^n e_j\}_{j=1}^{p_\mu})\\
&\leq \delta(\mathrm{span}\{PQ_n\hat e_j\}_{j=1}^\mu,\mathrm{span}\{T^n \hat e_j\}_{j=1}^{\mu})\leq C_5(\mu)r^n.
\end{split}
\end{equation}

With these arguments out of the way (these are the analogue of Proposition \ref{rate_extension}) we can now form our inductive argument, similar to the proof of Theorem \ref{QR-theorem}. Suppose first that (a) holds for $\mu$ (allowing $\mu=0$ for the initial step) and let $j\in\Theta$ have $j<p_{\mu+1}$ (where $p_{\mu+1}=\infty$ if $\mu=M$). From (a) for $\mu$ and (\ref{ext_lemma}) we have that
$$
PQ_ne_j=v_{n}+\sum_{i=1}^\mu a_{n,i}Q_n\hat e_i
$$
for some $v_{n}$ with $\|v_{n}\|\leq C_1(\mu)r^n$. Then we must have
$$
a_{n,i}+\langle v_{n}, Q_n\hat e_i\rangle = \langle PQ_n e_j,Q_n \hat e_i\rangle=\langle Q_n e_j,PQ_n \hat e_i\rangle.
$$
Using (a) again, along with the fact that $Q_n e_j$ is orthogonal to $\{Q_n\hat e_i\}_{i=1}^\mu$, we must have $\left|a_{n,i}\right|\leq 2C_1(\mu)r^n$. It follows that we can take $C_2(j)=(2\sqrt{\mu}+1)C_1(\mu)$ for $j\in[p_{\mu}+1,...,p_{\mu+1})$ in (b). Now we use (\ref{what_we_wnt}). Let $\xi\in\mathrm{span}\{PQ_n\hat e_j\}_{j=1}^{\mu+1}$ have unit norm and assume that $p_{\mu+1}<\infty$ (else there is nothing to prove since then $\mu=M$). Then there exists $b_{n,j}$ and $w_n$ such that
$$
\xi = \sum_{j=1}^{p_{\mu+1}} b_{n,j}Q_n e_j +w_n
$$
and $\|w_n\|\leq C_5(\mu+1)r^n$. Now let $j\in\Theta$ with $j<p_{\mu+1}$ then we must have
$$
\langle \xi, PQ_ne_j\rangle=\langle \xi, Q_ne_j\rangle=b_{n,j}+\langle w_n,Q_ne_j\rangle.
$$
We have proven (b) for such $j$ and hence we have $\left|b_{n,j}\right|\leq \big(C_2(j)+C_5(\mu+1)\big)r^n$. It follows that we can take
$$
C_1(\mu+1)=\mu^{\frac{1}{2}}\Big[C_5(\mu+1)+\Big\{\sum_{j=1,j\in\Theta}^{p_{\mu+1}}[C_2(j)+C_5(\mu+1)]^2\Big\}^{\frac{1}{2}}\Big],
$$
where the square root factor appears since the relevant spaces are $\mu$-dimensional. This completes the inductive step (the initial step is identical) and hence the proof of the theorem.
\end{proof}

Theorem \ref{non_normal_prop} can be made sharper (under a slightly stricter assumption on the linear independence of $\{e_j\}_{j=1}^M$) with the following theorem which includes the case that $\mathrm{ran}(I-P)$ is not necessarily invariant.

\begin{theorem}[Convergence to invariant subspace in infinite dimensions]
\label{non_normal_prop2}
Let $T \in \mathcal{B}(l^2(\mathbb{N}))$ be an invertible operator (not necessarily normal) and suppose that there exists an orthogonal projection $P$ of finite rank $M$ such that the range of $P$ is invariant under $T$. Suppose also that there exists $\alpha>\beta>0$ such that
\begin{itemize}
	\item $\|Tx\|\geq \alpha\|x\| \quad \forall x\in \mathrm{ran}(P)$,
	\item $\|(I-P)T(I-P)\|\leq \beta$.
\end{itemize}
Under these conditions, there exists a canonical $M$ dimensional $T^*-$invariant subspace $S$ and we let $\tilde P$ denote the orthogonal projection onto $S$ (in the special case that $\mathrm{ran}(I-P)$ is also $T$-invariant such as in Theorems \ref{new_QR} and \ref{non_normal_prop}, then $S=\mathrm{ran}(P)$). Suppose also that $\{\tilde Pe_j\}_{j=1}^M$ are linearly independent. Let $\{Q_n\}_{n\in\mathbb{N}}$ and $\{R_n\}_{n\in\mathbb{N}}$ be $Q$- and $R$-sequences of $T$ with respect to $\{e_i\}.$ Then
\begin{itemize}
	\item[(i)] The subspace angle $\phi(\mathrm{span}\{e_j\}_{j=1}^M,S)<\pi/2$ and we have
	\begin{equation}
	\label{beautiful}
	\hat\delta (\mathrm{span}\{Q_n e_j\}_{j=1}^M,\mathrm{ran}(P))\leq \frac{\sin\big(\phi(\mathrm{span}\{e_j\}_{j=1}^M,\mathrm{ran}(P))\big)}{\cos\big(\phi(\mathrm{span}\{e_j\}_{j=1}^M,S)\big)}\frac{\beta^n}{\alpha^n}\Big(1+\frac{\|PT(I-P)\|}{\alpha-\beta}\Big),
	\end{equation}
	\item[(ii)] Every subsequence of $\{Q_n^*TQ_n\}_{n\in\mathbb{N}}$ has a convergent subsequence 
$\{Q_{n_k}^*TQ_{n_k}\}_{k\in\mathbb{N}}$ such that 
\begin{equation*}
Q_{n_k}^*TQ_{n_k} \stackrel{\text{WOT}}{\longrightarrow} \sum_{j=1}^M \xi_j \otimes e_j \bigoplus \sum_{i=M+1}^{\infty} \zeta_i \otimes e_i, 
\end{equation*}
as $k \rightarrow \infty,$ where
$$
\xi_j \in \overline{\mathrm{span}\{e_l\}_{l=1}^M}, \quad \zeta_i\in \mathcal{H}.$$
\end{itemize}
Furthermore, if $T$ has only finitely many non-zero entries in each column then we can replace $WOT$ convergence by $SOT$ convergence.
\end{theorem}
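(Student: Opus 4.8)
The plan is to exploit the block structure of $T$ relative to a (generally non-orthogonal) $T$-invariant direct sum decomposition $\mathcal{H} = \mathrm{ran}(P) + S^\perp$, and then re-run the subspace-iteration arguments already used for Theorems \ref{new_QR} and \ref{non_normal_prop}. \textbf{Step 1 (constructing $S$).} Since $\mathrm{ran}(P)$ is $T$-invariant, $T$ has a $2\times 2$ block form relative to $\mathcal{H} = \mathrm{ran}(P)\oplus\mathrm{ran}(I-P)$ with upper-left block $A = PTP|_{\mathrm{ran}(P)}$, upper-right block $B = PT(I-P)$, lower-right block $D = (I-P)T(I-P)$, and vanishing lower-left block. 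The hypothesis $\|Tx\|\geq\alpha\|x\|$ on $\mathrm{ran}(P)$ makes $A$ invertible with $\|A^{-1}\|\leq\alpha^{-1}$ (it is injective and of finite rank on a space of equal dimension), and $\|D\|\leq\beta$. As $\|A^{-1}\|\,\|D\|\leq\beta/\alpha<1$, the Sylvester equation $YA^* - D^*Y = B^*$ has a unique solution $Y:\mathrm{ran}(P)\to\mathrm{ran}(I-P)$ (expand in a Neumann series), with $\|Y\|\leq\|B\|/(\alpha-\beta) = \|PT(I-P)\|/(\alpha-\beta)$. Put $S = \{x + Yx : x\in\mathrm{ran}(P)\}$, the canonical $M$-dimensional spectral subspace of $T^*$ for the part of the spectrum of modulus $\geq\alpha$; a direct computation from the Sylvester identity shows that $S^\perp = \{q - Y^*q : q\in\mathrm{ran}(I-P)\}$ is $T$-invariant, meets $\mathrm{ran}(P)$ only in $0$, and complements it, so $T$ is block-diagonal for $\mathcal{H} = \mathrm{ran}(P) + S^\perp$. (When $\mathrm{ran}(I-P)$ is $T$-invariant, $B = 0$, hence $Y = 0$ and $S = \mathrm{ran}(P)$, recovering the earlier theorems.) Finally, since $\tilde Pv = 0$ exactly when $v\perp S$, linear independence of $\{\tilde Pe_j\}_{j=1}^M$ is the same as injectivity of $\tilde P$ on $\mathrm{span}\{e_j\}_{j=1}^M$, that is $\phi(\mathrm{span}\{e_j\}_{j=1}^M, S) < \pi/2$; this is the first assertion of (i).

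\textbf{Step 2 (the two estimates and (i)).} By \eqref{usefulspan}, $\mathrm{span}\{Q_ne_j\}_{j=1}^M = \mathrm{span}\{T^ne_j\}_{j=1}^M =: V_n$, which is $M$-dimensional since $T$ is invertible. Two facts drive the rate. First, $T$-invariance of $\mathrm{ran}(P)$ gives $P^\perp T = DP^\perp$, so $P^\perp T^n = D^nP^\perp$ and $\|P^\perp T^nx\|\leq\beta^n\|P^\perp x\|$ for every $x$. Second, for a unit vector $x$ in $M_0 := \mathrm{span}\{e_j\}_{j=1}^M$ write $x = g + h$ with $g\in\mathrm{ran}(P)$, $h\in S^\perp$; then $T^nx = A^ng + T^nh$ with $A^ng\in\mathrm{ran}(P)$ and $T^nh\in S^\perp$. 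Using that $T|_{S^\perp}$ is conjugate to $D$ via $q\mapsto q - Y^*q$, together with $P^\perp h = P^\perp x$, one obtains $\mathrm{dist}(T^nx,\mathrm{ran}(P))\leq\|T^nh\|\leq\sqrt{1+\|Y\|^2}\,\beta^n\|P^\perp x\|\leq\sqrt{1+\|Y\|^2}\,\beta^n\sin\phi(M_0,\mathrm{ran}(P))$, while applying the oblique projection $\Pi$ onto $\mathrm{ran}(P)$ along $S^\perp$ gives $\|T^nx\|\geq\|\Pi\|^{-1}\|\Pi T^nx\| = \|\Pi\|^{-1}\|A^ng\|\geq\|\Pi\|^{-1}\alpha^n\|g\|$, and $\|g\|\geq\|\tilde Pg\| = \|\tilde Px\|\geq\cos\phi(M_0,S)$ since $\tilde Ph = 0$. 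Dividing, $\delta(V_n,\mathrm{ran}(P)) = \sup_{x\in M_0,\,\|x\|=1}\|P^\perp T^nx\|/\|T^nx\|$ is bounded by a constant times $(\beta/\alpha)^n\sin\phi(M_0,\mathrm{ran}(P))/\cos\phi(M_0,S)$; as $\dim V_n = \dim\mathrm{ran}(P) = M$ and this is eventually $< 1$, the standard identity $\hat\delta = \delta$ for equidimensional subspaces in that regime upgrades it to a bound on $\hat\delta(V_n,\mathrm{ran}(P))$. Keeping track of the constants, using $\sqrt{1+\|Y\|^2}\leq 1 + \|Y\|\leq 1 + \|PT(I-P)\|/(\alpha-\beta)$ and the norm of $\Pi$, then gives \eqref{beautiful}.

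\textbf{Step 3 (part (ii) and the SOT upgrade).} Weak sequential compactness of the unit ball of $\mathcal{B}(l^2(\mathbb{N}))$ gives a subsequence with $Q_{n_k}^*TQ_{n_k}\stackrel{\text{\scriptsize WOT}}{\longrightarrow} W$. For $i\leq M < j$ we have $\langle We_i,e_j\rangle = \lim_k\langle TQ_{n_k}e_i, Q_{n_k}e_j\rangle$; here $TQ_{n_k}e_i\in TV_{n_k} = V_{n_k+1}$, which by (i) lies within $O((\beta/\alpha)^{n_k})$ of $V_{n_k}$, whereas $Q_{n_k}e_j\perp V_{n_k}$ since $Q_{n_k}$ is unitary, so this inner product is $O((\beta/\alpha)^{n_k})\to 0$. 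Hence $W$ is block upper-triangular relative to $\overline{\mathrm{span}\{e_l\}_{l\leq M}}\oplus\overline{\mathrm{span}\{e_l\}_{l>M}}$, which is the form claimed in (ii). If moreover $T$ has finitely many nonzero entries per column, so does each $Q_{n_k}^*TQ_{n_k}$ by Proposition \ref{subdiagonal}, and the argument from the proof of Theorem \ref{new_QR} — approximate $x\in l^2(\mathbb{N})$ by a finitely supported vector and use the banded structure together with uniform boundedness — promotes the WOT convergence to SOT.

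\textbf{Expected main obstacle.} The delicate point is the lower bound showing $\|T^nx\|$ is comparable to $\alpha^n$ for $x\in M_0$ in Step 2. One cannot split $x$ merely along $\mathrm{ran}(P)\oplus\mathrm{ran}(I-P)$, because the coupling term $PT^nP^\perp = \sum_{k=0}^{n-1}A^kBD^{\,n-1-k}$ can grow like $\|A\|^n$ and cancel the leading term $A^n(Px)$; the remedy is precisely to split along the $T$-invariant complement $S^\perp$, which is why the canonical $T^*$-invariant subspace $S$ must be produced first via the Sylvester equation. Extracting the sharp trigonometric factors in \eqref{beautiful} afterwards is routine bookkeeping.
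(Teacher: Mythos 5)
Your proposal is correct and reaches \eqref{beautiful} by a genuinely different technical route. Step 1 (constructing $S$) is essentially the paper's Lemma \ref{schur} in different clothes: the paper works after a unitary change of basis, writing $Q^*TQ$ in block upper-triangular form with blocks $T_{11}, T_{12}, T_{22}$, and solves $T_{11}A - AT_{22}=-T_{12}$ by a Neumann series; your $Y:\mathrm{ran}(P)\to\mathrm{ran}(I-P)$ equals $-A^*$ and solves the adjoint Sylvester equation, so the two constructions of $S$ agree. Where you genuinely diverge is Step 2. The paper never leaves the QR coordinates: from $T^nP_0 = Q_nP_0Z_n$ it derives the exact matrix identities $T_{11}^n(V_0^1-AV_0^2)=(V_n^1-AV_n^2)Z_n$ and $T_{22}^nV_0^2=V_n^2Z_n$, eliminates $Z_n$ to get $V_n^2$ in closed form, then reads off $\hat\delta(\mathrm{span}\{Q_ne_j\}_{j=1}^M,\mathrm{ran}(P))=\|V_n^2\|$ from Lemma \ref{tidy1} and bounds $\|(V_0^1-AV_0^2)^{-1}\|$ via the subspace angle in Lemma \ref{tidy2}. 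You instead use $\mathrm{span}\{Q_ne_j\}_{j=1}^M=T^nM_0$ and estimate $\delta(T^nM_0,\mathrm{ran}(P))=\sup_{x\in M_0,\ \|x\|=1}\|P^\perp T^nx\|/\|T^nx\|$ directly, splitting $x$ along the oblique $T$-invariant sum $\mathcal{H}=\mathrm{ran}(P)\oplus S^\perp$ and controlling the denominator through the oblique projection $\Pi$; both routes then invoke the same equidimensional-gap symmetry ($\|U_{12}\|=\|U_{21}\|$ for unitary $U$) to upgrade $\delta$ to $\hat\delta$. Your version is more geometric, the paper's more mechanical with the constants falling out without optimisation. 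Step 3 and the SOT upgrade are sound and essentially the paper's argument.

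One bookkeeping slip worth fixing in Step 2: you state the sharp numerator bound $\|P^\perp T^nx\|\le\beta^n\|P^\perp x\|$ (from $P^\perp T^n=D^nP^\perp$) but then estimate the numerator via $\|T^nh\|$, which picks up a spurious $\sqrt{1+\|Y\|^2}$. Since $\|\Pi\|=\sqrt{1+\|Y\|^2}$ is already paid for in the denominator, your ratio carries the constant $1+\|Y\|^2$ rather than the paper's $1+\|Y\|$, which is strictly weaker whenever $\|Y\|>1$. Using the direct numerator bound together with $\|\Pi\|=\sqrt{1+\|Y\|^2}\le 1+\|Y\|\le 1+\|PT(I-P)\|/(\alpha-\beta)$ recovers \eqref{beautiful} exactly.
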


\begin{remark}
Theorem \ref{non_normal_prop2} says that the IQR algorithm can be used to approximate dominant invariant subspaces. In particular, we shall use the bound (\ref{beautiful}) to build a $\Delta_1$ algorithm in Section \ref{class_thms}. Note in the normal case that Theorem \ref{new_QR} is more precise, both in giving convergence of individual vectors to eigenvectors and in the less restrictive assumptions on spanning sets and $M$. In the normal case (and that of Theorem \ref{non_normal_prop}) we also have that the limit operator has a block diagonal form.
\end{remark}

\subsection{Proof of Theorem \ref{non_normal_prop2}}

In this section we will prove Theorem \ref{non_normal_prop2}. The proof technique is different to those used above and hence we have given it a separate section. Throughout, we will denote the ratio $\beta/\alpha$ by $r$. Note that since $M$ is finite, the bound $\alpha$ implies that $T|_{\mathrm{ran}(P)}:\mathrm{ran}(P)\rightarrow\mathrm{ran}(P)$ is invertible with $\|T|_{\mathrm{ran}(P)}^{-1}\|\leq 1/\alpha$. First, let $Q$ denote a unitary change of basis matrix from $\{e_j\}$ to $\{\tilde e_j\}$ where $\{\tilde e_j\}_{j=1}^M$ is a basis for $\mathrm{ran}(P)$. Then as matrices with respect to the original basis we can write
$$
Q=[P_1,P_2],\quad Q^* TQ=
\left(\begin{matrix}
T_{11}      & T_{12} \\
0  & T_{22}
\end{matrix}\right),
$$ 
where $T_{11}\in\mathbb{C}^{M\times M}$ and $T_{12}$ has $M$ rows. Our assumptions imply that $\|T_{11}^{-1}\|\leq 1/\alpha$ and $\|T_{22}\|\leq \beta$. The next lemma shows that we can change the basis further to eliminate the sub-block $T_{12}$. This is needed to apply a power iteration type argument.

\begin{lemma}
\label{schur}
Define the linear function $F:\mathcal{B}(l^2(\mathbb{N}),\mathbb{C}^M)\rightarrow\mathcal{B}(l^2(\mathbb{N}),\mathbb{C}^M)$ by
$$
F(A)=T_{11}^{-1}AT_{22},
$$
where we identify elements of $\mathcal{B}(l^2(\mathbb{N}),\mathbb{C}^M)$ as matrices. Then we can define $A\in\mathcal{B}(l^2(\mathbb{N}),\mathbb{C}^M)$ by
$A-F(A)=-T_{11}^{-1}T_{12}$. Furthermore, if we define
$$
B(A)=\left(\begin{matrix}
I      & A \\
0  & I
\end{matrix}\right),
$$
then $B(A)$ has inverse $B(-A)$ and
\begin{equation}
\label{schur_decomp}
B(-A)\left(\begin{matrix}
T_{11}      & T_{12} \\
0  & T_{22}
\end{matrix}\right)B(A)=\left(\begin{matrix}
T_{11}      & 0 \\
0  & T_{22}
\end{matrix}\right).
\end{equation}
\end{lemma}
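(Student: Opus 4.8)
The plan is to read the lemma as three successive claims and dispatch them in that order: (1) the relation $A-F(A)=-T_{11}^{-1}T_{12}$ determines a bona fide bounded operator $A$; (2) $B(-A)$ is a two-sided inverse of $B(A)$; and (3) the block-diagonalisation identity \eqref{schur_decomp}. Essentially all of the content sits in (1); claims (2) and (3) are one-line block-matrix computations.

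For (1), I would observe that $F$ is a bounded linear operator on the Banach space $\mathcal{B}(l^2(\mathbb{N}),\mathbb{C}^M)$ (here $T_{22}$ is understood as the operator induced on $\mathrm{ran}(I-P)$, a separable infinite-dimensional Hilbert space since $P$ has finite rank, so the identification with $l^2(\mathbb{N})$ is harmless). Submultiplicativity of the operator norm together with the two bounds already recorded before the lemma, $\|T_{11}^{-1}\|\leq 1/\alpha$ and $\|T_{22}\|\leq\beta$, gives
\[
\|F(A)\|=\|T_{11}^{-1}AT_{22}\|\leq\|T_{11}^{-1}\|\,\|A\|\,\|T_{22}\|\leq\frac{\beta}{\alpha}\,\|A\|=r\|A\|,
\]
so $\|F\|\leq r<1$. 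Hence $I-F$ is invertible on $\mathcal{B}(l^2(\mathbb{N}),\mathbb{C}^M)$ with $(I-F)^{-1}=\sum_{k\geq0}F^k$ converging in operator norm and $\|(I-F)^{-1}\|\leq(1-r)^{-1}$. Therefore $A:=(I-F)^{-1}\big(-T_{11}^{-1}T_{12}\big)$ is the unique bounded solution of $A-F(A)=-T_{11}^{-1}T_{12}$, and moreover $\|A\|\leq\|T_{11}^{-1}T_{12}\|/(1-r)\leq\|PT(I-P)\|/(\alpha-\beta)$, a bound I would record since it reappears in \eqref{beautiful}.

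Claims (2) and (3) are then mechanical. Multiplying block matrices, $B(A)B(-A)=B(-A)B(A)=\left(\begin{matrix}I&0\\0&I\end{matrix}\right)$ because the off-diagonal contributions cancel as $A-A=0$; in particular $B(A)$ is bounded and boundedly invertible with $B(A)^{-1}=B(-A)$, which is precisely what later permits conjugation by $B(A)$. For \eqref{schur_decomp}, I would first compute
\[
\left(\begin{matrix}T_{11}&T_{12}\\0&T_{22}\end{matrix}\right)B(A)=\left(\begin{matrix}T_{11}&T_{11}A+T_{12}\\0&T_{22}\end{matrix}\right),
\]
and then left-multiply by $B(-A)$, which only alters the $(1,2)$-block, turning it into $T_{11}A+T_{12}-AT_{22}$. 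Using the defining equation in the form $T_{11}A-AT_{22}=T_{11}\big(A-T_{11}^{-1}AT_{22}\big)=T_{11}\big(A-F(A)\big)=-T_{12}$, this block vanishes; the diagonal blocks are unchanged and the $(2,1)$-block remains $0$, which is exactly \eqref{schur_decomp}.

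The only conceptually substantive step is recognising $A-F(A)=-T_{11}^{-1}T_{12}$ as a Sylvester equation $T_{11}A-AT_{22}=-T_{12}$, whose solvability in the infinite-dimensional setting would ordinarily need care; here the spectral separation is quantitative, reflected in $\|T_{11}^{-1}\|\,\|T_{22}\|\leq\beta/\alpha<1$, which forces $F$ to be a strict contraction, so the Neumann series delivers the solution directly without invoking any spectral-theoretic machinery. Everything else — the block algebra and the tacit identification of operator spaces — is routine bookkeeping.
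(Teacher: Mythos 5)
Your proof is correct and matches the paper's argument: both establish that $F$ is a contraction with $\|F\|\leq\beta/\alpha<1$ via submultiplicativity and the bounds on $T_{11}^{-1}$ and $T_{22}$, define $A$ through the Neumann series for $(I-F)^{-1}$, and then verify the block identities by direct computation (your explicit rewriting of the $(1,2)$-block as $T_{11}(A-F(A))=-T_{12}$ is the same "straightforward check" the paper leaves implicit). The bound $\|A\|\leq\|PT(I-P)\|/(\alpha-\beta)$ you record is indeed exactly what the paper invokes a few lines later in the proof of Theorem~\ref{non_normal_prop2}.
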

\begin{proof}
Our assumptions on $T$ ensure that $F$ is a contraction with $\left\|F\right\|\leq r<1$. Hence we can define $A$ via the series
$$
A=\sum_{k=0}^{\infty}F^{k}(-T_{11}^{-1}T_{12}).
$$
It is then straightforward to check $A-F(A)=-T_{11}^{-1}T_{12}$, $B(A)B(-A)=B(-A)B(A)=I$ and the identity (\ref{schur_decomp}).
\end{proof}

Let
$$
Y=Q\left(\begin{matrix}
I      & 0 \\
-A^*  & I
\end{matrix}\right)
$$
then we have the matrix identity
$$
Y^{-1}T^*Y=\left(\begin{matrix}
T_{11}^*      & 0 \\
0  & T_{22}^*
\end{matrix}\right).
$$
The canonical $T^*-$invariant subspace alluded to in Theorem \ref{non_normal_prop2} is then simply $S=\mathrm{span}\{Ye_j\}_{j=1}^M$. The space is canonical since it is easily seen that it is unchanged if we use a different basis for $\mathrm{ran}(P_1)$ and $\mathrm{ran}(P_2)$ in the definition of $Q$.

Now let $P_0=\left(
\begin{matrix}
 e_1 & e_2 & \hdots & e_M
\end{matrix}
\right)\in\mathcal{B}(\mathbb{C}^M,l^2(\mathbb{N}))$ denote the matrix who's columns are the first $M$ basis elements $\{e_j\}_{j=1}^M$. Since the $\{R_i\}$ are upper triangular, it is easy to see that
$$
T^nP_0=Q_nR_nP_0=Q_nP_0P_0^*R_nP_0.
$$
We will denote the (invertible) matrix $P_0^*R_nP_0\in\mathbb{C}^{M\times M}$ by $Z_n$. Now define
$$
V^1_n=P_1^*Q_nP_0\in\mathcal{B}(\mathbb{C}^M), \quad V^2_n=P_2^*Q_nP_0\in\mathcal{B}(\mathbb{C}^M,l^2(\mathbb{N})),
$$
then we have the relation
$$
\left(\begin{matrix}
T_{11}      & T_{12} \\
0  & T_{22}
\end{matrix}\right)^n\left(\begin{matrix}
V^1_0       \\
V^2_0  
\end{matrix}\right)=\left(\begin{matrix}
V^1_n       \\
V^2_n  
\end{matrix}\right)Z_n.
$$
But by Lemma \ref{schur} we have
$$
\left(\begin{matrix}
T_{11}      & T_{12} \\
0  & T_{22}
\end{matrix}\right)^n=B(A)\left(\begin{matrix}
T_{11}^n      & 0 \\
0  & T_{22}^n
\end{matrix}\right)B(-A).
$$
Unwinding the definitions, this implies the matrix identities
\begin{align}
T_{11}^n(V_0^1-AV_0^2)&=(V_n^1-AV_n^2)Z_n,\label{mat1}\\
T_{22}^nV_0^2&=V_n^2Z_n\label{mat2}.
\end{align}

\begin{lemma}
\label{tidy1}
The following identity holds
\begin{equation}
\label{id1}
\hat\delta (\mathrm{span}\{Q_n e_j\}_{j=1}^M,\mathrm{ran}(P))=\|V_n^2\|.
\end{equation}
\end{lemma}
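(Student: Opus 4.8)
The plan is to translate both sides of (\ref{id1}) into statements about the square matrix $V_n^1=P_1^*Q_nP_0$ and the operator $V_n^2=P_2^*Q_nP_0$, and then to exploit the single orthogonality identity $(V_n^1)^*V_n^1+(V_n^2)^*V_n^2=I_M$.

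First I would record the basic structural facts. Since $Q=[P_1,P_2]$ is unitary, $P_1$ and $P_2$ are isometries with $P_1^*P_1=I_M$, $P_2^*P_2=I$, $P_1P_1^*=P$ and $P_2P_2^*=I-P$. Since each $Q_n$ is unitary and $\{e_j\}$ is orthonormal, the columns of $Q_nP_0$ are orthonormal, so $Q_nP_0$ is an isometry and the orthogonal projection onto $\mathrm{span}\{Q_ne_j\}_{j=1}^M$ is $E_n:=Q_nP_0P_0^*Q_n^*$. By (\ref{cam}), $\hat\delta(\mathrm{span}\{Q_ne_j\}_{j=1}^M,\mathrm{ran}(P))=\|E_n-P\|=\max(\|P^\perp E_n\|,\|PE_n^\perp\|)$, where $P^\perp=I-P$.

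Next I would compute the two terms. For the first, using $\|X\|^2=\|X^*X\|$ together with the fact that conjugation by the isometry $Q_nP_0$ (i.e. $Y\mapsto (Q_nP_0)Y(Q_nP_0)^*$) preserves the operator norm, $\|P^\perp E_n\|^2=\|E_nP^\perp E_n\|=\|P_0^*Q_n^*P^\perp Q_nP_0\|=\|(V_n^2)^*V_n^2\|=\|V_n^2\|^2$, where I used $P^\perp=P_2P_2^*$ and $(V_n^2)^*=P_0^*Q_n^*P_2$. For the second, $\|PE_n^\perp\|^2=\|PE_n^\perp P\|=\|P-PE_nP\|$; since $PE_nP=P_1V_n^1(V_n^1)^*P_1^*$, $P=P_1P_1^*$ and $P_1$ is an isometry, this equals $\|I_M-V_n^1(V_n^1)^*\|$, and as $V_n^1$ is a square $M\times M$ matrix this in turn equals $\|I_M-(V_n^1)^*V_n^1\|$.

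Finally I would close the argument with the orthogonality identity $(V_n^1)^*V_n^1+(V_n^2)^*V_n^2=P_0^*Q_n^*(P_1P_1^*+P_2P_2^*)Q_nP_0=P_0^*Q_n^*Q_nP_0=I_M$, which gives $I_M-(V_n^1)^*V_n^1=(V_n^2)^*V_n^2$ and hence $\|PE_n^\perp\|=\|V_n^2\|$ as well; combining the two computations gives (\ref{id1}). There is no genuine obstacle here: the only points that need a little care are checking that $Q_nP_0$ is an isometry (so that $E_n$ has the stated form and conjugation by $Q_nP_0$ is norm-preserving) and that $V_n^1$ is square, which is what allows one to interchange $V_n^1(V_n^1)^*$ and $(V_n^1)^*V_n^1$.
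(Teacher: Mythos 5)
Your proof is correct, and it reaches the identity by a route that differs in its concrete details from the paper's.  The paper starts from $\hat\delta=\|Q_nP_0P_0^*Q_n^*-P_1P_1^*\|$, conjugates this difference of projections by $Q_n^*$ on the left and $Q$ on the right to put it into the block anti-diagonal form $\left(\begin{smallmatrix}0 & U_{12}\\ -U_{21} & 0\end{smallmatrix}\right)$ with $U=Q_n^*Q$, and then equalises the two blocks via a singular-value argument ($\|U_{21}\|^2=1-\sigma_0(U_{11})^2=\|U_{12}\|^2$ because $U_{11}$ is square).  You instead split $\hat\delta$ via (\ref{cam}) into $\max(\|P^{\perp}E_n\|,\|PE_n^{\perp}\|)$, evaluate each term separately by conjugating through the isometries $Q_nP_0$ and $P_1$, and then identify both with $\|V_n^2\|$ by invoking the orthogonality relation $(V_n^1)^*V_n^1+(V_n^2)^*V_n^2=I_M$ together with $\|I_M-A^*A\|=\|I_M-AA^*\|$ for square $A$.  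The two arguments are morally parallel --- both rest on the unitarity of $Q_n^*Q$ restricted to the first $M$ columns and on the squareness of the diagonal $M\times M$ block --- but yours makes the isometry identity the pivot and avoids the singular-value computation, at the modest cost of having to carry out the two-term computation in (\ref{cam}) rather than handling the difference of projections all at once.  Both are valid; the paper's version is slightly shorter once the conjugation trick is spotted, while yours is perhaps more transparent about exactly why the two off-diagonal norms must agree.
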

\begin{proof}
Note that $\mathrm{span}\{Q_n e_j\}_{j=1}^M=\mathrm{ran}(Q_nP_0)$ and $\mathrm{ran}(P)=\mathrm{ran}(P_1)$. Since $P_1P_1^*$ and $Q_nP_0P_0^*Q_n^*$ are orthogonal projections, it follows that
\begin{align*}
\hat\delta (\mathrm{span}\{Q_n e_j\}_{j=1}^M,\mathrm{ran}(P))&=\| Q_nP_0P_0^*Q_n^*-P_1P_1^*\|\\
&=\| Q_n^*(Q_nP_0P_0^*Q_n^*-P_1P_1^*)Q\|\\
&=\left\|\left(\begin{matrix}
0      & P_0^*Q_n^*P_2 \\
-(I-P_0)^*Q_n^*P_1  & 0
\end{matrix}\right)\right\|.
\end{align*}
But we have that $\|P_0^*Q_n^*P_2\|=\|V_n^2\|$ and hence we are done if we can show $\|P_0^*Q_n^*P_2\|=\|(I-P_0)^*Q_n^*P_1\|$. Consider the unitary matrix
$$
U:=Q_n^*Q=\left(\begin{matrix}
P_0^*Q_n^*P_1      & P_0^*Q_n^*P_2 \\
(I-P_0)^*Q_n^*P_1  & (I-P_0)^*Q_n^*P_2
\end{matrix}\right)=\left(\begin{matrix}
U_{11}      & U_{12} \\
U_{21}  & U_{22}
\end{matrix}\right).
$$
Now let $x\in\mathbb{C}^{M}$ be of unit norm, then $\|U_{11}x\|^2+\|U_{21}x\|^2=1$. It follows that
$\|U_{21}\|^2=1-\sigma_0(U_{11})^2$, where $\sigma_0$ denotes the smallest singular value. Applying the same argument to $U^*$ we see that $\|U_{12}\|^2=1-\sigma_0(U_{11})^2=\|U_{21}\|^2$, completing the proof.
\end{proof}

\begin{lemma}
\label{tidy2}
The matrix $(V_0^1-AV_0^2)$ is invertible with
\begin{equation}
\label{id2}
\|(V_0^1-AV_0^2)^{-1}\|\leq\frac{1}{\cos\big(\phi(\mathrm{span}\{e_j\}_{j=1}^M,S)\big)}.
\end{equation}
\end{lemma}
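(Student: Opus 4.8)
The strategy is to recognise $V_0^1-AV_0^2$ as a compression (between two $M$-dimensional subspaces) of an operator whose range is precisely $S$, and then to read off both invertibility and the norm bound from two-projections geometry.

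First I would unwind the definitions. Let $W\colon\mathbb{C}^M\to\mathcal H$ be the operator whose $j$-th column is $Ye_j$ for $j\le M$; unwinding the definition of $Y$ together with $Q=[P_1,P_2]$ gives $W=P_1-P_2A^*$, and by construction $\mathrm{ran}(W)=\mathrm{span}\{Ye_j\}_{j=1}^M=S$, so $\ker(W^*)=S^\perp$. Using $Q_0=I$ (so that $V_0^i=P_i^*P_0$) and the unitarity relations $P_1^*P_1=I_M$, $P_2^*P_2=I$, $P_1^*P_2=0$, a short computation gives
\[
V_0^1-AV_0^2=(P_1^*-AP_2^*)P_0=W^*P_0,\qquad W^*W=I_M+AA^*\ \ge\ I_M.
\]
The inequality $W^*W\ge I_M$ yields $\|W^*s\|\ge\|s\|$ for every $s\in S=\mathrm{ran}(W)$: writing $s=Wz$, one has $\|W^*s\|=\|(I+AA^*)z\|\ge\|(I+AA^*)^{1/2}z\|=\|Wz\|=\|s\|$, since $(I+AA^*)^{1/2}\ge I$.

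Next I would bring in the subspace angle. Let $E_M$ and $\tilde P$ denote the orthogonal projections onto $\mathrm{span}\{e_j\}_{j=1}^M$ and $S$, and set $\phi=\phi(\mathrm{span}\{e_j\}_{j=1}^M,S)$. The hypothesis that $\{\tilde Pe_j\}_{j=1}^M$ are linearly independent says exactly that $\tilde P$ is injective on the $M$-dimensional space $\mathrm{span}\{e_j\}_{j=1}^M$, hence (equal dimensions) so is $E_M$ on $S$; a compactness argument on the unit sphere of these finite-dimensional spaces then rules out $\hat\delta(\mathrm{span}\{e_j\}_{j=1}^M,S)=1$, so by \eqref{subspace_angle} we get $\phi<\pi/2$ and $\cos\phi>0$. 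By \eqref{cam}, $\|\tilde P^\perp E_M\|\le\hat\delta=\sin\phi$, so for any $y\in\mathrm{span}\{e_j\}_{j=1}^M$ we have $\|\tilde Py\|^2=\|y\|^2-\|\tilde P^\perp y\|^2\ge\cos^2\phi\,\|y\|^2$. Putting these together, for $x\in\mathbb{C}^M$, using $W^*\tilde P^\perp=0$ and that $P_0$ is an isometry,
\[
\|(V_0^1-AV_0^2)x\|=\|W^*\tilde PP_0x\|\ \ge\ \|\tilde PP_0x\|\ \ge\ \cos\phi\,\|P_0x\|=\cos\phi\,\|x\|.
\]
Since $\cos\phi>0$, this shows $V_0^1-AV_0^2$ is bounded below, hence invertible (it is $M\times M$), with $\|(V_0^1-AV_0^2)^{-1}\|\le 1/\cos\phi$.

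The only genuinely delicate points are the bookkeeping behind the identity $V_0^1-AV_0^2=W^*P_0$ and the observation $W^*W=I_M+AA^*\ge I_M$, which is exactly what allows $\tilde P$ to pass through $W^*$ without losing anything in norm; the rest is routine. I would not expect any obstacle beyond keeping the block decomposition consistent.
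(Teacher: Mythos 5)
Your proof is correct, and it is a close variant of the paper's argument but with some genuinely different bookkeeping worth noting. Both of you reduce the claim to the geometry of the two $M$-dimensional subspaces $\mathrm{span}\{e_j\}_{j=1}^M$ and $S$ via a factorisation of $V_0^1-AV_0^2$ through an operator with range $S$. The paper normalises first, setting $W_{\mathrm{paper}}=(P_1-P_2A^*)(I+AA^*)^{-1/2}$ so that its columns are orthonormal (an isometry onto $S$), writes $V_0^1-AV_0^2=(I+AA^*)^{1/2}(W_{\mathrm{paper}}^*P_0)$, and uses the exact identity $\sigma_0(W_{\mathrm{paper}}^*P_0)=\cos\phi$ (recycling the singular-value/two-projections argument from Lemma~\ref{tidy1}) together with $\|(I+AA^*)^{-1/2}\|\le1$ to get the bound on the inverse. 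You instead keep $W=P_1-P_2A^*$ un-normalised, observe $W^*W=I+AA^*\ge I$, and turn this into the pointwise lower bound $\|W^*s\|\ge\|s\|$ for $s\in S$; inserting $\tilde P$ in front of $P_0$ (legitimate since $\ker W^*=S^\perp$) and using $\|\tilde P^\perp E_M\|\le\sin\phi$ then gives $\|(V_0^1-AV_0^2)x\|\ge\cos\phi\,\|x\|$ directly. Your route is a bit more elementary in that it avoids singular values and the cross-reference to Lemma~\ref{tidy1}, at the small cost of proving the lower bound $\|W^*s\|\ge\|s\|$ by hand (the step $\|(I+AA^*)z\|\ge\|(I+AA^*)^{1/2}z\|$), whereas the paper's normalisation makes that step automatic. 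Both give exactly the same constant. Your compactness argument for $\phi<\pi/2$ is fine; the paper asserts this more tersely but the content is the same.
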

\begin{proof}
First note that since $\{\tilde Pe_j\}_{j=1}^M$ are linearly independent, we must have $\phi(\mathrm{span}\{e_j\}_{j=1}^M,S)<\pi/2$ and hence the bound in (\ref{id2}) is finite. Let $W=(P_1-P_2A^*)(I+AA^*)^{-1/2}\in\mathcal{B}(\mathbb{C}^M,l^2(\mathbb{N}))$. By considering $W^*W=I\in\mathbb{C}^{M\times M}$, we see that the columns of $W$ are orthonormal. In fact, expanding $Y$ we have
$$
Y=[P_1-P_2A^* \quad P_2]
$$
and hence the columns of $W$ are a basis for the subspace $S$. Arguing as in the proof of Lemma \ref{tidy1}, we have that
$$
\hat\delta(\mathrm{span}\{e_j\}_{j=1}^M,S)=\sqrt{1-\sigma_0(W^*P_0)^2}<1.
$$
This implies that $W^*P_0$ is invertible with
$$
\sigma_0(W^*P_0)=\cos\big(\phi(\mathrm{span}\{e_j\}_{j=1}^M,S\big)\big)>0.
$$
We also have the identity
$$
V_0^1-AV_0^2=(I+AA^*)^{1/2}(W^*P_0).
$$
Since $(I+AA^*)^{-1/2}$ has norm at most $1$, we see that $(V_0^1-AV_0^2)$ is invertible and (\ref{id2}) holds.
\end{proof}

\begin{proof}[\textbf{Proof of Theorem \ref{non_normal_prop2}}:]
Using Lemma \ref{tidy2} and the matrix identities (\ref{mat1}) and (\ref{mat2}), we can write
$$
V_n^2=T_{22}^nV_0^2(V_0^1-AV_0^2)^{-1}T_{11}^{-n}(V_n^1-AV_n^2).
$$
Using (\ref{id1}) and (\ref{id2}), this implies
\begin{equation}
\label{bd1}
\begin{split}
\hat\delta (\mathrm{span}\{Q_n e_j\}_{j=1}^M,\mathrm{ran}(P))&\leq \frac{\|V_0^2\|\|V_n^1-AV_n^2\|r^n}{\cos\big(\phi(\mathrm{span}\{e_j\}_{j=1}^M,S)\big)}\\
&=\frac{\sin\big(\phi(\mathrm{span}\{e_j\}_{j=1}^M,\mathrm{ran}(P))\big)\|V_n^1-AV_n^2\|r^n}{\cos\big(\phi(\mathrm{span}\{e_j\}_{j=1}^M,S)\big)}.
\end{split}
\end{equation}
It is clear by summing a geometric series that
$$
\|A\|\leq \frac{\|T_{12}\|}{\alpha(1-r)}=\frac{\|PT(I-P)\|}{\alpha-\beta}.
$$
It follows that $\|V_n^1-AV_n^2\|\leq 1+\|PT(I-P)\|/(\alpha-\beta)$. Substituting this into (\ref{bd1}) proves part (i) of the theorem.

Next we argue that if $i>M$ then $\|PQ_ne_i\|\rightarrow 0$ as $n\rightarrow\infty$. We have that
$$
PQ_ne_i=\sum_{j=1}^M\alpha_{j,n}Q_ne_j+v_n
$$
with $\|v_n\|\rightarrow 0$ by part (i). Note that we then have
$$
\alpha_{j,n}=\langle PQ_ne_i, Q_ne_j \rangle +\epsilon_{j,n}=\langle Q_ne_i, PQ_ne_j \rangle +\epsilon_{j,n}
$$
with $\{\epsilon_{j,n}\}$ null. But again by (i) we have that $PQ_ne_j$ approaches $\mathrm{span}\{Q_ne_k\}_{k=1}^M$ which is orthogonal to $Q_ne_i$ and hence $\{\alpha_{j,n}\}$ is null. The proof of part (ii) now follows the same argument as in the proof of part (i) of Theorem \ref{new_QR} and of the final part of Theorem \ref{non_normal_prop}. The key property being that if $j\leq M$ and $i>M$ then $\langle Q_n^*TQ_ne_j,e_i\rangle\rightarrow 0$ due to the invariance of $\mathrm{ran}(P)$ under $T$. Note that it does not necessarily follow (as is easily seen by considering upper triangular $T$) that $\langle Q_n^*TQ_ne_i,e_j\rangle\rightarrow 0$ for such $i,j$.
\end{proof}

\section{The IQR algorithm can be computed}\label{implement}

The previous section gives a theoretical justification for why the IQR algorithm may work, but we are faced with the possibly unpleasant problem of how to compute with infinite data structures on a computer. Fortunately there is a way to overcome such a problem. The key is to impose some structural requirements on the infinite matrix.

\subsection{Quasi-banded subdiagonals}
\label{Quasi_banded_Subdiagonals}

\begin{definition}
\label{quasi_f}
Let $T$ be an infinite matrix acting as a bounded operator on $l^2(\mathbb{N})$
with basis $\{e_j\}_{j\in\mathbb{N}}$. For $f:\mathbb{N}\rightarrow\mathbb{N}$ non-decreasing with $f(n)\geq n$ we say that $T$ has quasi-banded subdiagonals with respect to $f$ if $\langle Te_j,e_i\rangle = 0$
when $i > f(j).$ 
\end{definition} 

This is the class of infinite matrices with a finite number of non-zero elements in each column (and not necessarily in each row) which is captured by the function $f$. It is for this class that the computation of the IQR algorithm is feasible on a finite machine. For this class of operators one can actually compute (without any approximation or any extra discretisation) the matrix elements of the $n$-th iteration of the IQR algorithm as if it was done on an infinite computer (meaning the computation collapses to a finite one). The following result of independent interest is needed in the proof and generalises the well known fact in finite dimensions that the QR algorithm preserves bandwidth (see \cite{parlett1998symmetric} for a good discussion of the tridiagonal case).

\begin{proposition}\label{subdiagonal}
Let $T \in \mathcal{B}(l^2(\mathbb{N}))$ and let $T_n$ be the $n$-th element in the IQR iteration, such that
$
T_n = Q^*_n \cdots Q^*_1 T Q_1 \cdots Q_n,
$
where 
$$
Q_j = \underset{l \rightarrow \infty}{\text{SOT-lim \,}} U^j_1\cdots U^j_l
$$ and $U^j_l$ is a Householder transformation. 
If T has quasi-banded subdiagonals with respect to $f$ then so does $T_n$.
\end{proposition}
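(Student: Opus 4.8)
The plan is to reduce to a single QR step and then induct on $n$. Since $T_n = R_n Q_n$ where $T_{n-1} = Q_n R_n$ is the Householder QR decomposition of Theorem \ref{QRthrm}, and $T_0 = T$ has quasi-banded subdiagonals with respect to $f$ by hypothesis, it suffices to prove: if $S\in\mathcal{B}(l^2(\mathbb{N}))$ has quasi-banded subdiagonals with respect to $f$ and $S = QR$ is its Householder QR decomposition, then $RQ$ does too.

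The first and main step is a structural lemma: the Householder vectors are \emph{locally supported}. Write $Q$ as the strong-operator-topology limit of $V_n = U_1\cdots U_n$, with $U_j = I_1\oplus\big(I_2 - \frac{2}{\|\xi_j\|^2}\xi_j\otimes\bar\xi_j\big)$ relative to the decomposition $\mathcal{H} = \mathrm{span}\{e_1,\dots,e_{j-1}\}\oplus\overline{\mathrm{span}}\{e_j,e_{j+1},\dots\}$, with $\xi_j$ in the second summand. I would prove by induction on $j$ that (i) the partially triangularised operator $U_j\cdots U_1 S$ still has quasi-banded subdiagonals with respect to $f$, and (ii) $\xi_{j+1}$ is supported in rows $j+1,\dots,f(j+1)$. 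In the inductive step, (i) says the $j$-th column of $U_{j-1}\cdots U_1 S$ is supported in rows $\le f(j)$, and $\xi_j$ is built from that column restricted to rows $\ge j$ together with a multiple of $e_j$, so $\xi_j$ is supported in rows $j,\dots,f(j)$; applying $U_j$ replaces each later column by itself minus a scalar multiple of $\xi_j$, which (because $f$ is non-decreasing) cannot push the row-support of the $i$-th column past $f(i)$, while columns with index $<j$ are left untouched, being already supported in rows $\le j-1$ and hence orthogonal to $\xi_j$.

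From this I would deduce that $Q$ itself has quasi-banded subdiagonals with respect to $f$. Since $\xi_n$ is supported in rows $\ge n$, we have $U_n e_J = e_J$ whenever $n>J$, so $V_n e_J = U_1\cdots U_J e_J$ for every $n\ge J$ and the limit is attained after finitely many factors: $Q e_J = U_1\cdots U_J e_J$. This vector is obtained from $e_J$ by successively subtracting scalar multiples of $\xi_J,\xi_{J-1},\dots,\xi_1$, each supported in rows $\le f(J)$ (using that $f$ is non-decreasing), hence $Q e_J$ is supported in rows $\le f(J)$, i.e.\ $\langle Qe_J,e_i\rangle = 0$ for $i>f(J)$. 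In the setting where $T$, and therefore every $T_{n-1}$, is invertible — which is the case for the IQR algorithm — this step is immediate instead from \eqref{usefulspan}, which gives $\mathrm{span}\{Qe_j\}_{j=1}^J = \mathrm{span}\{Se_j\}_{j=1}^J$, each $Se_i$ being supported in rows $\le f(i)\le f(J)$.

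The conclusion is then a short computation: the $j$-th column of $RQ$ is $R(Qe_j) = \sum_{k\le f(j)}\langle Qe_j,e_k\rangle\,Re_k$, a finite sum by the previous step, and since $R$ is upper triangular $Re_k$ is supported in rows $\le k\le f(j)$; hence $RQe_j$ is supported in rows $\le f(j)$. Thus $RQ$ has quasi-banded subdiagonals with respect to $f$, and the proposition follows by induction on $n$. I expect the bookkeeping in the infinite Householder sequence to be the only real obstacle — in particular, justifying that $Qe_J$ is genuinely the finite product $U_1\cdots U_J e_J$, so that no tail of the limit can spoil the support, and keeping track of which columns each $U_j$ fixes. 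Once the locally-supported-Householder lemma is in place the rest is routine; that lemma is moreover precisely what makes the IQR iteration computable and will be reused in Section \ref{implement}.
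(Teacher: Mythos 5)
Your proof is correct and follows essentially the same route as the paper's: reduce to a single QR step, observe that each Householder vector $\xi_m$ is supported in rows $m,\dots,f(m)$ so that $Q$ inherits quasi-banded subdiagonals with respect to $f$, and conclude since $R$ is upper triangular. You spell out the local-support bookkeeping (which the paper states somewhat tersely) and add a nice alternative derivation via \eqref{usefulspan} in the invertible case, but the key lemma and the logical structure are the same.
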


\begin{proof}
By induction, it is enough to prove the result for $n=1$. From the construction of the Householder reflections $U_m^1=P_{m-1}\oplus S_m$, the chosen $\eta_m$ (see Theorem \ref{QRthrm}) have
\begin{equation}
\langle \eta_m, e_j\rangle = 0, \quad j>f(m).
\end{equation}
Using the fact that $f$ is increasing, it follows that each $U^1_m$ has quasi-banded subdiagonals with respect to $f$, as does the product $U^1_1\cdots U^1_m$. It follows that $Q_1$ must have quasi-banded subdiagonals with respect to $f$ and hence so does $T_1=R_1Q_1$ since $R_1$ is upper triangular.
\end{proof}

\begin{theorem}\label{QR-stuff}
Let $T \in \mathcal{B}(l^2(\mathbb{N}))$ have quasi-banded subdiagonals with respect to $f$ and let 
 $T_n$ be the $n$-th element in the IQR iteration, i.e.
$
T_n = Q^*_n \cdots Q^*_1 T Q_1 \cdots Q_n,
$
where 
$$
Q_j = \underset{l \rightarrow \infty}{\text{SOT-lim \,}} U^j_1\cdots U^j_l
$$ and $U^j_l$ is a Householder transformation (the superscript is not a power, but an index). 
Let $P_m$ be the usual projection onto $\mathrm{span}\{e_j\}_{j=1}^m$ and denote the $a$-fold iteration of $f$ by $\underbrace{f\circ{}f\circ{}...\circ{}f}_{a\text{ times}}=f_a$.
Then
\begin{equation}\label{second_appl}
\begin{split}
P_mT_nP_m &= P_m
U^n_m\cdots U^n_1U^{n-1}_{f_1(m)}\cdots U^{n-1}_1\cdots U^2_{f_{(n-2)}(m)}\cdots U^2_1 U^1_{f_{(n-1)}(m)}\cdots
  U^1_1 \\ 
  & \quad \cdot P_{f_{n}(m)} T  P_{f_{n}(m)} \\
  & \quad \cdot U^1_1 \cdots  U^1_{f_{(n-1)}(m)} U^2_1  \cdots U^2_{f_{(n-2)}(m)}
  \cdots U^{n-1}_1\cdots U^{n-1}_{f_1(m)} U^n_1 \cdots U^n_m P_m.
\end{split}
\end{equation}
\end{theorem}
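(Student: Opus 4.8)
The plan is to carry the classical band-preservation argument into infinite dimensions while carefully controlling the (a priori infinite) products that define the $Q_j$. Since $T_n=\hat Q_n^{*}T\hat Q_n$ with $\hat Q_n=Q_1\cdots Q_n$ and $P_m$ is self-adjoint, we have $P_mT_nP_m=(\hat Q_nP_m)^{*}\,T\,(\hat Q_nP_m)$, so the whole statement reduces to (i) an explicit finite formula for $\hat Q_nP_m$ and (ii) a bound on the support of $\mathrm{ran}(\hat Q_nP_m)$.

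I would first record two elementary facts about a single Householder transformation $U_k^{j}=P_{k-1}\oplus S_k$ occurring in the QR factorisation $T_{j-1}=Q_jR_j$. (a) Since $U_k^{j}$ fixes $e_i$ for every $i<k$, it acts as the identity on $\mathrm{ran}(P_N)$ whenever $N<k$; hence if $\mathrm{ran}(X)\subseteq\mathrm{ran}(P_N)$ then $U_1^{j}\cdots U_l^{j}X=U_1^{j}\cdots U_N^{j}X$ for all $l\ge N$, and passing to the strong limit collapses the infinite product $Q_jX$ to the finite product $U_1^{j}\cdots U_N^{j}X$. (b) By the proof of Proposition \ref{subdiagonal}, $T_{j-1}$ has quasi-banded subdiagonals with respect to $f$ and the Householder vector defining $U_k^{j}$ is supported in $\mathrm{span}\{e_k,\dots,e_{f(k)}\}$; consequently $U_k^{j}$ fixes $e_i$ for $i>f(k)$ as well and maps $\mathrm{span}\{e_k,\dots,e_{f(k)}\}$ into itself, so — using that $f$ is non-decreasing — each $U_k^{j}$ with $k\le N$ leaves $\mathrm{ran}(P_{f(N)})$ invariant. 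Combining (a) and (b): if $\mathrm{ran}(X)\subseteq\mathrm{ran}(P_N)$ then
\[
Q_jX=U_1^{j}\cdots U_N^{j}X\qquad\text{and}\qquad\mathrm{ran}(Q_jX)\subseteq\mathrm{ran}(P_{f(N)}).
\]

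I would then iterate this, peeling off $Q_n,Q_{n-1},\dots,Q_1$ from the right of $\hat Q_nP_m$. Set $X_n=P_m$ and $X_{j-1}=Q_jX_j$; writing $f_0$ for the identity map, an induction on $j=n,n-1,\dots,1$ using the displayed implication gives $\mathrm{ran}(X_j)\subseteq\mathrm{ran}(P_{f_{n-j}(m)})$ and $X_{j-1}=U_1^{j}\cdots U_{f_{n-j}(m)}^{j}X_j$. After all $n$ steps this produces
\[
\hat Q_nP_m=\bigl(U_1^{1}\cdots U_{f_{n-1}(m)}^{1}\bigr)\bigl(U_1^{2}\cdots U_{f_{n-2}(m)}^{2}\bigr)\cdots\bigl(U_1^{n-1}\cdots U_{f_{1}(m)}^{n-1}\bigr)\bigl(U_1^{n}\cdots U_{m}^{n}\bigr)P_m,
\]
a finite product of Householder transformations, together with $\mathrm{ran}(\hat Q_nP_m)\subseteq\mathrm{ran}(P_{f_n(m)})$.

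Finally I would assemble \eqref{second_appl}. The range inclusion gives $P_{f_n(m)}\hat Q_nP_m=\hat Q_nP_m$, hence $T(\hat Q_nP_m)=TP_{f_n(m)}(\hat Q_nP_m)$ and $(\hat Q_nP_m)^{*}=(\hat Q_nP_m)^{*}P_{f_n(m)}$, so $P_mT_nP_m=(\hat Q_nP_m)^{*}P_{f_n(m)}TP_{f_n(m)}(\hat Q_nP_m)$. Substituting the displayed product for $\hat Q_nP_m$, pulling the left-most $P_m$ out of $(\hat Q_nP_m)^{*}$, and using $(U_k^{j})^{*}=U_k^{j}$ to rewrite the adjoint of the product as the reversed product, gives exactly \eqref{second_appl}. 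The main obstacle is purely the bookkeeping: justifying the collapse of each SOT-limit $Q_j$ to a finite product when composed against an operator supported on finitely many basis vectors, tracking that the support index of the range grows by one application of $f$ with each successive factor $Q_j$ (which is where Proposition \ref{subdiagonal} enters), and then checking that the resulting nested product matches the indices $f_a(m)$ appearing in \eqref{second_appl}.
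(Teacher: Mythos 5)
Your proposal is correct and uses essentially the same mechanism as the paper's proof: Proposition \ref{subdiagonal} gives quasi-banded preservation, the Householder vector $\xi_{l,j}$ is supported in $\mathrm{span}\{e_l,\dots,e_{f(l)}\}$, so each $U_k^j$ fixes $\mathrm{ran}(P_{k-1})$ and preserves $\mathrm{ran}(P_{f(N)})$, and therefore the SOT-limit defining $Q_j$ collapses to a finite product when applied against a finitely supported range, with $\mathrm{ran}(Q_jP_N)\subseteq\mathrm{ran}(P_{f(N)})$. The only cosmetic difference is organization: the paper proves the single reduction step $P_mT_nP_m=P_mU_m^n\cdots U_1^nP_{f(m)}T_{n-1}P_{f(m)}U_1^n\cdots U_m^nP_m$ (peeling one $Q_n$ off $T_n=Q_n^*T_{n-1}Q_n$) and then appeals to induction on $n$, whereas you compute $\hat Q_nP_m$ globally from $T_n=\hat Q_n^*T\hat Q_n$ by peeling $Q_n,Q_{n-1},\dots,Q_1$ off $P_m$ in one pass and then conjugate $T$ by the resulting finite product—an equivalent rearrangement of the same induction.
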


\begin{remark}
What Theorem \ref{QR-stuff} says is that to compute the finite section of size $m$ of the $n$-th iteration of the IQR algorithm (i.e. $P_mT_nP_m$), one only needs information from the finite section of size $f_{n}(m)$ (i.e. $P_{f_{n}(m)} T  P_{f_{n}(m)}$) since the relevant Householder reflections can also be computed from this information. In other words, the IQR algorithm can be computed.
\end{remark}

\begin{proof}[\textbf{Proof of Theorem \ref{QR-stuff}}:]
By induction it is enough to prove that
\begin{equation}
\label{obvious}
P_mT_nP_m=P_mU_m^n...U_1^nP_{f(m)}T_{n-1}P_{f(m)}U_1^n...U_1^mP_m
\end{equation}
To see why this is true, note that by the assumption that $T$ has quasi-banded subdiagonals with respect to $f$, Proposition \ref{subdiagonal} shows that $T_n$ has quasi-banded subdiagonals with respect to $f$ for all $n \in \mathbb{N}.$ Thus, it follows from the construction in the proof of Theorem \ref{QRthrm} that each $U^j_l$ is of the form 
$$
U^j_l = I_{l,j,1} \oplus \left(I_{l,j,2} - \frac{2}{\|\xi_{l,j}\|^2} \xi_{l,j}\otimes \bar \xi_{l,j}
\right)\oplus I_{l,j,3},
$$ 
where $I_{l,j,1}$ denotes the identity on $P_{l-1}\mathcal{H}$, $I_{l,j,2}$ denotes the identity on $\mathrm{span}\{e_k:l\leq k\leq f(l)\}$, $I_{l,j,3}$ denotes the identity on $P_{f(l)}^{\perp}\mathcal{H}$ and $\xi_{l,j}\in\mathrm{span}\{e_k:l\leq k\leq f(l)\}$. Since $P_m$ is compact, it then follows that
\begin{equation}\label{11}
\begin{split}
P_mT_nP_m &= (\underset{l \rightarrow \infty}{\text{SOT-lim}}\ P_m U_l^n...U_1^n)P_{f(m)}T_{n-1}P_{f(m)}(\underset{l \rightarrow \infty}{\text{SOT-lim}}\  U_1^n...U_l^nP_m)\\
&=P_mU_m^n...U_1^nP_{f(m)}T_{n-1}P_{f(m)}U_1^n...U_1^mP_m.
\end{split}
\end{equation}
\end{proof}

\begin{remark}
This result allows us to implement the IQR algorithm because each $U^j_l$ only affects finitely many columns or rows of $A$ if multiplied either on the left or the right. In computer science
it is often referred to as ``Lazy evaluation'' when one computes with infinite data structures, but defers the use of the information until needed. A simple implementation is shown in the appendix for the case that the matrix has $k$ subdiagonals (i.e. we have $f(n)=n+k$).
\end{remark}


The next question is how restrictive is the assumption in Definition \ref{quasi_f}? In particular, suppose that $T \in \mathcal{B}(\mathcal{H})$ and that $\xi \in \mathcal{H}$ is a cyclic vector for $T$ (i.e. $\mathrm{span}\{\xi, T\xi, T^2\xi, \hdots\}$ is dense in $\mathcal{H}$). Then by applying the Gram-Schmidt procedure to $\{\xi, T\xi, T^2\xi, \hdots\}$ we 
obtain an orthonormal basis $\{\eta_1, \eta_2, \eta_3, \hdots\}$ for $\mathcal{H}$ such that the matrix 
representation of $T$ with respect to $\{\eta_1, \eta_2, \eta_3, \hdots\}$ is upper Hessenberg, and thus the matrix representation has only one subdiagonal. The question is therefore about the existence of a cyclic vector. Note that if $T$ does not have invariant subspaces then every vector $\xi \in \mathcal{B}(\mathcal{H})$ is a cyclic vector. Now what happens if $\xi$ is not cyclic for $T$? We may still form $\{\eta_1, \eta_2, \eta_3, \hdots\}$ as above however $\mathcal{H}_1 = \overline{\mathrm{span}\{\eta_1, \eta_2, \eta_3, \hdots\}}$ is now an invariant subspace for $T$ and $\mathcal{H}_1 \neq \mathcal{H}.$ We may still form a matrix representation of $T$ with respect to $\{\eta_1, \eta_2, \eta_3, \hdots\}$, but this will now be a matrix representation of $T|_{\mathcal{H}_1}.$ Obviously, we can have that $\sigma(T|_{\mathcal{H}_1}) \subsetneq \sigma(T)$.

However, the following example shows that the class of matrices for which we can compute the IQR algorithm covers a wide number of applications. In particular, it includes all finite interaction Hamiltonians on graphs. Such operators play a prominent role in solid state physics \cite{mattis1986few,mogilner1991hamiltonians} describing propagation of waves and spin waves as well as encompassing Jacobi operators studied in many physical models and integrable lattices \cite{teschl2000jacobi}.

\begin{example}
Consider a connected, undirected graph $G$, such that each vertex degree is finite and the set of vertices $V(G)$ is countably infinite. Consider the set of all bounded operators $A$ on $l^2(V(G))\cong l^2(\mathbb{N})$ such that the set $S(v):=\{w\in V:\left\langle w,Av\right\rangle\neq0\}$ is finite for any $v\in V$. Suppose our enumeration of the vertices obeys the following pattern. $v_1$'s neighbours (including itself) are $S_1=\{v_1,v_2,...,v_{q_1}\}$ for some finite $q_1$. The set of neighbours of these vertices is $S_2=\{e_1,...,e_{q_2}\}$ for some finite $q_2$ where we continue the enumeration of $S_1$ and this process continues inductively enumerating $S_m$. If we know $S(v)$ for all $v\in V$ then we can find an $f:\mathbb{N}\rightarrow\mathbb{N}$ such that $A_{j,m}=0$ if $|j|>f(m)$. We simply choose $f(n)=q_{r_n}$ where $r_n$ is minimal such that $\cup_{j\leq n}S(v_j)\subset S_{r_n}$.
\end{example}

\subsection{Invertible operators}
\label{invert_comp}

More generally, given an invertible operator $T$ with information on how its columns decay at infinity we can compute finite sections of the IQR iterates with \textit{error control}. For computing spectral properties, we can assume, by shifting $T\rightarrow T+\lambda I$ then translating by $-\lambda$ back, that the operator we are interested in is invertible, hence the invertibility criterion is not that restrictive. Throughout we will use the following lemma which says that for invertible operators, the QR decomposition is essentially unique.

\begin{lemma}
Let $T$ be an invertible operator (viewed as a matrix acting on $l^2(\mathbb{N})$), then there exists a unique decomposition $T=QR$ with $Q$ unitary and $R$ invertible, upper triangular such that $R_{ii}\in\mathbb{R}_{>0}$. Furthermore, any other ``QR'' decomposition $T=Q'R'$ has a diagonal matrix $D=\mathrm{Diag}(t_1,t_2,...)$ such that $\left|t_i\right|=1$ and $Q=Q'D$. In other words, the QR decomposition is unique up to phase choices.
\end{lemma}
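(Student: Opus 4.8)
The plan is to get existence directly from Theorem \ref{QRthrm} and then reduce uniqueness to the elementary observation that a unitary operator that is upper triangular with respect to $\{e_j\}$ must be diagonal. First I would invoke Theorem \ref{QRthrm} to write $T=QR$ with $Q$ an isometry and $R$ upper triangular; since $T$ is invertible, $Q$ is unitary, and (as noted just after \eqref{pwr_ref}) $R_{ii}=\langle Re_i,e_i\rangle\neq 0$ for every $i$, so $R$ is invertible. Setting $t_i=R_{ii}/|R_{ii}|$ and $D=\mathrm{Diag}(t_1,t_2,\dots)$ (a unitary diagonal operator), I would replace $(Q,R)$ by $(QD,D^*R)$: the new unitary times upper-triangular factorisation has $i$-th diagonal entry $\bar t_i R_{ii}=|R_{ii}|>0$, which gives a decomposition of the required normalised form.

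For uniqueness I would establish two auxiliary facts valid in infinite dimensions. (a) If $S\in\mathcal{B}(\mathcal{H})$ is invertible and upper triangular, then $S^{-1}$ is upper triangular. The point is that upper triangularity means $S(V_j)\subseteq V_j$ where $V_j=\mathrm{span}\{e_1,\dots,e_j\}$; since $S$ is injective and $V_j$ is finite dimensional, $S|_{V_j}$ is a bijection of $V_j$, hence $S^{-1}(V_j)=V_j$ for all $j$, i.e. $S^{-1}$ is upper triangular. (Note one cannot simply say "back-substitution"; the finite-dimensional-invariant-subspace argument is what is needed.) (b) A unitary $U$ that is upper triangular is diagonal with unimodular diagonal. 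This follows by induction on $j$: $Ue_1\in V_1$ with $\|Ue_1\|=1$ forces $Ue_1=t_1e_1$, $|t_1|=1$; and if $Ue_k=t_ke_k$ for $k<j$, then $Ue_j\in V_j$ and $Ue_j\perp Ue_k=t_ke_k$ by unitarity, so $Ue_j\perp V_{j-1}$, whence $Ue_j=t_je_j$ with $|t_j|=1$.

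Finally, given two decompositions $T=QR=Q'R'$ with $Q,Q'$ unitary and $R,R'$ invertible upper triangular (boundedness of $R^{-1}=T^{-1}Q$ is automatic since $T$ is invertible), multiplying $QR=Q'R'$ on the left by $Q'^{*}$ gives $D:=Q'^{*}Q=R'R^{-1}$. By (a) this is upper triangular, and being a product of unitaries it is unitary, so by (b) $D=\mathrm{Diag}(t_1,t_2,\dots)$ with $|t_i|=1$; thus $Q=Q'D$ and $R=D^{*}R'$, which is the stated general form. If moreover both $R$ and $R'$ have diagonal entries in $\mathbb{R}_{>0}$, then $R_{ii}=\bar t_iR'_{ii}$ with $R_{ii},R'_{ii}>0$ forces $t_i=1$, so $D=I$, $Q=Q'$ and $R=R'$. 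I expect no serious obstacle here; the only care needed is the passage to infinite dimensions in (a) and (b), everything else being routine bookkeeping.
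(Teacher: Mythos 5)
Your proposal is correct and follows essentially the same route as the paper: existence via Theorem \ref{QRthrm} plus a diagonal phase normalisation, and uniqueness by showing $Q'^{*}Q=R'R^{-1}$ is a unitary upper-triangular operator, hence a unimodular diagonal. You simply spell out in more detail the two auxiliary facts (that the inverse of an invertible upper-triangular operator is upper triangular, and that a unitary upper-triangular operator is diagonal) which the paper asserts without proof, and you make the uniqueness of the normalised factorisation explicit.
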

\begin{proof}
Consider the QR decomposition already discussed in this paper, $T=Q''R''$. $T$ is invertible and hence $Q''$ is a surjective isometry so is unitary. Hence $R''=Q''^*T$ is invertible. Being upper triangular, it follows that $R''_{ii}\neq 0$ for all $i$. Choose $t_i\in \mathbb{T}$ such that $t_iR''_{ii}\in\mathbb{R}_{>0}$ and set $D=\mathrm{Diag}(t_1,t_2,...)$. Letting $Q=Q''D^*$ and $R=DR''$ we clearly have the decomposition as claimed.

Now suppose that $T=Q'R'$ then we can write $Q=Q'R'R^{-1}$. It follows that $R'R^{-1}$ is a unitary upper triangular matrix and hence must be of the form $D=\mathrm{Diag}(t_1,t_2,...)$ with $\left|t_i\right|=1$.
\end{proof}

Another way to see this result is to note that the columns of $Q$ are obtained by applying the Gram-Schmidt procedure to the columns of $T$. The restriction that $R_{ii}\in\mathbb{R}_{>0}$ can also be incorporated into Theorem \ref{QR-stuff}. Theorem \ref{QR-stuff} (in this subcase of invertibility) is then a consequence of the fact that if $T$ has quasi-banded subdiagonals with respect to $f$ then
$$
P_mT^nP_m=P_m(P_{f_n(m)}TP_{f_n(m)})^nP_m
$$
and the relations (\ref{pwr_ref}) - we can apply Gram-Schmidt (or a more stable modified version) to the columns of $P_{f_n(m)}TP_{f_n(m)}$ and truncate the resulting matrix.

Assume that given $T\in\mathcal{B}(l^2(\mathbb{N}))$ invertible (not necessarily with quasi-banded subdiagonals), we can evaluate an increasing family of increasing functions $g^j:\mathbb{N}\rightarrow\mathbb{N}$ such that defining the matrix $T_{(j)}$ with columns $\{P_{g^j(n)}Te_n\}$ we have that $T_{(j)}$ is invertible and
\begin{equation}
\label{col_decay}
\left\|(P_{g^j(n)}-I)Te_n\right\|\leq \frac{1}{j}.
\end{equation}
It is easy to see that such a sequence of functions must exist since any $S$ with $\left\|S-T\right\|\leq\left\|T^{-1}\right\|^{-1}$ is invertible. Given this information, without loss of generality by increasing the $g^j$s pointwise if necessary, applying H{\"o}lder's inequality and taking subsequences, we may assume that $\left\|T_{(j)}-T\right\|\leq 1/{j}.$ In other words, given a sequence of functions satisfying (\ref{col_decay}) we can evaluate a sequence of functions with this stronger condition. The following says that given such a sequence of functions, we can compute the truncations $P_mT_nP_m$ to a given precision.

\begin{theorem}
\label{can_compute_invertible}
Suppose $T\in\mathcal{B}(l^2(\mathbb{N}))$ is invertible and the family of functions $\{g^j\}$ are as above. Suppose also that we are given a bound $C$ such that $\left\|T\right\|\leq C$. Let $\epsilon>0$ and $m,n\in\mathbb{N}$, then we can choose $j$ such that applying Theorem \ref{QR-stuff} (with the diagonal operators to ensure $R_{ii}>0$) to $T_{(j)}$ using the function $g^j$ instead of $f$, we have the guaranteed bound
$$
\left\|P_mT_nP_m-P_mT_{(j),n}P_m\right\|\leq\epsilon,
$$
where $T_{(j),n}$ denotes the $n$-th IQR iterate of $T_{(j)}$.
\end{theorem}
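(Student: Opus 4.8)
The proof rests on two observations. First, $P_m T_{(j),n} P_m$ is \emph{exactly} computable: $T_{(j)}$ has quasi-banded subdiagonals with respect to $g^j$ and is invertible, so Theorem \ref{QR-stuff} (with the diagonal normalisation ensuring $R_{ii}>0$) expresses $P_m T_{(j),n} P_m$ in terms of the single finite section $P_{N_j} T_{(j)} P_{N_j}$, where $N_j$ is the $n$-fold composition of $g^j$ evaluated at $m$; thus ``choosing $j$'' is an honest instruction. Second, the assignment $S \mapsto P_m S_n P_m$, sending an invertible $S$ to the leading $m\times m$ section of its $n$-th IQR iterate, is operator-norm continuous at $S=T$, with a modulus of continuity one can write down. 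Granting these, and using that we may take the $g^j$ to satisfy $\|T_{(j)}-T\|\le 1/j$, the stated bound follows by taking $j$ large.

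For the continuity I would use $T^n = \hat Q_n \hat R_n$ from \eqref{pwr_ref} together with $T_n = \hat Q_n^{*} T \hat Q_n$: with the normalisation $(\hat R_n)_{ii}>0$ the first $m$ columns $\hat q_1,\dots,\hat q_m$ of $\hat Q_n$ are precisely the Gram--Schmidt orthonormalisation of $T^n e_1,\dots,T^n e_m$, and $(P_m T_n P_m)_{kl} = \langle T\hat q_l,\hat q_k\rangle$ for $k,l\le m$; the analogous statements, with columns $\hat q_1^{(j)},\dots,\hat q_m^{(j)}$, hold for $T_{(j)}$. I would then: (a) telescope $T^n - T_{(j)}^n = \sum_{i=0}^{n-1} T^i(T-T_{(j)})T_{(j)}^{\,n-1-i}$ to get $\|T^n-T_{(j)}^n\|\le n(C+1)^{n-1}/j =: \eta_j$ (using $\|T\|\le C$ and $\|T_{(j)}\|\le C+1$, hence also $\|T^n e_k - T_{(j)}^n e_k\|\le\eta_j$); (b) note that invertibility of $T^n$ forces $\sigma_{\min}(T^n|_{\mathrm{ran}(P_m)}) \ge \sigma_{\min}(T^n) \ge \|T^{-1}\|^{-n} =: \gamma > 0$, and likewise that for each $k\le m$ the distance from $T^n e_k$ to $\mathrm{span}\{T^n e_l\}_{l<k}$ is $\ge\gamma$, so that Gram--Schmidt --- a finite composition of subtractions of orthogonal projections onto spans of earlier columns and normalisations by quantities $\ge\gamma$ --- is Lipschitz on a ball about $(T^n e_1,\dots,T^n e_m)$ that contains $(T_{(j)}^n e_1,\dots,T_{(j)}^n e_m)$ once $\eta_j\le\gamma/2$, whence $\|\hat q_k - \hat q_k^{(j)}\| \le L\,\eta_j$ for $k\le m$ with an explicit $L = L(m,C^n,\gamma)$; (c) substitute into the inner-product formulae to get, for $k,l\le m$,
$$
\bigl|(P_m T_n P_m)_{kl} - (P_m T_{(j),n} P_m)_{kl}\bigr| \;\le\; \frac{1}{j} + 2(C+1)L\,\eta_j,
$$
and hence $\|P_m T_n P_m - P_m T_{(j),n} P_m\| \le m\bigl(1/j + 2(C+1)L\eta_j\bigr)$, a computable decreasing function of $j$; we pick $j$ making it $\le\epsilon$.

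The main obstacle is effectivity: the constant $L$ (and the threshold $\eta_j\le\gamma/2$) enters through a lower bound $\gamma$ on $\sigma_{\min}(T^n)$, i.e. through a bound on $\|T^{-1}\|$, which is not in the listed data. This cannot simply be dropped --- Gram--Schmidt, equivalently QR, is genuinely discontinuous at singular matrices --- but a bound is in any case available from the $g^j$: since $T_{(j)}$ is invertible with $\|T-T_{(j)}\|\le 1/j$, once $1/j < \|T_{(j)}^{-1}\|^{-1}$ one has $\|T^{-1}\| \le \|T_{(j)}^{-1}\|\bigl(1-\|T_{(j)}^{-1}\|/j\bigr)^{-1}$; for the purposes of the theorem we take such a bound to be part of the given data, on the same footing as the bound $C$ on $\|T\|$. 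With $\gamma$, and hence $L$, in hand the quantity $m(1/j+2(C+1)L\eta_j)$ is evaluable, the required $j$ is chosen, and the resulting estimate is an honest certificate because $P_m T_{(j),n} P_m$ was itself computed exactly via Theorem \ref{QR-stuff}.
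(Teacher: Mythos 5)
Your strategy is mathematically sound but has a genuine effectivity gap, which you correctly identify yourself and then resolve by quietly changing the theorem's hypotheses. The telescoping bound on $\|T^n-T_{(j)}^n\|$, the interpretation of the first $m$ columns of $\hat Q_n$ as Gram--Schmidt applied to the first $m$ columns of $T^n$, and the continuity of $S\mapsto P_mS_nP_m$ are all correct and match the paper's starting point. The obstacle you flag --- that your Lipschitz constant $L$ for Gram--Schmidt needs $\gamma=\|T^{-1}\|^{-n}$, and that Gram--Schmidt is genuinely discontinuous at singular matrices --- is real. But your fix of taking a bound on $\|T^{-1}\|$ ``to be part of the given data'' does not prove the stated theorem: the hypotheses provide only the matrix entries, the functions $g^j$, and $C\geq\|T\|$, and the downstream applications in Theorems~\ref{class1}--\ref{class3} invoke this result over classes $\Omega^k_{t,L}$ and $\tilde\Omega^M_{t,L}$ in which no bound on $\|T^{-1}\|$ is supplied to the algorithm. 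Your intermediate suggestion --- inferring such a bound from $\|T_{(j)}^{-1}\|$ --- does not close the loop either, since $\|T_{(j)}^{-1}\|$ is the norm of the inverse of an infinite operator and is likewise not computable from the listed data.

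The paper avoids the problem entirely by making the error certificate depend on \emph{computable} quantities rather than on $\|T^{-1}\|$. Writing $\tilde v_k^j$ for the $k$-th unnormalised modified-Gram--Schmidt vector produced from the columns of $T_{(j)}^n$ (an object the algorithm constructs exactly when running Theorem~\ref{QR-stuff} on $T_{(j)}$), the paper sets $\tilde C=(C+1)^n$ and defines recursively
$$
\delta_1(j)=\frac{2\tilde C}{j\,\|\tilde t_1^j\|},\qquad
\delta_k(j)=\max\Bigl\{\delta_{k-1}(j),\ \frac{2\bigl(\tilde C/j+2(k-1)\delta_{k-1}(j)\tilde C\bigr)}{\|\tilde v_k^j\|}\Bigr\},
$$
proving $\|q_k-\tilde q_k^j\|\leq\delta_m(j)$ for $k\leq m$ and then $\|P_mT_nP_m-P_mT_{(j),n}P_m\|\leq 2\sqrt{m}\,\delta_m(j)\,C+1/j$. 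This certificate involves only $C$, $j$, $m$, $n$ and the norms $\|\tilde v_k^j\|$ computed from $T_{(j)}$. The algorithm therefore simply increments $j$, recomputes the certificate, and halts when it drops below $\epsilon$. The role of the invertibility of $T$ in the proof is purely to establish that this procedure terminates: as $j\to\infty$ the vectors $\tilde v_k^j$ converge to the nonzero vectors $v_k$ (so $\|\tilde v_k^j\|$ are bounded below) and hence $\delta_m(j)\to 0$ --- but the algorithm never needs to evaluate or bound $\|T^{-1}\|$. This is the step your argument is missing: rather than an a priori Lipschitz estimate in terms of $\|T^{-1}\|$, one should produce an a posteriori error bound in terms of quantities already available to the running algorithm and argue non-effectively that those quantities behave well.
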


\begin{proof}[\textbf{Proof of Theorem \ref{can_compute_invertible}}:]
First consider the error when applying Theorem \ref{QR-stuff} to $T_{(j)}$ with $g^j$ for any fixed $j$. We will show that we can compute an error bound which converges to zero as $j\rightarrow\infty$ and from this the theorem easily follows by successively computing the bound and halting when this bound is less than $\epsilon$.

Write the QR decompositions
$$
T^n=\hat Q_n \hat R_n,\quad  (T_{(j)})^n=\hat{Q}_{(j),n}\hat{R}_{(j),n}.
$$
We have $\left\|T-T_{(j)}\right\|\leq1/j$ and hence, by writing $T_{(j)}=T+(T_{(j)}-T)$, that
$$
\left\|T^n-(T_{(j)})^n\right\|\leq \sum_{k=1}^n {n \choose k}\frac{1}{j^k}C^{n-k}\leq \frac{(C+1)^n}{j}=\frac{\tilde C}{j},
$$
where $\tilde C=(C+1)^n$. The columns of $\hat Q_n$ and $\hat{Q}_{(j),n}$ are simply the columns of the matrices $T^n$ and $(T_{(j)})^n$ after the application of Gram-Schmidt. Let the first $m$ columns of $T^n$ and $(T_{(j)})^n$ be denoted by $\{t_k\}_{k=1}^m$ and $\{\tilde t_k^j\}_{k=1}^m$ respectively and let $\{q_k\}_{k=1}^m$ and $\{\tilde q_k^j\}_{k=1}^m$ be the vectors obtained after applying Gram-Schmidt to these sequences of vectors. We then have
\begin{equation}
\label{othog_rel_q}
\begin{split}
\|q_1-\tilde q_1^j\|&=\left\|\frac{t_1}{\|t_1\|}-\frac{\tilde t_1^j}{\|\tilde t_1^j\|}\right\|\\
&=\left\|\frac{t_1(\|\tilde t_1^j\|-\|t_1\|)}{\|t_1\|\|\tilde t_1^j\|}-\frac{(\tilde t_1^j-t_1)\|t_1\|}{\|t_1\|\|\tilde t_1^j\|}\right\|\leq \frac{2\|t_1-\tilde t_1^j\|}{\|\tilde t_1^j\|}\leq\frac{2\tilde C}{j\|\tilde t_1^j\|}.
\end{split}
\end{equation}

For a vector $v$ of unit norm, let $P_{\perp v}$ denote the orthogonal projection onto the space of vectors perpendicular to $v$. Note that for two such vectors $v,w$, we have $\left\|P_{\perp v}-P_{\perp w}\right\|\leq\left\|v-w\right\|$. Let
\begin{equation}
\label{proj_GS}
v_k=P_{\perp q_{k-1}}\cdots P_{\perp q_{1}}t_k,\quad \tilde v_k^j=P_{\perp \tilde q_{k-1}^j}\cdots P_{\perp \tilde q_{1}^j}\tilde t_k^j,
\end{equation}
then $q_k$ are just the normalised version of $v_k$ and likewise $\tilde q_k^j$ are just the normalised version of $\tilde v_k^j$. Suppose that for $\mu<k$ we have $\|q_\mu-\tilde q_\mu^j\|\leq \delta$ for some $\delta>0$. Then applying the above products of projections we have
\begin{align*}
\|v_k-\tilde v_k^j\|&\leq \|P_{\perp q_{k-1}}\cdots P_{\perp q_{1}}(t_k-\tilde t_k^j)\| + \|P_{\perp q_{k-1}}\cdots P_{\perp q_{1}}\tilde t_k^j-\tilde v_k^j\|\\
&\leq \|t_k-\tilde t_k^j\| + \| P_{\perp q_{k-1}}\cdots P_{\perp q_{1}}-P_{\perp \tilde q^j_{k-1}}\cdots P_{\perp \tilde q^j_{1}}\|\|\tilde t^j_k\|\\
&\leq \|t_k-\tilde t_k^j\|+(k-1)\delta\|\tilde t_k^j\|.
\end{align*}
In the last line we have used the fact that if the operators $\{A_l\}_{l=1}^m$ and $\{B_l\}_{l=1}^m$ have norm bounded by $1$, then
$$
\left\|\prod_{l=1}^mA_l-\prod_{l=1}^mB_l\right\|\leq\sum_{l=1}^m\|A_l-B_l\|.
$$
Applying the same argument as in the inequalities (\ref{othog_rel_q}) we see that
\begin{equation}
\label{proj_GS2}
\|q_k-\tilde q_k^j\|\leq \frac{2 (\|t_k-\tilde t_k^j\|+(k-1)\delta\|\tilde t_k^j\|)}{\|\tilde v_k^j\|}\leq \frac{2 ({\tilde C}/{j}+2(k-1)\delta\tilde C)}{\|\tilde v_k^j\|},
\end{equation}
since $\|\tilde t_k^j\|\leq C+\tilde C/j\leq 2\tilde C$. Now note that we can compute the $\|\tilde v_k^j\|$ from the proof of Theorem \ref{QR-stuff}. Set $\delta_1(j)=\frac{2\tilde C}{j\|\tilde t_1^j\|}$ and for $1<k\leq m$ define iteratively
$$
\delta_k(j)=\max\Big\{\delta_{k-1}(j),\frac{2 ({\tilde C}/{j}+2(k-1)\delta_{k-1}(j)\tilde C)}{\|\tilde v_k^j\|}\Big\}.
$$
We must have $\|q_k-\tilde q_k^j\|\leq\delta_m(j)$ for $1\leq k\leq m$ where we have now shown the $j$ dependence as an argument.

It follows that $\|(\hat Q_n-\hat{Q}_{(j),n})P_m\|\leq \sqrt{m}\delta_m(j)$ and hence that
\begin{align*}
\|P_mT_nP_m-P_mT_{(j),n}P_m\|&\leq \|P_m(\hat Q_n-\hat{Q}_{(j),n})^*T \hat Q_nP_m\|+\|P_m\hat{Q}_{(j),n}^*(T\hat Q_n -T_{(j)}\hat{Q}_{(j),n})P_m\|\\
&\leq \sqrt{m}\delta_m(j)C +\|(T-T_{(j)})\hat Q_{(j),n}P_m\|+\|T(\hat Q_n-\hat{Q}_{(j),n})P_m\|\\
&\leq 2\sqrt{m}\delta_m(j)C+\frac{1}{j}.
\end{align*}
So we need only show that $\delta_m(j)\rightarrow 0$ as $j\rightarrow\infty$. Note that as $j\rightarrow\infty$, the columns of $(T_{(j)})^n$ converge to that of $T^n$. It follows that $\tilde t_k^j$ converge to $t_k$ and $\tilde q^j_1$ converges to $q_1$. An easy inductive argument using (\ref{proj_GS}) and (\ref{proj_GS2}) shows that the vectors $\tilde q_{k}^j$ converge to $q_{k}$ and $\|\tilde v_k^j\|$ are bounded below. $\delta_m(j)\rightarrow 0$ now follows.
\end{proof}

\section{SCI classification theorems}
\label{class_thms}

In this section we will apply the above results to prove three new classification theorems in the SCI hierarchy. First, assume that $T \in \mathcal{B}(l^2(\mathbb{N}))$ is an invertible normal operator with $\sigma(T) = \omega \cup \Psi$, where $\omega \cap \Psi = \emptyset$, $\omega = \{\lambda_i\}_{i=1}^N,$ and the $\lambda_i$'s are isolated eigenvalues with multiplicity $m_i$ satisfying $|\lambda_1| > \hdots > |\lambda_N|.$ As usual, we also assume that $\sup\{|\theta|:\theta \in \Psi\} < |\lambda_N|$ and set 
\begin{equation}\label{eq:the_M}
M := m_1 + \hdots +m_N\in\mathbb{N}\cup\{\infty\}. 
\end{equation}
In this section we will assume for simplicity that all the $m_i$ except possibly $m_N$ are finite.
To be able to obtain the classification results we need two key assumptions.
\begin{itemize}
\item[(I)] ({\it Column decay}): We assume a much weaker condition than bandedness of the infinite matrix. Indeed, we suppose a known decay of the elements in the columns of $T$ that is described through a family of increasing functions $\{g^j\}_{j\in \mathbb{N}}$. In particular, $g^j:\mathbb{N}\rightarrow\mathbb{N}$ is such that defining the infinite matrix $T_{(j)}$ with columns $\{P_{g^j(n)}Te_n\}_{n\in\mathbb{N}}$ we have that $T_{(j)}$ is invertible and
\begin{equation}
\label{col_decay2}
\left\|(P_{g^j(n)}-I)Te_n\right\|\leq \frac{1}{j}, \qquad n \in \mathbb{N}.
\end{equation}

\item[(II)] ({\it Distance to span of eigenvectors}): In order to obtain error control ($\Delta_1$ classification) one needs to control the hidden constant in the $O(r^n)$ estimate in \eqref{eq:big_O}. This is done as follows, where $\{Q_n\}_{n\in\mathbb{N}}$ is a $Q$-sequence of $T$ with respect to $\{e_j\}_{j\in\mathbb{N}}$. Given finite $k<M+1$ with $m_1+...+m_{N-1}<k$, we will assume that if $l<N$ then $\{\chi_{\{\lambda_1,...,\lambda_l\}}(T)e_j\}_{j=1}^{m_1+...+m_{l}}$ are linearly independent. We also assume that $\{\chi_{\{\lambda_1,...,\lambda_{N}\}}(T)e_j\}_{j=1}^{k}$ are linearly independent. This simply ensures that the IQR algorithm converges with the expected ordering (largest eigenvalue in the first diagonal entry then in descending order). It follows from Theorems \ref{new_QR} and \ref{QR-theorem}, that there exist eigenspaces $E_1,...,E_{N}$ (with the last space depending on $k$ and the vectors $\{e_j\}$) corresponding to the eigenvalues $\lambda_{1},...,\lambda_{N}$ such that
\begin{itemize}
	\item $E_i=\mathrm{ker}(T-\lambda_i I)$ is the full eigenspace if $i<N$
	\item $\hat\delta\Big(\bigoplus_{i=1}^lE_i,\mathrm{span}\{ Q_ne_{j}\}_{j=1}^{\min\{m_1+...+m_{l},k\}}\Big)\rightarrow 0$ as $n\rightarrow\infty$ for $l=1,...,N$.
\end{itemize}
We then define the initial supremum subspace angle by
\begin{equation}
\label{sup_subspace_angle}
\Phi(T,\{e_j\}_{j=1}^k):=\sup_{l=1,...,N}\phi\Big(\bigoplus_{i=1}^lE_i,\mathrm{span}\{ e_{j}\}_{j=1}^{\min\{m_1+...+m_{l},k\}}\Big),
\end{equation}
where $\phi$, defined by (\ref{subspace_angle}), denotes the subspace angle. Our assumptions and the proofs in Section \ref{conv_main} show that $\Phi(T,\{e_j\}_{j=1}^k)<\pi/2$ and hence the key quantity $\tan\big(\Phi(T,\{e_j\}_{j=1}^k)\big)$ is finite.
\end{itemize}

\begin{remark}
The quantity $\tan\big(\Phi(T,\{e_j\}_{j=1}^k)\big)$ can be viewed as a measure of how far $\{e_j\}_{j=1}^k$ is from $\{q_j\}_{j=1}^k$, the $k$ eigenvectors of $T$ corresponding to the first $k$ eigenvalues (including multiplicity and preserving order). Hence it gives an estimate of how good the initial approximation $\{e_j\}_{j=1}^k$ to $\{q_j\}_{j=1}^k$ is. Indeed, we know from \eqref{eq:big_O} that the convergence rate is $O(r^n)$, and the hidden constant $C$ depends exactly on this behaviour. In particular, if $e_j = q_j$ for $j \leq k$ then $C = 0$.
\end{remark}

Define also
\[
r(T)=\max\{\left|\lambda_2/\lambda_1\right|,...,\left|\lambda_{N}/\lambda_{N-1}\right|,\rho(T)/\left|\lambda_{N}\right|\}, \qquad \rho(T)=\mathrm{sup}\{\left|z\right|:z\in\Psi\}.
\]
We can now define the class of operators $\Omega^k_{t,L}$ for the classification theorem.

\begin{definition}
\label{complicate_def}
Given $k\in\mathbb{N}$, $t\in(0,1)$ and $L>0$, let $\Omega^k_{t,L}$ denote the class of invertible normal operators $T$ acting on $l^2(\mathbb{N})$ with $\left\|T\right\|\leq L$ such that:
\begin{enumerate}
	\item There exists the decomposition $\sigma(T) = \omega \cup \Psi$ as above with $m_1+...+m_{N-1}< k\leq M$, where $M$ is defined in \eqref{eq:the_M}.
	\item If $m_1+...+m_l<k$ then $\{\chi_{\{\lambda_1,...,\lambda_l\}}(T)e_j\}_{j=1}^{m_1+...+m_l}$ are linearly independent. Also, the vectors $\{\chi_{\{\lambda_1,...,\lambda_N\}}(T)e_j\}_{j=1}^{k}$ are linearly independent.
	\item We have access to functions $g^j:\mathbb{N}\rightarrow\mathbb{N}$ with (\ref{col_decay2}).
	\item It holds that $r(T)\leq t$ and $\tan\big(\Phi(T,\{e_j\}_{j=1}^k)\big)\leq L$.
\end{enumerate}
\end{definition}

We can now define the computational problem that we want to classify in the SCI hierarchy. Consider for any $T\in \Omega^k_{t,L}$, the problem of computing the $k$-th largest eigenvalues (including multiplicity) and the corresponding eigenspaces. In other words we consider the set valued mapping 
\[
\Xi_1(T) = \mathcal{S} \subset \mathcal{M} = \mathbb{C}^k\times \big(l^2(\mathbb{N})\big)^k
\]
where we define 
\begin{align*}
\mathcal{S}&:=\Big\{(\underbrace{\lambda_1,...,\lambda_1}_{\text{$m_1$ times}},...,\underbrace{\lambda_N,...,\lambda_N}_{\text{$k-(m_1+...+m_{N-1})$ times}})\times (\hat q_1,...,\hat q_k):\\
&\text{s.t. $\{\hat q_j\}_{j=m_1+...+m_{l-1}+1}^{m_1+...+m_l}$ is an orthonormal basis of $\mathrm{ran}(\chi_{\lambda_l}(T))$ for $l<N$}\\
&\text{and $\{\hat q_j\}_{j=m_1+...+m_{N-1}+1}^{k}$ is an orthonormal basis for a subspace of $\mathrm{ran}(\chi_{\lambda_N}(T))$}\Big\}.
\end{align*}
As discussed in Remark \ref{rem:set_valued} in \S \ref{SCI_basics}, where we review the SCI hierarchy, 
when we speak of convergence of $\mathcal{M}\ni\Gamma_n(T)$ to $\Xi_1(T)$, we define, with a slight abuse of notation, 
\[
\mathrm{dist}(\Gamma_n(T),\Xi_1(T)) := 
\inf_{y \in \Xi_1(T)} d_{\mathcal{M}}(\Gamma_n(T),y) \rightarrow 0. 
\]
%
Having established the basic definition we can now present the classification theorem.

\begin{theorem}[$\Delta_1$ classification for the extremal part of the spectrum]
\label{class1}
Given the above setup we have $\{\Xi_1,\Omega^k_{t,L}\}\in \Delta_1.$ In other words, for all $n\in\mathbb{N}$, there exists a general tower using radicals, $\Gamma_n(T)$, such that for all $T\in\Omega^k_{t,L}$,
$$
\mathrm{dist}(\Gamma_n(T),\Xi_1(T))\leq 2^{-n}.
$$
\end{theorem}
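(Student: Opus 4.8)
The plan is to combine the quantitative convergence theory of Section \ref{conv_main} (in particular the explicit bound \eqref{beautiful} of Theorem \ref{non_normal_prop2}) with the computability results of Section \ref{implement} (Theorem \ref{QR-stuff} and Theorem \ref{can_compute_invertible}), tracking every constant in terms of the admissible data $t,L,k$ and the supplied functions $g^j$. By Definition \ref{complicate_def}(2) the linear independence hypotheses hold at every level, so by Theorem \ref{new_QR} we may take $\hat e_j=e_j$ for $j\leq k$; write $D_\infty\in\mathbb{C}^{k\times k}$ for the target diagonal matrix $\mathrm{diag}(\lambda_1,\dots,\lambda_1,\dots,\lambda_N,\dots,\lambda_N)$ and let $\widehat P_k$ be the projection onto $\mathrm{span}\{e_j\}_{j=1}^k$. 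The object to compute is then the pair consisting of (i) the diagonal entries of $\widehat P_k Q_n^*TQ_n\widehat P_k$ and (ii) the first $k$ columns $Q_ne_1,\dots,Q_ne_k$, which converge to an orthonormal tuple $\hat q_1,\dots,\hat q_k$ forming bases of the respective eigenspaces; it suffices to produce these to within $2^{-n}$ for a suitably large iteration index $n'=n'(n,t,L,k)$.

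\textbf{Step 1 (explicit rate).} For $l=1,\dots,N-1$ apply Theorem \ref{non_normal_prop2} with $P^{(l)}=\chi_{\{\lambda_1,\dots,\lambda_l\}}(T)$, of finite rank $d_l:=m_1+\dots+m_l$. Normality forces $S=\mathrm{ran}(P^{(l)})$ and $P^{(l)}T(I-P^{(l)})=0$, and one has $\alpha_l=|\lambda_l|$, $\beta_l=|\lambda_{l+1}|$, so \eqref{beautiful} collapses to
$$
\hat\delta\bigl(\mathrm{span}\{Q_ne_j\}_{j=1}^{d_l},\,\mathrm{ran}(P^{(l)})\bigr)\leq \tan\bigl(\phi(\mathrm{span}\{e_j\}_{j=1}^{d_l},\mathrm{ran}(P^{(l)}))\bigr)\,\bigl|\lambda_{l+1}/\lambda_l\bigr|^{\,n}\leq L\,t^{\,n},
$$
using $\phi(\cdot)\leq\Phi(T,\{e_j\}_{j=1}^k)$ and $|\lambda_{l+1}/\lambda_l|\leq r(T)\leq t$. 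The only case not directly covered is the final eigenvalue block when $k<M$: there one first peels off the already-aligned coordinates $j\leq d_{N-1}$ at rate $L\,t^n$ and runs the same argument on the orthogonal complement, which (by normality) is approximately $\ker(T-\lambda_NI)\oplus\mathrm{ran}\chi_\Psi(T)$, now genuinely separated with ratio $\rho(T)/|\lambda_N|\leq t$; the $l=N$ term $\phi(\bigoplus_{i=1}^NE_i,\mathrm{span}\{e_j\}_{j=1}^k)$ in the definition of $\Phi$ controls the residual angle, yielding $\hat\delta(\mathrm{span}\{Q_ne_j\}_{j=1}^{k},\mathcal E)\leq c_1(t,L,k)\,t^{\,n}$ for an explicit $c_1$, where $\mathcal E=\bigoplus_{i=1}^{N-1}E_i\oplus V$ with $V\subset\ker(T-\lambda_NI)$. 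As in the proof of Theorem \ref{new_QR}(ii) (estimating $\|(T-\langle T\hat q_j,\hat q_j\rangle I)Q_ne_j\|$), these $\hat\delta$-bounds convert to $\|\widehat P_k Q_n^*TQ_n\widehat P_k-D_\infty\|\leq c_2(t,L,k)\,t^{\,n}$ and, since close finite-dimensional subspaces admit close orthonormal bases with explicit constants, to $\max_{j\leq k}\|Q_ne_j-\hat q_j^{(n)}\|\leq c_3(t,L,k)\,t^{\,n}$ for an orthonormal tuple $\hat q_1^{(n)},\dots,\hat q_k^{(n)}$ whose blocks are bases of $\mathrm{ran}\chi_{\lambda_1}(T),\dots,\mathrm{ran}\chi_{\lambda_{N-1}}(T)$ and of some $k'$-dimensional subspace of $\mathrm{ran}\chi_{\lambda_N}(T)$; in particular $(D_\infty\text{-diagonal})\times(\hat q_1^{(n)},\dots,\hat q_k^{(n)})\in\mathcal S\subset\Xi_1(T)$.

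\textbf{Step 2 (computation and assembly of $\Gamma_n$).} Given the budget $2^{-n}$, set $n'$ minimal with $c_2(t,L,k)\,t^{\,n'}\leq 2^{-n-2}$ and $c_3(t,L,k)\,t^{\,n'}\leq 2^{-n-2}$ (this depends only on the admissible data, which a general algorithm may use). Using the column-decay functions $g^j$ of Definition \ref{complicate_def}(3) together with the bound $\|T\|\leq L$, Theorem \ref{can_compute_invertible} lets us choose $j$ so that the finitely computable matrix $\widehat P_kT_{(j),n'}\widehat P_k$ approximates $\widehat P_kQ_{n'}^*TQ_{n'}\widehat P_k$ within $2^{-n-2}$; since the QR decomposition of an invertible operator is unique up to phase, the same truncated data (Gram–Schmidt applied to the first $k$ columns of $(P_{g^j_{n'}(k)}T_{(j)}P_{g^j_{n'}(k)})^{n'}$, or equivalently the Householder products of Theorem \ref{QR-stuff}) yields approximations to $Q_{n'}e_1,\dots,Q_{n'}e_k$ within $2^{-n-2}$ — all of this being finitely many arithmetic operations and square roots, i.e. a genuine general tower using radicals. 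Output $\Gamma_n(T)$ equal to (diagonal of the computed $\widehat P_kT_{(j),n'}\widehat P_k$) $\times$ (the computed columns). Two triangle inequalities against the exact point of $\mathcal S$ from Step 1 give $\mathrm{dist}(\Gamma_n(T),\Xi_1(T))\leq 2^{-n}$.

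\textbf{Main obstacle.} The delicate point is Step 1 for $k<M$: Theorem \ref{non_normal_prop2} requires \emph{strict} separation $\alpha>\beta$, which fails when one tries to split off a proper subspace of the degenerate eigenspace $\ker(T-\lambda_NI)$ from its complement inside that eigenspace. The resolution — show the iterates first lock onto the dominant sum $\bigoplus_{l<N}E_l$ at rate $t^n$ and then solve the residual problem on $\ker(T-\lambda_NI)\oplus\mathrm{ran}\chi_\Psi(T)$, where separation is restored — must be carried out so that the angle defects produced by this two-stage reduction are still controlled by $\tan\bigl(\Phi(T,\{e_j\}_{j=1}^k)\bigr)\leq L$; this is precisely why $\Phi$ is defined as a supremum over all levels $l=1,\dots,N$, and making the constant $c_1$ (hence $c_2,c_3$) genuinely explicit in $t,L,k$ is the crux of the argument. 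The remaining, more routine, difficulty is propagating rounding/truncation errors through Gram–Schmidt uniformly over $\Omega^k_{t,L}$, which is exactly what Theorem \ref{can_compute_invertible} supplies, and converting $\hat\delta$-closeness of subspaces into membership in the set-valued fibre $\Xi_1(T)$.
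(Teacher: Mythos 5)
Your overall architecture coincides with the paper's: establish a quantitative $O(t^n)$ rate with a constant controlled by the admissible data $(t,L,k)$, then invoke Theorem~\ref{can_compute_invertible} (together with the $g^j$ and the bound $\|T\|\leq L$) to realise the iterate on a finite machine, and finally assemble $\Gamma_n$. Your Step~2 is essentially the paper's proof verbatim, and the reduction to $\hat e_j=e_j$ via Definition~\ref{complicate_def}(2), the use of the tangent of the angle $\Phi$ as the explicit constant, and the cast of $\Gamma_n$ are all right. The genuine gap is where you already suspect it: Step~1 for the partial final block.

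The paper does not route the rate estimate through Theorem~\ref{non_normal_prop2} at all, and this choice is what dissolves your obstacle. It instead applies Theorem~\ref{QR-theorem} directly, whose proof (via Proposition~\ref{rate_extension} and the inductive construction of \S\ref{prelims}) is specifically designed to handle a spanning set $\{e_j\}_{j=1}^k$ of length $k<M$: the limit subspace $B=\bigoplus_j E_j$ is defined by that construction, the partial last block is simply $V=\mathrm{span}\{\chi_{\nu_n}(T)\tilde\xi_i\}$ coming out of the Gram--Schmidt step, and the quantity $Z(T,\{e_j\}_{j=1}^k)$ bounds the angle defect at \emph{all} levels simultaneously. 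The paper then shows $Z\leq\sqrt{k}L$ by exactly the trigonometric computation you gesture at. Your ``two-stage reduction'' is not carried out (you label it the crux), and as stated it is delicate: the residual vectors $\{Q_ne_j\}_{j>d_{N-1}}$ are not the QR iterates of any fixed operator on the complement, so one cannot directly re-apply Theorem~\ref{non_normal_prop2} there; one would instead have to make a perturbation argument that mixes the stage-one error $O(t^n)$ into a stage-two initial angle, and then argue the result is still $\leq c_1(t,L,k)t^n$ with an explicit $c_1$. The power-iteration proof of Proposition~\ref{rate_extension} avoids this entirely by treating $\mathrm{span}\{T^ne_j\}_{j=1}^k$ as a single object.

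A second, related under-specification: your $N$ block-level $\hat\delta$ estimates (one per $l=1,\dots,N$) need to be upgraded to $k$ column-level estimates $\|Q_ne_j-a_{n,j}\hat q_j\|\leq\beta t^n$, $j\leq k$, before the diagonal bound $|\langle Q_n^*TQ_ne_j,e_j\rangle-\lambda_{a_j}|\leq 2L\beta t^n$ and the eigenvector output can be extracted. Your nested-subspace observation is the right idea, but its constants must be tracked; this bookkeeping is precisely the role of the $A(m)$, $B(j)$, $C(\mu)$ recursions in Theorem~\ref{QR-theorem} and Lemma~\ref{noidea}, which are what the paper's proof ultimately cites. In short: replace the invocation of Theorem~\ref{non_normal_prop2} in Step~1 by Theorem~\ref{QR-theorem} (using that $\hat e_j=e_j$ under Definition~\ref{complicate_def}(2)), bound $Z(T,\{e_j\}_{j=1}^k)\leq\sqrt{k}L$, and the proof closes; as written, the final block and the column-level constants are not established.
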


\begin{remark}
Note that this means that we converge to the $k$ largest magnitude eigenvalues \textit{in order} with error control, and not just arbitrary points of the spectrum. This is in contrast to most $\Sigma_1$ classifications in the SCI hierarchy where the best we can hope for is to bound $\mathrm{dist}(z,\sigma(T))$ for $z\in\mathbb{C}$.
\end{remark}

\begin{proof}[\textbf{Proof of Theorem \ref{class1}}:]
Let $T\in\Omega_{t,L}^k$ then by the definition of $\Omega_{t,L}^k$, we may take $\hat e_j=e_j$ for $j=1,...,k$ in the arguments in Section \ref{prelims}. The first step is to bound $Z(T,\{e_j\}_{j=1}^{k})$ in terms of $\Phi(T,\{e_j\}_{j=1}^k)$. Let $\{\tilde e_j\}_{j=1}^k$ denote the basis described in Section \ref{prelims}. In our case:
\begin{itemize}
	\item For any $1\leq i\leq k$, $\mathrm{span}\{\tilde e_j\}_{j=1}^i=\mathrm{span}\{e_j\}_{j=1}^i$.
	\item If $j>m_1+...+m_l$ then $\chi_{\lambda_l}(T)\tilde e_j=0$.
	\item The vectors $\{\chi_{\lambda_l}(T)\tilde e_j\}_{j=m_1+...+m_{l-1}+1}^{\min\{m_1+...+m_{l},k\}}$ are orthonormal.
\end{itemize}
Let $\delta_j=\|\tilde e_j\|$ then we must have that if $m_1+...m_{l-1}<j\leq m_1+...+m_l$ then
\begin{align*}
\frac{\delta_j^2-1}{\delta_j^2}&\leq \delta\Big(\mathrm{span}\{\tilde e_j\},\bigoplus_{i=1}^l\mathrm{span}\{\chi_{\{\lambda_i\}}(T)\tilde e_j\}_{j=m_1+...m_{i-1}+1}^{\min\{m_1+...+m_{i},k\}}\Big)^2\\
&\leq \delta\Big(\mathrm{span}\{\tilde e_j\}_{j=1}^{\min\{m_1+...+m_{l},k\}},\bigoplus_{i=1}^l\mathrm{span}\{\chi_{\{\lambda_i\}}(T)\tilde e_j\}_{j=m_1+...m_{i-1}+1}^{\min\{m_1+...+m_{i},k\}}\Big)^2\\
&=\delta\Big(\mathrm{span}\{e_j\}_{j=1}^{\min\{m_1+...+m_{l},k\}},\bigoplus_{i=1}^l E_i\Big)^2\\
&\leq \sin^2\big(\Phi(T,\{e_j\}_{j=1}^k)\big)
\end{align*}
Where the first line holds since the nearest point to $\tilde e_j$ in $\bigoplus_{i=1}^l\mathrm{span}\{\chi_{\{\lambda_i\}}(T)\tilde e_j\}_{j=m_1+...m_{i-1}+1}^{\min\{m_1+...+m_{i},k\}}$ is simply $\chi_{\lambda_l}(T)\tilde e_j$ and the $E_i$ are defined as above and in (\ref{need_ejs}). Rearranging, this implies that
$$
\delta_j^2\leq \frac{1}{1-\sin^2\big(\Phi(T,\{e_j\}_{j=1}^k)\big)}=\frac{1}{\cos^2\big(\Phi(T,\{e_j\}_{j=1}^k)}.
$$
Hence it follows that
$$
Z(T,\{e_j\}_{j=1}^{k})=\Big(\sum_{j=1}^k \delta_j^2-1\Big)^{\frac{1}{2}}\leq \Big(\sum_{j=1}^k \tan^2\big(\Phi(T,\{e_j\}_{j=1}^k)\big)\Big)^{\frac{1}{2}}\leq \sqrt{k}L.
$$

In particular, Theorem \ref{QR-theorem} and its proof now implies that
$$
\hat\delta(\mathrm{span}\{\hat q_j\},\mathrm{span}\{Q_me_j\})\leq B(j)\sqrt{k}Lt^m,
$$
where $\{\hat q_j\}_{j=1}^k$ are orthonormal eigenvectors of $T$ and $Q_m$ is a $Q-$sequence of $T$. In particular, $\{B(j)\}_{j=1}^k$ can be computed in finitely many arithmetic operations from the induction proof of Theorem \ref{QR-theorem}. 
It follows that there exists $z_{j,m}\in\mathbb{C}$ of unit modulus such that defining $\beta=\max\{B(1),...,B(k)\}\sqrt{k}L$, we have
$$
\left\|Q_me_j-z_{j,m}\hat q_j\right\|\leq \beta t^m.
$$
Note that we do not need to assume knowledge of $N$ for this bound (trivially $N\leq k$). Using that $Q_m$ is an isometry, this implies that
$$
\left|\langle Q_m^*TQ_me_j,e_j\rangle - \lambda_{a_j}\right|\leq 2\left\|T\right\|\beta t^m\leq 2L\beta t^m,
$$
where $T\hat q_j=\lambda_{a_j}$. Note that we must have $\{\lambda_{a_j}\}_{j=m_1+...+m_{l-1}+1}^{m_1+...+m_l}=\lambda_l$ and $\{\lambda_{a_j}\}_{j=m_1+...+m_{N-1}+1}^{k}=\lambda_N$ by 3. in the definition of $\Omega_{t,L}^k$.

Given any $\epsilon>0$, choose $m$ large enough so that $2L\beta t^m\leq \epsilon$ and $\beta t^m\leq\epsilon$. The fact that $\left\|T\right\|\leq L$ and (\ref{col_decay2}) holds implies that we can compute $\langle Q_m^*TQ_me_j,e_j\rangle$ to accuracy $\epsilon$ using finitely many arithmetical and square root operations using Theorem \ref{can_compute_invertible}. Call these approximations $\tilde\lambda_1,\tilde\lambda_2,...,\tilde\lambda_k$. Furthermore, the proof of Theorem \ref{can_compute_invertible} also makes clear that we can compute $Q_me_j\in l^2(\mathbb{N})$ to accuracy $\epsilon$ using finitely many arithmetical and square root operations (the approximations have finite support). Call these approximations $\tilde q_1,\tilde q_2,...,\tilde q_k$. Then set
$$
\Gamma^{\epsilon}(T)=(\tilde\lambda_1,\tilde\lambda_2,...,\tilde\lambda_k)\times(\tilde q_1,\tilde q_2,...,\tilde q_k).
$$
The above estimates show that $\mathrm{dist}(\Gamma^{\epsilon}(T),\Xi_1(T))\leq 4k\epsilon$. The proof is completed by setting $\Gamma_{n}(T)=\Gamma^{2^{-(n+2)}/k}(T)$.
\end{proof}

Next suppose we have a continuous increasing function function $g:\mathbb{R}_{\geq 0}\rightarrow\mathbb{R}_{\geq 0}$ diverging at $\infty$ such that $g(0)=0$ and $g(x)\leq x$. Let $\Omega_{\mathrm{IQR}}^g$ be the set of all operators $T$ acting on $l^2(\mathbb{N})$ (i.e. we fix the representation w.r.t. the canonical basis) for which the IQR algorithm converges in the weak operator topology to a diagonal matrix with the same spectrum as $T$ and such that
$$
\left\|(T-zI)^{-1}\right\|^{-1}\geq g\big(\mathrm{dist}(z,\sigma(T))\big).
$$
Note that by Theorem \ref{new_QR} this includes all normal compact operators, $T$, such that $\{z\in\sigma(T):\left|z\right|=s\}$ has size at most $1$ for all $s>0$ (where we can take $g(x)=x$).\footnote{A simple compactness argument says that for any bounded operator $T$ there is a corresponding function $g$ that works.} We will allow evaluations of $g$ in our algorithms and also assume that we are given functions that satisfy (\ref{col_decay2}) and have an upper bound for $\|T\|$. We consider computing $\Xi_2(T)=\sigma(T)$ in the space of compact non-empty subsets of $\mathbb{C}$ with the Hausdorff metric.

\begin{theorem}[$\Sigma_1$ classification for spectrum]
\label{class2}
Given the above setup we have $\{\Xi_2,\Omega_{\mathrm{IQR}}^g\}\in \Sigma_1.$ In other words, there is a convergent sequence of general towers using radicals, $\Gamma_n(T)$, such that $\Gamma_n(T)\rightarrow\Xi_2(T)=\sigma(T)$ for any $T\in\Omega_{\mathrm{IQR}}^g$ and for all $n$ we have
$$
\Gamma_n(T)\subset \sigma(T)+B_{2^{-n}}(0).
$$
\end{theorem}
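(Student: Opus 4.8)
The plan is to turn the diagonal entries of the IQR iterates into candidate spectral points, to attach to each candidate a \emph{verifiable} certificate that it lies within $2^{-n}$ of $\sigma(T)$ using the resolvent lower bound encoded by $g$, and to output only the certified candidates. The rule ``output only certified points'' immediately forces $\Gamma_n(T)\subset\sigma(T)+B_{2^{-n}}(0)$, so the substance of the proof is that (i) the iterates and the certificates are computable (as general towers using radicals, with error control) from the data we are handed, and (ii) every limiting diagonal entry is eventually certified, which will give Hausdorff convergence.

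First, $T$ is invertible (as it must be for the IQR iteration to be defined), so Theorem~\ref{can_compute_invertible}, together with the column-decay functions $\{g^j\}$ and the bound $\|T\|\le C$, lets us compute to any prescribed accuracy, using finitely many arithmetic and square-root operations, the finite section $P_mT_pP_m$ of the $p$-th IQR iterate $T_p=Q_p^*TQ_p$, as well as the finitely supported truncations of the vectors $Q_pe_j$ (exactly as in the proof of Theorem~\ref{class1}). By the definition of $\Omega_{\mathrm{IQR}}^g$ we have $T_p\stackrel{\mathrm{WOT}}{\longrightarrow}D=\bigoplus_{j}d_j\,e_j\otimes e_j$ with $\overline{\{d_j:j\in\mathbb{N}\}}=\sigma(D)=\sigma(T)$; in particular $\langle T_pe_j,e_j\rangle\to d_j$ as $p\to\infty$ and $\{d_j\}$ is dense in $\sigma(T)$. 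These diagonal Rayleigh quotients are the candidate points.

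The certificate rests on the elementary bound $\|(T-\mu I)^{-1}\|^{-1}\le\|(T-\mu I)v\|$, valid for every unit vector $v$ (with the convention that the left side is $0$ when $T-\mu I$ is not invertible). Combined with the hypothesis $\|(T-zI)^{-1}\|^{-1}\ge g\big(\mathrm{dist}(z,\sigma(T))\big)$ this gives $\mathrm{dist}(\mu,\sigma(T))\le g^{-1}\big(\|(T-\mu I)v\|\big)$, where $g^{-1}$ is the inverse of $g$ — continuous, increasing, with $g^{-1}(y)\to0$ as $y\to0$ since $g$ is continuous, increasing, vanishes at $0$ and diverges at $\infty$ — and is computable from evaluations of $g$ to any accuracy (e.g.\ by bisection). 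Taking $v=Q_pe_j$ and $\mu=\langle T_pe_j,e_j\rangle$, and using $TQ_pe_j=Q_pT_pe_j$ and unitarity of $Q_p$, the residual $\|(T-\langle T_pe_j,e_j\rangle I)Q_pe_j\|$ equals $\|(T_p-\langle T_pe_j,e_j\rangle I)e_j\|$, the norm of the off-diagonal part of the $j$-th column of $T_p$. This is computable to any accuracy from the data above (Theorems~\ref{QR-stuff} and~\ref{can_compute_invertible}), and it tends to $0$ as $p\to\infty$: the off-diagonal column entries converge to those of $D$, which vanish, while bandedness is preserved under the IQR iteration (Proposition~\ref{subdiagonal}), so the tail is controlled and the convergence is in norm — equivalently, $Q_pe_j$ is an asymptotic eigenvector of $T$ with eigenvalue $d_j$. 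Hence $g^{-1}$ of a computed upper bound on this residual is a computable upper bound for $\mathrm{dist}(\langle T_pe_j,e_j\rangle,\sigma(T))$ that tends to $0$.

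To assemble $\Gamma_n(T)$, run the above over a search budget that grows with $n$ (over the coordinate $j$, the iteration count $p$, and the internal accuracy and truncation parameters of Theorem~\ref{can_compute_invertible}), accepting a computed approximate diagonal entry into $\Gamma_n(T)$ exactly when the resulting \emph{guaranteed} bound on its distance to $\sigma(T)$ — namely $g^{-1}$ of a computed upper bound on the residual, inflated by all the (controlled) computation errors — is at most $2^{-n}$, and enlarging the budget until at least one point is accepted (which always happens, since the residual for $j=1$ tends to $0$). Then $\Gamma_n(T)$ is finite, non-empty, and by construction contained in $\sigma(T)+B_{2^{-n}}(0)$. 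For the reverse Hausdorff estimate, fix $\epsilon>0$ and pick finitely many $d_{j_1},\dots,d_{j_r}$ that are $\epsilon$-dense in the compact set $\sigma(T)$; for $n$ large the budget suffices to reach, for each $j_s$, an iteration at which the residual and the computation errors are small enough that the corresponding approximate diagonal entry (then within $\epsilon$ of $d_{j_s}$) is accepted, so $\Gamma_n(T)$ is eventually $2\epsilon$-dense in $\{d_{j_1},\dots,d_{j_r}\}$ and hence $3\epsilon$-dense in $\sigma(T)$; with the inclusion this gives $\Gamma_n(T)\to\sigma(T)$ in the Hausdorff metric. I expect the crux to be exactly the certification step: the only input convergence is the rate-free $T_p\to D$ in $\mathrm{WOT}$ (and no rate may be assumed here), so one must manufacture a one-sided, \emph{verifiable} guarantee that a concrete computed number lies within $2^{-n}$ of $\sigma(T)$; this is precisely the role of $g$, converting the small residual of the explicit approximate eigenvector $Q_pe_j$ into a guaranteed small distance to $\sigma(T)$, with the remaining technical work being the propagation of the Theorem~\ref{can_compute_invertible} error bounds through the residual and through $g^{-1}$, and the reduction from general column decay to the quasi-banded approximations $T_{(j)}$ (whose iterates remain quasi-banded by Proposition~\ref{subdiagonal}), where the norm convergence of the off-diagonal columns is cleanest to see.
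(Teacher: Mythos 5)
Your proposal is correct and follows essentially the same route as the paper: candidates are the diagonal entries of the IQR iterates, each certified via the residual bound $\mathrm{dist}(\alpha_{j,m},\sigma(T))\le g^{-1}\big(\|TQ_me_j-\alpha_{j,m}Q_me_j\|\big)$ obtained from the resolvent hypothesis on $g$, with all quantities computed with error control by Theorem~\ref{can_compute_invertible}, and density of the limiting diagonal entries in $\sigma(T)$ giving Hausdorff convergence. The only cosmetic difference is in assembling $\Gamma_n$: the paper fixes the candidate indices $j\le n$ and takes $m(n,T)$ minimal so that all $n$ computed bounds $h_{j,m}$ fall below $2^{-n}$, whereas you phrase it as a growing search budget from which only certified points are admitted; both give $\Gamma_n(T)\subset\sigma(T)+B_{2^{-n}}(0)$ together with convergence.
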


\begin{proof}[\textbf{Proof of Theorem \ref{class2}}:]
Let $T\in\Omega_{\mathrm{IQR}}^g$ and $Q_m$ be a $Q-$sequence of $T$. Fix $n\in\mathbb{N}$. Then Theorem \ref{can_compute_invertible} shows that we can compute any finite number of the diagonal entries of $Q_m^*TQ_m$ to any given accuracy using finitely many arithmetical and square root operations. Similarly, the proof shows that we can compute $TQ_me_j$ and $Q_me_j$ to any given accuracy in $l^2(\mathbb{N})$ (the approximations have finite support). Now let $\alpha_{j,m}$ be the computed approximations of $\langle Q_m^*TQ_me_j,e_j\rangle$ to accuracy $1/m$, then since $T\in\Omega_{\mathrm{IQR}}^g$ we have that $\lim_{m\rightarrow\infty}\alpha_{j,m}=\alpha_j\in\sigma(T)$. Furthermore, $\{\alpha_j:{j\in\mathbb{N}}\}$ is dense in $\sigma(T)$. We have that
$$
\left\|(T-\alpha_{j,m}I)^{-1}\right\|^{-1}\leq\left\|TQ_me_j-\alpha_{j,m}Q_me_j\right\|
$$
and hence that
\begin{equation}
\label{key_resolvent}
\mathrm{dist}(\alpha_{j,m},\sigma(T))\leq g^{-1}(\left\|TQ_me_j-\alpha_{j,m}Q_me_j\right\|).
\end{equation}
Given $m,j$, we can compute an upper bound $h_{j,m}$ for the right hand side of (\ref{key_resolvent}) by approximating the norm $\left\|TQ_me_j-\alpha_{j,m}Q_me_j\right\|$ from above to accuracy $1/m$ and finitely many evaluations of $g$. Namely, let $x_{j,m}$ be the approximation of $\left\|TQ_me_j-\alpha_{j,m}Q_me_j\right\|$ and set
$$
h_{j,m}=\frac{\min\{l\in\mathbb{N}:g(l/m)\geq x_{j,m}\}}{m}.
$$
It is then clear that $\lim_{m\rightarrow \infty}h_{j,m}=0$ and $h_{j,m}\geq g^{-1}(\left\|TQ_me_j-\alpha_{j,m}Q_me_j\right\|)$.

We set $\Gamma_{n}(T)=\{\alpha_{j,m(n,T)}:j=1,...,n\}$ where $m(n,T)$ is minimal such that $h_{j,m}\leq 2^{-n}$ for $j=1,...,n$. By (\ref{key_resolvent}), we must have that
$$
\Gamma_{n}(T)\subset\sigma(T)+B_{2^{-n}}(0).
$$
It is also clear that $\Gamma_n(T)\rightarrow\sigma(T)$ in the Hausdorff metric.
\end{proof}

The final result considers dominant invariant subspaces discussed in Theorem \ref{non_normal_prop2}. Let $M\in\mathbb{N}$, $t\in(0,1)$ and $L>0$. We let $\tilde\Omega^M_{t,L}$ denote the class of operators such that the assumptions of Theorem \ref{non_normal_prop2} hold (same $M$) and such that:
\begin{enumerate}
	\item $\beta/\alpha<t$
	\item $\max\Big\{\|T\|,\frac{\sin\big(\phi(\mathrm{span}\{e_j\}_{j=1}^M,\mathrm{ran}(P))\big)}{\cos\big(\phi(\mathrm{span}\{e_j\}_{j=1}^M,S)\big)}\Big(1+\frac{\|PT(I-P)\|}{\alpha-\beta}\Big)\Big\}\leq L$
\end{enumerate}
We also assume that we are given functions that satisfy (\ref{col_decay2}) and consider computing the dominant invariant subspace $\Xi_3(T)=\mathrm{ran}(P)$ in the space of $M$-dimensional subspaces of $l^2(\mathbb{N})$ equipped with the metric $\hat\delta$. 

\begin{theorem}[$\Delta_1$ classification for dominant invariant subspace]
\label{class3}
Given the above setup we have $\{\Xi_3,\tilde\Omega^M_{t,L}\}\in \Delta_1.$ In other words, for all $n\in\mathbb{N}$, there exists a general tower using radicals, $\Gamma_n(T)$, each an $M$-dimensional subspace of $l^2(\mathbb{N})$, such that for all $T\in\tilde\Omega^M_{t,L}$,
$$
\hat \delta(\Gamma_n(T),\Xi_3(T))\leq 2^{-n}.
$$
\end{theorem}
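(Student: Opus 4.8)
The strategy is entirely parallel to that of Theorem~\ref{class1}, with eigenvalue approximation replaced by subspace approximation. Fix $T\in\tilde\Omega^M_{t,L}$ and $n\in\mathbb{N}$, and let $\{Q_m\}$ be a $Q$-sequence of $T$. By definition of $\tilde\Omega^M_{t,L}$ the operator $T$ is invertible, we are supplied with functions $\{g^j\}$ obeying (\ref{col_decay2}), and $\|T\|\le L$; hence Theorem~\ref{can_compute_invertible} applies, and (exactly as exploited in the proof of Theorem~\ref{class1}) for any prescribed $\epsilon>0$ and any iteration index $m$ we may compute finitely supported vectors $\tilde q_1,\dots,\tilde q_M$ with $\|\tilde q_j-Q_me_j\|\le\epsilon$ using finitely many arithmetic and square-root operations. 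On the other hand, Theorem~\ref{non_normal_prop2}(i), together with conditions~1 and~2 in the definition of $\tilde\Omega^M_{t,L}$ — which bound $\beta/\alpha$ by $t$ and the geometric constant appearing in (\ref{beautiful}) by $L$ — gives
\[
\hat\delta\big(\mathrm{span}\{Q_me_j\}_{j=1}^M,\mathrm{ran}(P)\big)\le L\,t^m .
\]
Since $t\in(0,1)$ and $L$ are known to the algorithm, we fix $m=m(n)$ explicitly with $L\,t^m\le 2^{-(n+1)}$.

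Next I would insert a short, $T$-independent perturbation lemma. Because $Q_m$ is unitary, $\{Q_me_j\}_{j=1}^M$ is orthonormal, so if $\|\tilde q_j-Q_me_j\|\le\epsilon$ for each $j$ then the Gram matrix of $\{\tilde q_j\}_{j=1}^M$ differs from the identity by $O(M\epsilon)$ in operator norm; consequently, once $\epsilon$ is small enough depending only on $M$, the $\tilde q_j$ are linearly independent and every unit vector in their span has coordinate vector of norm at most $\sqrt2$, whence $\delta\big(\mathrm{span}\{\tilde q_j\}_{j=1}^M,\mathrm{span}\{Q_me_j\}_{j=1}^M\big)\le\sqrt{2M}\,\epsilon$ and, by (\ref{fd_property}), $\hat\delta\big(\mathrm{span}\{\tilde q_j\}_{j=1}^M,\mathrm{span}\{Q_me_j\}_{j=1}^M\big)\le\sqrt{2}\,M\epsilon=:C_M\epsilon$. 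Choosing $\epsilon=\epsilon(n,M)$ with $C_M\epsilon\le2^{-(n+1)}$ and setting $\Gamma_n(T):=\mathrm{span}\{\tilde q_1,\dots,\tilde q_M\}$, the triangle inequality for the metric $\hat\delta$ yields
\[
\hat\delta\big(\Gamma_n(T),\Xi_3(T)\big)\le\hat\delta\big(\Gamma_n(T),\mathrm{span}\{Q_me_j\}_{j=1}^M\big)+\hat\delta\big(\mathrm{span}\{Q_me_j\}_{j=1}^M,\mathrm{ran}(P)\big)\le 2^{-n}.
\]
Every choice ($m$, then $\epsilon$) is explicit and every computation is a finite composition of arithmetic and radical operations, so $\Gamma_n$ is a general tower using radicals and $\{\Xi_3,\tilde\Omega^M_{t,L}\}\in\Delta_1$.

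The only genuine point of care is the perturbation lemma of the second paragraph: it must be made completely explicit and uniform over the class $\tilde\Omega^M_{t,L}$, so that the accuracy $\epsilon$ requested of Theorem~\ref{can_compute_invertible} depends on $n$ and $M$ alone and not on the unknown geometry of $\mathrm{ran}(P)$ or of the canonical $T^*$-invariant subspace $S$. Controlling the norm of coordinate vectors via the perturbed Gram matrix, and then passing from $\delta$ to $\hat\delta$ through $\delta(M_1,M_2)\le l^{1/2}\delta(M_2,M_1)$, delivers exactly this; a minor byproduct that must be checked along the way is that the computed $\tilde q_j$ are genuinely linearly independent, so that $\Gamma_n(T)$ is an honest $M$-dimensional subspace and hence a legitimate point of the output metric space.
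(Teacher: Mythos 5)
Your proof is correct and the overall architecture is identical to the paper's: bound $\hat\delta(\mathrm{span}\{Q_m e_j\}_{j=1}^M,\mathrm{ran}(P))$ via Theorem~\ref{non_normal_prop2} and the parameters $t,L$ built into $\tilde\Omega^M_{t,L}$, choose $m$ so that $Lt^m\le 2^{-(n+1)}$, compute approximations $\tilde q_j$ to $Q_m e_j$ via Theorem~\ref{can_compute_invertible}, and close with the triangle inequality for $\hat\delta$. The one place you diverge is the perturbation step from $\{Q_m e_j\}$ to $\{\tilde q_j\}$. You treat $\{\tilde q_j\}$ as an arbitrary $\epsilon$-perturbation and therefore run a Gram-matrix argument (establishing linear independence, bounding coordinate vectors by $\sqrt{2}$, passing from $\delta$ to $\hat\delta$ through (\ref{fd_property})), ending with the constant $\sqrt{2}M\epsilon$. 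The paper instead observes that the vectors produced by Theorem~\ref{can_compute_invertible} are themselves exactly orthonormal --- they are the output of an exact Gram--Schmidt applied to the columns of the truncated power $(T_{(j)})^{n}$, and radicals suffice for this --- so a one-line H\"older estimate on both sides gives $\hat\delta(\mathrm{span}\{\tilde q_j\},\mathrm{span}\{Q_m e_j\})\le\sqrt{M}\,\epsilon$ directly, with no need to verify linear independence or control inverse Gram matrices. Your version is more robust (it would survive any approximation scheme, not just one that outputs orthonormal vectors) at the cost of a worse constant and the extra bookkeeping you yourself flag as ``the only genuine point of care''; the paper's is shorter because it exploits structure already present in the computed data. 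Either way the choices of $m$ and $\epsilon$ depend only on $n$, $M$, $t$, $L$, so the resulting $\Gamma_n$ is a legitimate height-one radical tower with the required error bound.
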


\begin{proof}[\textbf{Proof of Theorem \ref{class3}}:]
Let $n\in\mathbb{N}$ and $T\in\tilde \Omega^M_{t,L}$. Then from Theorem \ref{non_normal_prop2}, we can choose $m$ large so that $t^mL<2^{-(n+1)}$, and hence
$$
\hat\delta(\mathrm{span}\{Q_me_j\}_{j=1}^M,\mathrm{ran}(P))<2^{-(n+1)}.
$$
Using Theorem \ref{can_compute_invertible} and its proof, given $\epsilon$ we can compute in finitely many arithmetical and square root operations, approximations $v_{m,j}(\epsilon)$ (of finite support) such that
$$
\|v_{m,j}(\epsilon)-Q_me_j\|\leq\epsilon.
$$
The vectors $\{Q_me_j\}_{j=1}^M$ are orthonormal, as are the approximations $\{v_{m,j}(\epsilon)\}_{j=1}^M$. A simple application of H\"older's inequality then yields
$$
\hat\delta(\mathrm{span}\{v_{m,j}(\epsilon)\}_{j=1}^M,\mathrm{span}\{Q_me_j\}_{j=1}^M)\leq \sqrt{M}\epsilon.
$$
By the triangle inequality, the proof of the theorem is complete by choosing $\epsilon$ such that $\sqrt{M}\epsilon\leq 2^{-(n+1)}$ and then setting $\Gamma_n(T)=\mathrm{span}\{v_{m,j}(\epsilon)\}_{j=1}^M$.
\end{proof}

\section{Examples and numerical simulations}
\label{sec_5}

The aim of the is section is threefold:
\begin{enumerate}
	\item To demonstrate the convergence and implementation results of Section \ref{conv_main}--\ref{class_thms} on practical examples.
	\item To demonstrate that, as well as the proven results, the IQR algorithm performs better than theoretically expected in many cases. In particular we conjecture that for normal operators whose essential spectrum has exactly one extremal point, the IQR algorithm will also converge to this point. We also demonstrate cases where this seems to hold even if there are multiple extreme points of the essential spectrum and even in non-normal cases.
	\item To compare the IQR algorithm to the finite section method and show that in some cases it considerably outperforms it. In general one can view $\sigma(P_mQ_n^*TQ_n|_{P_m\mathcal{H}})$ as a generalised version of the finite section method, now with two parameters ($m$ and $n$) that can be varied with $n$ controlling the number of IQR iterates. In some cases we find this avoids spectral pollution whilst still converging to the entire spectrum.
\end{enumerate}

Before embarking with some numerical examples, two remarks are in order. First, extra care has been taken in the case of non self-adjoint operators whose finite truncations can be non-normal and hence the computation of their spectra can be numerically unstable. Unless stated otherwise, all calculations were performed in double precision (in MATLAB) and have been checked against extended precision \cite{mct2015} to ensure that none of the results are due to numerical artefacts. Second, when dealing with operators acting on $l^2(\mathbb{Z})$ we use $\mathbb{N}$ as an index set by listing the canonical basis as $e_0,e_1,e_{-1},e_2,e_{-2},...$, allowing us to apply the IQR algorithm on $l^2(\mathbb{N})$. Of course different indexing is possible and in general this would lead to different implementations of the IQR algorithm,\footnote{A discussion of this is beyond the scope of this paper. In effect, for invertible operators, this corresponds to choosing the order of columns on which to perform a Gram-Schmidt type procedure.} but we stick with this ordering throughout.

\subsection{The finite section method}
\label{s:FS}

We first briefly say a few words on the finite section method, the standard means to discretise infinite matrices, since comparisons will be made later. If
$\{P_m\}_{m \in \mathbb{N}}$ is a sequence of finite-rank projections such that $P_{m+1} \geq P_m$ and $P_m \rightarrow I$ strongly, where $I$ is the identity, then the idea is to replace $T$ by the finite square matrix $P_m T |_{P_m\mathcal{H}}$ (typically, one takes $P_m$ to be the orthogonal projection onto $\mathrm{span} \{e_1,\ldots,e_m \}$). Thus, to find $\sigma(T)$, we instead compute $\sigma(P_m T |_{P_m\mathcal{H}})$. However, there can be significant issues when using the finite section method. In general, there is no guarantee that the computed spectra $\sigma(P_m T |_{P_m\mathcal{H}})$ need converge to $\sigma(T)$.

For example, consider the shift operator $Se_j = e_{j+1}$ on $l^2(\mathbb{N})$. If $P_m$ projects onto $\mathrm{span} \{e_1,\hdots,e_m\},$ we would get that $\sigma((P_mS |_{P_m\mathcal{H}}) = \{0\}$ for all $m$, whereas $\sigma(S)$ is the closed unit disc. We can also have that $\sigma(P_mT |_{P_m\mathcal{H}}) \nsubseteq \sigma(T).$ For example, let
\begin{equation}\label{eq:the_A}
A = \left(
\begin{matrix}
 a_1      & i     &        &       &       \\
 1     & a_2     & i  &  &   \\
      & 1     & a_3       & i      &   \\
      &      & 1       & a_4      &  \ddots \\
  &   &  & \ddots & \ddots\\
\end{matrix}
\right),
\end{equation}
where $a_j=5\cos(j)/4+2i\sin(j)$. To gain an accurate picture of the spectrum, note that $A$ is banded and hence we can compute approximates to the pseudospectrum \cite{hansen2011}. In order to approximate the spectrum in the best possible way we must take $\epsilon$ as small as possible. Unfortunately, there is a restriction to how small $\epsilon$ can be depending on $\epsilon_{\mathrm{mach}}$ (machine precision) of the software used. To illustrate this, observe that the approximates are given by (a discrete version of)
\begin{equation}\label{root}
\begin{split}
\Gamma_{m}(A) &= \{z \in \mathbb{C}
: \min \{ \sqrt{\lambda} : \lambda \in \sigma_0(P_{m}(A -
  z)^*(A - z)\vert_{P_{m}\mathcal{H}})\} \leq \epsilon\}
\\& \qquad \quad \cup  \{z \in \mathbb{C}
: \min \{ \sqrt{\lambda} : \lambda \in \sigma_0(P_{m}(A -
  z)(A - z)^*\vert_{P_{m}\mathcal{H}})\} \leq \epsilon\}.
\end{split}
\end{equation}
Thus, ignoring the additional error in computing the smallest singular values (denoted by $\sigma_0$) and assuming $A$ to have matrix entries of order $1$, computing $\Gamma_{m}(A)$ will have the same challenges as if one squares a real number and then takes its square root. In particular, due to the floating point arithmetic used in the software and (\ref{root}) we must at least have that 
$$
\epsilon \gtrsim \sqrt{\epsilon_{\mathrm{mach}}},
$$
and this puts a serious restriction on our computation, particularly for the non-normal case where the distance $d_H(\sigma(T),\sigma_{\epsilon}(T))$ may be large (though we always have $\sigma(T)\subset\sigma_{\epsilon}(T)$). However, it is possible to detect spectral pollution outside of $\sigma_{\epsilon}(T)$ if we can approximate it well.

The phenomenon of ``spectral pollution'' occurs for $A$: namely, the computed spectrum $\sigma(P_mA |_{P_m\mathcal{H}})$ contains elements that have nothing to do with $\sigma(A).$ This is visualised in Fig. \ref{pollution1}, an example with spectral pollution $z\notin\sigma_{1/10}(A)$ where the same phenomenon occurs for larger $m$. The spectral pollution phenomenon is well known. As the following theorem suggests, such pollution can be arbitrarily bad.

\begin{figure}
\centering
\includegraphics[width=0.495\textwidth,trim={32mm 90mm 35mm 95mm},clip]{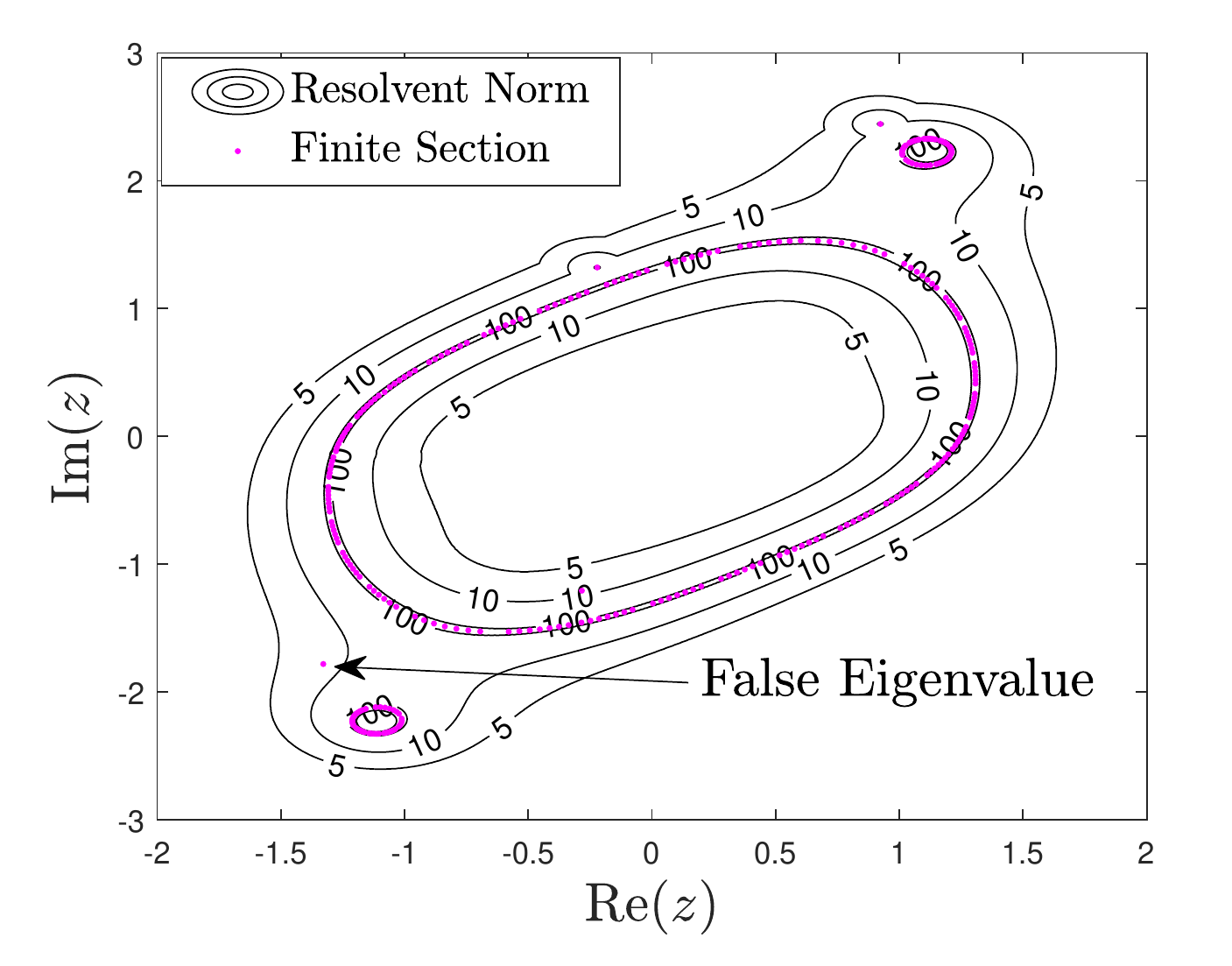}
\includegraphics[width=0.495\textwidth,trim={32mm 90mm 35mm 95mm},clip]{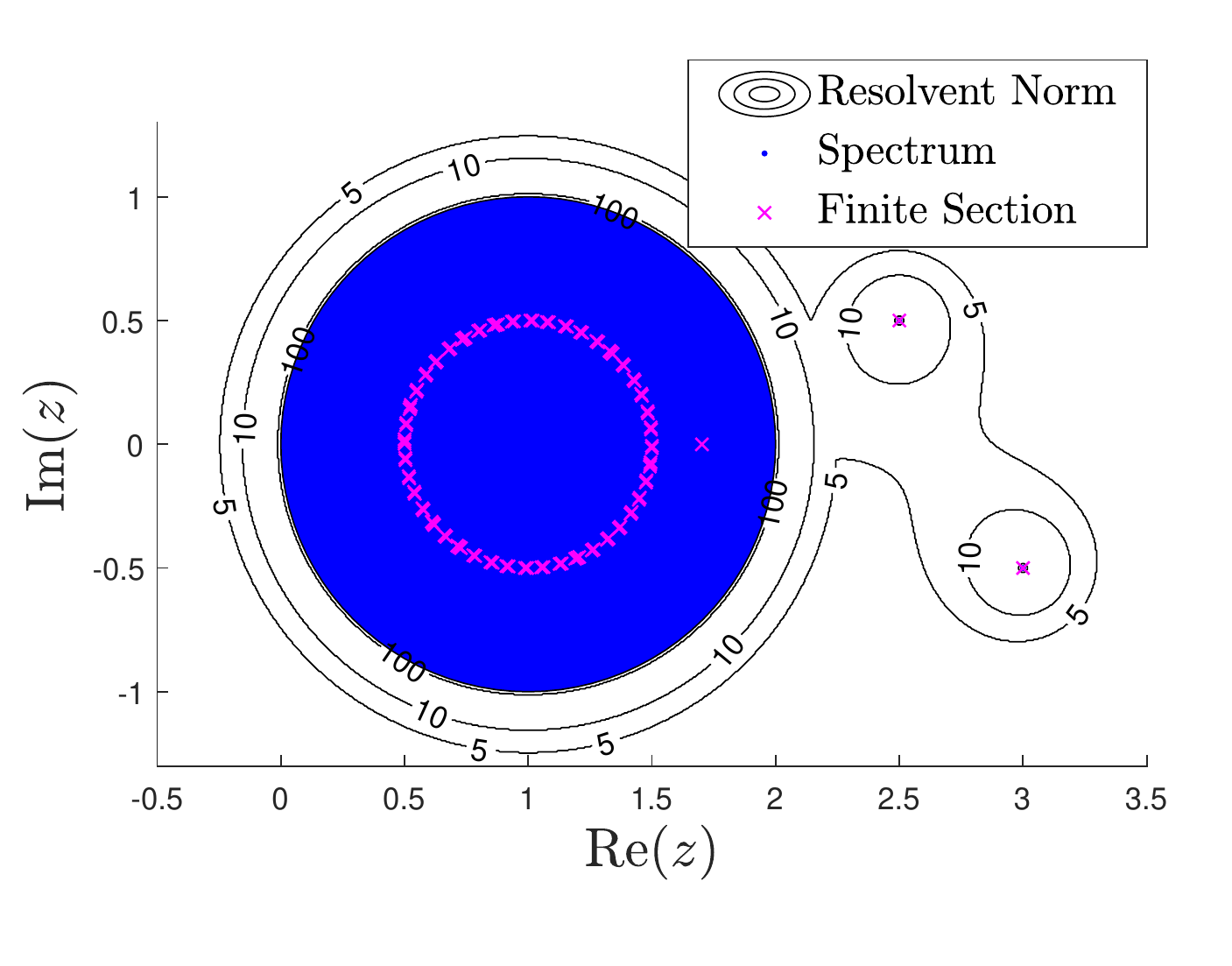}
\caption{Left: $\sigma_{\epsilon}(A)$ plotted as contours of the resolvent norm, as well as $\sigma(P_mA |_{P_m\mathcal{H}})$ for $m = 300$ with the false eigenvalue (recall that $\sigma(A) \subseteq \sigma_{\epsilon}(A)$). Right: $\sigma_{\epsilon}(T)$, $\sigma(T)$ and $\sigma(P_mT |_{P_m\mathcal{H}})$ for $m = 100$.}
\label{pollution1}
\end{figure}

\begin{theorem}[Pokrzywa \cite{Pokrzywa_79}]\label{polish}
Let $A \in \mathcal{B}(\mathcal{H})$ and $\{P_m\}$ be a sequence of
finite-dimensional projections converging strongly to the
identity. Suppose that $S \subset W_e(A).$ Then there exists a sequence
$\{\tilde{P}_m\}$ of finite-dimensional projections such that $P_m < \tilde{P}_m$ (so
$\tilde{P}_m \rightarrow I$ strongly) and 
\[
d_H(\sigma(A_m) \cup S, \sigma(\tilde A_m)) \rightarrow 0,
 \quad \text{as }m \rightarrow \infty, 
\]
where
\[
A_m = P_mA |_{P_m\mathcal{H}}, \qquad \tilde A_m =
\tilde P_mA |_{\tilde P_m\mathcal{H}} 
\]
and $d_H$ denotes the Hausdorff metric.
\end{theorem}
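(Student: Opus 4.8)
The plan is to prove this result (due to Pokrzywa \cite{Pokrzywa_79}) by forcing every point of $S$ to become an \emph{exact} eigenvalue of a finite-rank enlargement of $P_m$, so that no delicate matrix-perturbation estimate is ever needed. The only external input is the classical description of the essential numerical range (Fillmore--Stampfli--Williams): $z\in W_e(A)$ if and only if there is an orthonormal sequence $(\phi_k)$ with $\langle A\phi_k,\phi_k\rangle\to z$; since an orthonormal sequence is automatically weakly null, projecting the $\phi_k$ off any fixed finite-dimensional subspace $G$ and renormalising yields the convenient reformulation: \emph{for every finite-dimensional $G\subset\mathcal H$, every $z\in W_e(A)$ and every $\eta>0$ there is a unit vector $\psi\perp G$ with $|\langle A\psi,\psi\rangle-z|<\eta$} (the renormalisation and the cross terms involving $P_G\phi_k$ disappear in the limit because $\|P_G\phi_k\|\to 0$).

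First I would treat one point $z\in W_e(A)$. Writing $F_m=\mathrm{ran}(P_m)$, apply the reformulation with $G=F_m+AF_m+A^*F_m$ to obtain a unit vector $\psi\perp G$ with $|\langle A\psi,\psi\rangle-z|<\eta$. Because $\psi\perp A^*F_m$ we get $P_{F_m}A\psi=0$, and because $\psi\perp AF_m$ we get $P_{F_m}A^*\psi=0$; a one-line computation then shows that the compression of $A$ to $\widetilde F:=F_m\oplus\mathbb C\psi$ is \emph{exactly} block diagonal, equal to $A_m\oplus(\langle A\psi,\psi\rangle)$, with $\psi$ a genuine eigenvector. Iterating, for finitely many $z_1,\dots,z_p\in W_e(A)$ I would choose $\psi_1,\dots,\psi_p$ one at a time, at the $j$-th step enlarging $G$ to also contain $\psi_1,\dots,\psi_{j-1}$ together with $A\psi_i$ and $A^*\psi_i$ for $i<j$. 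Then $\{\psi_i\}$ is orthonormal and orthogonal to $F_m$, and every coupling term $\langle A\psi_i,\psi_j\rangle$ with $i\neq j$ vanishes (one direction from $\psi_j\perp A\psi_i$, the other from $\psi_i\perp A^*\psi_j$), so with $\widetilde F=F_m\oplus\mathrm{span}\{\psi_1,\dots,\psi_p\}$ and $\widetilde P_m$ the orthogonal projection onto $\widetilde F$ the compression is $A_m\oplus\mathrm{diag}(\langle A\psi_1,\psi_1\rangle,\dots,\langle A\psi_p,\psi_p\rangle)$. Hence $\sigma(\widetilde A_m)=\sigma(A_m)\cup\{\langle A\psi_1,\psi_1\rangle,\dots,\langle A\psi_p,\psi_p\rangle\}$ and $d_H\big(\sigma(\widetilde A_m),\sigma(A_m)\cup\{z_1,\dots,z_p\}\big)<\eta$, with $P_m<\widetilde P_m$ and $\widetilde P_m\geq P_m\to I$ strongly.

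To handle a general compact $S\subset W_e(A)$, for each $m$ choose a finite $\epsilon_m$-net $S_m\subset S$ with $\epsilon_m\to0$, run the construction above with the points of $S_m$ and tolerance $\eta_m=\epsilon_m$, and let $\widetilde P_m$ be the resulting enlargement of $P_m$ (finite rank, namely $\mathrm{rank}(P_m)+\#S_m$). Since the added eigenvalues lie within $\eta_m$ of $S_m$ and $d_H(S_m,S)\leq\epsilon_m$, the triangle inequality for $d_H$ gives $d_H\big(\sigma(\widetilde A_m),\sigma(A_m)\cup S\big)\leq\epsilon_m+\eta_m\to0$, while $\widetilde P_m\geq P_m\to I$ strongly; the edge case $S=\emptyset$ is trivial (or just append one harmless vector to keep $P_m<\widetilde P_m$).

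I expect the only genuinely delicate point to be the bookkeeping that makes the enlarged compression \emph{exactly} block-diagonal: one must choose the $\psi_i$ so that simultaneously all of $P_{F_m}A\psi_i$, $P_{F_m}A^*\psi_i$ and $\langle A\psi_i,\psi_j\rangle$ ($i\neq j$) vanish, which is why $G$ must be enlarged to include $AF_m$, $A^*F_m$ and the $A$- and $A^*$-images of the previously chosen vectors. Everything else --- the $W_e$ characterisation, the existence of the vectors $\psi$, and the Hausdorff-distance estimates --- is standard, and in particular there is no need for dimension-dependent eigenvalue perturbation bounds because the added numerical-range points appear as exact eigenvalues.
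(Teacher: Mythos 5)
The paper does not prove this theorem; it is quoted verbatim from Pokrzywa \cite{Pokrzywa_79}, so there is no in-paper proof to compare against. On its own merits your argument is correct and essentially self-contained given the Fillmore--Stampfli--Williams characterisation of $W_e(A)$ by orthonormal sequences: your reformulation (existence of a unit $\psi\perp G$ with $\langle A\psi,\psi\rangle$ within $\eta$ of $z$, for any prescribed finite-dimensional $G$) follows from weak-nullity of orthonormal sequences exactly as you indicate; choosing $G\supset F_m+AF_m+A^*F_m$ and iteratively appending $\psi_i,A\psi_i,A^*\psi_i$ forces the compression to $\widetilde F$ to be \emph{exactly} $A_m\oplus\operatorname{diag}(\langle A\psi_i,\psi_i\rangle)$, which eliminates the need for any eigenvalue-perturbation estimate; and the Hausdorff bound $d_H(B\cup M,B\cup S)\leq d_H(M,S)$ finishes the job with an $\epsilon_m$-net of $S$. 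The only loose thread is the edge case $S=\emptyset$: appending ``one harmless vector'' does not obviously keep $d_H(\sigma(A_m),\sigma(\widetilde A_m))$ small, since the new eigenvalue $\langle A\psi,\psi\rangle$ can only be steered toward $W_e(A)$, not toward $\sigma(A_m)$. In practice the theorem is only invoked with $S\neq\emptyset$ (indeed the Hausdorff metric is defined on nonempty compacts), so this is cosmetic, but if you want to cover it you should either assume $S\neq\emptyset$ or replace the strict inequality $P_m<\widetilde P_m$ by $P_m\leq\widetilde P_m$ in that degenerate case.
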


Despite this result, the finite section can perform quite well. This is the case for self adjoint operators \cite{Arveson_cnum_lin94,brown2007quasi,hansen2008} and it is also well suited for the computation of pseudospectra of Toeplitz operators \cite{Bottcher_pseu,Bottcher_book}. Moreover, in general, we have the following (recall that $W_e(T)$ is the convex hull of the essential spectrum for $T$ normal):

\begin{theorem}[Pokrzywa \cite{Pokrzywa_79}]\label{t:conv}
Let $T \in \mathcal{B}(\mathcal{H})$ and $\{P_m\}$ be a sequence of
finite-dimensional projections converging strongly to the
identity. If $\lambda \notin W_e(T)$ then $\lambda \in
\sigma(T)$ if and only if 
$$
\mathrm{dist}(\lambda,\sigma( P_mT|_{P_m\mathcal{H}})) \longrightarrow 0,
\qquad \text{as } m \rightarrow \infty.
$$ 
\end{theorem}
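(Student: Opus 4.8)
After replacing $T$ by $T-\lambda$ (which translates $\sigma(T)$, $W_e(T)$ and every $P_mT|_{P_m\mathcal H}$ by $-\lambda$) we may assume $\lambda=0\notin W_e(T)$; write $T_m:=P_mT|_{P_m\mathcal H}$, a finite matrix. We must show $0\in\sigma(T)$ iff $\mathrm{dist}(0,\sigma(T_m))\to0$, and we treat the two implications separately. For ``$0\notin\sigma(T)\Rightarrow\mathrm{dist}(0,\sigma(T_m))\not\to0$'', note $T$ is invertible and suppose for contradiction that, along a subsequence, $\mu_m\in\sigma(T_m)$ with $\mu_m\to0$ and unit eigenvectors $x_m\in P_m\mathcal H$, so $P_mTx_m=\mu_mx_m$ and $\langle Tx_m,x_m\rangle=\langle P_mTx_m,x_m\rangle=\mu_m$. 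Put $w_m=(I-P_m)Tx_m=Tx_m-\mu_mx_m$. I would first argue that every weak subsequential limit of $(x_m)$ is $0$: if $x_{m_k}\rightharpoonup x$ then $Tx_{m_k}\rightharpoonup Tx$ and, since $P_{m_k}\to I$ strongly while $\|Tx_{m_k}\|\le\|T\|$, also $P_{m_k}Tx_{m_k}\rightharpoonup Tx$, hence $w_{m_k}\rightharpoonup0$; substituting into $x_m=\mu_mT^{-1}x_m+T^{-1}w_m$ and using $\mu_m\to0$ gives $x_{m_k}\rightharpoonup0$, i.e. $x=0$. Thus $x_m\rightharpoonup0$, so $(x_m)$ is a weakly null sequence of unit vectors with $\langle Tx_m,x_m\rangle\to0$; by the standard description $W_e(T)=\{\mu:\exists\,y_n\rightharpoonup0,\ \|y_n\|=1,\ \langle Ty_n,y_n\rangle\to\mu\}$ this forces $0\in W_e(T)$, contradicting the hypothesis. (This is the ``no spectral pollution outside $W_e$'' phenomenon.)

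For the converse, assume $0\in\sigma(T)$. Since $0\notin W_e(T)\supseteq\sigma_{\mathrm{ess}}(T)$ and $\mathbb C\setminus W_e(T)$ is connected and unbounded, $0$ lies in the unbounded component of $\mathbb C\setminus\sigma_{\mathrm{ess}}(T)$, where $T-z$ has index zero; by analytic Fredholm theory $0$ is then an isolated eigenvalue of finite algebraic multiplicity. As $W_e(T)$ is compact, convex and misses $0$, after multiplying $T$ by a unimodular constant (which rotates all spectra and preserves $\mathrm{dist}(0,\cdot)$) we may assume $W_e(T)\subseteq\{\mathrm{Re}\,z\ge\delta\}$, $\delta>0$. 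Then $A=\tfrac12(T+T^*)$ is self-adjoint with $\sigma_{\mathrm{ess}}(A)\subseteq W_e(A)=\mathrm{Re}\,W_e(T)\subseteq[\delta,\infty)$, so $\chi_{(-\infty,\delta/2]}(A)$ has finite rank. Let $G$ be the orthogonal projection onto $\mathrm{ran}\,\chi_{(-\infty,\delta/2]}(A)$ together with the Riesz eigenprojection of $T$ at $0$; then $V:=\mathrm{ran}\,G$ is finite-dimensional and $\mathrm{Re}\langle Tx,x\rangle\ge\tfrac\delta2\|x\|^2$ for all $x\perp V$, so the compression $E$ of $T$ to $V^\perp$ is invertible with $\|(E-z)^{-1}\|\le4/\delta$ for $|z|\le\delta/4$.

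The plan is then a finite-dimensional Schur-complement (Feshbach) reduction on a small disk. Fix $\rho\in(0,\delta/8)$ with $\sigma(T)\cap\overline{B_\rho(0)}=\{0\}$. Since $\mathrm{dist}(\mathrm{ran}\,G,P_m\mathcal H)\to0$, for $m$ large the space $W_m:=P_m\,\mathrm{ran}\,G\subseteq P_m\mathcal H$ has $\dim W_m=\dim V$, and a routine perturbation estimate gives $\mathrm{Re}\langle T_my,y\rangle\ge\tfrac\delta4\|y\|^2$ for $y$ in the orthocomplement of $W_m$ inside $P_m\mathcal H$. Splitting $T_m-z$ along $P_m\mathcal H=W_m\oplus(P_m\mathcal H\ominus W_m)$, the lower--right block is invertible on $\overline{B_\rho(0)}$, so $\det(zI-T_m)=\det D_m(z)\,\det S_m(z)$ with $D_m$ zero-free on $\overline{B_\rho(0)}$ and $S_m(z)$ the $(\dim V)\times(\dim V)$ Schur complement; thus the eigenvalues of $T_m$ in $B_\rho(0)$ are exactly the zeros of $\det S_m$ there. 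Identifying $W_m$ with $V$ via the near-identity isomorphism $P_m|_V$ and using $P_m\to I$ strongly on finite-dimensional subspaces, one checks $S_m\to S$ uniformly on $\overline{B_\rho(0)}$, where $S(z)$ is the Schur complement of the coercive block of $T-z$; here $\det S(0)=0$ because $0\in\sigma(T)$ (and $E$ is invertible), while $\det S\not\equiv0$ because $\sigma(T)\cap B_\rho(0)$ is discrete. By Hurwitz's theorem $\det S_m$ has a zero in $B_\rho(0)$ for all large $m$, i.e. $\sigma(T_m)\cap B_\rho(0)\neq\emptyset$; letting $\rho\downarrow0$ yields $\mathrm{dist}(0,\sigma(T_m))\to0$.

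The first implication is soft --- it uses only weak sequential compactness and the description of $W_e(T)$. The hard part is the converse: because $T_m$ is in general non-normal, a vector with $\|T_m\psi\|$ small need not lie near an eigenvector nor signal an eigenvalue of $T_m$ near $0$ (pseudospectral effects), so one genuinely needs the winding-number/Hurwitz argument rather than an approximate-eigenvector one; the attendant technical nuisance is that the good finite-dimensional reducing subspace $\mathrm{ran}\,G$ is not contained in $P_m\mathcal H$, which is exactly what forces the perturbation estimates making $S_m\to S$ and keeping the complementary block coercive.
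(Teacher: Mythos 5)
The paper does not prove this theorem --- it is stated as a citation to Pokrzywa \cite{Pokrzywa_79} and used as a black box --- so there is no internal proof to compare against. Judged on its own terms, your proof is correct and well organised. The ``no spectral pollution'' direction via the weakly-null-sequence characterisation of $W_e(T)$ is clean: the key estimate $\langle T x_m,x_m\rangle=\mu_m$ and the observation that $P_{m_k}Tx_{m_k}$ inherits the weak limit of $Tx_{m_k}$ make the contradiction with $0\notin W_e(T)$ precise. The converse direction is the right approach for a non-normal $T$: an approximate eigenvector does not yield an eigenvalue of a non-normal truncation, so one must track a finite determinant, and the Feshbach/Schur-complement reduction on a small disk plus Hurwitz is exactly the tool that accomplishes this. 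Two remarks. First, your ``routine perturbation estimate'' for the coercivity of the lower-right block of $T_m$ is in fact an identity: if $y\in P_m\mathcal H$ and $y\perp W_m=P_mV$ then $\langle y,v\rangle=\langle P_my,v\rangle=\langle y,P_mv\rangle=0$ for every $v\in V$, so $y\in V^\perp$ and $\mathrm{Re}\langle T_my,y\rangle=\mathrm{Re}\langle Ty,y\rangle\ge\frac{\delta}{2}\|y\|^2$ exactly, with no perturbation loss. Second, the phrase ``one checks $S_m\to S$ uniformly'' hides the only genuinely technical step; it does go through, but not by hand-waving. One should fix an orthonormal basis $\{v_j\}$ of $V$, set $\tilde v_j=P_mv_j$, and show for each $j$ that the vector $u_m^{(j)}\in P_m\mathcal H\ominus W_m$ solving $D_m(z)u_m^{(j)}=-\Pi_m^\perp(T-z)\tilde v_j$ converges in norm to $u^{(j)}=-D(z)^{-1}C(z)v_j$; this uses the uniform bound on $D_m(z)^{-1}$, the inclusion $P_m\mathcal H\ominus W_m\subseteq V^\perp$ (so that $\Pi_m^\perp P_V=0$), the fact that the finite-rank projection onto $W_m$ converges in norm to $P_V$, and $(I-P_m)v_j\to0$. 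Once that is written out the rest (determinant convergence, $\det S(0)=0$ since $D(0)=E$ is invertible while $T$ is not, $\det S\not\equiv0$ by discreteness, Hurwitz, diagonalise $\rho\downarrow0$) is correct.
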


However, if we want to use the finite section method and rely on Theorem
\ref{t:conv} we must know $W_e(T),$ and that may be unpleasant to
compute. Alternatively, we could hope that $\sigma_{\mathrm{ess}}(T)$ is close to $W_e(T)$. For example if $T$ is hypo-normal
($T^*T - TT^* \geq 0$) then
$$
\mathrm{conv}(\sigma_{\mathrm{ess}}(T)) = W_e(T),
$$
where $\mathrm{conv}(\sigma_{\mathrm{ess}}(T))$ denotes the convex hull of
$\sigma_{\mathrm{ess}}(T).$ But what if we have a ``very non-normal'' operator? 

Another problem we may encounter using the finite section method is that even
though $\sigma_d(T)$ may be recovered, one may get a very misleading
picture of the rest of the spectrum. Such problems are illustrated in
the following simple example. Let

\begin{equation}\label{eq:the_T}
T = \left(
\begin{array}{cccc|cccc}
 2.5 +0.5i  & 0        & 0      & 0     &  0     & 0      &0        &\cdots\\
 1          & 3 -0.5i  & 0      & 0     &  0     & 0      &0        &\cdots\\
 0          & 1        & 1.7    & 0.05  &  0     & 0      &0        &\cdots\\
 0          & 0        & 0.05   & t_4     &  0     & 0      &0        &\cdots\\
\hline
 0          & 0        & 0      & 0     &  t_5     & 0      &0        &\cdots\\
 0          & 0        & 0      & 0     &  1     & t_6      &0        &\cdots\\
 0          & 0        & 0      & 0     &  0     & 1      &t_7        &\cdots\\
                                                                
\vdots      & \vdots   & \vdots & \vdots& \vdots & \vdots &\ddots   &\ddots\\
\end{array}
\right),
\end{equation}
where $t_j=1+0.5(\sin(j)+i\cos(j))$ for $j \geq 4.$ This operator decomposes into an upper $4\times 4$ block and an operator acting on the perpendicular subspace. It is also possible to compute the spectrum analytically (it consists of a disc of radius $1$ centred at $1$ together with two isolated eigenvalues). Again, we can compute the pseudospectrum of $T$ (Fig. \ref{pollution1}) to reveal that whilst the eigenvalues produced by the finite section method are correct, they do not capture the entire spectrum. It is straightforward to adapt this example (e.g. by changing basis) to have the same phenomena without an obvious decomposition of the operator into a finite part and triangular part. Without the support from the picture of the pseudospectrum, the finite section method does not provide information regarding the boundary of the essential numerical range of $T$ - there is a misleading circle of eigenvalues of $P_mT|_{P_m\mathcal{H}}$ which do not occur along the boundary of the essential spectrum but are simply given by the diagonal entries $\{t_5,t_6,...,t_m\}$.

\begin{remark}
The previous examples demonstrated that, in general, the finite section method is not always suitable for computing spectra. Rather then working with square sections of the infinite matrix $T$, one should work with \textit{uneven sections} $P_n T P_m$, where the parameters $n$ and $m$ are allowed to vary independently. Indeed, the algorithms presented in \cite{colb1,hansen2011} use this method. In effect, we need to know how large $n$ should be to retain enough information of the operator $TP_m$. This type of idea is also used implicitly in the IQR algorithm (see Section \ref{implement}).
\end{remark}

\subsection{Numerical examples I: normal operators}
\label{s:num_1}

\begin{example}[Convergence of the IQR algorithm]
We begin with two simple examples that demonstrate the linear (or exponential) convergence proven in Theorem \ref{new_QR} and Corollary \ref{new_QRCOR} (and its generalisations). Consider first the one-dimensional discrete Schr\"odinger operator given by
$$ T_1 = \left(
\begin{matrix}
 v_1      & 1     &        &       &        \\
 1      & v_2     & 1  &  &   \\
       & 1     & v_3       & 1      &   \\
      &      & 1       & v_4      &  \ddots \\
 &   &  & \ddots & \ddots\\
\end{matrix}
\right),
$$
where $v_j=5\sin(j)^2/\sqrt{j}$ if $j\leq 10$ and $v_j=0$ otherwise. As a compact (in fact finite rank) perturbation of the free Laplacian, $\sigma(T_1)$ consists of the interval $[-2,2]$ together with isolated eigenvalues of finite multiplicity which can be computed \cite{webb2017spectra}. The second operator, $T_2$, consists of taking the operator
$$
T_0=\left(
\begin{matrix}
 2      & 0     & 0       & 0 \\
 0      & \frac{3i}{2}     & 0       & 0 \\
 0      & 0     & -\frac{5}{4}       & 0 \\
0      & 0     & 0       & -\frac{9i}{8} 
\end{matrix}
\right)\bigoplus U_{1},
$$
where $U_{k}$ denotes the bilateral shift $e_j\rightarrow e_{j+k}$, writing this as an operator on $l^2(\mathbb{N})$ and then mixing the spaces via a random unitary transformation on the span of the first $9$ basis vectors. This ensures $T_2$ is not written in block form but has known eigenvalues. We have plotted the difference in norm between the first $j\times j$ block of each $Q_n^*T_lQ_n$ and the diagonal operator formed via the largest $j$ eigenvalues for $j=1,2,3$ and $4$ in Fig. \ref{normal1}. The plot clearly shows the exponential convergence.
\end{example}

\begin{figure}
\centering
\includegraphics[width=0.495\textwidth,trim={32mm 92mm 35mm 92mm},clip]{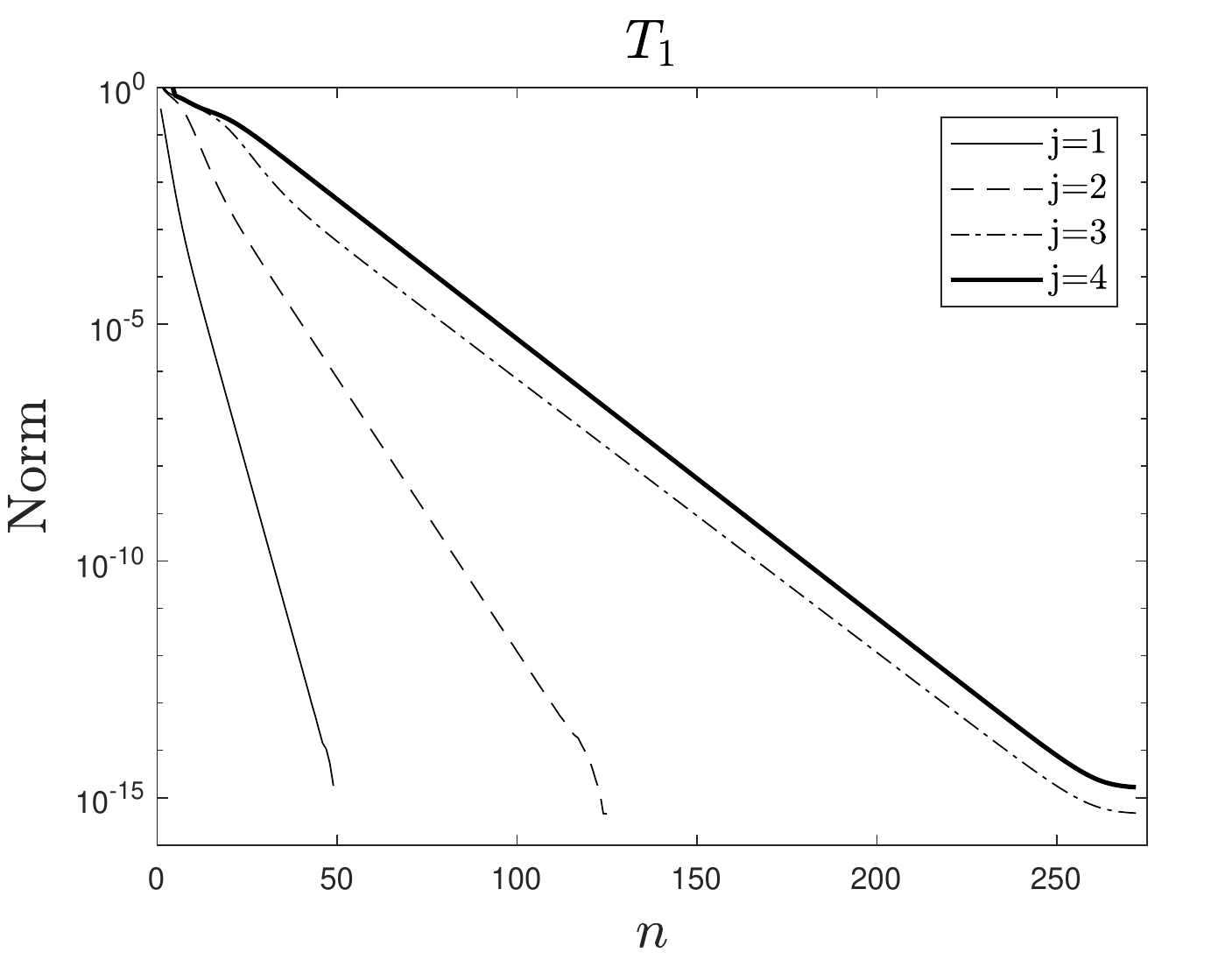}
\includegraphics[width=0.495\textwidth,trim={32mm 92mm 35mm 92mm},clip]{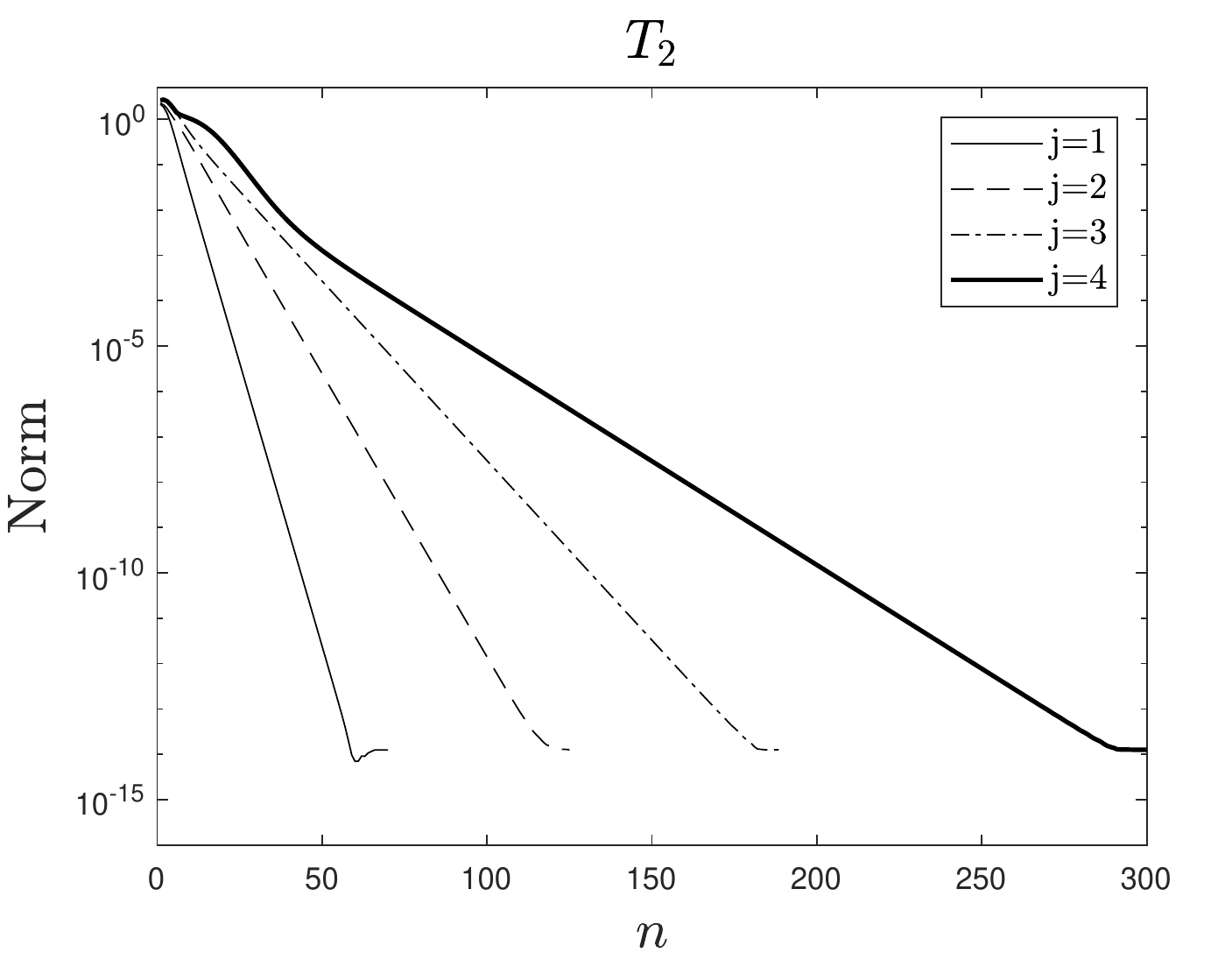}
\caption{Exponential convergence to the diagonal blocks for $T_1$ and $T_2$.}
\label{normal1}
\end{figure}

\begin{example}[Convergence to extremal parts of the spectrum]
To see why we may need some condition on $\sigma(T)$ for convergence of the IQR algorithm to the extreme parts of the spectrum, we consider Laurent and Toeplitz operators with symbol given by a trigonometric polynomial
$$
a(t)=\sum_{j=-k}^{j=k}a_jt^j.
$$
Given such a symbol, we define Laurent and Toeplitz operators
$$
L(a)=\left(
\begin{array}{ccc|cccc}
 \cdots & \cdots & \cdots & \cdots & \cdots & \cdots & \cdots \\
 \cdots & a_0 & a_{-1} & a_{-2} & a_{-3} & a_{-4} & \cdots \\
 \cdots & a_1 & a_0 & a_{-1} & a_{-2} & a_{-3} & \cdots \\
\hline
\cdots & a_2 & a_1 & a_0 & a_{-1} & a_{-2} & \cdots \\
\cdots & a_3 & a_2 & a_1 & a_0 & a_{-1} & \cdots \\
\cdots & a_4 & a_3 & a_2 & a_1 & a_0 & \cdots \\
\cdots & \cdots & \cdots & \cdots & \cdots & \cdots & \cdots 
\end{array}
\right),\quad T(a)= \left(
\begin{array}{cccc}
a_0 & a_{-1} & a_{-2} & \cdots \\
a_1 & a_0 & a_{-1} & \cdots \\
a_2 & a_1 & a_0 & \cdots \\
\cdots & \cdots & \cdots & \cdots 
\end{array}
\right),
$$
acting on $l^2(\mathbb{Z})$ and $l^2(\mathbb{N})$ respectively. Note that $L(a)$ is always normal whereas $T(a)$ need not be (see for example \cite{Bottcher_book}). A simple example already mentioned is $a(t)=t$ which gives rise to the bilateral and unilateral shifts $L(a)=U_1$ and $T(a)=S$. In this case, both of these operators are invariant under iterations of the IQR algorithm and hence their finite sections $P_mQ_n^*TQ_n|_{P_m\mathcal{H}}$ always have spectrum $\{0\}$. In the case of $L(a)$ this is an example of spectral pollution, whereas in the case of $T(a)$ this does not capture the extremal parts of the spectrum. Regarding pure finite section, the following beautiful result is known:

\begin{theorem}[Schmidt-Spitzer \cite{schmidt1960toeplitz}]
\label{nice_teop}
If $a$ is a trigonometric polynomial then we have the following convergence in the Hausdorff metric:
$$
\lim_{m\rightarrow\infty} \sigma(P_mL(a)|_{P_m\mathcal{H}})=\lim_{m\rightarrow\infty} \sigma(P_mT(a)|_{P_m\mathcal{H}})=\bigcap_{r\in(0,\infty)}\sigma(T(a_r))=:\Upsilon(a),
$$
where $a_r(t)=a(rt)$. Furthermore, this limit set is a connected finite union of analytic arcs, each pair of which has at most endpoints in common.
\end{theorem}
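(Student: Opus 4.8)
The plan is to prove the two equalities separately: first pin down the limit set $\Upsilon(a)$ in closed form via the roots of an auxiliary polynomial, then show that the finite-section spectra converge to it. The first equality is essentially bookkeeping: with either the window $\{1,\dots,m\}$ or the symmetric window $\{-n,\dots,n\}$, the matrix $P_mL(a)|_{P_m\mathcal H}$ is, up to conjugation by a permutation, the banded Toeplitz matrix $T_{m'}(a)$ produced by the corresponding finite section of $T(a)$; so the two finite-section limits coincide and it suffices to study $\lim_m\sigma(T_m(a))$, where $T_m(a)$ is the $m\times m$ banded Toeplitz truncation with symbol $a$.

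For $\lambda\in\mathbb C$ put $b_\lambda(z)=z^k(a(z)-\lambda)$, a polynomial of degree $2k$ when $a_{-k},a_k\neq0$ (the generic case; see below for the degenerate ones), with roots $z_1(\lambda),\dots,z_{2k}(\lambda)$ ordered by $|z_1|\le\dots\le|z_{2k}|$. I would first establish
$$
\Upsilon(a)\;=\;\bigcap_{r>0}\sigma(T(a_r))\;=\;\{\lambda\in\mathbb C:\ |z_k(\lambda)|=|z_{k+1}(\lambda)|\}\;=:\;\Lambda(a).
$$
This needs only the Fredholm theory of Toeplitz operators with continuous symbol: $T(a_r)-\lambda I=T(a_r-\lambda)$ is Fredholm iff $a_r-\lambda$ has no zero on $\mathbb T$ (equivalently $r\notin\{|z_i(\lambda)|\}$) and, when it is, invertible iff the winding number of $a_r-\lambda$ about $0$ vanishes. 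The argument principle for the meromorphic function $a(z)-\lambda$ on $\{|z|<r\}$, whose only pole is the pole of order $k$ at $0$, gives $\mathrm{wind}(a_r-\lambda)=\#\{i:|z_i(\lambda)|<r\}-k$, which is $0$ exactly when $|z_k(\lambda)|<r<|z_{k+1}(\lambda)|$. Hence some $r>0$ makes $T(a_r)-\lambda I$ invertible iff the open interval $(|z_k(\lambda)|,|z_{k+1}(\lambda)|)$ is nonempty (it then contains no root modulus, by the ordering), i.e.\ iff $|z_k(\lambda)|<|z_{k+1}(\lambda)|$.

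The heart of the matter is then $\lim_m\sigma(T_m(a))=\Lambda(a)$ in the Hausdorff metric, proved as a two-sided inclusion. For $\Lambda(a)\subseteq\liminf_m\sigma(T_m(a))$ I would, for fixed $\lambda$ with $|z_k(\lambda)|=|z_{k+1}(\lambda)|$, construct approximate eigenvectors of $T_m(a)$ from the general solution $x_i=\sum_p c_p\,z_p(\lambda)^i$ of the recursion $\sum_{j=-k}^k a_jx_{i+j}=\lambda x_i$: one solves the $2k$ boundary rows of $T_m(a)-\lambda I$ (the first $k$ and last $k$) up to corner terms that are exponentially small because of the modulus separation among the non-critical roots, and the equality $|z_k(\lambda)|=|z_{k+1}(\lambda)|$ is precisely what makes the leading $2k\times2k$ boundary determinant degenerate in the limit, producing a near-null vector. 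For the reverse inclusion $\limsup_m\sigma(T_m(a))\subseteq\Lambda(a)$, fix $\lambda_0\notin\Lambda(a)$ and use the algebraic factorisation $a(z)-\lambda=c_-^\lambda(z)\,c_+^\lambda(z)$ with $c_-^\lambda(z)=\prod_{i\le k}\!\big(1-z_i(\lambda)/z\big)$ and $c_+^\lambda(z)=a_k\prod_{i>k}\!\big(z-z_i(\lambda)\big)$, holomorphic in $\lambda$ near $\lambda_0$; then $T_m(a)-\lambda I$ differs from the product of the lower-triangular banded matrix $T_m(c_-^\lambda)$ (constant diagonal $1$) and the upper-triangular banded matrix $T_m(c_+^\lambda)$ (constant diagonal $a_k\prod_{i>k}(-z_i(\lambda))\neq0$) by a perturbation of rank at most $2k$ concentrated in the corners, and Widom's (Day's) asymptotic formula for determinants of banded Toeplitz matrices gives $\det(T_m(a)-\lambda I)=C(\lambda)\big(\prod_{i>k}z_i(\lambda)\big)^m(1+o(1))$ with $C(\lambda_0)\neq0$, locally uniformly in $\lambda$; so $T_m(a)$ has no eigenvalue near $\lambda_0$ for large $m$. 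The ``finite union of analytic arcs'' assertion then follows because $|z_k(\lambda)|^2$ and $|z_{k+1}(\lambda)|^2$ are, off branch points, real-analytic functions of $(\mathrm{Re}\,\lambda,\mathrm{Im}\,\lambda)$ cut out by resultant/discriminant conditions on $b_\lambda$, so $\Lambda(a)$ is a bounded real-analytic (semialgebraic) curve, and a standard finiteness argument for its singular points supplies the decomposition.

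I expect the main obstacle to be the $\limsup$ inclusion: making the determinant asymptotics hold \emph{locally uniformly} in $\lambda$, and handling the degenerate configurations — $a_{-k}$ or $a_k$ vanishing (so $b_\lambda$ drops degree and fewer than $2k$ roots are available), or roots $z_i(\lambda)$ colliding (confluent blocks in the boundary determinant and in Widom's formula). The clean way around this is to run all the limiting arguments directly on the characteristic determinant $D_m(\lambda)=\det(T_m(a)-\lambda I)$, viewed as a polynomial in $\lambda$, tracking zeros with multiplicity and invoking Hurwitz-type theorems for the locally uniform control, rather than on individual root products; the equal-moduli construction for the $\liminf$ direction likewise has to be phrased so that it survives coalescing critical roots.
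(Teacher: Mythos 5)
The paper states this theorem as a black box with a citation to Schmidt and Spitzer (1960); it supplies no proof, so there is no internal argument to compare against. Your proposal should therefore be judged against the standard route in the literature (Schmidt--Spitzer, Hirschman, Widom, B{\"o}ttcher--Grudsky), which it does follow in its broad outline: permute to reduce the Laurent sections to finite Toeplitz matrices $T_m(a)$, identify $\bigcap_{r>0}\sigma(T(a_r))$ with the equal-modulus set $\Lambda(a)=\{\lambda:\ |z_k(\lambda)|=|z_{k+1}(\lambda)|\}$ via winding numbers, control $\limsup_m\sigma(T_m(a))$ away from $\Lambda(a)$ by a Wiener--Hopf factorisation and a Widom/Baxter--Schmidt determinant asymptotic, and read off the analytic-arc structure from the discriminant/resultant locus.

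Two corrections. Cosmetically, with the paper's matrix convention (entry $(i,j)$ equal to $a_{i-j}$) the factor $T(c_-^\lambda)$, whose symbol has only nonpositive Fourier indices, is \emph{upper} triangular with unit diagonal, and $T(c_+^\lambda)$ is \emph{lower} triangular; you have these swapped, though this does not affect the argument. Substantively, your mechanism for the inclusion $\Lambda(a)\subseteq\liminf_m\sigma(T_m(a))$ is wrong as stated: $|z_k(\lambda)|=|z_{k+1}(\lambda)|$ does \emph{not} make the normalised $2k\times 2k$ boundary determinant ``degenerate in the limit.'' Take $a(t)=t+t^{-1}$, so that $z^k(a(z)-\lambda)=z^2-\lambda z+1$ has roots $z_{1,2}=e^{\mp i\theta}$ when $\lambda=2\cos\theta\in(-2,2)$; the boundary determinant there is a nonzero multiple of $z_1^{m+1}-z_2^{m+1}=-2i\sin\big((m+1)\theta\big)$, which for generic $\theta$ oscillates and does not tend to zero as $m\to\infty$. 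The correct statement is that when $|z_k|=|z_{k+1}|$ the two leading terms of the Widom/Trench expansion $D_m(\lambda)=\sum_J C_J(\lambda)\prod_{i\in J}z_i(\lambda)^m$ have equal modulus and a relative phase rotating with $m$, so that $D_m(\lambda)$ winds around zero as $\lambda$ traverses a small circle about $\lambda_0$, with winding number growing linearly in $m$; the argument principle then yields eigenvalues of $T_m(a)$ in every neighbourhood of $\lambda_0$ for all large $m$. Your own fallback at the end --- run all limit arguments on $D_m(\lambda)$ as a polynomial in $\lambda$ and track its zeros --- is exactly the right idea and should replace, not merely supplement, the ``degenerate boundary determinant'' claim.
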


It is straightforward to construct examples where it appears that both $\lim_{n\rightarrow\infty}P_mQ_n^*T(a)Q_n|_{P_m\mathcal{H}}$ and $\lim_{n\rightarrow\infty}P_mQ_n^*L(a)Q_n|_{P_m\mathcal{H}}$ exist and are either the extreme parts of $\sigma(L(a))$ or of $\Upsilon(a)$. For example consider the symbols
$$a(t)=\frac{t^3+t^{-1}}{2},\quad \tilde{a}(t)=t+it^{-2}.$$
Fig. \ref{rose1} shows the outputs of the IQR algorithm and plain finite section for the corresponding Laurent and Toeplitz operators for $m=50$ and $n=1$ and $n=300$. In the case of $a$, it appears that both limit sets are the extremal parts of $\sigma(L(a))$ (together with $0$ if $m$ is not a multiple of $4$). Whereas in the case of $\tilde{a}$ it appears that $\lim_{n\rightarrow\infty}P_mQ_n^*T(a)Q_n|_{P_m\mathcal{H}}$ is the extremal parts of $\Upsilon(a)$ and $\lim_{n\rightarrow\infty}P_mQ_n^*L(a)Q_n|_{P_m\mathcal{H}}$ is the extremal parts of $\sigma(L(a))$ (again together with a finite collection of points depending on the value of $m$ modulo $3$). Curiously, in both cases we observed convergence in the strong operator topology to block diagonal operators (up to unitary equivalence in each sublock), whose blocks have spectra corresponding to the limiting sets (hence the dependence on remainder of $m$ modulo $2$ or $3$). However, in contrast to convergence to points in the discrete spectrum, convergence to these operators was only algebraic. This is shown in Fig. \ref{rose2} where we have plotted the Hausdorff distance between the limiting set and the eigenvalues of the first diagonal block. We also shifted the operators ($+1.1 I$ for $a$ and $-1.5iI$ for $\tilde{a}$) so that the extremal points correspond to exactly one point. In this scenario and for all operators (Laurent or Toeplitz) the IQR algorithm converges strongly to a diagonal operator whose diagonal entries are the corresponding extremal point of $\sigma(L(a))$. This convergence is also shown in Fig. \ref{rose2} and we observed a slower rate of convergence than before. This is possibly due to points from the other tips of the petals of $\sigma(L(a))$ converging as we increase $n$. It would be interesting to see if some form of Theorem \ref{nice_teop} holds for the IQR algorithm (now taking $n\rightarrow\infty$). Given the examples presented here, such a statement would likely be quite complicated. However, we conjecture that if a normal operator has exactly one extreme point of its essential spectrum (and finitely many eigenvalues of magnitude greater than $r_{\mathrm{ess}}$) then this extreme point will be recovered in the limit $n\rightarrow\infty$ for large enough $m$.

\begin{figure}
\centering
\includegraphics[width=0.495\textwidth,trim={32mm 92mm 35mm 92mm},clip]{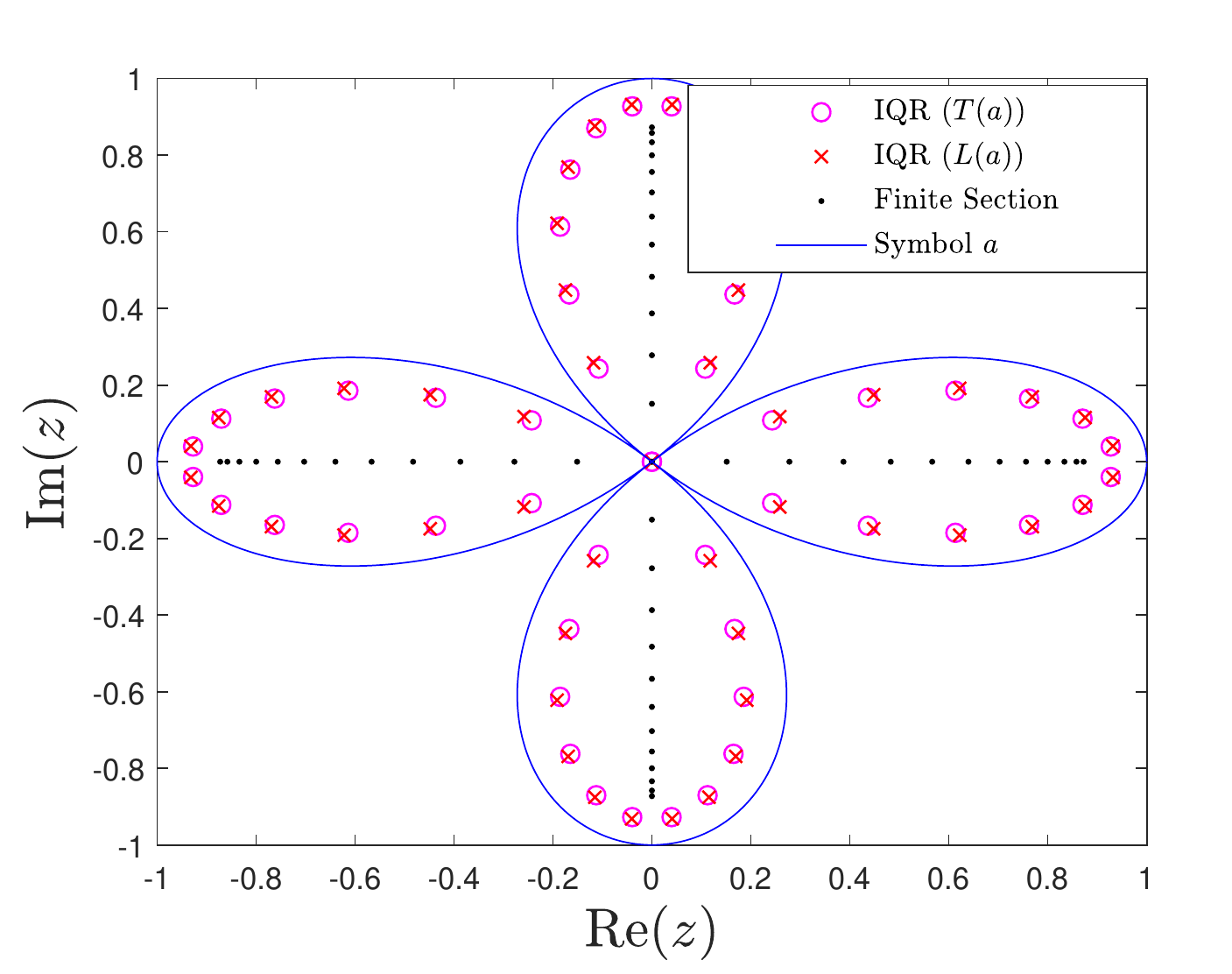}
\includegraphics[width=0.495\textwidth,trim={32mm 92mm 35mm 92mm},clip]{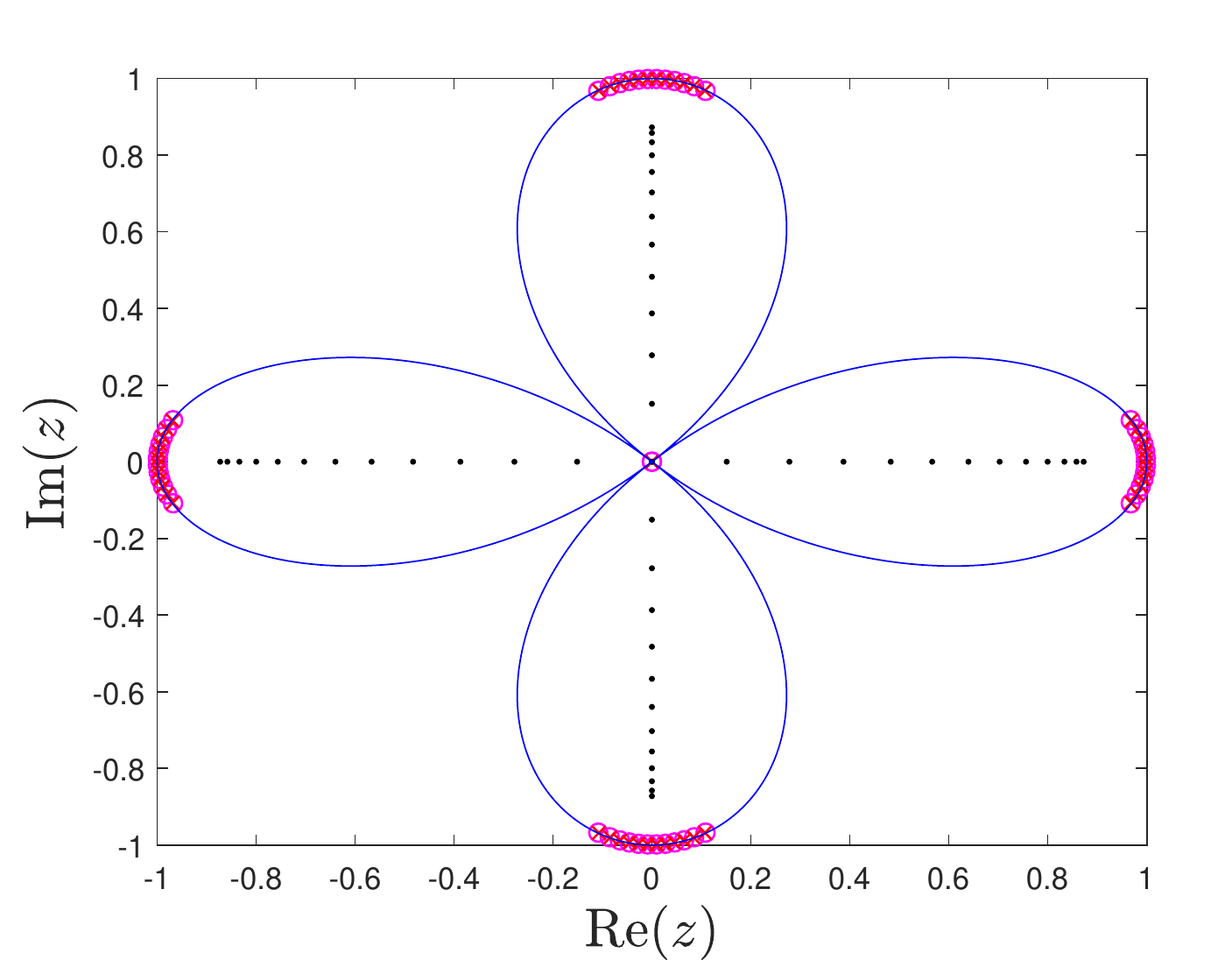}
\includegraphics[width=0.495\textwidth,trim={32mm 92mm 35mm 92mm},clip]{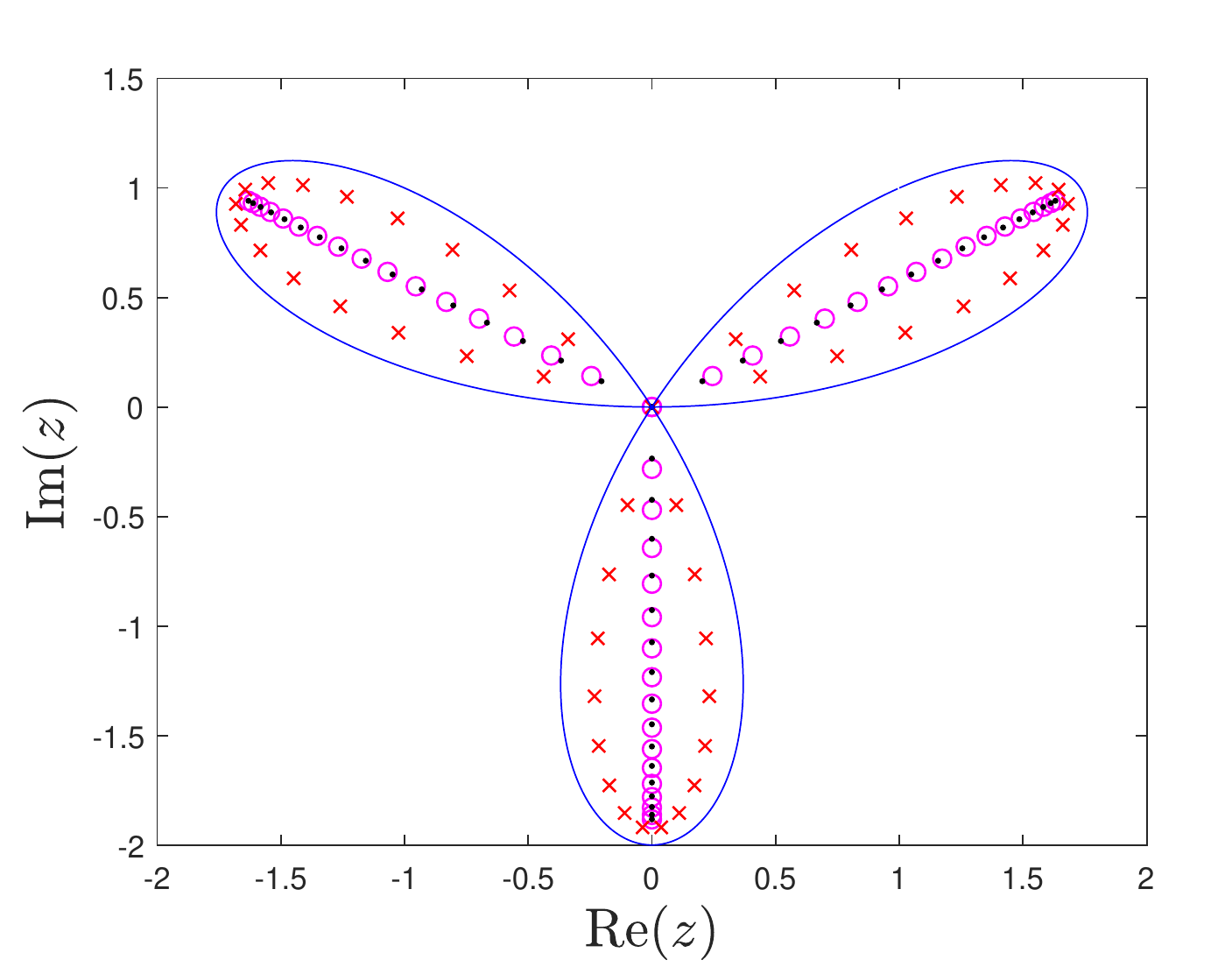}
\includegraphics[width=0.495\textwidth,trim={32mm 92mm 35mm 92mm},clip]{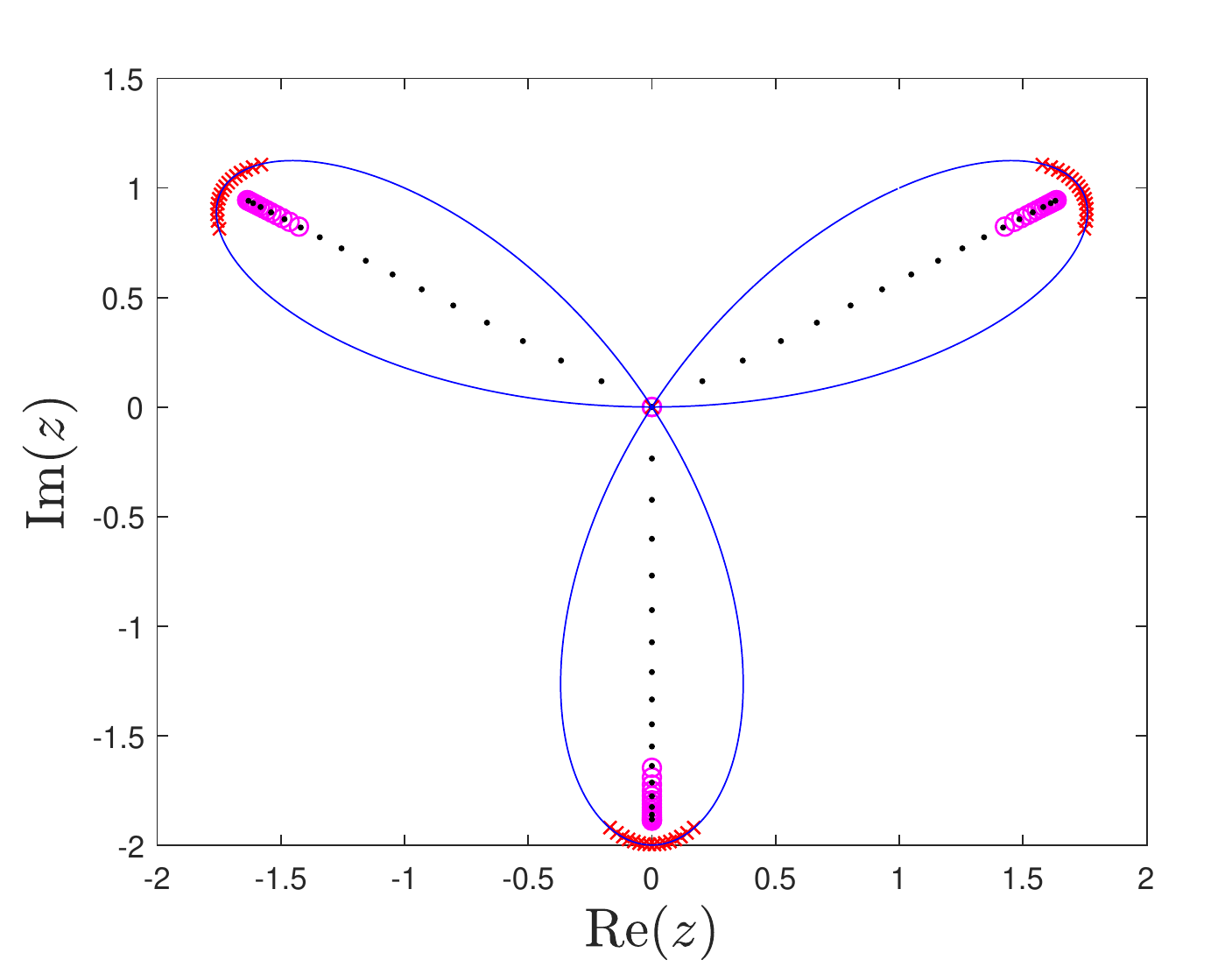}
\caption{Top: Output of IQR and finite section on $T(a)$ and $L(a)$ for $m=50$ and $n=1$ (left), $n=300$ (right). Bottom: Same but for the symbol $\tilde {a}$. In both cases for a given symbol $b$, $\sigma(L(b))$ is given by $\{b(z):z\in\mathbb{T}\}$ (shown) and $\sigma(T(b))$ is given by $\sigma(L(b))\cup\{z\in\mathbb{C}\backslash b(\mathbb{T}):\mathrm{wind}(b,z)\neq 0\}$.}
\label{rose1}
\end{figure}

\begin{figure}
\centering
\includegraphics[width=0.495\textwidth,trim={32mm 92mm 35mm 92mm},clip]{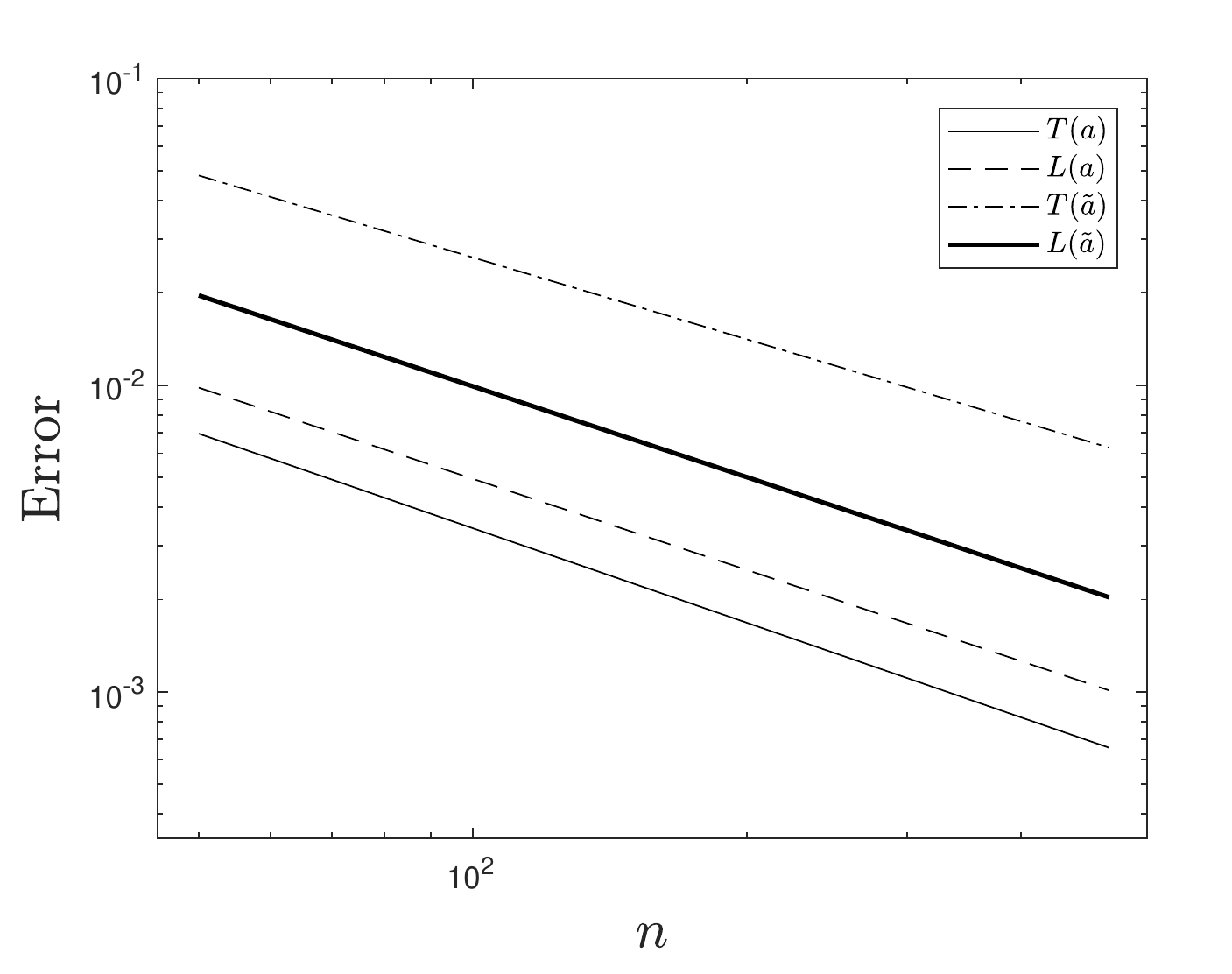}
\includegraphics[width=0.495\textwidth,trim={32mm 92mm 35mm 92mm},clip]{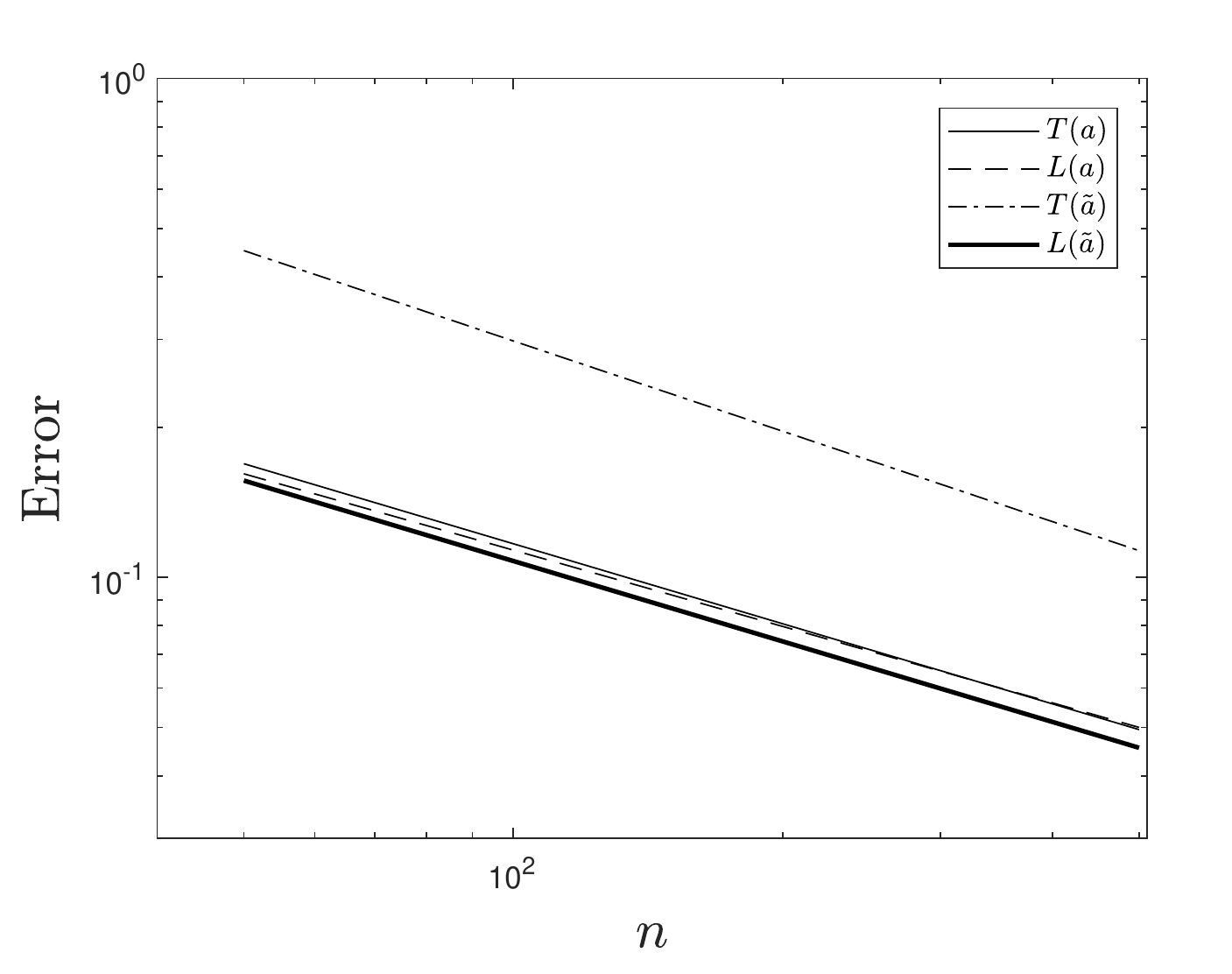}
\caption{Left: Algebraic convergence to block diagonal operators. Right: Algebraic convergence to diagonal operators. In both cases we have plotted the difference in eigenvalues of the first block as we increase $n$.}
\label{rose2}
\end{figure}
\end{example}

\begin{example}[IQR and avoiding spectral pollution]
In this example we consider whether the IQR algorithm may be used as a tool to avoid spectral pollution. Sometimes when considering $\sigma(P_m T|_{P_m\mathcal{H}})$, spectral pollution can be detected by changing $m$ (edge states which correspond to spectral pollution are often unstable but this is not always the case). In general, $\sigma(P_m Q_n^*TQ_n|_{P_m\mathcal{H}})$ can be considered as a generalised version of finite section with a finite number ($n$) of IQR iterates being performed on the infinite dimensional operator before truncation. If $Q_n$ is unitary, then this simply changes the basis before truncation and such a change may reduce (or change) spectral pollution allowing it to be detected. Here we consider
$$
T_3=\left(
\begin{matrix}
 0      & 3     &        &       &        \\
 3      & 0     & 1  &  &   \\
       & 1     & 0       & 3      &   \\
      &      & 3       & 0      &  \ddots \\
  &   &  & \ddots & \ddots\\
\end{matrix}
\right).
$$
The spectrum of $T_3$ is $[-4,-2]\cup[2,4]$. However, if $m$ is odd then $0\in\sigma(P_m T_3|_{P_m\mathcal{H}})$. We shifted the operator by considering $T_3+0.2I$ (and then shifted back for the spectrum). Fig. \ref{jacobi1} shows the Hausdorff distance between $\sigma(P_m Q_n^*(T_3+0.2I)Q_n|_{P_m\mathcal{H}})-0.2I$ and $\sigma(T_3)$ as $n$ varies for different $m$. The spikes in the distance correspond to eigenvalues leaving the interval $[-4,-2]$ and crossing to $[2,4]$ (also shown in Fig. \ref{jacobi1}). The increase in distance as $m$ decreases (for large $n$) is due less of the interval $[-4,-2]$ being approximated. It appears that the IQR algorithm can be an effective tool at detecting spectral pollution - certainly a mixture of varying $m$ and $n$ will be more effective than just varying $m$.

\begin{figure}
\centering
\includegraphics[width=0.495\textwidth,trim={32mm 92mm 35mm 92mm},clip]{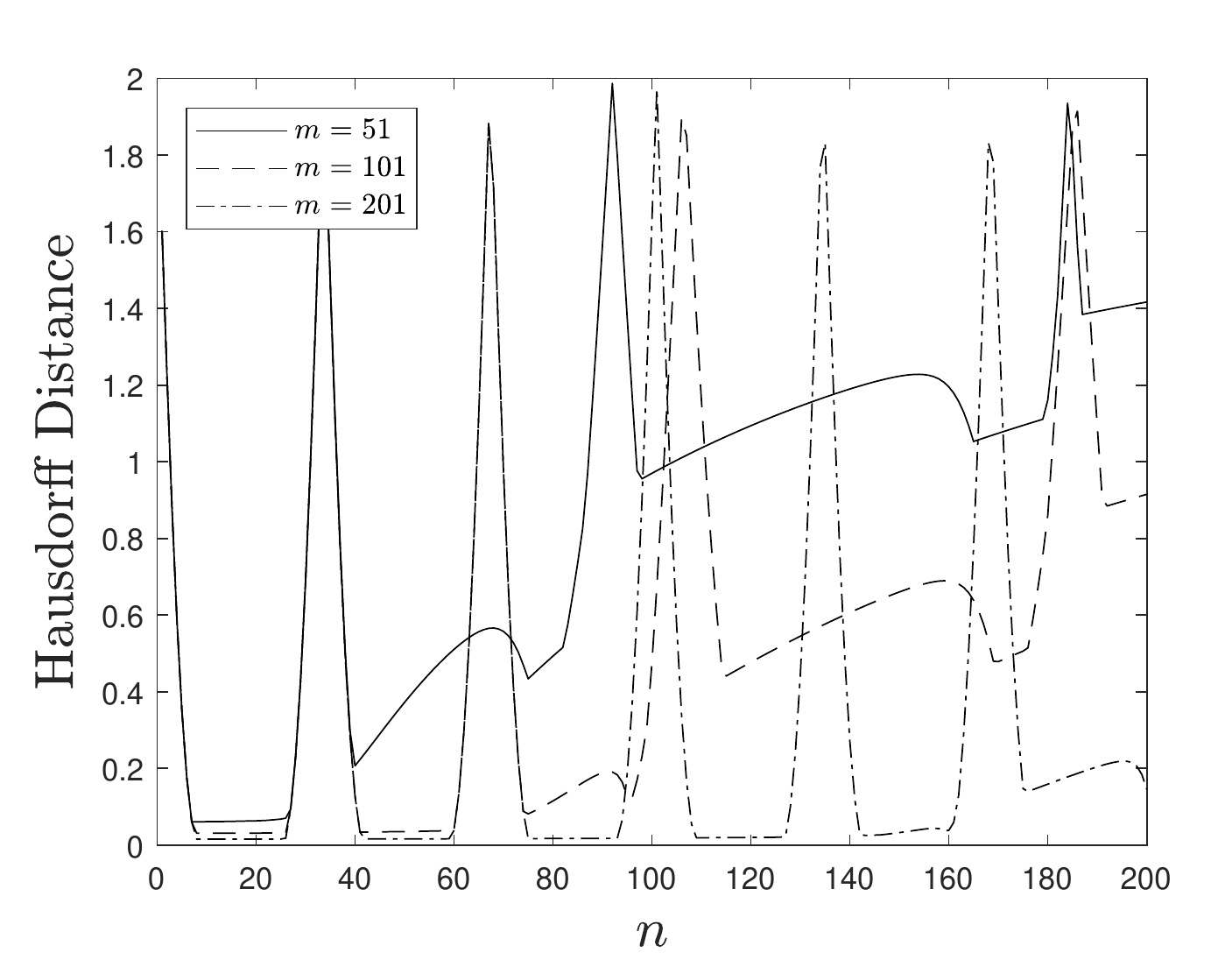}
\includegraphics[width=0.495\textwidth,trim={32mm 92mm 35mm 92mm},clip]{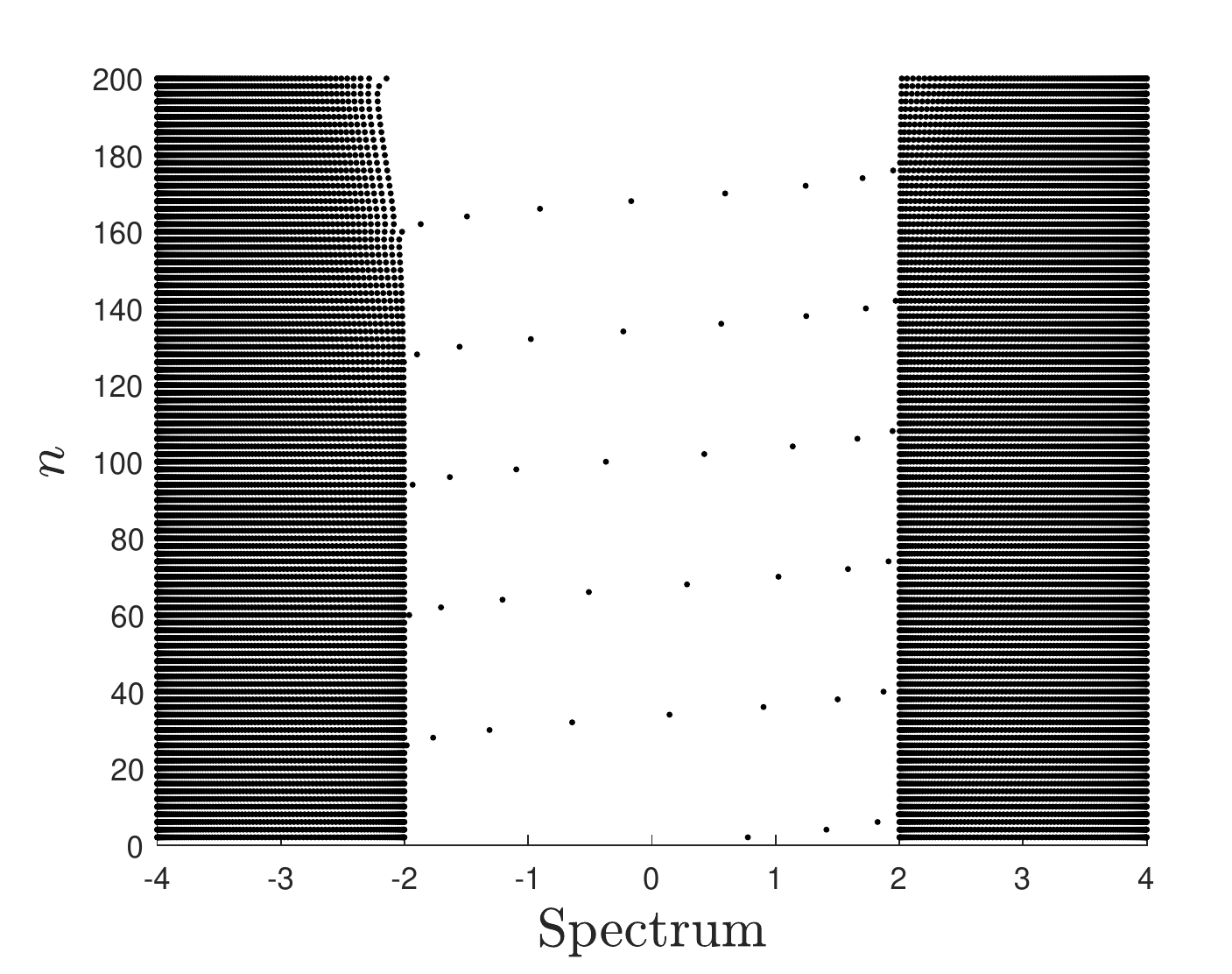}
\caption{Left: $d_{H}(\sigma(P_m Q_n^*(T_3+0.2I)Q_n|_{P_m\mathcal{H}})-0.2I,\sigma(T_3))$ as a function of $n$ for different $m$. Right: $\sigma(P_m Q_n^*(T_3+0.2I)Q_n|_{P_m\mathcal{H}})-0.2I$ as a function of $n$ for $m=201$. Note the crossing of eigenvalues across the spectral gap.}
\label{jacobi1}
\end{figure}

Another example of this is given by the operator $L(a)$ considered previously. For fixed $m$ we found that
$$
\lim_{n\rightarrow\infty}\sup_{z\in\sigma(P_mQ_n^*L(a)Q_n|_{P_m\mathcal{H}})}\mathrm{dist}(z,\sigma(L(a)))=0.
$$
However, for finite section, spectral pollution occurs for all large $m$
$$
\lim_{m\rightarrow\infty}\sup_{z\in\sigma(P_mL(a)|_{P_m\mathcal{H}})}\mathrm{dist}(z,\sigma(L(a)))>0
$$
and the IQR algorithm can only recover the extreme parts of the spectrum
$$
\lim_{n\rightarrow\infty}d_{H}(\sigma(P_mQ_n^*L(a)Q_n|_{P_m\mathcal{H}}),\sigma(L(a)))>0.
$$
Despite this, we found that for small fixed $n>0$ it appears that
$$
\lim_{m\rightarrow\infty}d_{H}(\sigma(P_mQ_n^*L(a)Q_n|_{P_m\mathcal{H}}),\sigma(L(a)))=0.
$$
This is shown in Fig. \ref{rose3} with similar results for $L(\tilde{a})$.

\begin{figure}
\centering
\includegraphics[width=0.495\textwidth,trim={32mm 92mm 35mm 92mm},clip]{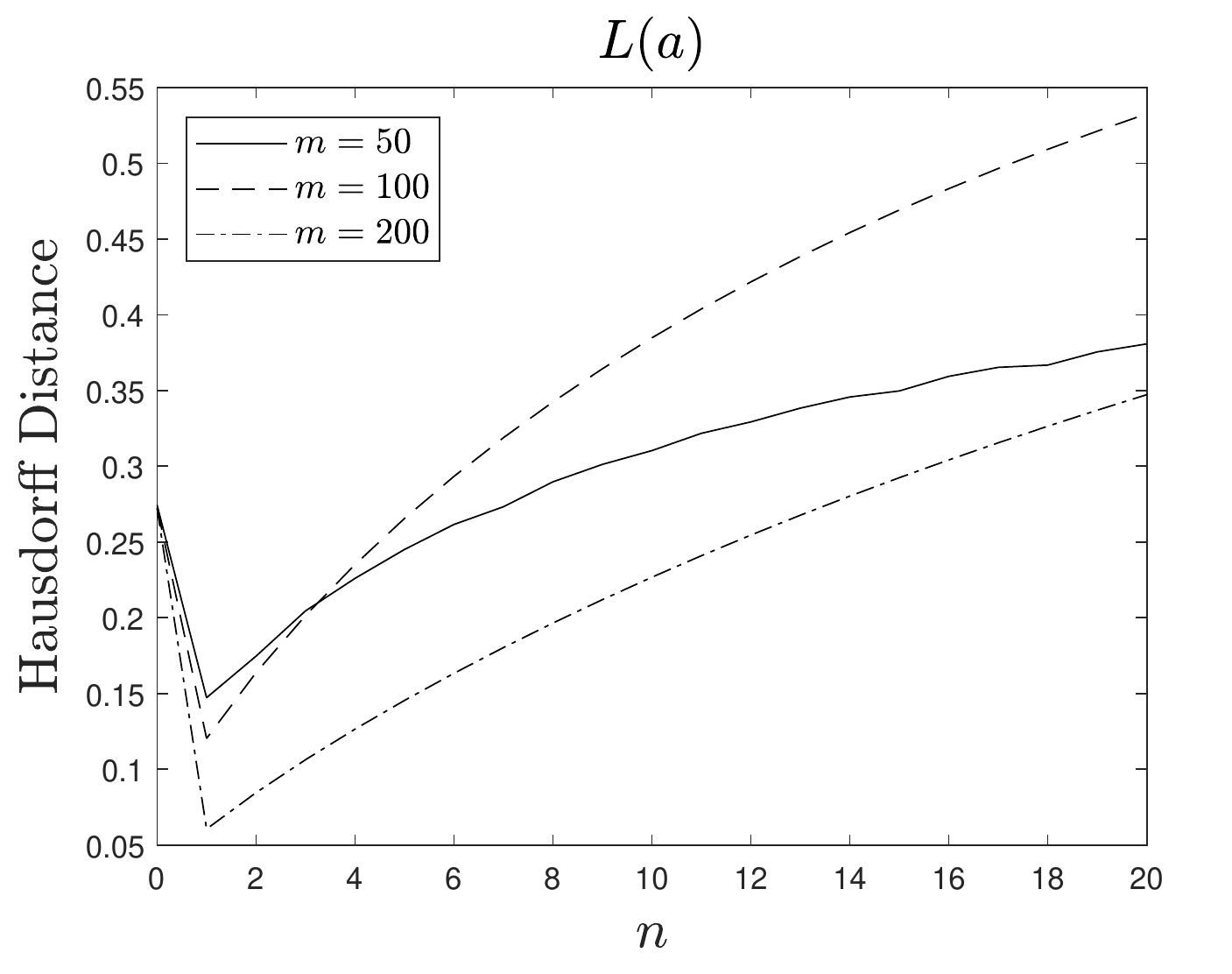}
\includegraphics[width=0.495\textwidth,trim={32mm 92mm 35mm 92mm},clip]{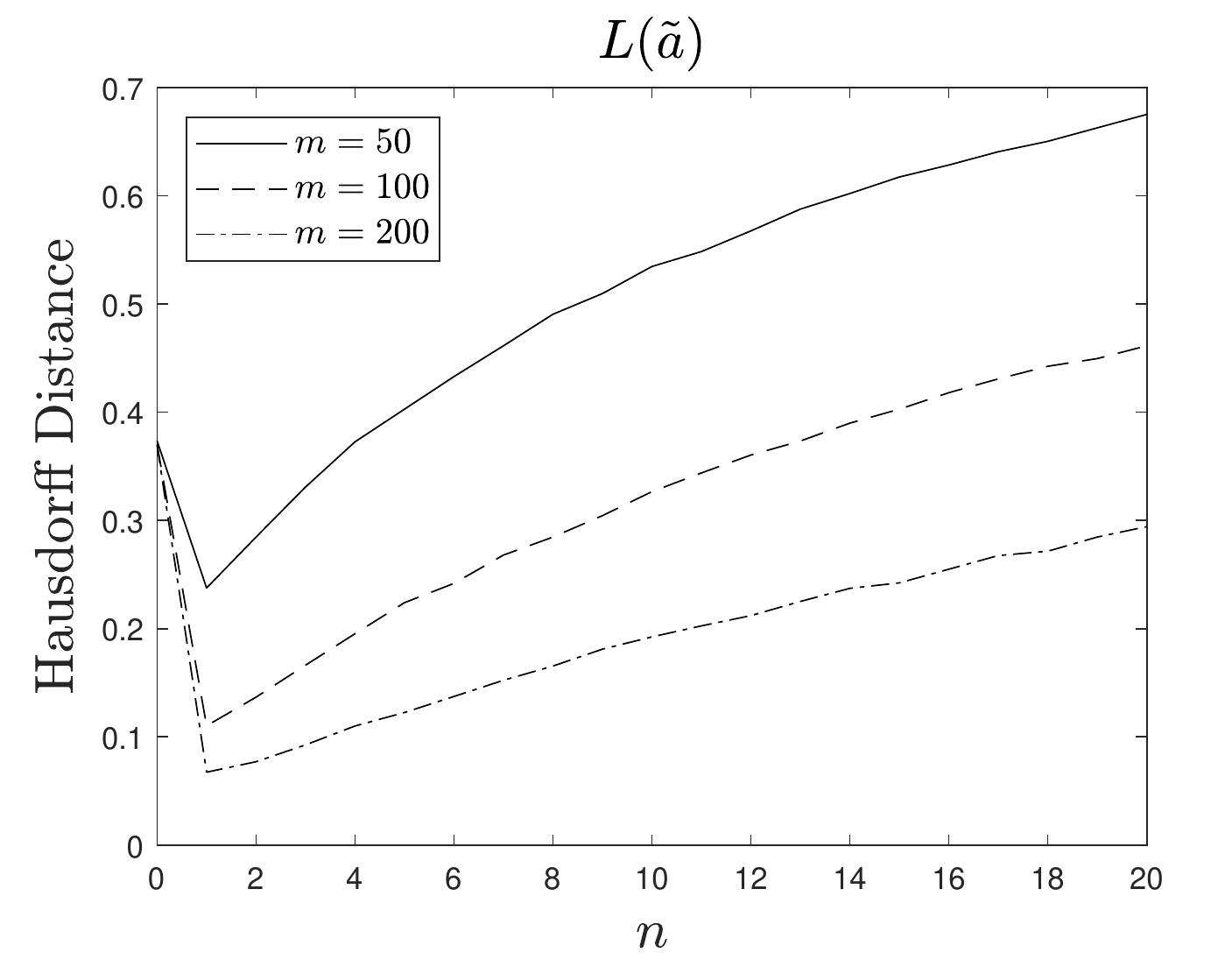}
\caption{Left: $d_{H}(\sigma(P_mQ_n^*L(a)Q_n|_{P_m\mathcal{H}}),\sigma(L(a)))$ as a function of $n$ for different $m$. $d_{H}(\sigma(P_mQ_n^*L(\tilde a)Q_n|_{P_m\mathcal{H}}),\sigma(L(\tilde a)))$ as a function of $n$ for different $m$. }
\label{rose3}
\end{figure}

\end{example}

\subsection{Numerical examples II: non-normal operators}\label{magic}

Although Theorem \ref{new_QR} considers normal operators, Theorems \ref{non_normal_prop} and \ref{non_normal_prop2} suggest the IQR algorithm may also be useful for non-normal operators. Indeed, the results presented here demonstrate that in practice the IQR algorithm can work very well for non-normal problems. If an infinite matrix $T$ has $m$ isolated eigenvalues $\{\lambda_1, \hdots, \lambda_m\}$ (repeated according to multiplicity) outside $r_{\mathrm{ess}}(T)$ (the essential spectral radius), then Theorems \ref{non_normal_prop} and \ref{non_normal_prop2} suggest that the eigenvalues will appear
on the diagonal of 
$
P_mQ_n^*TQ_n|_{P_m\mathcal{H}}
$
as $n \rightarrow \infty$, i.e. 
$$
\sigma(P_mQ_n^*TQ_n|_{P_m\mathcal{H}}) \longrightarrow \{\lambda_1, \hdots, \lambda_m\},
 \qquad \text{as }n\rightarrow \infty.
$$
We will verify this numerically in the next examples. However, we will see that not only do we get convergence to the eigenvalues, but often we also pick up parts of the boundary of the essential spectrum (this was the case when considering $T(a)$ but appeared not to be the case for $T(\tilde a)$). This phenomenon is not accounted for in the previous exposition where normality was crucial for proving Theorem \ref{new_QR}.

\begin{example}[Recovering the extremal part of the spectrum]
Let us return to the infinite matrices $A$ in \eqref{eq:the_A} and $T$ in \eqref{eq:the_T} from Section \ref{s:FS}. We have run the IQR algorithm with $n=1000$ and $n=300$ for $A$ and $T$ respectively, shown in Fig. \ref{infQR1}. We see that if one takes a finite section after running the IQR algorithm, then part of the boundary of the essential spectrum also appears, along with the discrete spectrum $\sigma_d(A)$. Note that the part of the boundary that is captured is the extreme part (points with largest modulus). It seems that after running the IQR algorithm, the spectral information from the largest isolated eigenvalues and the largest approximate point spectrum is ``squeezed up'' to the upper and leftmost portions of the matrix. This is not completely counter-intuitive given (\ref{pwr_ref}) and is what normally happens in finite dimensions. For both examples, we found that the IQR iterates converges to an upper triangular matrix (analogous to the finite dimensional case) in agreement with Theorems \ref{non_normal_prop} and \ref{non_normal_prop2}. The convergence of the upper $1\times 1$ block for $A$ (corresponding to the dominant eigenvalue) and $4\times 4$ non-diagonal block for $T$ are shown in Fig. \ref{non_normal_rate} where we have plotted the difference in norm.

\begin{figure}
\centering
\includegraphics[width=0.495\textwidth,trim={32mm 90mm 35mm 95mm},clip]{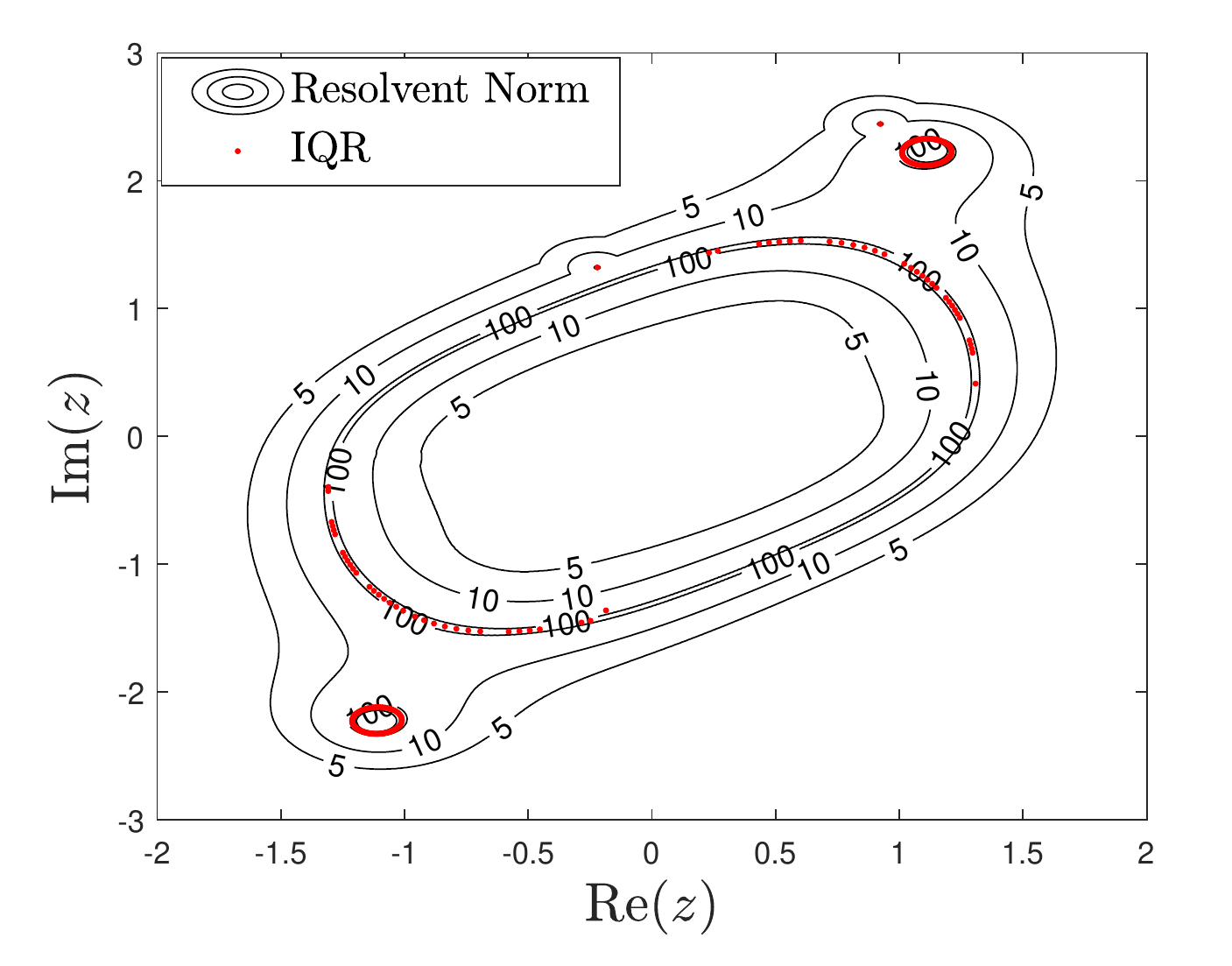}
\includegraphics[width=0.495\textwidth,trim={32mm 90mm 35mm 95mm},clip]{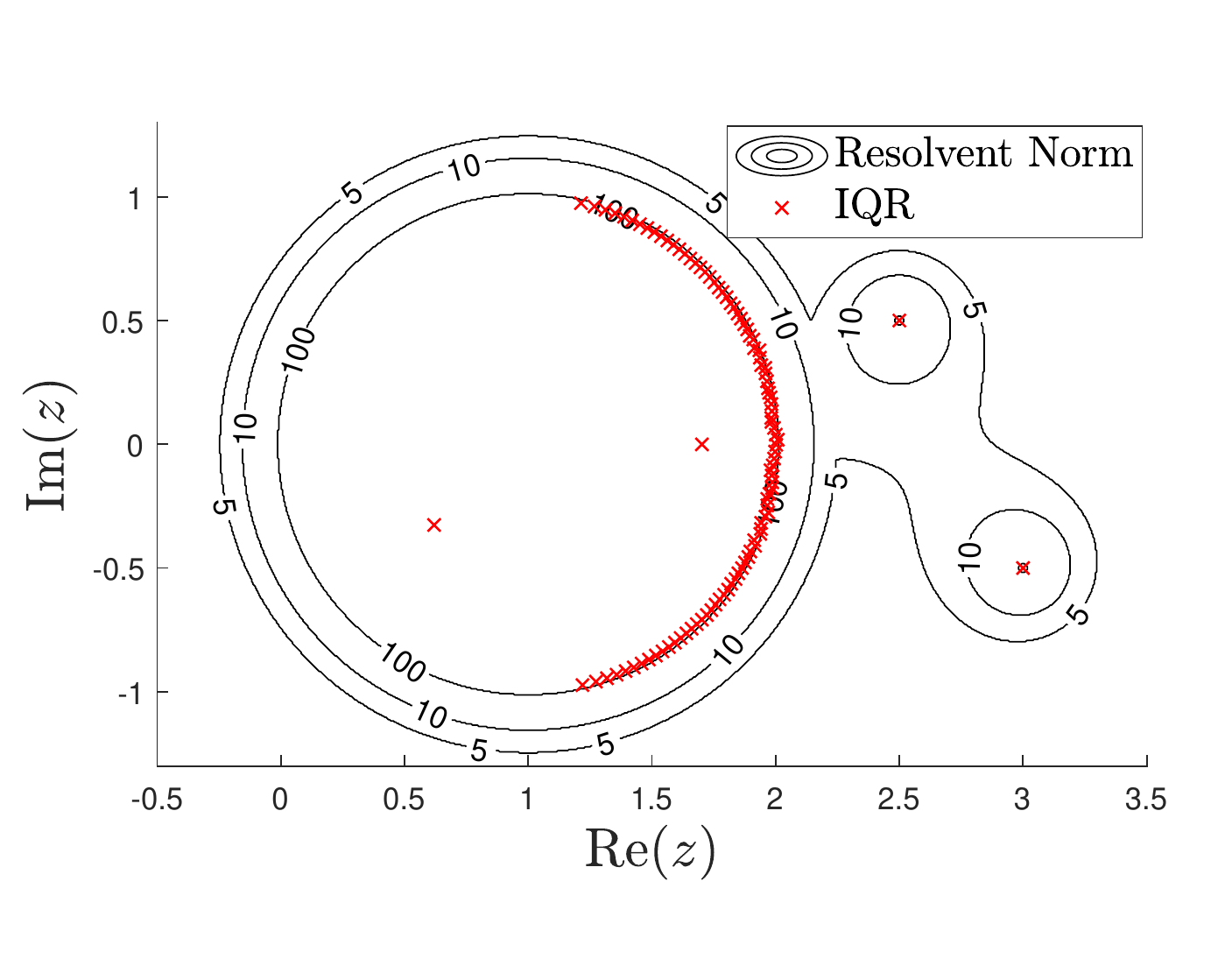}
\caption{Left: Output of the IQR algorithm $\sigma(P_{m}Q_n^*AQ_n|_{P_{m}})$ for $m=300$ and $n=1000$. Right: Output of the IQR algorithm $\sigma(P_{m}Q_n^*TQ_n|_{P_{m}})$ for $m=100$ and $n=300$.}
\label{infQR1}
\end{figure}

We also discuss another drawback of the algorithm, $\Gamma_m$, to compute the pseudospectrum by perturbing the operator $T$. Let $\widetilde{T}$ be the operator obtained from $T$ if we set $\widetilde{T}_{5,4}=5\times 10^7$ (note that this gets rid of the block form). The computation of $\Gamma_m$ involves squaring the operator and hence leads to matrices of norm of order $10^{15}$, making it impossible to compute $\sigma_{\epsilon}(\widetilde{T})$ using double precision for $\epsilon \lesssim 10^{-1}$. However, the IQR algorithm shares the pleasant feature of finite section in allowing a wider range of magnitudes of the matrix entries of the operator. The output for $n=300$, $m=100$ is shown in Fig. \ref{non_normal_rate} as well as convergence of the upper $2\times 2$ block (corresponding to the dominant eigenvalues). Note in this case we can only compute this upper block to an accuracy of about $10^{-8}$ in double precision due to the large perturbed entry. However, this is still much better than pseudospectral techniques. All the errors in Fig. \ref{non_normal_rate} were obtained via comparison with converged matrices computed using quadruple precision.

\begin{figure}
\centering
\includegraphics[width=0.495\textwidth,trim={32mm 90mm 35mm 95mm},clip]{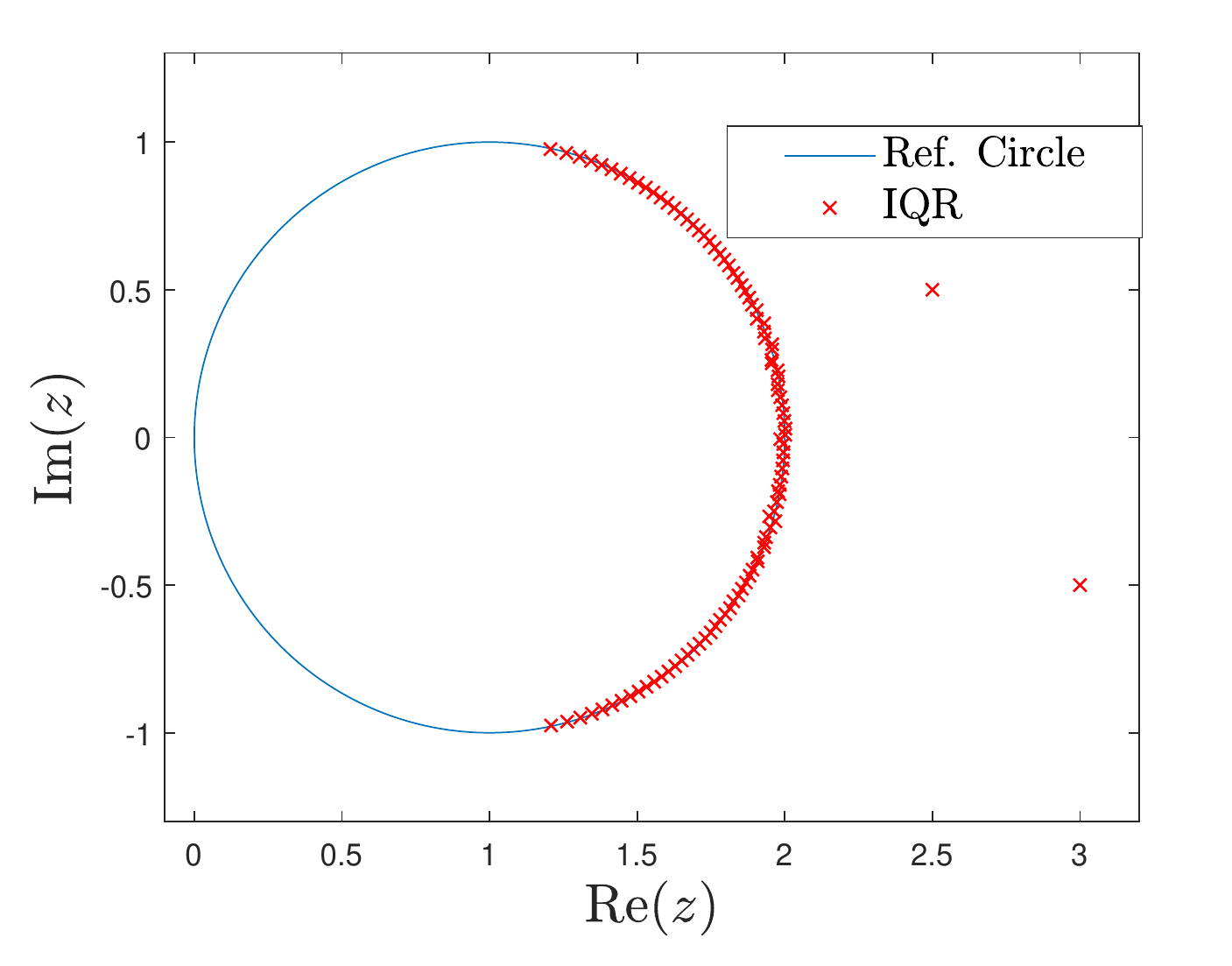}
\includegraphics[width=0.495\textwidth,trim={32mm 90mm 35mm 95mm},clip]{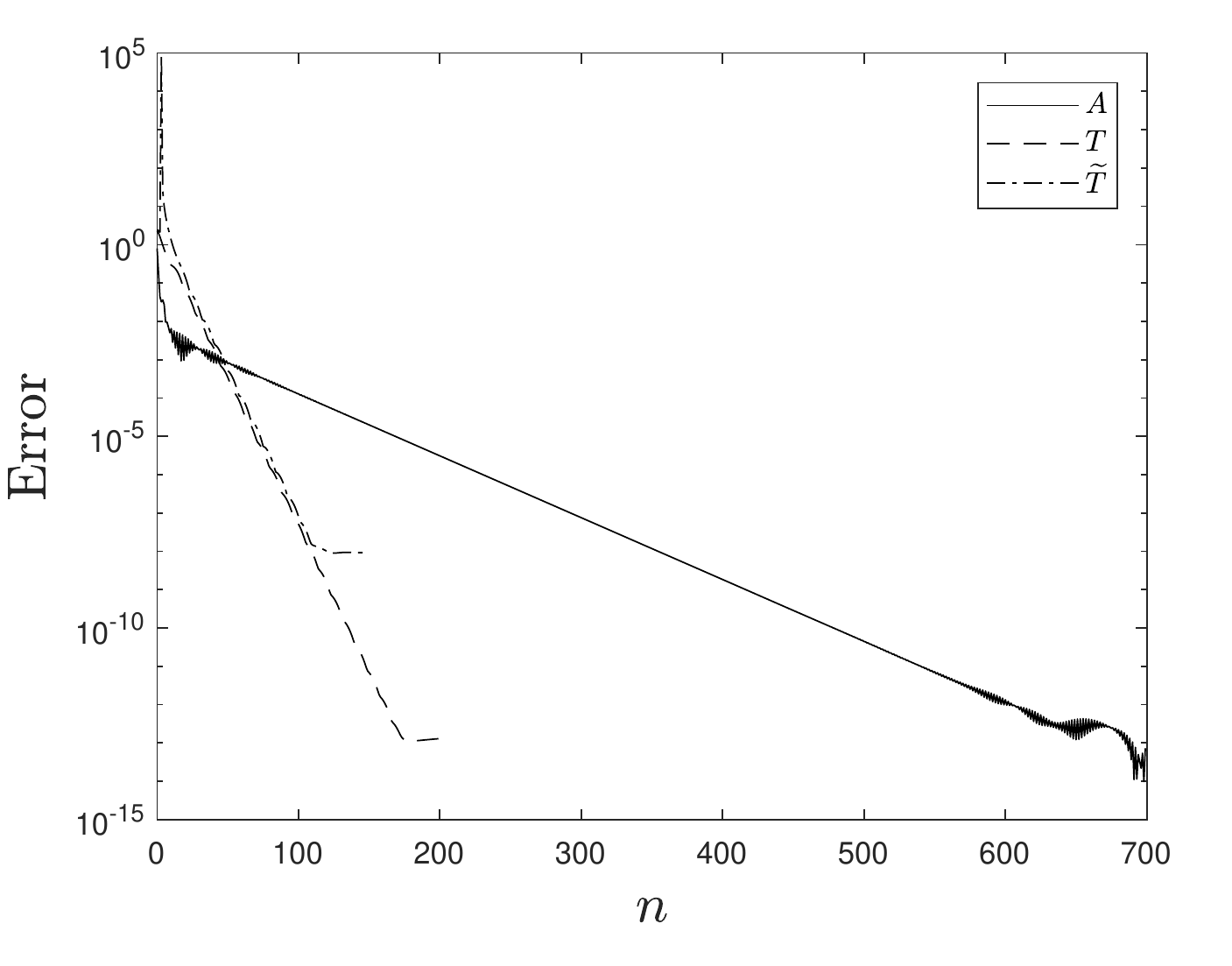}
\caption{Left: Output of the IQR algorithm $\sigma(P_{m}Q_n^*\widetilde{T}Q_n|_{P_{m}})$ for $m=100$ and $n=300$. The reference circle is the boundary of the essential spectrum. Right: Convergence of upper diagonal blocks for operators $A$, $T$ and $\widetilde{T}$.}
\label{non_normal_rate}
\end{figure}
\end{example}

\begin{example}[$PT$-symmetry in quantum mechanics]

Finally, we consider a so called $PT$ symmetric operator (non-normal), demonstrating the same phenomena. A Hamiltonian $H=p^2/2+V(x)$ is said to be $PT$ symmetric if it commutes with the action of the operator $PT$ where $P$ is the parity operator $\hat{x}\rightarrow{}-\hat{x},\hat{p}\rightarrow{}-\hat{p}$ and $T$ the time operator $\hat{p}\rightarrow{}-\hat{p},i\rightarrow{}-i$. Further distinction can be made between exact (unbroken) $PT$ symmetry when $H$ shares common ``eigenfunctions'' with $PT$ and broken $PT$ symmetry when they possess different eigenfunctions. Many $PT$ Hamiltonians possess the remarkable property that their spectra are real for small enough $\mathrm{Im}(V)$ but that the spectrum becomes complex above a certain threshold \cite{bender1998real}. This phase transition from exact to broken $PT$ phase is known as symmetry breaking. There has been a lot of interest in recent years, both theoretically and experimentally, in non-Hermitian $PT$ symmetric Hamiltonians \cite{regensburger2012parity,makris2008beam}.

We consider an operator on $l^2(\mathbb{Z})$ of the form
\begin{equation}
(H_1x)_n = x_{n-1}+x_{n+1}+V_nx_n.
\end{equation}
This commutes with (the discrete version of) $PT$ precisely when the potential has even real part and odd imaginary part. We tested the IQR algorithm on the potential
\begin{equation}
V_n=\begin{cases}
\cos(n)+i\gamma\sin(n), & \mod(n,2)=0 \\
0, & \mod(n,2)=1 \\
\end{cases},
\end{equation}
and found similar results for other potentials. Fig. \ref{infQRpt} shows the same qualitative behaviour as the last example for $\gamma=1,2$ at $m=500,n=3000$. We shifted by $2.2$ and $2.15$ for $\gamma=1,2$ respectively. For comparison we have shown converged resolvent norms. We found that spectral pollution with no IQR iterates was consistent as we varied $m$. However, for a fixed $m$, increasing the number of iterates ($n\rightarrow\infty$) caused $\sigma(P_mQ_n^*H_1Q_n|_{P_m\mathcal{H}})$ to approach the extremal part of the spectrum.

\begin{figure}
\centering
\includegraphics[width=0.48\textwidth,trim={30mm 90mm 40mm 97mm},clip]{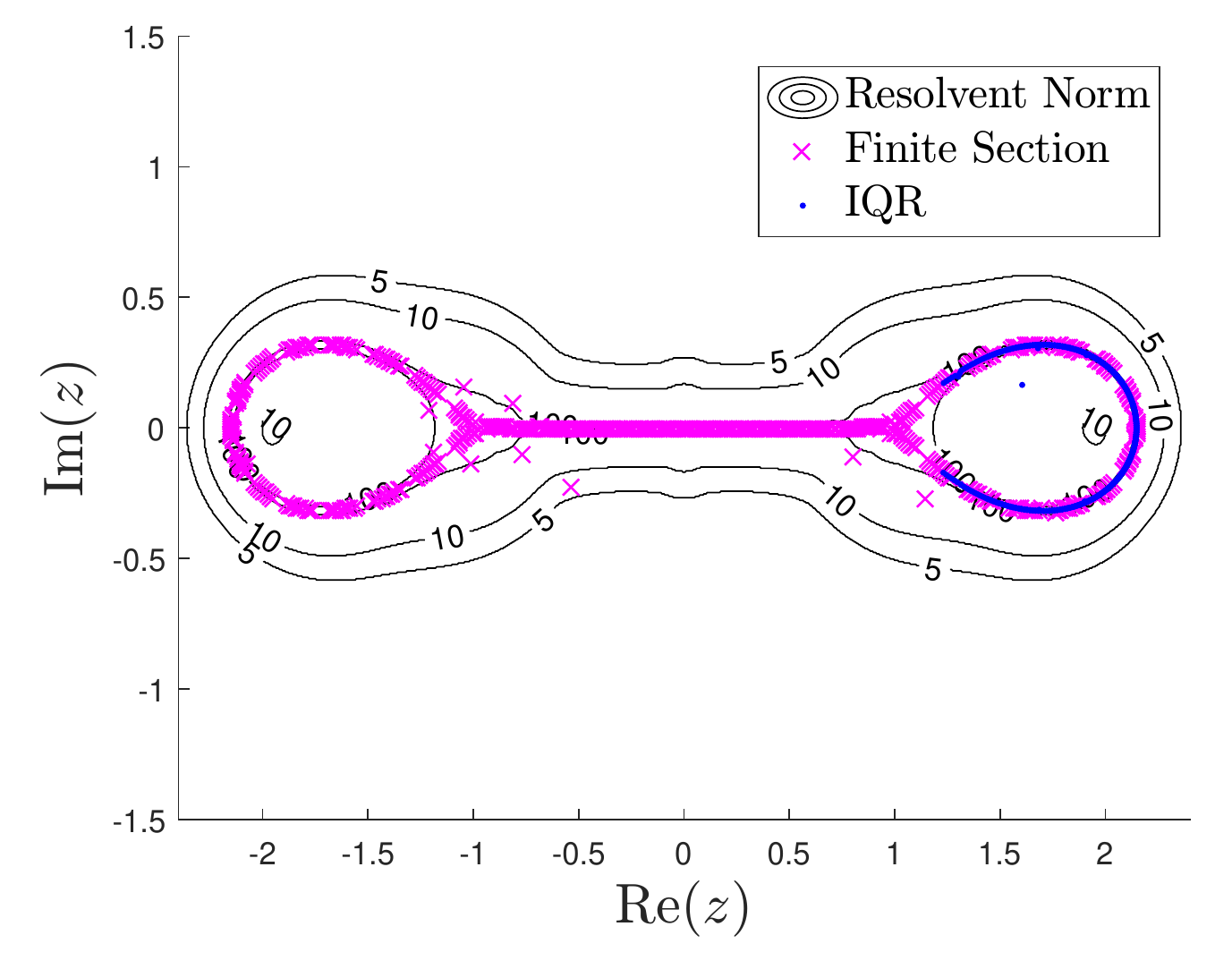} 
\includegraphics[width=0.48\textwidth,trim={30mm 90mm 40mm 97mm},clip]{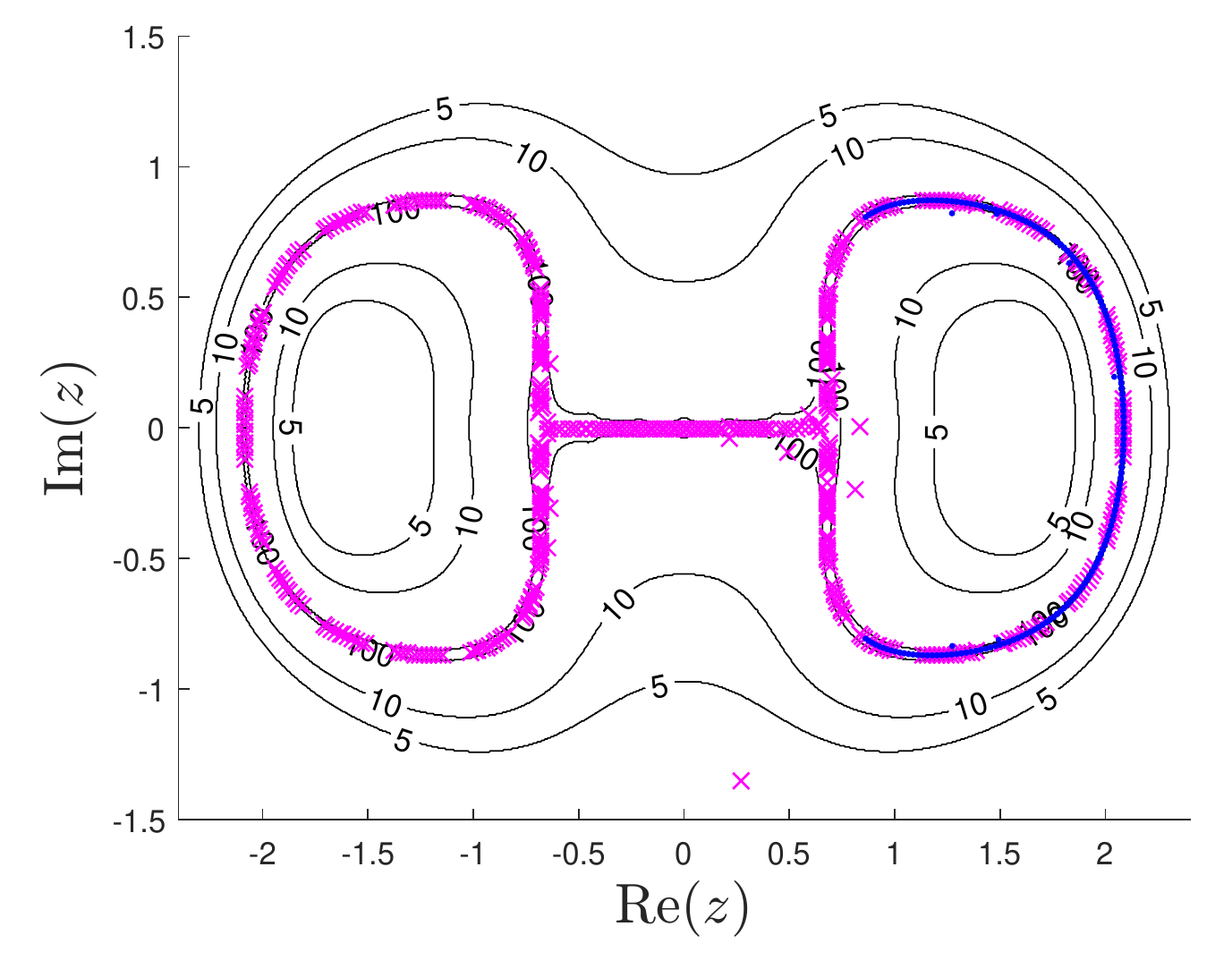}
\caption{The figures show finite sections $\sigma(P_mH_1|_{P_m\mathcal{H}})$ (magenta) and (shifted) $\sigma(P_mQ_n^*H_1Q_n|_{P_m\mathcal{H}})$ IQR iterates (blue) along with converged resolvent norm contours for $\gamma=1$ (left) and $\gamma=2$ (right). Both figures are for $m=500, n=3000$ and show the convergence to the extremal parts of the spectrum.}
\label{infQRpt}
\end{figure}

\end{example}

\subsection{Numerical examples III: random non-Hermitian operators and boundary conditions}\label{applications_Z}

In this final section, we explore examples where the $P_mQ_n^*TQ_nP_m$ naturally give rise to periodic boundary conditions (this was already seen for some examples of Laurent operators in Section \ref{s:num_1}). Both examples discussed here are physically motivated random tridiagonal operators on the lattice $\mathbb{Z}$. One of the key applications of studying such random operators can be found in condensed matter physics. The discrete models below have been used to study conductivity of disordered media, flux lines in superconductors and asymmetric hopping particles. Many such operators are also the discretisation of certain stochastic differential equations. As we will demonstrate, the IQR method can be a powerful way of avoiding spectral pollution caused by unnatural ``open" boundary conditions in forming the finite section $P_mTP_m$. In both of these examples, periodic boundary conditions are natural and we find that taking finite sections after iterating the IQR algorithm captures periodic boundary conditions.

\begin{example}[Hopping sign model in sparse neural networks]

The first example is a non-normal operator with random sub and super-diagonals, first studied by Feinberg and Zee \cite{feinberg1999non,holz2003remarkable,chandler2010eigenvalue}. The usual ``Hopping Sign Model" is defined via
\begin{equation*}
(H_2x)_n=x_{n-1}+b_{n}x_{n+1},
\end{equation*}
with $b_n\in\{\pm 1\}$ (say independent Bernoulli with parameter $p=1/2$). This describes a particle ```hopping'' on $\mathbb{Z}$ and can be mapped into a (complex-valued) random walk. We will consider a slightly different operator described by
\begin{equation}
(H_3x)_n=s_{n-1}^{-}\exp({-g})x_{n-1}+s_{n}^{+}\exp({g})x_{n+1},
\end{equation}
and appearing in \cite{amir2016non} in the context of sparse neural networks. We shall assume that $g$ is real and non-negative and that $s_j^{\pm}$ are i.d.d. random variables with Bernoulli distribution $p$. In other words 
$$\mathbb{P}(s_j^{\pm}=1)=1-\mathbb{P}(s_j^{\pm}=-1)=p.$$
We will only consider $g=1/10$ and $p=1/2$, but will vary $p$ in an effort to compute the spectrum of $H_3$ which only depends on the support of the distribution of the $s^{\pm{}}_j$'s. It is easy to prove that the spectrum (and pseudospectrum) of $H_3$ is almost surely constant and that there is no inessential spectrum. Furthermore, one can show that $\sigma(H_3)$ is contained in the annulus $\{z\in\mathbb{C}:2\sinh(g)\leq\left|z\right|\leq2\cosh(g)\}$.

Finite section calculations associated with this operator have some interesting properties and are extensively studied in \cite{amir2016non}. If one projects using the standard basis of $l^2(\mathbb{Z})$ then one obtains matrices of the form

\begin{equation*}
M_n^1 =
\left(
\begin{matrix}
0    &s_{-n+1}^{-}\exp({-g})&  &   \\
s_{-n+1}^{+}\exp({g})& 0 & \ddots &\\
    & \ddots & \ddots & s_{n-1}^{-}\exp({-g}) \\
    &  & s_{n-1}^{+}\exp({g}) & 0  \\

\end{matrix}
\right).
\end{equation*}
If we use open boundary conditions (i.e. we simply project onto the space spanned by $\{e_{-n},...,e_n\}$) then one can ``gauge" away $g$ by a similarity transformation, leading to

\begin{equation*}
M'_n =
\left(
\begin{matrix}
0    &s_{-n+1}^{-}&  &   \\
s_{-n+1}^{+}& 0 & \ddots &\\
    & \ddots & \ddots & s_{n-1}^{-} \\
    &  & s_{n-1}^{+} & 0  \\

\end{matrix}
\right).
\end{equation*}
On the other hand, the use of periodic boundary conditions leads to the matrix

\begin{equation*}
M_n^2 =
\left(
\begin{matrix}
0    &s_{-n+1}^{-}\exp({-g})&  &s_{n}^{+}\exp({g})\\
s_{-n+1}^{+}\exp({g})& 0 & \ddots &\\
    & \ddots & \ddots & s_{n-1}^{-}\exp({-g}) \\
s_{n}^{-}\exp({-g})&  & s_{n-1}^{+}\exp({g}) & 0  \\

\end{matrix}
\right),
\end{equation*}
which does not suffer from this setback.

\begin{figure}
\centering
\vspace{-13mm}
\includegraphics[height=110mm]{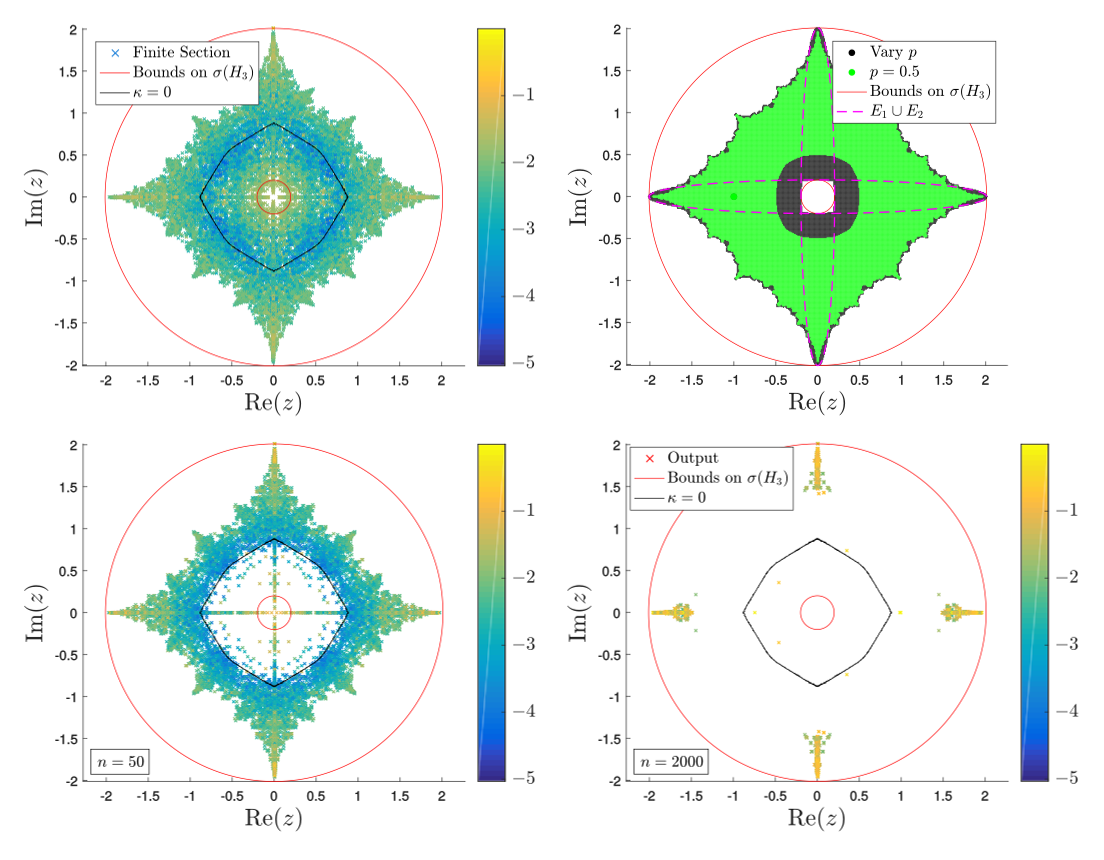}
\vspace{-4mm}
\caption{Top: Output of finite section over a random sample of $200$ matrices of size $200$ (left) and the estimates using pseudospectral techniques (right). Bottom: The output of IQR over $200$ samples computing $\sigma(P_mQ_n^*H_3Q_n|_{P_m\mathcal{H}})$ for $m=200$ and $n=50$ (left), $n=2000$ (right). Note that a few iterates seems to agree with periodic boundary conditions and then increasing the number of iterates leads to convergence to the extremal parts of the essential spectrum.}
\label{zee_fs}
\end{figure}

In \cite{amir2016non} this phenomena was studied via localisation of the eigenvalues of $M_n^{2}$, in particular using the Lyapunov exponent $\kappa(z)$ which is equal to the inverse of the localisation length. An eigenfunction $\psi$ with eigenvalue $z$ localised around $x_0$ behaves approximately as 
$$
\left|\psi(x)\right|\sim \exp({-\kappa(z)\left| x-x_0\right|}).
$$
If one defines recursively
$$
y_{n+1}(z)=\exp(g)\frac{\psi_{n+2}}{\psi_{n+1}}=-(s^{-}_{n-1}/s_n^+)/y_n(z)+z/s_n^+
$$
then (in the limit of large system sizes)
$$
\kappa(z;g)=\lim_{N\rightarrow\infty}\frac{1}{2N+1}\sum_{j=-N}^{N}\big(\log\left|y_j(z)\right|-g\big).
$$
This is known as the transfer matrix approach. 
 For fixed $z$, as we increase $g$, $\kappa(z;g)$ becomes negative. The heuristic is that a hole opens up in the spectrum corresponding to a negative Lyapunov exponent. Eigenvalues of $M_n^2$ inside the hole are swept up and become delocalised moving to the rim of the hole, whereas those outside remain largely undisturbed. Eigenvalues of $M_n^1$ inside the negative $\kappa$ zone correspond to edge states due to the finite system size approximation.

Fig. \ref{zee_fs} shows the output of a sample of $200$ finite sections with open boundary conditions and matrix size $200$. We have also shown the annular region that bounds the spectrum, as well as the contour $\kappa=0$. In order to calculate $\kappa$, we calculated the above sum on a grid with large $N$ to ensure convergence. The colour bar corresponds to the inverse participation ratio (log scale) of normalised eigenfunctions defined by
$$
1/P\equiv\frac{\sum_j\left|\psi_i\right|^4}{\sum_j\left|\psi_i\right|^2}.
$$
Note that this has a maximum value of $1$ (localised) and a minimum value of $1/N$ (delocalised), $N$ being the size of the matrix. Open boundary conditions produces spectral pollution in the hole with localised eigenfunctions and the contour $\kappa=0$ corresponds to the delocalised region. In order to compare to the spectrum of the infinite operator on $l^2(\mathbb{Z})$ we have plotted $\sigma_{\epsilon}(H_3)$, for $\epsilon=10^{-2}$, calculated using matrix sizes of order $10^5$. We note that the spectrum is independent of $p\in(0,1)$ so we have also shown the union of these estimates over $p=\{k/100\}_{k=1}^{99}$. Although the algorithm used to compute the pseudospectrum is guaranteed to converge to $\sigma_{\epsilon}(H_3)$, there are regions in the complex plane where this convergence is very slow. Taking unions over $p$ is simply a way to speed up this convergence. We found upon taking $\epsilon$ smaller that the spectrum appeared to have a fractal like nature. It also appears that the hole in the spectrum corresponds to the boundary of two ellipses. It is easy to prove that the ellipse
$$
E_1=\{\exp({g+i\theta})+\exp({-g-i\theta}):\theta\in[0,2\pi)\}
$$
is contained in $\sigma(H_3)$ and that the spectrum (and pseudospectrum) of $H_3$ has fourfold rotational symmetry. Denoting the rotation of $E_1$ by $\pi/4$ as $E_2$ we have shown $E_1\cup E_2$ in the figure.

Fig. \ref{zee_fs} also shows the effect of IQR iterations over random samples of size $200$ for $m=200$ and $n=50$ and $2000$. Remarkably, as we increase $n$, a few iterations is enough to capture periodic boundary conditions and sweep away the localised edge states. We have also shown the inverse participation ratio which, although now is defined with respect to a new basis, still gives an indication of how ``diagonal" the matrix $P_mQ_n^*H_3Q_n|_{P_m\mathcal{H}}$ is. If we increase $n$ further, the output approaches the edge of the spectrum with eigenvectors becoming more localised (in the new basis). We found exactly the same phenomena to occur if we shifted the operator $H_3$ with convergence to the corresponding extremal part of the essential spectrum.

\end{example}

\begin{example}[NSA Anderson model in superconductors]
Finally, we consider a non-normal operator with no inessential spectrum where the IQR algorithm does not seem to converge to the boundary of the essential spectrum, but rather to a curve associated with periodic boundary conditions in the large system size limit.

Over the past twenty years there has been considerable interest in non self-adjoint random operators, sparked by Hatano and Nelson studying a non self-adjoint Anderson model in the context of vortex pinning in type-II superconductors \cite{hatano1996localization}. Their model showed that an imaginary gauge field in a disordered one-dimensional lattice can induce a delocalisation transition. The operator in $\mathcal{B}(l^2(\mathbb{Z}))$ can be written as
\begin{equation}
(H_4x)_n=\exp({-g})x_{n-1}+\exp({g})x_{n+1}+V_nx_n
\label{NSAand}
\end{equation}
where $g>0$ and $V$ is a random potential. This operator also has applications in population biology \cite{nelson1998non} and the self-adjoint version of this model is widely studied for the phenomenon of Anderson localisation (absence of diffusion of waves) \cite{anderson1958absence,billy2008direct}. In the non self-adjoint case, complex values of the spectrum indicate delocalisation. Note that we now have randomness on the diagonal with fixed coupling coefficients $\exp({\pm g})$.

Standard finite section produces real eigenvalues since the matrix $P_mH_4|_{P_m\mathcal{H}}$ is similar to a real symmetric matrix. However, truncating the operator and adopting periodic boundary conditions gives rise to the famous ``bubble and wings''. If $V=0$ then the spectrum is an ellipse $E=\{\exp({g+i\theta})+\exp({-g-i\theta}):\theta\in[0,2\pi)\}$, but as we increase the randomness wings appear on the real axis. For a study of this phenomenon and the described phase transition we refer the reader to \cite{feinberg1999non}. Goldsheid and Khoruzhenko have studied the convergence of the spectral measure in the periodic case as $N\rightarrow\infty$ in \cite{goldsheid1998distribution}, $N$ being the number of sites. In general, the support of these measures as $N\rightarrow\infty$ can be very different to the spectrum of the operator on $l^2(\mathbb{Z})$ given by (\ref{NSAand}), highlighting the difficulty in computing the spectrum.

We consider the case $g=1/2$ with $V_n$ i.i.d. Bernoulli random variables taking values in $\{\pm 1\}$ with equal probability $p=1/2$. Again, there is no inessential spectrum and the spectrum/pseudospectrum is constant almost surely, depending only on the support of the distribution of the $V_n$. The following inclusion is also known, which gives boundaries to the spectrum: 
$$
\sigma(H_4)\subset(\overline{\mathrm{conv}}(E)+[-1,1])\cap (E+B_{1}),
$$
where $\overline{\mathrm{conv}}(E)$ its closed convex hull of $E$ and $B_1$ denotes the closed unit disk. The choice of $g$ ensures the spectrum has a hole in it. One may calculate the Lyapunov exponent, either by the transfer matrix approach or by calculating a potential related to the density of states. 
 The limiting distribution of the eigenvalues of finite section with periodic boundary conditions is given by the complex curve 
$$
\{z\in\mathbb{C}\backslash\mathbb{R}:\kappa(z)=0\}\cup\{x\in\mathrm{supp}(dN):\kappa(x+i0)>0\}.
$$

The output of the IQR algorithm for $m=30$ and $n=15$ and $n=300$ over $200$ random samples are shown in Fig. \ref{anderson_fs}. Note that if we took $n=0$, the spectrum would be real in stark contrast to Fig. \ref{anderson_fs}. Taking a small number of IQR iterates approximates the bubble and wings with a few remaining real eigenvalues. However, upon increasing $n$, the output does not seem to converge to the extremal parts of the spectrum, but seems to remain stuck on the limit curve with the operator $P_mQ^*_nH_4Q_n|_{P_m\mathcal{H}}$. Shifting by $+4iI$ caused the output to recover the top part of the limit curve.

\begin{figure}
\centering
\includegraphics[width=0.49\textwidth,trim={35mm 90mm 40mm 97mm},clip]{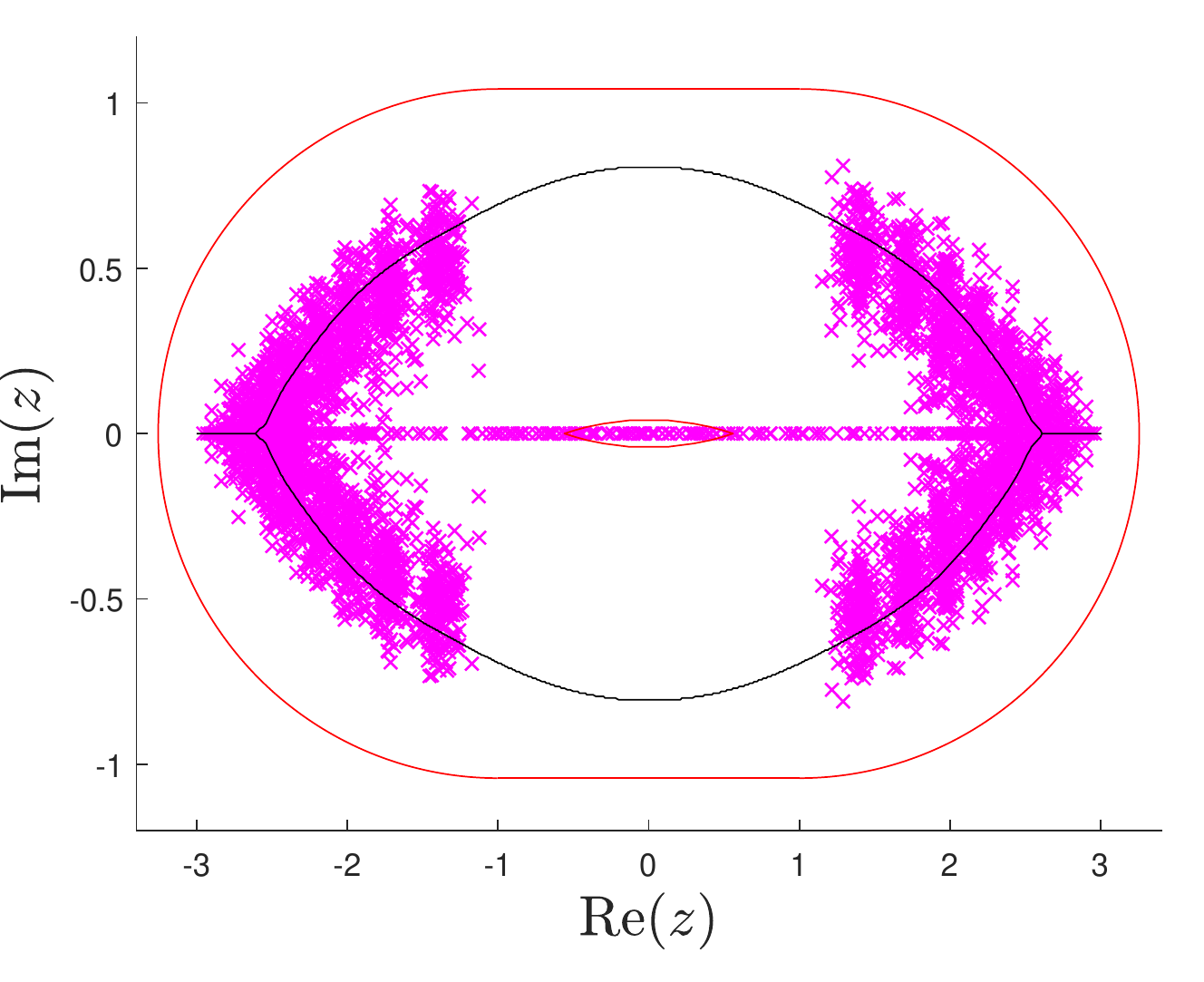}
\includegraphics[width=0.49\textwidth,trim={35mm 90mm 40mm 97mm},clip]{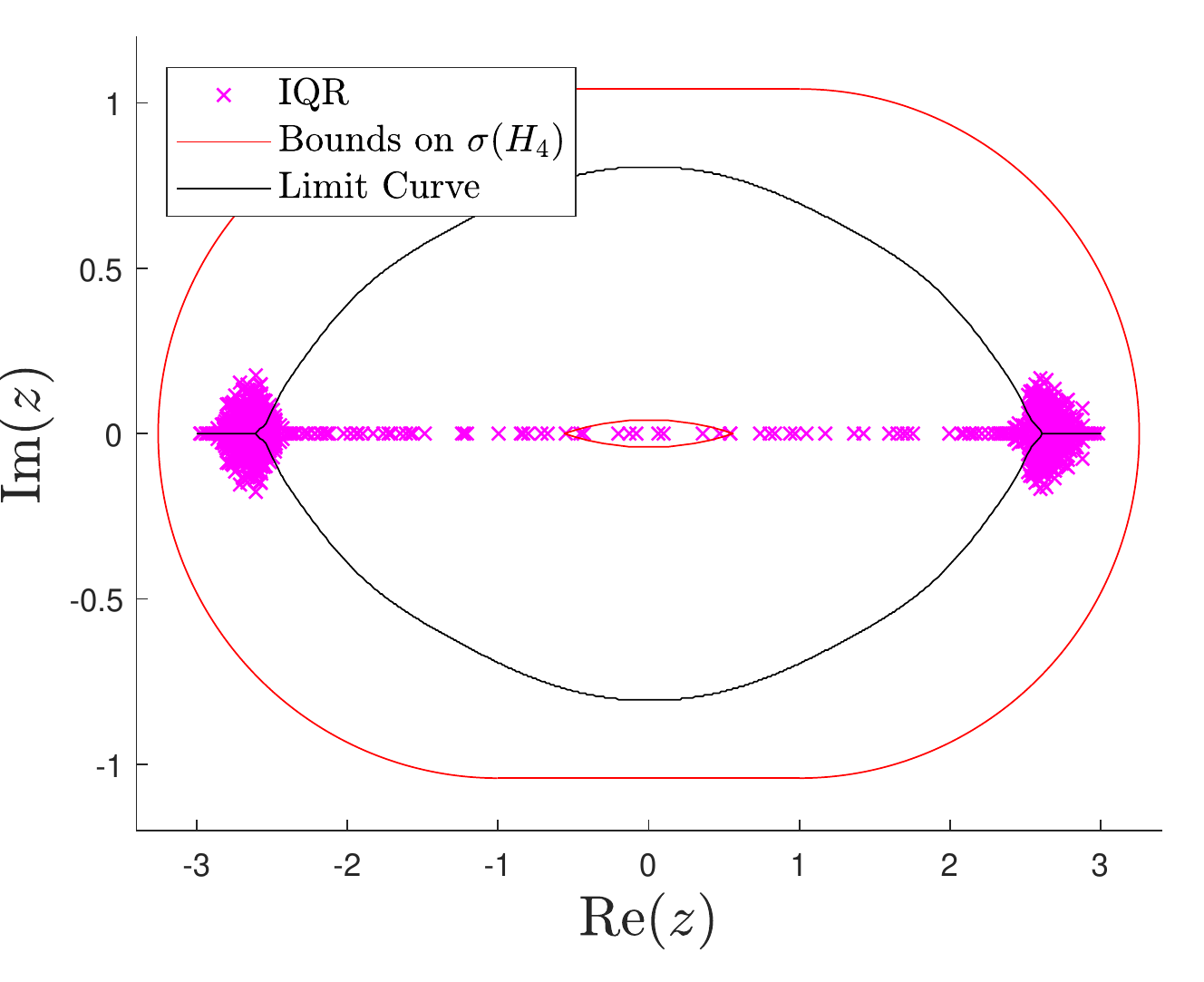}

\caption{The output of IQR over $200$ samples computing $\sigma(P_mQ_n^*H_4Q_n|_{P_m\mathcal{H}})$ for $m=30$ and $n=15$ (left), $n=300$ (right). Note that we appear to recover the periodic limit curve and increasing the number of iterates converges to the extremal parts. Applying shifts allowed us to recover the extremal parts of the limit curves.}
\label{anderson_fs}
\end{figure}

\end{example}
\begin{remark}
For any operator $T$ that has $Q_n$ unitary, the essential spectrum and spectrum of $Q_n^*TQ_n$ is equal to that of $T$. As the above two examples suggest, taking a small value of $n$ could be used as a method of testing eigenvalues of finite section methods that correspond to finite system size effects, such as open boundary conditions. This could be used in quasi-periodic systems or systems with very few symmetries, where there is no obvious choice of appropriate boundary conditions. However, detecting isolated eigenvalues of finite multiplicity within the convex hull of the essential spectrum still remains a challenge.
\end{remark}

\section{Concluding remarks and open problems}
\label{conc_s}

This paper discussed the generalisation of the famous QR algorithm to infinite dimensions. It was shown that for a large class of operators, encompassing many in scientific applications, the iterates of the IQR algorithm can be computed efficiently on a computer. For matrices with finitely many entries in each column, the computation collapses to a finite one. In general, for an invertible operator we can compute the iterates to any given accuracy in finite time. Furthermore, it was proven that for normal operators, the algorithm converges to the discrete spectrum outside the convex hull of the essential spectrum, with the rate of convergence generalising the well known result in finite dimensions. These were extended to more general invariant subspaces and non-normal operators in Theorems \ref{non_normal_prop} and \ref{non_normal_prop2}. Unfortunately the IQR algorithm cannot in general be sped up with the use of shift strategies, which considerably speed up the finite dimensional algorithm \cite{parlett1998symmetric}. This is due to two reasons. The first is simply that there is no final column of an infinite matrix, hence the usual link with inverse iteration cannot be made. Second, it is also possible for part of the spectrum to be lost in the limit (see Example \ref{lose_spec_exam}) and below the essential spectra radius there is no guarantee of convergence. 

Despite these inherent drawbacks of the infinite dimensional setting, we showed how the IQR algorithm can be used to gain new classification results and convergent algorithms in the SCI hierarchy. In particular, we showed how to compute eigenvalues and eigenspaces outside the essential spectrum with error control for normal operators. This was extended to dominant invariant subspaces for general (possibly non-normal) operators as well as the spectrum of a large class of operators that includes compact normal operators with eigenvalues of distinct magnitude. These results present the first such algorithms that tackle these problems with error control.

Finally, we demonstrated that the IQR algorithm can be implemented both in theory and in practice. We demonstrated the convergence theorems in Section \ref{conv_main} as well as some examples (normal and non-normal) where the extremal parts of the essential spectrum also appear to be recovered. Based on this, we conjecture that there may be a large class of operators for which the IQR algorithm converges to the extreme parts of the essential spectrum.\footnote{This is false in general as is easily seen by considering the shift operator.} In particular, we conjecture that this holds for normal operators if the set of extremal points of the essential spectrum has size one. However, an example was given where convergence to the essential spectrum was only algebraic $O(n^{-\alpha})$ as $n\rightarrow\infty$ as opposed to the linear convergence rate $O(r^n)$ to the discrete spectrum/eigenvalues. It was also demonstrated that the truncations of the IQR algorithm have spectra agreeing with periodic boundary conditions for a range of operators in the class of ``pseudoergodic'' NSA random operators. 
In some cases, the algorithm performed much better than standard finite section methods. We should stress that, as in the case of the finite section method, for fixed $n$ and $m\rightarrow\infty$, the output $\sigma(P_mQ_n^*TQ_n|_{P_{m}})$ will in general still suffer from the spectral pollution phenomenon and in some cases not recover the full spectrum. This was apparent in the numerical examples and is likely to hold true for many operators even when taking a mixture of double limits $m,n\rightarrow\infty$. However, examples of Laurent operators were given where it appears $\sigma(P_mQ_n^*TQ_n|_{P_{m}})$ converges to the spectrum as $m\rightarrow\infty$ for fixed $n>0$ but not $n=0$. We hope that the algorithm's potential use in sifting out spectral pollution/complying with appropriate boundary conditions via a canonical unitary transformation can also be exploited. 

Based on our findings, we end with a list of open problems for further study on the theoretical properties of the IQR algorithm:
\begin{itemize}
	\item Which conditions are needed on a possibly non-normal operator in order for the IQR algorithm to pick up the extreme points of the essential spectrum?
	\item Is the convergence rate to non-isolated points of the spectrum algebraic? 
	\item For operators which do not have a trivial QR decomposition, is there a way of choosing $n=n(m)$ such that $\sigma(P_{m}Q_{n(m)}^*TQ_{n(m)}|_{P_{{m}}})$ converges to the spectrum as $m\rightarrow\infty$? If not, then for which classes of operators does such a choice exist?
	\item Is there a link between the IQR algorithm and the finite section method with periodic boundary conditions for the class of pseudoergodic operators?
	\item Are there other cases where the IQR algorithm alleviates the need to provide natural boundary conditions when applying the finite section method?
	\item Extending the IQR algorithm to unbounded operators. Can the IQR algorithm also be extended to a continuous version for differential operators?
\end{itemize}

\section*{Acknowledgments} MJC acknowledges support from the UK Engineering and Physical Sciences Research Council (EPSRC) grant EP/L016516/1. ACH acknowledges support from a Royal Society University Research Fellowship as well as EPSRC grant EP/L003457/1.We would also like to thank the referees whose comments and suggestions led to the improvement of the manuscript.
\appendix

\section{Appendix}

\subsection{Example codes}

Here we show example code for the IQR algorithm in the case that the matrix has $k$ subdiagonals. The code can easily be adapted for the more general case considered in Section \ref{Quasi_banded_Subdiagonals}.

\begin{algorithm}\label{alg}
\footnotesize
\begin{verbatim}


% The Infinite_QR(A,n,k,m) takes a section P_{nk+m}AP_{nk+m} 
% of an infinite matrix A with k subdiagonals, performs n iterations 
% of the infinite dimensional QR algorithm and returns
% J = P_mQ_nAQ*_nP_m.   

function J = Infinite_QR(A,n,k,m)
d = size(A,2);
 for j=1:n
   A = Inf_QR(A,d-j*k,k);   % The output in each loop is actually
 end                        % U_(d-j*k)...U_1A_(j-1)U_1...U_(d-j*k)  
J = A(1:m,1:m);             % if A_j is the j-th term in the QR iteration. 
\end{verbatim}
\end{algorithm}

\begin{algorithm}
\footnotesize
\begin{verbatim}


% Inf_QR(A,n,k) takes a matrix A with k subdiagonals and performs 
% multiplication by n Householder transformation from the left and 
% right, i.e. B = U_n...U_1AU_1...U_n.

function B = Inf_QR(A,n,k)
B = A; d = size(A,1);
 for j = 1:n
    u = House(A(j:j+k,j));
    A(j:j+k,j:d) = A(j:j+k,j:d) - 2*u*(u'*A(j:j+k,j:d));
    B(j:j+k,1:d) = B(j:j+k,1:d) - 2*u*(u'*B(j:j+k,1:d));
    B(1:d,j:j+k) = B(1:d,j:j+k) - 2*(B(1:d,j:j+k)*u)*u';
 end 
\end{verbatim}
\end{algorithm}

\begin{algorithm}
\footnotesize
\begin{verbatim}


% House(x) takes a vector x and creates a unit vector u 
% such that (I - 2u*u')x = ce_1 where c is some complex 
% number (depending on x) and e_1 = [1,0...].

function u = House(x)
v = x;
if v(1) == 0
   v(1) = v(1) + norm(v);             %This is the classical way  
else                                  %of creating Householder reflections 
   v(1) = x(1) + sign(x(1))*norm(x);  %as in finite dimensions.
end
u = v/norm(v);
\end{verbatim}
\end{algorithm}
\normalsize

\subsection{Recalling the basics of the SCI hierarchy}\label{SCI_basics}
The corner stone in the SCI hierarchy is the definition of a computational problem, a general algorithm and towers of algorithms. 
The basic objects in a computational problem are as follows:
\begin{itemize}
\itemsep0em
\item[(i)] $\Omega$ is some set, called the \emph{domain}.
\item[(ii)] $\Lambda$ is a set of complex valued functions on $\Omega$ called the \emph{evaluation} set.
\item[(iii)] $\mathcal{M}$ is a metric space with metric $d_{\mathcal{M}}$.
\item[(iv)] $\Xi:\Omega\to \mathcal{M}$ is called the \emph{problem function}.
\end{itemize}
The set $\Omega$ is the set of objects that give rise to our computational problems. The problem function $\Xi : \Omega\to \mathcal{M}$ is what we are interested in computing. Moreover, the set $\Lambda$ is the collection of functions that provide us with the information we are allowed to read. This leads to the following definition.

\begin{definition}[Computational Problem]
Given a primary set $\Omega$, an evaluation set $\Lambda$, a metric space $\mathcal{M}$ and a problem function $\Xi:\Omega\to \mathcal{M}$ we call the collection $\{\Xi,\Omega,\mathcal{M},\Lambda\}$ a \emph{computational problem}.
\end{definition}

For instance, when computing the spectrum of bounded operators on $l^2(\mathbb{N})$, we let 
$\Omega$ be a subset of $\mathcal{B}(l^2(\mathbb{N}))$ (for example the set of self-adjoint operators or compact operators), $(\mathcal{M},d)$ be the set of all non-empty compact subsets of $\mathbb{C}$ provided with the Hausdorff metric $d=d_{H}$ in 
\eqref{eq:Hausdorff}. The evaluation functions in $\Lambda$ consist of the family of all functions $f_{i,j}: A\mapsto \langle Ae_j,e_i\rangle$, $i,j\in\mathbb{N}$, which provide the entries of the matrix representation of $A$ with respect to the canonical basis $\{e_i\}_{i\in \mathbb{N}}$. Finally, $\Xi: A \mapsto \sigma(A)$.

The goal is to find algorithms which approximate the function $\Xi$. More generally, the main pillar of our framework is the concept of a tower of algorithms, which is needed to describe problems that need several limits in the computation. However, first one needs the definition of a general algorithm.
\begin{definition}[General Algorithm]\label{Gen_alg}
Given a computational problem $\{\Xi,\Omega,\mathcal{M},\Lambda\}$, a \emph{general algorithm} is a mapping $\Gamma:\Omega\to \mathcal{M}$ such that for each $A\in\Omega$
\begin{itemize}
\item[(i)] there exists a finite subset of evaluations $\Lambda_\Gamma(A) \subset\Lambda$, 
\item[(ii)] the action of $\,\Gamma$ on $A$ only depends on $\{A_f\}_{f \in \Lambda_\Gamma(A)}$ where $A_f := f(A),$
\item[(iii)] for every $B\in\Omega$ such that $B_f=A_f$ for every $f\in\Lambda_\Gamma(A)$, it holds that $\Lambda_\Gamma(B)=\Lambda_\Gamma(A)$.
\end{itemize}
\end{definition}

Note that the definition of a general algorithm is more general than the definition of a Turing machine or a Blum-Shub-Smale (BSS) machine. A general algorithm has no restrictions on the operations allowed. The only restriction is that it can only take a finite amount of information, though it is allowed to \emph{adaptively} choose the finite amount of information it reads depending on the input. Condition (iii) assures that the algorithm reads the information in a consistent way. Note that the purpose of such a general definition is to get strong lower bounds. In particular, the more general the definition is, the stronger a proven lower bound will be.

With a definition of a general algorithm we can define the concept of towers of algorithms. However, before we define that, we will discuss the cases for which we may have a set valued function. 

\begin{remark}[Set valued functions]\label{rem:set_valued}
Occasionally we will consider a function $\Xi$ such that for $T \in \Omega$ we have that $\Xi(T) \subset \mathcal{M}$. In this case we will still require that a general algorithm produces a single valued out put i.e $\Gamma(T) \in \mathcal{M}$ for $T \in \Omega$. However, we replace the metric in order to define convergence. In particular, $\Gamma_n(T) \rightarrow \Xi(T)$, as $n \rightarrow \infty$ means 
\[
\inf_{y \in \Xi(T)} d_{\mathcal{M}}(\Gamma_n(T),y) \rightarrow 0. 
\]
\end{remark}

\begin{definition}[Tower of Algorithms]\label{tower_funct}
Given a computational problem $\{\Xi,\Omega,\mathcal{M},\Lambda\}$, a \emph{tower of algorithms of height $k$
 for $\{\Xi,\Omega,\mathcal{M},\Lambda\}$} is a family of sequences of functions
 $$\Gamma_{n_k}:\Omega
\rightarrow \mathcal{M},\ \Gamma_{n_k, n_{k-1}}:\Omega
\rightarrow \mathcal{M},\dots,\ \Gamma_{n_k, \hdots, n_1}:\Omega \rightarrow \mathcal{M},
$$
where $n_k,\hdots,n_1 \in \mathbb{N}$ and the functions $\Gamma_{n_k, \hdots, n_1}$ at the ``lowest level'' of the tower are general algorithms in the sense of Definition \ref{Gen_alg}. Moreover, for every $A \in \Omega$,
$$
\Xi(A)= \lim_{n_k \rightarrow \infty} \Gamma_{n_k}(A), \quad \Gamma_{n_k, \hdots, n_{j+1}}(A)= \lim_{n_j \rightarrow \infty} \Gamma_{n_k, \hdots, n_j}(A) \quad j=k-1,\dots,1.
$$
\end{definition}

In addition to a general tower of algorithms (defined above), we will focus on radical towers. The definition of a general algorithm allows for strong lower bounds, however, to produce upper bounds we must add structure to the algorithm and towers of algorithms. A radical tower allows for arithmetic operations, comparisons and radicals. 

\begin{definition}[Radical Towers]
Given a computational problem $\{\Xi,\Omega,\mathcal{M},\Lambda\}$, a \emph{Radical Tower of Algorithms} of height $k$
 for $\{\Xi,\Omega,\mathcal{M},\Lambda\}$ is a tower of algorithms where the lowest level functions 
 $$
 \Gamma = \Gamma_{n_k, \hdots, n_1} :\Omega \rightarrow \mathcal{M}
 $$ satisfy the following:
 For each $A\in\Omega$
the action of $\,\Gamma$ on $A$ consists of only finitely many arithmetic operations, comparisons and radicals ($\sqrt{\cdot}$) of positive numbers on 
$\{A_f\}_{f \in \Lambda_\Gamma(A)}$, where $A_f = f(A)$.
\end{definition}

In other words one may say that for the finitely many steps of the computation of the lowest functions $\Gamma = \Gamma_{n_k, \hdots, n_1} :\Omega \rightarrow \mathcal{M}$ only the four arithmetic operations $+, -, \cdot, /$ within the smallest (algebraic) field which is generated by the input $\{A_f\}_{f \in \Lambda_\Gamma(A)}$ are allowed. In addition we allow to extract radicals of positive real numbers.
 We implicitly assume that any complex number can be decomposed into a real and an imaginary part, and moreover we can determine whether $a = b$ or $a > b$ for all real numbers $a,b$ which can occur during the computations.
Given the definitions above we can now define the key concept, namely, the Solvability Complexity Index: 

\begin{definition}[Solvability Complexity Index]\label{complex_ind}
A computational problem $\{\Xi,\Omega,\mathcal{M},\Lambda\}$ is said to have \emph{Solvability Complexity Index $\mathrm{SCI}(\Xi,\Omega,\mathcal{M},\Lambda)_{\alpha} = k$}, with respect to a tower of algorithms of type $\alpha$, if $k$ is the smallest integer for which there exists a tower of algorithms of type $\alpha$ of height $k$. If no such tower exists then $\mathrm{SCI}(\Xi,\Omega,\mathcal{M},\Lambda)_{\alpha} = \infty.$ If there exists a tower $\{\Gamma_n\}_{n\in\mathbb{N}}$ of type $\alpha$ and height one such that $\Xi = \Gamma_{n_1}$ for some $n_1 < \infty$, then we define $\mathrm{SCI}(\Xi,\Omega,\mathcal{M},\Lambda)_{\alpha} = 0$. We may sometimes write $\mathrm{SCI}(\Xi,\Omega)_{\alpha}$ to simplify notation when $\mathcal{M}$ and $\Lambda$ are obvious. 
\end{definition}

 The definition of the SCI immediately induces the SCI hierarchy:

\begin{definition}[The Solvability Complexity Index Hierarchy]
\label{1st_SCI}
Consider a collection $\mathcal{C}$ of computational problems and let $\mathcal{T}$ be the collection of all towers of algorithms of type $\alpha$ for the computational problems in $\mathcal{C}$.
Define 
\begin{equation*}
\begin{split}
\Delta^{\alpha}_0 &:= \{\{\Xi,\Omega\} \in \mathcal{C} \ \vert \   \mathrm{SCI}(\Xi,\Omega)_{\alpha} = 0\}\\
\Delta^{\alpha}_{m+1} &:= \{\{\Xi,\Omega\}  \in \mathcal{C} \ \vert \   \mathrm{SCI}(\Xi,\Omega)_{\alpha} \leq m\}, \qquad \quad m \in \mathbb{N},
\end{split}
\end{equation*}
as well as
\[
\Delta^{\alpha}_{1} := \{\{\Xi,\Omega\}  \in \mathcal{C}   \  \vert \ \exists \ \{\Gamma_n\}_{n\in \mathbb{N}} \in \mathcal{T}\text{ s.t. } \forall A \ d(\Gamma_n(A),\Xi(A)) \leq 2^{-n}\}. 
\]
\end{definition}

\begin{remark}[The $\Delta_k$ notation]
Note that in this paper we only consider radical towers and hence the superscript $\alpha$ will be omitted throughout. Thus we will always write $\Delta_k$.
\end{remark}

Finally, we recall the definition of $\Sigma_1^{\alpha}$. 

\begin{equation*}
\begin{split}
\Sigma^{\alpha}_{1} &= \{\{\Xi,\Omega\} \in \Delta^{\alpha}_{2} \ \vert \  \exists \ \{\Gamma_{n}\}_{n\in \mathbb{N}} \in \mathcal{T}\text{ s.t. }  \ \Gamma_{n}(A) \subset \mathcal{N}_{2^{-n}}(\Xi(A))  \ \text{and}\ \Gamma_n(A)\rightarrow\Xi(A) \ \forall A \in \Omega\} \\
\end{split}
\end{equation*}
where $\mathcal{N}_{\delta}(\omega)$ denotes the $\delta$-neighbourhood of $\omega \subset \mathcal{M}.$

\small
\bibliographystyle{abbrv}
\bibliography{InfMatBib}

\end{document}